		\newtheorem{theorem}{Theorem}[section]
		\newtheorem{lemma}[theorem]{Lemma}
		\newtheorem{proposition}[theorem]{Proposition}
		\newtheorem{corollary}[theorem]{Corollary}
		\newtheorem{definition}[theorem]{Definition}
		\newtheorem{remark}[theorem]{Remark}
	\newenvironment{proof}{
		\goodbreak\par
		\textit{Proof.}%
	}{%
		\nopagebreak
		\hfill{\vrule width 1ex height 1ex depth 0ex}
		\medskip
		\goodbreak
	}
	\newcommand{\sizedescriptor}[2]
	{
		\ifthenelse{\equal{#1}{0}}{}{
		\ifthenelse{\equal{#1}{1}}{\big}{
		\ifthenelse{\equal{#1}{2}}{\Big}{
		\ifthenelse{\equal{#1}{3}}{\bigg}{
		\ifthenelse{\equal{#1}{4}}{\Bigg}{
		#2}}}}}
	}
	\newcommand{\proven}[1]{\underline{#1}\vspace{0.2em}\\}
	\newcommand{\ie}[1][~]{i.e.{#1}}
	\newcommand{\eg}[1][~]{e.g.{#1}}
	\newcommand{\etc}[1][~]{etc.{#1}}
	\newcommand{\df}[1]{\emph{#1}}
	\newcommand{\ism}{\cong}
	\newcommand{\equ}{\sim}
	\newcommand{\dfeq}{\mathrel{\mathop:}=}
	\newcommand{\apart}{\mathrel{\#}}
	\newcommand{\id}[1][]{\textrm{Id}_{#1}}
	\newcommand{\impl}{\Rightarrow}
	\newcommand{\rstr}[1]{\left.{#1}\right|}
	\newcommand{\cnct}{{{:}{:}}}
	\newcommand{\im}{\text{im}}
	\newcommand{\ball}[3][]{B_{#1}\left(#2, #3\right)}
	\newcommand{\insarg}{\text{\textrm{---}}}
	\newcommand{\monus}{\mathbin{\dot{\relbar}}}
	\newcommand{\all}[4][auto]{\forall\, #2 \,{\in}\, #3\,.\sizedescriptor{#1}{\left}({#4}\sizedescriptor{#1}{\right})}
	\newcommand{\some}[4][auto]{\exists\, #2 \,{\in}\, #3\,.\sizedescriptor{#1}{\left}({#4}\sizedescriptor{#1}{\right})}
	\newcommand{\xall}[3]{\forall\, #1 \,{\in}\, #2\,.\,#3}
	\newcommand{\xsome}[3]{\exists\, #1 \,{\in}\, #2\,.\,#3}
	\newcommand{\st}[3][auto]{\sizedescriptor{#1}{\left}\{#2\;\sizedescriptor{#1}{\middle}|\;#3\sizedescriptor{#1}{\right}\}}
	\newcommand{\finseq}[1]{{#1}^*}
	\newcommand{\fin}{\mathcal{F}}
	\newcommand{\inhfin}{\fin^{+}}
	\newcommand{\NN}{\mathbb{N}}
	\newcommand{\ZZ}{\mathbb{Z}}
	\newcommand{\QQ}{\mathbb{Q}}
	\newcommand{\RR}{\mathbb{R}}
	\newcommand{\intoo}[3][\RR]{{#1}_{(#2, #3)}}
	\newcommand{\intcc}[3][\RR]{{#1}_{[#2, #3]}}
	\newcommand{\intco}[3][\RR]{{#1}_{[#2, #3)}}
	\newcommand{\lr}{\underrightarrow{\RR}}
	\newcommand{\ur}{\underleftarrow{\RR}}
	\newcommand{\kat}[1]{\mathbf{\underline{#1}}}
	\newcommand{\dom}{\text{dom}}
	\newcommand{\cod}{\text{cod}}
	\newcommand{\zero}{\mathbf{0}}
	\newcommand{\one}{\mathbf{1}}
	\newcommand{\unit}{*}
	\newcommand{\two}{\mathbf{2}}
	\newcommand{\trm}[1][]{\mathord{!}_{#1}}
	\newcommand{\ini}[1][]{\mathord{\text{\textnormal{!`}}}_{#1}}
	\newcommand{\cat}{\kat{C}}
	\newcommand{\Set}{\kat{Set}}
	\newcommand{\Top}{\kat{Top}}
	\newcommand{\Grp}{\kat{Grp}}
	\newcommand{\Pstr}{\kat{Pstr}}
	\newcommand{\Apstr}{\kat{Apstr}}
	\newcommand{\Str}{\kat{Str}}
	\newcommand{\Ccstr}{\kat{Ccstr}}
	\newcommand{\FormSp}{\kat{FormSp}}
	\newcommand{\pst}{\mathcal{P}}
	\newcommand{\opn}{\Sigma}
	\newcommand{\optp}{\mathcal{O}}
	\newcommand{\cltp}{\mathcal{Z}}
	\newcommand{\lnth}{\text{lnth}}
	\newcommand{\lpo}[1][~]{\textbf{LPO}{#1}}
	\newcommand{\llpo}[1][~]{\textbf{LLPO}{#1}}
	\newcommand{\nap}{\approx}
	\definecolor{lightgray}{rgb}{0.75,0.75,0.75}
	\newcommand{\inford}{\sqsubseteq}
	\newcommand{\covby}{\triangleleft}
	\newcommand{\pospred}[2][]{\lozenge_{#2}{#1}}
	\newcommand{\formal}[1]{\mathbf{#1}}
	\newcommand{\formalgtQ}[1][\formal{R}]{{{}_{\formal{Q}}\formal{<}_{#1}}}
	\newcommand{\formalltQ}[1][\formal{R}]{{{}_{#1}\formal{<}_{\formal{Q}}}}
	\newcommand{\embgtQ}[1][\formal{R}]{{{}_{\formal{Q}}lt_{#1}}}
	\newcommand{\embltQ}[1][\formal{R}]{{{}_{#1}lt_{\formal{Q}}}}
	\newcommand{\smR}{\mathbf{R}}
	\newcommand{\lowcl}{\mathop{\downarrow}}
	\newcommand{\pos}[1]{{#1}^+}
	\newcommand{\arch}{\mathsf{Arch}}
	\newcommand{\ring}{\mathsf{Ring}}
	\newcommand{\field}{\mathsf{Field}}
	\newcommand{\dc}{\mathscr{D}}
	\newcommand{\lc}{\mathscr{L}}
	\newcommand{\uc}{\mathscr{U}}
\title{Unified Approach to Real Numbers in Various Mathematical Settings}
\author{
Davorin Le\v{s}nik\vspace{1ex}\\
Department of Mathematics\\
Darmstadt University of Technology, Germany\\
\texttt{lesnik@mathematik.tu-darmstadt.de}%\vspace{1ex}\\
%Department of Mathematics and Physics\\
%University of Ljubljana, Slovenia\\
%\texttt{davorin.lesnik@fmf.uni-lj.si}
}
\begin{document}

	\maketitle
	
	\begin{abstract}
		We provide a setting-independent definition of reals by introducing the notion of a \df{streak}. We show that various standard constructions of reals satisfy our definition. We study the structure of reals by noting that its pieces correspond to reflections on the category of streaks.
	\end{abstract}

	%\newpage
	%\tableofcontents\newpage

	\section{Introduction}\label{Section: introduction}
		
		Real numbers form one of the most important sets in mathematics. There are different definitions of reals, the most common one probably being: \emph{$\RR$ is a Dedekind complete ordered field}. It turns out that in classical mathematics this definition determines $\RR$ up to isomorphism (so we have uniqueness), and one can construct models, satisfying this definition (thus proving existence), such as Cauchy reals (equivalence classes of Cauchy sequences of rationals), Dedekind reals (Dedekind cuts) etc.
		
		If there is little doubt, what reals are classically, the answer is not so clear-cut in various constructive settings. Constructively the definition ``Dedekind complete ordered field'' doesn't work (for example, it does not imply the archimedean property) and different constructions of reals don't necessarily yield isomorphic sets (for example, depending on the constructive setting, there can be more Dedekind reals than Cauchy ones).
		
		It is not apparent that one construction would be inherently better than the other, and one usually uses the one which behaves best in the current setting. This paper is an attempt to make a unifying definition for reals in a wide variety of settings, by making only some very basic assumptions on the background theory.
		
		Here is the definition, explained in very informal terms (the precise formulation is given in Definition~\ref{Definition: reals} and the definitions leading to it). Consider sets, equipped with an archimedean linear order $<$, addition $+$ and, defined at least on positive elements, mutiplication $\cdot$ (we name this a \df{streak}). Clearly the reals, whatever they are, ought to be an example of such a set (of course, natural numbers $\NN$, integers $\ZZ$ and rationals $\QQ$ ought to be examples also). The fact, that the order $<$ is linear and archimedean, essentially amounts to such a set lying on the real line. Hence, $\RR$ can be characterised as the largest among such sets.
		
		The point of such a definition is to avoid speaking about \df{completion}, as different notions of completion (\eg order completion, metric completion) need not coincide constructively. Replacing `complete' with `largest' is a trick which works for general metric spaces as well: the completion of a metric space is the largest metric space, into which the original one can be densely isometrically embedded (for a precise formulation, see \eg[~]\cite{Lesnik_D_2010:_synthetic_topology_and_constructive_metric_spaces}).
		
		Of course, one must formalize, what `largest' means in this context. The tool we'll use for this (as well as several other things) is the \df{universal property} from category theory. In fact, nearly the entirety of this paper is heavily influenced by ideas from category theory. However, no particular prior knowledge of categories is required; I make an effort to translate everything into noncategorical terms (though I still mention the categorical interpretations for readers, familiar with them). What category theory is explicitly needed, is explained in Subsection~\ref{Subsection: categories} and at the beginning of Section~\ref{Section: reflections}.
		
		The paper is more ambitious than just providing a general definition for reals, however (for that, we'd need little more than Definitions~\ref{Definition: streak} and~\ref{Definition: reals}). Additionally we analize various (order, algebraic, topological, limit, completness) structures, typical for reals --- how they can be added one by one, how they fit together, why the reals must necessarily have them.
		
		Here is the exact breakdown of the paper.
		\begin{itemize}
			\item\textbf{Section~\ref{Section: introduction}:} \underline{Introduction}\\
				Overview of the paper, notation and a primer on category theory (as much as is used in the paper).
			\item\textbf{Section~\ref{Section: setting}:} \underline{Setting}\\
				The purpose of this paper is to work with reals in a very general setting, encompasing the various particular ones used in mathematical practice, and in this section such a setting is described. Also, some basic corollaries of our axioms are derived.
			\item\textbf{Section~\ref{Section: streaks}:} \underline{Prestreaks and streaks}\\
				We define the crucial tool we use to study the structure of reals --- streaks (and their morphisms). This is done in stages, with more general strict orders, prestreaks and archimedean prestreaks defined first. Later we consider special kind of streaks --- multiplicative and dense ones.
			\item\textbf{Section~\ref{Section: reflections}:} \underline{Reflective structures}\\
				We observe that various pieces of the structure of reals correspond to (co)reflections on the category of streaks (or related categories).
			\item\textbf{Section~\ref{Section: reals}:} \underline{Real numbers}\\
				We use the preceding theory to formally define the set of reals and observe some immediate properties.
			\item\textbf{Section~\ref{Section: models_of_reals}:} \underline{Models of reals}\\
				We verify that various standard constructions of reals satisfy our definition in their respective settings.
			\item\textbf{Section~\ref{Section: additional}:} \underline{Additional examples}\\
				There are some structures, closely related to reals, but not actually isomorphic to them. Here we discuss how they fit into our theory.
			%\item\textbf{Section~\ref{Section: conclusion}:} \underline{Concluding remarks}\\
			%	Summary of what was achieved in the paper, and some further remarks.
		\end{itemize}
		
		\subsection{Notation}
			
			\begin{itemize}
				\item
					Number sets are denoted by $\NN$ (natural numbers), $\ZZ$ (integers), $\QQ$ (rationals), and $\RR$ (reals). Zero is considered a natural number (so $\NN = \{0, 1, 2, 3,\ldots\}$).
				\item
					Subsets of number sets, obtained by comparison with a certain number, are denoted by the suitable order sign and that number in the index. For example, $\NN_{< 42}$ denotes the set $\st{n \in \NN}{n < 42} = \{0, 1, \ldots, 41\}$ of all natural numbers smaller than $42$, and $\RR_{\geq 0}$ denotes the set $\st{x \in \RR}{x \geq 0}$ of non-negative real numbers. The apartness relation $\apart$ is used in the similar way.
				\item
					Intervals between two numbers are denoted by these two numbers in brackets and in the index. Round, or open, brackets $(\ )$ denote the absence of the boundary in the set, and square, or closed, brackets $[\ ]$ its presence; for example $\intco[\NN]{5}{10} = \st{n \in \NN}{5 \leq n < 10} = \{5, 6, 7, 8, 9\}$ and $\intcc{0}{1} = \st{x \in \RR}{0 \leq x \leq 1}$.
				\item
					Given a map $a\colon N \to A$ where $N$ is a subset of natural numbers, we often write simply $a_k$ instead of $a(k)$ for the value of $a$ at $k \in N$.
				\item
					The set of maps from $A$ to $B$ is written as the exponential $B^A$.
				\item
					The set of finite sequences of elements in $A$ is denoted by $\finseq{A}$.
				%\item
				%	Concatenation of sequences $a$ and $b$ is $a \cnct b$.
				\item
					Given sets $A \subseteq X$, $B \subseteq Y$ and a map $f\colon X \to Y$ with the image $\im(f) \subseteq B$, the restriction of $f$ to $A$ and $B$ is denoted by $\rstr{f}_A^B$. When we restrict only the domain or only the codomain, we write $\rstr{f}_A$ and $\rstr{f}^B$, respectively.
				\item
					A one-element set (a singleton) is denoted by $\one$ (and its sole element by $*$).
				\item
					The onto maps are called surjective, and the one-to-one maps injective.
				\item
					The quotient of a set $X$ by an equivalence relation $\equ$ is denoted by $X/_\equ$. Its elements --- the equivalence classes --- are denoted by $[x]$ where $x \in X$ (\ie if $q\colon X \to X/_\equ$ is the quotient map, then $[x] \dfeq q(x)$).
				\item
					The coproduct (disjoint union) is denoted by $+$ in the binary case, and by $\coprod$ in the general case.
			\end{itemize}
		
		\subsection{Categories}\label{Subsection: categories}
		
			This subsection provides a very brief introduction to category theory (the definition and some instances of the universal property). For a more serious introduction to the topic, consider~\cite{lane1998categories}; however, additional knowledge is not required for understanding this paper (though it helps to understand some issues better; I often make additional remarks, how something can be seen through the categorical lens).
			
			Informally, a category is a collection of objects (whatever we are interested in) and some maps between them (called `morphisms' or `arrows'), typically those which in some way preserve whatever structure the objects have. Examples include $\Set$ (the category of sets and maps), $\Top$ (the category of topological spaces and continuous maps), $\Grp$ (the category of groups and group homomorphisms), $\RR\textrm{-}\kat{Vect}$ (the category of real vector spaces and linear maps), $\kat{Pre}$ (the category of preordered sets and monotone maps) etc. (In the paper we define and study the category of \df{streaks} and their morphisms $\Str$.)
			
			The formal definition of a category describes, what exactly is required to call something `morphisms' or `maps'.
			\begin{definition}
				A \df{category} is a pair $\cat = (\cat_0, \cat_1)$ of families of \df{objects} $\cat_0$ and \df{morphisms} (or \df{arrows}) $\cat_1$, together with operations
				\begin{itemize}
					\item
						$\dom, \cod\colon \cat_1 \to \cat_0$, called \df{domain} and \df{codomain},
					\item
						$\id\colon \cat_0 \to \cat_1$, called \df{identity}, and
					\item
						$\circ\colon \cat_2 \to \cat_1$ (where $\cat_2 \dfeq \st{(f, g) \in \cat_1 \times \cat_1}{\dom(f) = \cod(g)}$), called \df{composition},
				\end{itemize}
				such that the following holds for all $X \in \cat_0$, $f, g, h \in \cat_1$:
				\begin{itemize}
					\item
						$\dom(\id[X]) = X = \cod(\id[X])$,
					\item
						$\dom(f \circ g) = \dom(g)$,\ \ $\cod(f \circ g) = \cod(f)$,
					\item
						$f \circ \id[\dom(f)] = f$,\ \ $\id[\cod(f)] \circ f = f$,
					\item
						$(f \circ g) \circ h = f \circ (g \circ h)$\ \ (in the sense that whenever one side is defined, so is the other, and then they are equal).
				\end{itemize}
				When $\dom(f) = X$ and $\cod(f) = Y$, we denote this by writing $f\colon X \to Y$.
				
				When both $\cat_0$ and $\cat_1$ are sets, the category $\cat$ is called \df{small}, otherwise it is \df{large}.
			\end{definition}
			
			Clearly the various examples of structured sets and structure-preserving maps above satisfy this definition. However, the definition of morphisms is abstract; they need not be actual maps. For example, any preorder $(P, \leq)$ (that is, $\leq$ is reflexive and transitive) is an example of a category ($P$ is the set of objects, and $\leq$, viewed as a subset  of $P \times P$, is the set of morphisms, with $\dom$ and $\cod$ being projections), specifically the kind, where we have at most one arrow from one object to another. Conversely, any such category determines a preorder (existence of identities corresponds to reflexivity, and existence of compositions to the transitivity of the preorder). Hence we define that a \df{preorder category} is a category, in which for every two (not necessarily distinct) objects there is at most one morphism from the first to the second one.
			
			An arrow $f\colon X \to Y$ in a category $\cat$ is called an \df{isomorphism} when it has an inverse with regard to composition, \ie when there exists an arrow $g\colon Y \to X$ in $\cat$ such that $g \circ f = \id[X]$ and $f \circ g = \id[Y]$. By the standard argument the inverse of $f$ is unique, and we denote it by $f^{-1}$. Two objects are called \df{isomorphic} when there exists an isomorphims between them (we write $X \ism Y$); this is an equivalence relation on objects.
			
			In $\Set$, isomorphisms are bijections; in $\Top$, the homeomorphisms; in $\kat{Grp}$, the usual group isomorphisms; and so on.
			
			In any category identities are always isomorphisms. The converse in a preorder category is equivalent to the antisymmetry of the preorder; thus we define that a \df{partial order category} is a preorder category, in which the only isomorphisms are identities.
			
			An object $\zero$ in a category is called \df{initial} when for every object $X$ there exists exactly one morphism $\ini[X]\colon \zero \to X$. In $\Set$, the initial object is the empty set.
			
			Note that the definition of a category is self dual in the sense, that if we reverse the direction of all arrows (that is, we switch $\dom$ and $\cod$, and the order of composition) in a category, we get another category (called the \df{dual} or \df{opposite category}). In particular, any categorical notion has its dual notion. While being isomorphic is self-dual, reversing the arrows in the case of an initial objects yields a new notion. An object $\one$ is called \df{terminal} when for every object $X$ there exists exactly one morphism $\trm[X]\colon X \to \one$.
			
			In $\Set$, the terminal objects are precisely the singletons. In particular, a category can have more than one terminal object (or none), but they are all isomorphic: for any two $\one$, $\one'$, both $\trm[\one'] \circ \trm[\one]'$ and $\id[\one]$ are maps $\one \to \one$, and therefore equal (due to uniqueness); similar for $\trm[\one]' \circ \trm[\one']$. Of course, the same applies for initial objects.
			
			Note that in a preorder category an initial object is a smallest element in the preorder, and a terminal object the largest one. This will allow us to formalize the intuition at the beginning of the Introduction: in Section~\ref{Section: streaks} we define \df{streaks} and their morphisms (which together form a preorder category $\Str$), of which the reals are the largest one; thus we define (in Section~\ref{Section: reals}) the reals as the terminal streak (the initial streak is what we start with: the natural numbers). Such a definition determines the reals up to isomorphism (of all relevant structures), which is what we want.
			
			More generally, whenever some objects/morphisms in a category are characterized in a similar way by requiring the existence and uniqueness of some arrow, it is said that they are defined by/possess a particular \df{universal property} (see~\cite{lane1998categories} or simply the Wikipedia article for the exact definition), and this always determines them up to (a canonical choice of) an isomorphism.
			
			Many construction can be given by a universal property, such as categorical products (a product of objects $X$ and $Y$ is denoted by $X \times Y$). Explicitly, an object $X \times Y$, together with maps (``\df{projections}'') $p\colon X \times Y \to X$, $q\colon X \times Y \to Y$, is called the (\df{categorical}) \df{product} when for every object $T$ and all maps $f\colon T \to X$, $g\colon T \to Y$ there exists a unique map $T \to X \times Y$ which we denote by $(f, g)$, such that $p \circ (f, g) = f$ and $q \circ (f, g) = g$. This can be captured by the following diagram.
			\[\xymatrix@+3em{
				X  &  X \times Y \ar[l]_p \ar[r]^q  &  Y  \\
				&  T \ar[lu]^f \ar[ru]_g \ar@{-->}[u]^{\exists!}_{(f, g)}  &
			}\]
			The dashed arrow represents the morphism of which existence and uniqueness we demand (making this an example of the universal property). The fomulae $p \circ (f, g) = f$ and $q \circ (f, g) = g$ say precisely that any two paths between from one node to another represent the same morphism; we say that this diagram \df{commutes}.
			
			Interpreting this definition in $\Set$, we obtain the usual cartesian product; in $\Top$, the usual topological product; and so on.
			
			This definition can be easily generalized to a product of an arbitrary family of objects $(X_i)_{i \in I}$; the product of such a family is denoted by $\prod_{i \in I} X_i$.
			
			By reversing all the arrows in the definition of the product, we get the definition of (aptly-named) \df{coproduct}, or \df{sum} (denoted $X + Y$). Explicitly, an object $X + Y$, together with morphisms $i\colon X \to X + Y$, $j\colon Y \to X + Y$ is a \df{sum} of $X$, $Y$ when for every object $T$ and all maps $f\colon X \to T$, $g\colon Y \to T$ there exists a unique map $[f, g]\colon X + Y \to T$ which makes the following diagram commute (\ie $[f, g] \circ i = f$, $[f, g] \circ j = g$).
			\[\xymatrix@+3em{
				X \ar[r]^i \ar[dr]_f  &  X + Y \ar@{-->}[d]_{\exists!}^{[f, g]}  &  Y \ar[l]_j \ar[ld]^g  \\
				&  T  &
			}\]
			The coproduct of a family of objects $(X_i)_{i \in I}$ is denoted by $\coprod_{i \in I} X_i$. In $\Set$, coproducts are disjoint unions.
			
			The notation for $\zero$, $\one$, $\times$, $+$ is such not only because of cardinalities of the these objects in $\Set$, but also because the usual laws of arithmetic hold in other categories as well: among others, we have
			\[X + \zero \ism X \ism \zero + X, \qquad X \times \one \ism X \ism \one \times X.\]
			We define $\two \dfeq \one + \one$; in $\Set$, $\two$ is any two-element set.
			
			In partially ordered categories the product and coproduct are the greatest lower bound (infimum) and the least upper bound (supremum) respectively.
			
			The definition of (co)product can be generalized to that of a categorical (\df{co})\df{limit} which encompass many other constructions (such as making quotients); for more on the subject, see~\cite{lane1998categories}.
			
			We will need one more instance of a universal property in this paper, namely the \df{reflections} (and their duals, coreflections). These are defined and discussed in Section~\ref{Section: reflections}. We have a few more notions to define, though, which are not instances of the universal property.
			
			An arrow $m\colon X \to Y$ is called a \df{monomorphism} (or simply \df{mono}) when for all objects $T$ and all $f, g\colon T \to X$, if $m \circ f = m \circ g$, then $f = g$. In $\Set$ monomorphisms are exactly the injective maps. The dual notion is called an \df{epi}(\df{morphism}); in $\Set$ these are surjective maps. In general every isomorphism is both mono and epi, but the converse does not always hold (though it holds in $\Set$): for example, in a preorder category every morphism is both mono and epi, but in general not iso. In fact, this is pretty characteristic for preorder categories: if a category has a terminal (resp.~initial) object and all its morphisms are monos (resp.~epis), then it is a preorder category.
			
			Monos allow us to define subobjects in a general category (corresponding to subsets in $\Set$, subgroups in $\Grp$ \etc[]). We declare monos $m\colon T \to X$, $m'\colon S \to X$ with the same codomain $X$ to be equivalent when there exist (necessarily unique since $m$, $m'$ are monos) morphisms $f\colon S \to T$, $g\colon T \to S$ such that $m \circ f = m'$, $m' \circ g = m$, \ie the following diagram commutes.
			\[\xymatrix@-2em{
				T \ar[rrrrd]^m \ar@/^1ex/[dd]^g  &&&&\\
				&&&&  X  \\
				S \ar[rrrru]_{m'} \ar@/^1ex/[uu]^f  &&&&
			}\]
			In particular this implies that $f$ and $g$ are inverse, therefore isomorphisms.
			
			Intuitively, we identify those monos which have the same image in $X$. We thus define that \df{subobjects} of $X$ are equivalence classes of monos with codomain $X$. In $\Set$ (as well as in many other categories) subobjects have canonical representatives; for a subobject $[m\colon T \to X]$ this is the inclusion $\im(m) \hookrightarrow X$. That is, $[m]$ represents the subset $\im(m) \subseteq X$, and in this sense subsets of $X$ are identified with the categorical subobjects of $X$ in $\Set$.
			
			We can consider not just morphisms within a category, but morphisms between categories themselves, that is, the categories themselves form a category\footnote{More precisely, all small categories form a large category, all large categories form a superlarge category, and so on. The Russel's paradox still applies.}. The morphisms of categories are called \df{functors}. By definition, a functor $F\colon \kat{C} \to \kat{D}$ consists of a map between objects $F_0\colon \kat{C}_0 \to \kat{D}_0$ and a map between arrows $F_1\colon \kat{C}_1 \to \kat{D}_1$ which preserve all the categorical structure, \ie
			\[F_0\big(\dom(f)\big) = \dom_{F_1(f)}, \qquad F_0\big(\cod(f)\big) = \cod_{F_1(f)},\]
			\[F_1\big(\id[X]\big) = \id[F_0(X)], \qquad F_1\big(f \circ g\big) = F_1(f) \circ F_1(g).\]
			We usually drop the subscripts $0$, $1$ when it is clear whether the functor is applied on an object or a morphism, writing simply $F(X)$, $F(f)$.
			
			A functor $F\colon \kat{C} \to \kat{D}$ is called \df{full} when for all objects $X$, $Y$ of $\kat{C}$ and all morphisms $g\colon F(X) \to F(Y)$ in $\kat{D}$ there exists a morphism $f\colon X \to Y$ such that $F(f) = g$ (a form of local surjectivity on morphisms).
			
			A subcategory $\kat{C} \subseteq \kat{D}$ is called \df{full} when the inclusion $\kat{C} \hookrightarrow \kat{D}$ is a full functor. Note that to specify a full subcategory, it is enough to specify its objects.

	\section{Setting}\label{Section: setting}
	
		In this section we discuss the assumptions we make on the background mathematical setting. The purpose is to make them very general, so as to be able to interpret the results (the definition and properties of reals, and such blends of order, algebraic and topological structure more generally) in a wide variety of settings (classical set theory being only one of them). The assumptions are rather standard constructive ones, though we add a less usual one in the form of intrinsic topology. In the second part of the subsection we derive a few basic results which we'll use throughout the paper (without always explicitly calling back to them).
		
		We assume that we have a constructive set theory with (at least) first order logic (the category of \emph{classical} sets $\Set$ is also a special case of this).%, and possibly also a class theory if a predicative setting is desired, where collections of subsets do not form a set (if they do form a set --- that is, the powerset axiom is assumed --- we can take classes to be the same as sets). That said, though we work more generally, we still want to refer to classical sets as an example, and so we let $\Set$ denote the category of \emph{classical} sets.
		
		We assume that we have reasonable interpretations of the empty set, singletons, products, sums (disjoint unions), solution sets of equations, and quotients (formally, we are assuming to be working in a category with all finite limits and finite colimits).
		
		We assume that images of maps between sets are again sets.
		
		We assume that we have a set of natural numbers $\NN$, subject to Peano axioms (categorically, we are assuming to have a natural numbers object).
		
		We assume that for any set $X$ the collection $X^\NN$ of all sequences in $X$ is again a set (in categorical terms, $\NN$ is an exponentiable object). Consequently we also have the set $\finseq{X} = \coprod_{n \in \NN} X^n$ of all finite sequences in $X$, as these can be viewed as infinite sequences, in which we eventually start repeating an element not from $X$, \ie
		\[\finseq{X} \dfeq \st{a \in \finseq{(X + \one)}}{\xsome{l}{\NN}\all{i}{\NN}{a_i \in X \iff i < l}}.\]
		For any $a \in \finseq{X}$ the witnessing $l$ is clearly unique, and we denote it by $\lnth(a)$ (the \df{length} of the sequence $a$).
		
		For any set $X$ we want to talk about the collection of its subsets $\pst(X)$. However, we want to allow predicative settings when $\pst(X)$ is not in general a set, so we postulate that we also have classes. The only thing we assume for the category of classes is that it contains our category of sets as a full subcategory, it has finite limits, and that for every \emph{set} $X$ we have its its \df{powerclass} $\pst(X)$. (If the powerset axiom is assumed --- that is, $\pst(X)$ is a set for every set $X$ --- then we can take classes to be the same as sets.)
		
		So far, the assumptions have been quite ordinary, but now we will make a less usual one: every set has an \df{intrinsic topology}.
		
		The point is that the reals are useful because of their rich structure (in particular, the blend of various different structures), the topological one no less important than the order and the algebraic one. Even so, the reals are usually defined only through their order and algebra (\eg a "Dedekind complete ordered field") since that is enough, and the topology is tacked on later (in the case of the usual euclidean one, defined via the strict order relation $<$). We'll make the topology an intrinsic part of the definition of reals right from the start.
		
		We assume that for every set $X$ we have classes $\optp(X) \subseteq \pst(X)$ and $\cltp(X) \subseteq \pst(X)$ of \df{open} and \df{closed} subsets of $X$, respectively. We assume that every map $f\colon X \to Y$ between sets is \df{continuous}, that is, the preimage\footnote{Recall that the preimage of a morphism in a category can be formally defined via a pullback, and so exists in our setting.} of an open subset of $Y$ is an open subset of $X$, and a preimage of a closed subset of $Y$ is a closed subset of $X$.
		
		In classical topology $\optp(X)$ is assumed to be closed under finite intersections and arbitrary unions (note that this already includes the usually separately stated condition, that $\emptyset$ and the whole $X$ are open, since these are just the empty (nullary) union and intersection, respectively). We want to be more general than that to include examples where that is not the case, specifically \df{synthetic topology} and \df{Abstract Stone Duality}.\footnote{In both these cases, the indexing sets of the unions, under which the open subsets are closed, are called \df{overt}, which is a notion, dual to compactness. In classical topology this notion is vacant: every topological space is overt.}
		
		Thus we assume that for every set $X$ the topology $\optp(X)$ is closed under finite intersections (in particular, it contains the empty intersection $X \in \optp(X)$) and \emph{countable} unions (in particular, the empty union $\emptyset \in \optp(X)$). This makes $\optp(X)$ a so-called \df{$\sigma$-frame}.
		
		Classically closed subsets are defined simply as complements of the open ones, or equivalently, the subsets, of which the complements are open, or equivalently, the subsets which contain all their adherent points, etc. However, we want our results to hold constructively as well, where these various definitions are not equivalent. We could decide on one of them, but it is not necessary. We will simply percieve $\cltp(X)$ as one more additional structure, that might be connected in some way with $\optp(X)$, but is not necessarily so (giving us an additional degree of freedom).
		
		However, we must then separately specify the unions and intersections, under which closed subsets are preserved. We will want the order relation $\leq$ to be closed, which in a linear order is the negation of $>$ (which we want to be open). Due to the (constructively valid) de Morgan law $\lnot\bigvee_i a_i \iff \bigwedge_i \lnot{a_i}$, we make the assumption on $\cltp(X)$ that it is closed under countable intersections. However, the other de Morgan $\lnot\bigwedge_i a_i \iff \bigvee_i \lnot{a_i}$ constructively doesn't hold, though the following finite version of it does: $\lnot(a \land b) \iff \lnot\lnot(\lnot{a} \lor \lnot{b})$. Hence we assume that for any finite collection of closed subsets of $X$ the double complement of their union is closed also.
		
		In one of the examples of models that we'll consider --- synthetic topology --- the topology $\optp(X)$ is represented by an exponential $\opn^X$ (where $\opn$ is the set of ``open truth values'', \ie (isomorphic to) $\optp(\one)$). In categorical jargon, this exponentiation is a contravariant functor, adjoint to its opposite, and so maps colimits to limits. In more normal language, this means that disjoint unions and quotients have the expected topologies. We want this to hold in our general case also.
		
		Let $i\colon X \to X + Y$ and $j\colon Y \to X + Y$ be the canonical inclusions into the coproduct of $X$ and $Y$. We require that the topology of the disjoint union $X + Y$ is given in the standard way: its subset is open/closed if and only if its restrictions to the summands are, that is,
		\[\optp(X + Y) = \st{U \subseteq X + Y}{i^{-1}(U) \in \optp(X) \land j^{-1}(U) \in \optp(Y)},\]
		and the same for $\cltp(X + Y)$. Note that this implies $\optp(X + Y) \ism \optp(X) \times \optp(Y)$ (and likewise for closed subsets).
		
		Let $\equ$ be an equivalence relation on $X$ and $q\colon X \to X/_\sim$ the quotient map. We require that $X/_\sim$ has the \df{quotient topology}, that is,
		\[\optp(X/_\sim) = \st{U \subseteq X/_\sim}{q^{-1}(U) \in \optp(X)},\]
		and similarly for the closed subsets.
		
		\begin{remark}
			Half of our assumptions (the $\subseteq$ part) for the disjoint unions and quotients follows already from the fact that all maps are continuous. What we are additionally assuming is the reverse inclusion $\supseteq$, \ie that topologies on disjoint unions and quotients aren't \emph{weaker} than expected.
		\end{remark}
		
		This ends our assuptions on the setting. In the remainder of the section we observe some of their immediate consequences.
		
		First note that the usual way of defining the reals without referring to their topology is a special case of what we'll do: if you want to forget about topology, simply choose $\optp(X) = \cltp(X) = \pst(X)$. Everything being open and closed fulfills all the topological conditions in this paper, so it amounts to the same thing as ignoring the topological parts (naturally the reals $\RR$ will then also end up discrete). This is an important point: the assumption of intrinsic topology makes the setting \emph{more} general, not less.
		
		Recall that in constructivism a subset $A \subseteq X$ is called \df{decidable} (in $X$) when for every $x \in X$ the statement $x \in A \lor \lnot(x \in A)$ holds (of course, in classical mathematics every subset is decidable). Relations can be viewed as subsets of a product, and so this term applies for them as well. One can prove that relations $=$, $<$, $\leq$ are decidable on $\NN$, $\ZZ$ and $\QQ$ (though constructively usually not on $\RR$).
		
		Denote the elements of $\two = \one + \one$ by $\two = \{\top, \bot\}$ (``top'' and ``bottom'', or ``true'' and ``false''). Let $A$ be a decidable subset of $X$. Then we can define its \df{characteristic map} $\chi\colon X \to \two$ by
		\[\chi(x) \dfeq \begin{cases} \top & \text{if } x \in A,\\ \bot & \text{if } \lnot(x \in A). \end{cases}\]
		Since the empty subset and the whole set are always open and closed, it follows from the above definition of the disjoint union topology that each summand in a disjoint union is open and closed. In particular $\{\top\}$ is open and closed in $\two$, and then so is $A = \chi^{-1}(\{\top\})$ due to continuity. In summary, decidable implies open and closed.
		
		In particular this means that in classical set theory, where every subset is decidable, the only choice of topologies we can take under our assumptions is the discrete ones. In other words, in $\Set$ the reals we'll obtain will have no topological structure --- as is usual, the topology plays no part in the definition of the set $\RR$. This is to be expected, since sets can be seen as discrete topological spaces. However, if we interpret our definition of $\RR$ in a more ``topologically-minded'' category --- such as sheaves over (small) topological spaces --- we get the reals with the usual euclidean topology.
		
		Subsets, that we explicitly define, will typically be given by their defining property, and it is useful to extend the topological notions to predicates as well. Let $\Phi(x)$ be a predicate on a set $X$ (that is, the variable $x$ can be interpreted as an element of a set $X$). We say that $\Phi(x)$ is an \df{open} (resp.\ \df{closed}) \df{predicate} on $X$ when $\st{x \in X}{\Phi(x)}$ is an open (resp.\ closed) subset of $X$.
		
		This definition often allows us to easily recognize, that a set is open or closed (after we wade through the following very technical lemma); for example, the closure of topologies under countable unions translates to the fact that an open predicate, existentially quantified over a countable set, is again open, and so on.
	
		\begin{lemma}
			Let $f\colon E \to A$ and $p\colon E \to B$ be maps; denote the latter's fibers by $E_b \dfeq p^{-1}(b)$. Note that we have a map $[p, \id[B]]\colon E + B \to B$.
			\begin{enumerate}
				\item
					Suppose all fibers of $p$ are countable in the following sense: there exists a map $s\colon \NN \times B \to E + B$ which preserves fibers (\ie $[p, \id[B]] \circ s = \pi$ where $\pi\colon \NN \times B \to B$ is the second projection)\footnote{In other words, $p$ is countable in the slice category over $B$.} and for all $b \in B$ we have $E_b \subseteq s(\NN \times \{b\})$.
					
					Then if $\Phi(a)$ is an open predicate on $A$,
					\[\xsome{x}{E_b}{\Phi(f(x))}\]
					is an open predicate on $B$. If $\Phi(a)$ is a closed predicate on $A$, then
					\[\xall{x}{E_b}{\Phi(f(x))}\]
					is a closed predicate on $B$.
				\item
					Suppose all fibers of $p$ are finite in the following sense: there exists $m \in \NN$ and a map $s\colon \NN_{< m} \times B \to E + B$ which preserves fibers (\ie $[p, \id[B]] \circ s = \pi$ where $\pi\colon \NN \times B \to B$ is the second projection) and for all $b \in B$ we have $E_b \subseteq s(\NN_{< m} \times \{b\})$.
					
					Then if $\Phi(a)$ is an open predicate on $A$,
					\[\xall{x}{E_b}{\Phi(f(x))}\]
					is an open predicate on $B$. If $\Phi(a)$ is a closed predicate on $A$, then
					\[\lnot\lnot\xsome{x}{E_b}{\Phi(f(x))}\]
					is a closed predicate on $B$.\footnote{Naturally in all of the examples above, $\Phi(f(x))$ can contain $b$ as well, but it is sufficient to write that $f$ is dependent only on $x$ since $b = p(x)$.}
			\end{enumerate}
		\end{lemma}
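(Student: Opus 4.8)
The plan is to reduce each of the four claims to a closure property of $\optp$ or $\cltp$ by pulling a suitable subset of $E + B$ back along $s$ and then quantifying over the (countable, respectively finite) index set. Throughout, write $i\colon E \to E + B$ and $j\colon B \to E + B$ for the coproduct inclusions; recall that $i(E)$ and $j(B)$ are complementary decidable subsets of $E + B$ (so one may decide which summand a given element lies in, and case-split accordingly), and that $i$ is a monomorphism. The hypothesis $[p, \id[B]] \circ s = \pi$ says precisely that for every $(n, b)$ the value $s(n, b)$ is either $i(e)$ for some $e$ in the fiber $E_b$, or else the dummy value $j(b)$; the hypothesis $E_b \subseteq s(\NN \times \{b\})$ (read via $i$) says that every element of $E_b$ occurs among these values.

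Consider the open--existential claim of part~(1). Put $S \dfeq \st{a \in A}{\Phi(a)}$, open in $A$, so $f^{-1}(S)$ is open in $E$ and $U \dfeq i(f^{-1}(S))$ is open in $E + B$ (its restrictions to the two summands are $f^{-1}(S)$ and $\emptyset$). Since $s$ is continuous, $s^{-1}(U)$ is open in $\NN \times B$, hence for each fixed $n$ the set $\st{b \in B}{s(n, b) \in U}$, being the preimage of $s^{-1}(U)$ along $b \mapsto (n, b)$, is open in $B$. One then verifies the equality
\[\st{b \in B}{\xsome{x}{E_b}{\Phi(f(x))}} \sepeq \bigcup_{n \in \NN} \st{b \in B}{s(n, b) \in U}\]
using injectivity of $i$, the complementarity of the two summands, and fiber-preservation of $s$; the right-hand side is a countable union of opens, hence open. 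Dually, for the closed--universal claim take $C \dfeq \st{a}{\Phi(a)}$ closed and put $V \dfeq i(f^{-1}(C)) \cup j(B)$, now including the whole $B$-summand so that the dummy values cannot obstruct a universal quantifier; then $V$ is closed in $E + B$, and a parallel computation gives $\st{b}{\xall{x}{E_b}{\Phi(f(x))}} = \bigcap_{n \in \NN} \st{b}{s(n, b) \in V}$, a countable intersection of closed sets, hence closed.

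Part~(2) runs identically with $\NN$ replaced by the finite set $\NN_{< m}$, except that the closed case must now be paired with the existential quantifier. For the open--universal claim, use $V \dfeq i(f^{-1}(S)) \cup j(B)$ exactly as in the closed--universal case above to present $\st{b}{\xall{x}{E_b}{\Phi(f(x))}}$ as a finite intersection of opens, hence open. For the closed--existential claim, use $V \dfeq i(f^{-1}(C))$, with no $B$-summand, to obtain $\st{b}{\xsome{x}{E_b}{\Phi(f(x))}} = \bigcup_{n < m} \st{b}{s(n, b) \in V}$, a finite union of closed subsets of $B$. This union need not itself be closed, but by the setting's assumption its double complement is closed; and since $b$ lies in the union exactly when $\xsome{x}{E_b}{\Phi(f(x))}$ holds, that double complement is $\st{b}{\lnot\lnot\xsome{x}{E_b}{\Phi(f(x))}}$, as required.

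The genuinely delicate part is not the topology but the constructive bookkeeping inside the displayed set equalities: one must use that $E_b$, transported into $E + B$ by the mono $i$, meets each value $s(n, b)$ cleanly (injectivity of $i$, together with the decidable disjointness of the two summands), and one must recognize that the finite-union-of-closeds step is exactly where the de~Morgan asymmetry discussed in Section~\ref{Section: setting} forces the double negation --- which is why $\lnot\lnot$ is unavoidable in part~(2) but absent from the countable-intersection step of part~(1). The mild strengthening in which $\Phi$ is allowed to depend on $b$ as well is handled in the same way, after replacing $f$ throughout with $(f, p)\colon E \to A \times B$.
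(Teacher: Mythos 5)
Your proof is correct and follows essentially the same route as the paper's: form the open (resp.\ closed) subset $f^{-1}(S)$ of $E$, transport it into $E+B$ (adding the whole $B$-summand exactly when the quantifier is universal so the dummy values $j(b)$ don't interfere), pull back along $s(n,\insarg)$, and finish with the appropriate closure property of $\optp$ or $\cltp$ --- countable union, countable intersection, finite intersection, or double complement of a finite union. The only differences are cosmetic (you write $i(f^{-1}(S))$ and $i(f^{-1}(C)) \cup j(B)$ where the paper writes $f^{-1}(U)$ and $f^{-1}(F)+B$), so there is nothing to add.
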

		\begin{proof}
			\begin{enumerate}
				\item
					Suppose first that $\Phi(a)$ is open, and let $U \dfeq \st{a \in A}{\Phi(a)}$. By assumption $U$ is open in $E$, so $f^{-1}(U)$ is open in $E$, and by the definition of the coproduct topology $f^{-1}(U)$ is open in $E + B$. For any $n \in \NN$ denote $V_n \dfeq s(n,\insarg)^{-1}\big(f^{-1}(U)\big)$; this is an open subset of $B$. We have
					\[\xsome{x}{E_b}{\Phi(f(x))} \iff \xsome{x}{E_b}{f(x) \in U} \iff\]
					\[\iff \xsome{n}{\NN}{f(s(n,b)) \in U} \iff \xsome{n}{\NN}{b \in V_n},\]
					so $\st{b \in B}{\xsome{x}{E_b}{\Phi(f(x))}} = \bigcup_{n \in \NN} V_n$ which is an open subset of $B$.
					
					Suppose now that $\Phi(a)$ is closed, and define $F \dfeq \st{a \in A}{\Phi(a)}$ and for $n \in \NN$ let $G_n \dfeq s(n,\insarg)^{-1}\big(f^{-1}(U) + B\big)$. $F$ is closed in $A$, so $f^{-1}(F)$ is closed in $E$, so $f^{-1}(F) + B$ is closed in $E + B$, and then finally $G_n$ is closed in $B$. Similarly as above we have
					\[\xall{x}{E_b}{\Phi(f(x))} \iff \xall{x}{E_b}{f(x) \in F} \iff\]
					\[\iff \xall{n}{\NN}{s(n,b) \in f^{-1}(G) + B} \iff \xall{n}{\NN}{b \in G_n},\]
					so $\st{b \in B}{\xall{x}{E_b}{\Phi(f(x))}} = \bigcap_{n \in \NN} G_n$ is closed in $B$.
				\item
					Similarly as above. If $U \dfeq \st{a \in A}{\Phi(a)}$ is open, then so are all $V_n \dfeq s(n,\insarg)^{-1}\big(f^{-1}(U) + B\big)$, and $\st{b \in B}{\xall{x}{E_b}{\Phi(f(x))}} = \bigcap_{n \in \NN_{< m}} V_n$ is open in $B$. If $F \dfeq \st{a \in A}{\Phi(a)}$ is closed, then all $G_n \dfeq s(n,\insarg)^{-1}\big(f^{-1}(U)\big)$ are, and $\st{b \in B}{\lnot\lnot\xsome{x}{E_b}{\Phi(f(x))}} = \big(\bigcup_{n \in \NN_{< m}} G_n\big)^{CC}$ is closed in $B$.
			\end{enumerate}
		\end{proof}
		
		Naturally the general version implies the one where all the fibers are the same.
		\begin{corollary}
			Let $f\colon X \times B \to A$ be a map and $\Phi(a)$ a predicate on $A$.\
			\begin{itemize}
				\item
					If $X$ is countable and $\Phi(a)$ open, then $\xsome{x}{X}{\Phi(f(x, b))}$ is an open predicate on $B$.
				\item
					If $X$ is countable and $\Phi(a)$ closed, then $\xall{x}{X}{\Phi(f(x, b))}$ is a closed predicate on $B$.
				\item
					If $X$ is finite and $\Phi(a)$ open, then $\xall{x}{X}{\Phi(f(x, b))}$ is an open predicate on $B$.
				\item
					If $X$ is finite and $\Phi(a)$ closed, then $\lnot\lnot\xsome{x}{X}{\Phi(f(x, b))}$ is a closed predicate on $B$.
			\end{itemize}
		\end{corollary}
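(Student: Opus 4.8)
The plan is to obtain the corollary as the special case of the Lemma in which every fiber of $p$ is (a copy of) $X$. Concretely, I would take $E \dfeq X \times B$ and let $p\colon E \to B$ be the second projection, so that $E_b = p^{-1}(b)$ is canonically identified with $X$ for each $b \in B$; the given map $f\colon X \times B \to A$ already has the shape $E \to A$ required by the Lemma. Under the identification $E_b \ism X$ the quantified predicates $\xsome{x}{E_b}{\Phi(f(x))}$ and $\xall{x}{E_b}{\Phi(f(x))}$ become exactly $\xsome{x}{X}{\Phi(f(x,b))}$ and $\xall{x}{X}{\Phi(f(x,b))}$. Hence, once the countable/finite-fiber hypotheses of the Lemma are checked, part~1 of the Lemma yields the two ``countable'' bullets and part~2 yields the two ``finite'' bullets, with the $\lnot\lnot$ in the last one exactly as stated.

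So everything reduces to verifying those hypotheses from the assumption that $X$ is countable (resp.\ finite). For countability, I would use a witness in the form of a map $c\colon \NN \to X + \one$ which hits every element of $X$ (for each $x \in X$ some $n$ has $c(n)$ equal to the copy of $x$ in $X + \one$); note that this formulation automatically accommodates $X = \emptyset$. Define $s\colon \NN \times B \to E + B = (X \times B) + B$ by splitting on which summand $c(n)$ lies in, which is legitimate since the coproduct $X + \one$ is disjoint: if $c(n)$ is the copy of some $x \in X$, let $s(n,b)$ be the copy of $(x,b)$ in $E + B$; otherwise let $s(n,b)$ be the copy of $b$ in $E + B$. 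Then $[p,\id[B]] \circ s$ sends $(n,b)$ to $b$ in both branches, so it equals the projection $\pi\colon \NN \times B \to B$; and for each $b$, every $(x,b) \in E_b$ equals $s(n,b)$ for any $n$ with $c(n)$ the copy of $x$, so $E_b \subseteq s(\NN \times \{b\})$. This is precisely the hypothesis of part~1 of the Lemma.

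For the ``finite'' bullets one repeats the construction verbatim with $\NN$ replaced by $\NN_{< m}$: a finite $X$ comes with a covering map $c\colon \NN_{< m} \to X + \one$, the same recipe produces $s\colon \NN_{< m} \times B \to E + B$ meeting the hypothesis of part~2 of the Lemma, and we read off the remaining two bullets. If finiteness is taken to mean $X \ism \NN_{< m}$, the $\one$-padding is superfluous and $s$ can be taken to be essentially the canonical isomorphism, but the argument above is insensitive to which convention is used.

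There is no genuine difficulty here; the corollary is a routine instantiation of the Lemma, and the only point worth a second glance is the bookkeeping of matching the ambient definitions of ``countable'' and ``finite'' to the exact covering data $s$ demanded in the Lemma's hypotheses --- in particular that ``countable'' be read as ``admits a surjection-like map out of $\NN$'', padded by $\one$ to cover the empty case, rather than merely ``injects into $\NN$'' --- after which the two parts of the Lemma apply word for word.
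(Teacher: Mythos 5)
Your proposal is correct and is exactly the paper's argument: the corollary is obtained by instantiating the lemma with $E = X \times B$ and $p$ the second projection, which is all the paper says. Your additional verification that countability/finiteness of $X$ supplies the fiber-covering map $s$ demanded by the lemma's hypotheses is a faithful spelling-out of what the paper leaves implicit.
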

		\begin{proof}
			Use the preceding lemma for $E = X \times B$ and $p$ the projection on the second factor.
		\end{proof}

	\section{Prestreaks and streaks}\label{Section: streaks}
	
		In this section we define and study the notion of a \df{streak} which is a convenient way to capture the combination of order, algebraic and topological structure of the reals (allowing us to define $\RR$ in two words --- see Definition~\ref{Definition: reals}), but is general enough that we can use it to classify other number sets $\NN$, $\ZZ$, $\QQ$ as well.
		
		More to the point, it is also modular; it allows us to add new pieces of structure in such a way, that the new structure forms a reflective subcategory of the old one, whence it follows directly from the definition of the reals, that $\RR$ must also possess this structure, and we get direct formulae for it.

		Informally, a streak consists of
		\begin{itemize}
			\item
				the order structure, with both the strict $<$ and nonstrict $\leq$ order relations,
			\item
				the algebraic structure, being a monoid for addition $+$, and its positive elements being a monoid for multiplication $\cdot$ (in other words, we take only the part of the algebraic structure of $\RR$ which preserves the order structure $<$ and $\leq$, \ie addition and multiplication with positive elements, but not subtraction, multiplication with nonpositive elements, or division),
			\item
				the topological structure which must also be in agreement with the order structure ($<$ must be open and $\leq$ closed).
		\end{itemize}
		
		It is convenient to also have a more general notion of a \df{prestreak}. One reason is that the addition of a new structure typically entails constructing a prestreak first, then quotienting it out to get the desired streak. Another reason is that the \df{smooth reals} in synthetic differential geometry (see Subsection~\ref{Subsection: smooth_reals}) are at best a prestreak rather than a streak.
		
		Most of this section is spent on technicalities, proving (in painful detail) that those properties hold which we expected to hold anyway. It is there, so that we have black on white, that our claims are valid, but otherwise this section can be just skimmed through by the reader. What's important, is to keep in mind the following: streaks are (up to isomorphism) subsets of reals containing $0$ and $1$, closed under addition, and positive elements are closed under multiplication (while prestreaks are a generalization, where we don't require antisymmetry of $\leq$ and the archimedean property).
		
		\subsection{Order relations}
		
			We start with the formal treatment of the order relations. We want $\RR$ (and streaks in general) to be linearly ordered, but as it is well known, the condition $\all{x,y}{\RR}{x \leq y \lor y \leq x}$ is constructively too strong (it implies \llpo[]). So we follow the standard constructive way of defining the linear order.
			
			\begin{definition}
				A set $X$ is \df{strictly ordered} by a binary relation $<$ when the following holds for all $a, b, x \in X$.
				\begin{itemize}
					\item
						$\lnot(a < b \land b < a)$ \quad (\df{asymmetry})
					\item
						$a < b \implies a < x \lor x < b$ \quad (\df{cotransitivity})
				\end{itemize}
			\end{definition}
			Observe that a strict order is also irreflexive ($\lnot(a < a)$ for all $a \in X$) --- take $a = b$ in asymmetry condition --- and transitive ($a < b \land b < c \implies a < c$) --- if $a < b$, then by cotransitivity $a < c$ or $c < b$, and the latter, together with $b < c$, contradicts asymmetry.
			
			In a strictly ordered set $(X, <)$ we define the nonstrict order relation by $a \leq b \dfeq \lnot(b < a)$ for $a, b \in X$. This is a preorder on $X$; reflexivity of $\leq$ is irreflexivity of $<$, and transitivity of $\leq$ is the contrapositive of cotransitivity of $<$. Note that asymmetry of $<$ can be restated as $a < b \implies a \leq b$. Also, transitivity of $<$ can be strenghtened (with the same argument) to $a < b \land b \leq c \implies a < c$, as well as $a \leq b \land b < c \implies a < c$.
			
			In classical mathematics we can use the deMorgan law on asymmetry of $<$, obtaining the standard definition of a linear order $a \leq b \lor b \leq a$. Constructively, we make do with just the properties, defined above.
			
			We also define a relation $\apart$ by $a \apart b \dfeq a < b \lor b < a$. This is an apartness relation~\cite{Troelstra_AS_Dalen_D_1988:_constructivism_in_mathematics_volume_2} on $X$: it is irreflexive ($\lnot(a \apart a)$), symmetric ($a \apart b \implies b \apart a$) and cotransitive ($a \apart b \implies a \apart x \lor x \apart b$).
			
			A strict order $(X, <)$ is called \df{tight} when the apartness relation satisfies the \df{tightness} condition
			\[\lnot(a \apart b) \implies a = b\]
			for all $a, b \in X$, or equivalently, when $\leq$ is antisymmetric:
			\[a \leq b \land b \leq a \implies a = b,\]
			thus a partial order.
			
			There is a standard way, how to turn a preorder into a partial order (or an apartness relation into a tight one). Applying this in the case of a strict order $(X, <)$, we define the relation $\nap$ on $X$ by
			\[x \nap y \dfeq x \leq y \land y \leq x, \qquad \text{or equivalently} \qquad x \nap y \dfeq \lnot(x \apart y).\]
			It is easily seen that this is an equivalence relation on $X$ and that $<$ induces a well-defined tight strict order on $X/_\nap$ by $[x] < [y] \dfeq x < y$, therefore a partial order $[x] \leq [y] \iff x \leq y$ and a tight apartness relation $[x] \apart [y] \iff x \apart y$. The relation $\nap$ becomes, of course, the equality on $X/_\nap$.
			
			Recall that for any partial order $(X, \leq)$, a supremum $\sup A$ of a subset $A \subseteq X$ is an element $s \in X$ with the property
			\[\all{x}{X}{s \leq x \iff \xall{a}{A}{a \leq x}}.\]
			The left-to-right implication states $s$ is an upper bound for $A$ (it is equivalent to $\xall{a}{A}{a \leq s}$), the other implication tells it is the least such. A supremum of a set need not exist, but if it does, antisymmetry of $\leq$ implies it is unique; moreover, we have $a \leq b \iff \sup\{a, b\} = b$. We define infimum $\inf A$ analogously.
			
			In the case of a (tight) strict order we have a strengthening of the notion of supremum and infimum. We define $s \in X$ to be the \df{strict supremum} of $A \subseteq X$ when
			\[\all{x}{X}{x < s \iff \xsome{a}{A}{x < a}}\]
			(and analogously the \df{strict infimum}). The left-to-right implication is again equivalent to $s$ being an upper bound, but the other one is a genuine strengthening of the previous condition, so a strict supremum is also a supremum, but not necessarily vice versa, unless we have classical logic. To see this, let $X = \{0, 1\}$ and $A = \{0\} \cup \st{1}{p}$ where $\lnot\lnot p$ holds. We claim $\sup A = 1$. Clearly, $1$ is an upper bound of $A$. Now let $x \in X$ be an upper bound for $A$. It cannot be $x = 0$ since that would imply $\lnot(1 \in A)$, \ie $\lnot\lnot\lnot p = \lnot p = \bot$. So $x = 1$. However, if $1$ is also the strict supremum of $A$, then $0 < 1$ implies $\xsome{a}{A}{0 < a}$, meaning $p$. We obtained $\lnot\lnot p \implies p$ for an arbitrary truth value $p$.
			
			We may calculate (strict) suprema and infima per parts.
			\begin{lemma}
				Let $\{A_i\}_{i \in I}$ be a family of subsets $A_i \subseteq X$, and $<$ a tight strict order on $X$. Then
				\[\sup \bigcup_{i \in I} A_i = \sup \st{\sup A_i}{i \in I}\]
				if all above suprema exist. Analogous formulae hold for infima and their strict versions.
			\end{lemma}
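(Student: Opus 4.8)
The plan is to use the defining universal property of the supremum directly, so that the whole claim reduces to checking that the two families on the right-hand side have exactly the same upper bounds; antisymmetry of $\leq$ (available because $<$ is tight) then forces the suprema to coincide. Concretely, I first record for an arbitrary $x \in X$ the chain of equivalences: $x$ is an upper bound for $\bigcup_{i \in I} A_i$ iff $\xall{i}{I}{\xall{a}{A_i}{a \leq x}}$ iff $\xall{i}{I}{x \text{ is an upper bound for } A_i}$; and, since each $\sup A_i$ is assumed to exist, its defining property says exactly that $x$ is an upper bound for $A_i$ iff $\sup A_i \leq x$. Hence $x$ is an upper bound for $\bigcup_{i \in I} A_i$ iff $x$ is an upper bound for $\st{\sup A_i}{i \in I}$.

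Next, write $s \dfeq \sup \bigcup_{i \in I} A_i$ and $t \dfeq \sup \st{\sup A_i}{i \in I}$, both assumed to exist. Combining the defining properties of these two suprema with the previous paragraph, for every $x \in X$ we get $s \leq x \iff t \leq x$. Instantiating at $x = t$ (using $t \leq t$) yields $s \leq t$, and instantiating at $x = s$ yields $t \leq s$; since $\leq$ is antisymmetric (tightness of $<$), $s = t$, as desired.

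For strict suprema the argument is parallel, now using the defining property $x < (\text{strict }\sup A) \iff \xsome{a}{A}{x < a}$: for all $x$ one obtains $x < (\text{strict }\sup \bigcup_i A_i) \iff \xsome{i}{I}{\xsome{a}{A_i}{x < a}} \iff \xsome{i}{I}{x < (\text{strict }\sup A_i)} \iff x < (\text{strict }\sup \st{\sup A_i}{i \in I})$, the middle step using the assumed existence of each strict supremum. Calling the two outer elements $s$ and $t$ we have $x < s \iff x < t$ for all $x$; if $s < t$ held, instantiating at $x = s$ would give $s < s$, contradicting irreflexivity, and symmetrically $t < s$ is impossible, so $\lnot(s \apart t)$ and tightness gives $s = t$. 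The infimum and strict-infimum statements follow by the order-reversing dual of these arguments. I do not expect a genuine obstacle here; the only points that need care are invoking the existence hypotheses precisely at the step where ``$x$ is an upper bound for $A_i$'' is traded for ``$\sup A_i \leq x$'', and appealing to tightness (equivalently, antisymmetry of $\leq$) exactly at the end.
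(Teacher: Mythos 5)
Your proof is correct and follows essentially the same route as the paper: both arguments reduce the claim to the defining (universal) property of the supremum and then invoke tightness/antisymmetry for uniqueness. The only difference is presentational — the paper verifies directly that $\sup\st{\sup A_i}{i \in I}$ satisfies the two clauses defining the strict supremum of the union (treating only the strict case and leaving the rest as analogous), whereas you spell out both the non-strict and strict cases via the "same upper bounds" biconditional.
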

			\begin{proof}
				We prove the formula only in the case of strict suprema. Denote $s \dfeq \sup\st{\sup A_i}{i \in I}$, and assume the suprema in the claim are strict. Take any $a \in \bigcup_{i \in I} A_i$. Then there is an $i \in I$, such that $a \in A_i$, so $a \leq \sup A_i \leq s$. Now take any $x \in X$, $x < s$. That means there exists $i \in I$ for which $x < \sup A_i$, hence there is $a \in A_i$ for which $x < a$. This proves the claim.
			\end{proof}
			
			In particular, this means $\sup\{a, b, c\} = \sup\{\sup\{a, b\}, c\}$ and in general for $n \in \NN_{\geq 2}$,
			\[\sup\{a_0, a_1, \ldots, a_{n-1}\} = \sup\{\sup\{\ldots \sup\{a_0, a_1\}, a_2\}, \ldots, a_{n-1}\}.\]
			Moreover, $\sup\{a\} = a$, so typically when we check something for finite suprema (and infima and their strict versions), we need to verify the condition only for the supremum of the empty set (\ie the least element) and binary suprema.
			
			Actually, in the case of finite suprema and infima, there is no difference between the usual and the strict version.
			\begin{proposition}\label{Proposition: finite_strict_suprema_and_infima}
				Let $<$ be a tight strict order on $X$, $n \in \NN$ and $a_0, \ldots, a_{n-1} \in X$. Suppose $s \dfeq \sup\{a_0, \ldots, a_{n-1}\}$ exists.
				\begin{enumerate}
					\item For $n \geq 1$, it is not true that for all $i \in \NN_{<n}$, $a_i < s$.
					\item The supremum $s$ is also strict.
				\end{enumerate}
				Similarly for infima.
			\end{proposition}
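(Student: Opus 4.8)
The plan is to prove part~(1) first, then obtain part~(2) from it by a finite case distinction, and finally get the statements about infima by duality. It is worth keeping in mind that, after using antisymmetry of $\leq$, part~(1) amounts to saying that not-not one of the $a_i$ equals $s$: classically the supremum of a finite set is attained, and this double negation is exactly what survives constructively.

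For part~(1) the obvious attempt via cotransitivity of $<$ leads nowhere (if $a_i < s$ for all $i$, then cotransitivity applied to $a_i < s$ with test point $a_j$ merely re-derives $a_j < s$), so the key is to bring in \emph{tightness}, i.e.\ antisymmetry of $\leq$. Assume $a_i < s$ for every $i \in \NN_{<n}$, aiming at a contradiction. For each $i$ this gives $\lnot(s \leq a_i)$, since $s \leq a_i$ is by definition $\lnot(a_i < s)$. Because $s = \sup\{a_0, \ldots, a_{n-1}\}$, the defining property of the supremum evaluated at the element $a_i$ reads $s \leq a_i \iff \forall j \in \NN_{<n}.\, a_j \leq a_i$; rewriting each $a_j \leq a_i$ as $\lnot(a_i < a_j)$ turns this into $\lnot(s \leq a_i) \iff \lnot\lnot\, \exists j \in \NN_{<n}.\, a_i < a_j$. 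Hence $\forall i \in \NN_{<n}.\, \lnot\lnot\, \exists j \in \NN_{<n}.\, a_i < a_j$.

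Since $\NN_{<n}$ is finite, a finite conjunction of $\lnot\lnot$-statements implies the $\lnot\lnot$ of their conjunction, so it suffices to reach a contradiction from $\forall i \in \NN_{<n}.\, \exists j \in \NN_{<n}.\, a_i < a_j$ (equivalently, one may simply extract the witnesses one step at a time, since the target is $\bot$). Starting at the index $0$ — this is the only place $n \geq 1$ is used — one repeatedly picks a strictly larger element to build a chain $a_{i_0} < a_{i_1} < \ldots < a_{i_n}$ with $i_0 = 0$ and every $i_k \in \NN_{<n}$. By transitivity of $<$ these elements are strictly increasing; but $i_0, \ldots, i_n$ are $n+1$ values in the $n$-element set $\NN_{<n}$, so $i_k = i_\ell$ for some $k < \ell$, giving $a_{i_k} < a_{i_k}$ by transitivity, which contradicts irreflexivity. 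This establishes $\lnot\big(\forall i \in \NN_{<n}.\, a_i < s\big)$.

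For part~(2) one implication is immediate: $s$ is an upper bound of $\{a_0, \ldots, a_{n-1}\}$, so $x < a_i \leq s$ yields $x < s$. For the converse, let $x < s$; cotransitivity of $<$ with each test point $a_i$ gives $\forall i \in \NN_{<n}.\, (x < a_i \lor a_i < s)$, and since $\NN_{<n}$ is finite this finite family of disjunctions distributes to $(\exists i \in \NN_{<n}.\, x < a_i) \lor (\forall i \in \NN_{<n}.\, a_i < s)$. The right disjunct contradicts part~(1) when $n \geq 1$ (and the case $n = 0$ is trivial, since then $s$ is the least element of $X$ and nothing lies below it), so $\exists i.\, x < a_i$, which is precisely what a strict supremum requires. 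The statements for infima follow by applying everything to the opposite strict order. I expect the only genuine obstacle to be the constructive bookkeeping in part~(1): noticing that cotransitivity alone is insufficient and must be combined with antisymmetry of $\leq$ and a double-negation manipulation before the finite pigeonhole argument can be run.
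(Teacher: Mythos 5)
Your proof is correct, and part (1) takes a genuinely different route from the paper's. The paper first reduces to the binary case $n=2$ (via the preceding ``per parts'' lemma for suprema) and then argues: if $a_0, a_1 < s$, a contradiction from $a_1 < a_0$ yields $a_0 \leq a_1$, hence $s = \sup\{a_0,a_1\} = a_1$ by tightness, contradicting $a_1 < s$. You instead treat general $n$ directly: the universal property of the supremum evaluated at $a_i$ turns each $a_i < s$ into $\lnot\lnot\,\exists j.\ a_i < a_j$, and a chain-plus-pigeonhole argument under the double negations (legitimate, since the goal is $\bot$ and equality on $\NN_{<n}$ is decidable) finishes the job. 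What your version buys is independence from the binary reduction --- and, notably, from tightness itself: despite your framing that antisymmetry of $\leq$ is the key, your actual steps only use the definition $a \leq b \dfeq \lnot(b<a)$ together with the supremum's universal property, so part (1) holds for any strict order admitting a supremum. The cost is a heavier combinatorial argument, essentially the same descending-chain device the paper deploys later for Lemma~\ref{Lemma: alt_less_than_on_finite_subsets}. Part (2) and the dualization to infima match the paper's argument (cotransitivity against each $a_i$, distribute the finite disjunctions, rule out the all-$a_i < s$ branch by part (1)).
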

			\begin{proof}
				The claim of the proposition is obvious for $n = 0, 1$. It is sufficient to prove it for $n = 2$. Assume $a_0 < s$, $a_1 < s$. Suppose $a_1 < a_0$. Then also $a_1 \leq a_0$, so $s = a_0$, a contradiction to $a_0 < s$. So $a_0 \leq a_1$, but then $s = a_1$, a contradiction to $a_1 < s$.
				
				As for the second part, take any $x \in X$, $x < s$. By cotransitivity, $x < a_0 \lor a_0 < s$ and $x < a_1 \lor a_1 < s$. It cannot be both $a_0 < s$ and $a_1 < s$, so $x < a_0 \lor x < a_1$ which proves $s$ is strict.
			\end{proof}
			
			As usual, we call $(X, <)$ a \df{lattice} when it has binary (hence inhabited finite) suprema and infima (which are then automatically strict).
		
		\subsection{Prestreaks}
		
			In this subsection we define prestreaks and their morphisms, examine their properties, observe their connection to natural numbers and study the archimedean property.
			
			Prestreaks provide a basic and very general mix of order, algebraic and topologic structure.
			\begin{definition}\label{Definition: prestreaks}
				A structure $(X, <, +, 0, \cdot, 1)$ is called a \df{prestreak} when
				\begin{itemize}
					\item
						$(X, <)$ is a strict order,
					\item
						$(X, +, 0)$ is a commutative monoid,
					\item
						$(X_{> 0}, \cdot, 1)$ is a commutative monoid,
					\item
						\df{multiplication} $\cdot$ distributed over \df{addition} $+$, \ie for all $a, b, c \in X_{> 0}$ we have $(a + b) \cdot c = a \cdot c + b \cdot c$,
					\item
						adding an element preserves and reflects $<$, \ie for all $a, b, x \in X$ we have $a + x < b + x \iff a < b$,
					\item
						multiplying with a (\df{positive}) element preserves and reflects $<$, \ie for all $a, b, x \in X_{> 0}$ we have $a \cdot x < b \cdot x \iff a < b$,
					\item
						the strict order $<$ is an \df{open relation}, \ie $\st{(a, b) \in X \times X}{a < b} \in \optp(X \times X)$,
					\item
						the induced preorder $\leq$ is a \df{closed relation}, \ie $\st{(a, b) \in X \times X}{a \leq b} \in \cltp(X \times X)$.
				\end{itemize}
			\end{definition}
			
			\begin{proposition}
				The following holds in a prestreak $(X, <, +, 0, \cdot, 1)$ for all $a, b, c, d, x \in X$.
				\[a + x \leq b + x \iff a \leq b  \qquad  a < b \land c < d \implies a + c < b + d\]
				\[a < b \land c \leq d \implies a + c < b + d  \qquad  a \leq b \land c \leq d \implies a + c \leq b + d\]
				\[a + b > 0 \implies a > 0 \lor b > 0  \qquad  a + b < 0 \implies a < 0 \lor b < 0\]
				\[a > 0 \land b > 0 \implies a + b > 0  \qquad  a < 0 \land b < 0 \implies a + b < 0\]
				\[a > 0 \land b \geq 0 \implies a + b > 0  \qquad  a < 0 \land b \leq 0 \implies a + b < 0\]
				\[a \geq 0 \land b \geq 0 \implies a + b \geq 0  \qquad  a \leq 0 \land b \leq 0 \implies a + b \leq 0\]
				The following holds for all $a, b, c, d, x \in X_{> 0}$.
				\[a \cdot x \leq b \cdot x \iff a \leq b  \qquad  a < b \land c < d \implies a \cdot c < b \cdot d\]
				\[a < b \land c \leq d \implies a \cdot c < b \cdot d  \qquad  a \leq b \land c \leq d \implies a \cdot c \leq b \cdot d\]
				\[a \cdot b > 1 \implies a > 1 \lor b > 1  \qquad  a \cdot b < 1 \implies a < 1 \lor b < 1\]
				\[a > 1 \land b > 1 \implies a \cdot b > 1  \qquad  a < 1 \land b < 1 \implies a \cdot b < 1\]
				\[a > 1 \land b \geq 1 \implies a \cdot b > 1  \qquad  a < 1 \land b \leq 1 \implies a \cdot b < 1\]
				\[a \geq 1 \land b \geq 1 \implies a \cdot b \geq 1  \qquad  a \leq 1 \land b \leq 1 \implies a \cdot b \leq 1\]
			\end{proposition}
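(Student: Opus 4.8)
The plan is to derive every line from the prestreak axioms together with the facts about strict orders already recorded above: irreflexivity, transitivity, the two strengthened transitivities $a < b \land b \leq c \implies a < c$ and $a \leq b \land b < c \implies a < c$, and cotransitivity. The key organising remark is that $\leq$ is by definition $\lnot(>)$, so each $\leq$-statement is merely the contrapositive of a corresponding $<$-statement. For instance $a + x \leq b + x \iff a \leq b$ is obtained by negating both sides of the axiom $b + x < a + x \iff b < a$, and likewise $a \cdot x \leq b \cdot x \iff a \leq b$ (using $x \in X_{> 0}$, so that the multiplicative preserve-and-reflect axiom applies). Hence it suffices to treat the $<$-version of each line; the remaining $\leq$-conclusions then follow either by the same contrapositive trick or by transitivity of $\leq$.

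For the ``simultaneous comparison'' lines --- $a < b \land c < d \implies a + c < b + d$ and its two weakenings, and the multiplicative analogues on $X_{> 0}$ --- I would chain two applications of the relevant preserve axiom through a common middle term: from $a < b$, translation gives $a + c < b + c$; from $c < d$ (plus commutativity), translation gives $b + c < b + d$; transitivity of $<$ concludes. If one hypothesis is an $\leq$ instead of a $<$, the corresponding intermediate step becomes an $\leq$ (via the already-proven $a + x \leq b + x \iff a \leq b$) and one invokes the appropriate strengthened transitivity; if both are $\leq$, one uses transitivity of $\leq$. The multiplicative versions are verbatim the same with $+,0$ replaced by $\cdot,1$, which is legitimate exactly because all of $a,b,c,d$ are stipulated to lie in $X_{> 0}$.

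The lines of the form $a + b > 0 \implies a > 0 \lor b > 0$ (its sign-reversed twin, and the multiplicative analogues $a \cdot b > 1 \implies a > 1 \lor b > 1$ and $a \cdot b < 1 \implies a < 1 \lor b < 1$) are where cotransitivity does the work: apply it to $0 < a + b$ with interpolant $b$ to get $0 < b \lor b < a + b$, and in the second disjunct rewrite $b = 0 + b$ and reflect the translation to obtain $0 < a$; the twin uses interpolant $b$ on $a + b < 0$, and the multiplicative cases use interpolant $b$ on $1 < a \cdot b$ (resp.\ $a \cdot b < 1$) together with the reflecting half of the multiplicative axiom. Finally the monotone ``sign'' conclusions $a > 0 \land b > 0 \implies a + b > 0$, $a < 0 \land b < 0 \implies a + b < 0$, etc., are special cases of the simultaneous-comparison lines, obtained by instantiating one argument at the additive (resp.\ multiplicative) unit on whichever side makes the conclusion come out right --- e.g.\ $0 < a \land 0 < b \implies 0 + 0 < a + b$, $a < 0 \land b < 0 \implies a + b < 0 + 0$, $1 < a \land 1 < b \implies 1 \cdot 1 < a \cdot b$ --- and the mixed $<$/$\leq$ and all-$\leq$ sign lines come analogously from the mixed and all-$\leq$ comparison lines. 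One small point to state explicitly is that $1 \in X_{> 0}$, i.e.\ $0 < 1$, since $1$ is the identity of the monoid $(X_{> 0}, \cdot, 1)$; this is needed both to feed $1$ into the multiplicative axioms and to make the multiplicative sign lines meaningful.

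There is no genuine obstacle: the proposition is entirely routine once the strict-order toolkit above is in hand. The only things that require care are the bookkeeping --- verifying that every element fed into a multiplicative statement or into distributivity genuinely lies in $X_{> 0}$ --- and picking the correct interpolant in the cotransitivity arguments, which in each case is $b$ after normalising the relevant unit into position.
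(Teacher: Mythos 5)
Your proof is correct and is exactly the routine verification the paper intends (its own proof is just ``follows easily from the definitions''): contrapositives for the $\leq$-biconditionals, chaining the translation axioms through a middle term plus (strengthened) transitivity for the simultaneous comparisons, cotransitivity with interpolant $b$ for the disjunctive sign lines, and instantiation at the unit for the monotone sign lines. The bookkeeping points you flag --- $1 \in X_{>0}$ and checking that all factors lie in $X_{>0}$ --- are the only places where care is needed, and you handle them correctly.
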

			\begin{proof}
				Follows easily from the definitions.
			\end{proof}
			
			The condition $a > 0 \land b > 0 \implies a + b > 0$ means that there was no problem when we required distributivity $(a + b) \cdot c = a \cdot c + b \cdot c$ in Definition~\ref{Definition: prestreaks}: if $a, b, c > 0$, then all three products $a \cdot c$, $b \cdot c$, $(a + b) \cdot c$ are well defined.
			
			In a prestreak we have $0 < 1$ by definition. In particular $0 \apart 1$, so a prestreak has at least two elements (in fact, it has infinitely many of them --- all the natural numbers, as we shall soon see).
			
			We want prestreaks to form a category, so we need to define the notion of their morphisms. Naturally, for the definition we take maps which preserve all the structure.
			\begin{definition}
				A map between prestreaks $f\colon X \to Y$ is a (\df{prestreak}) \df{morphism} when the following holds for all $a, b \in X$.
				\[a < b \implies f(a) < f(b)\]
				\[f(a + b) = f(a) + f(b)  \qquad  f(0) = 0\]
				\[f(a \cdot b) = f(a) \cdot f(b)  \qquad  f(1) = 1\]
			\end{definition}
			Of course, we require the preservation of multiplication only when it is defined, \ie for $a, b > 0$; since in that case also $f(a), f(b) > 0$, their product is well defined as well.
			
			We don't explicitly ask for the preservation of the topologic structure since this is automatic: recall that we're assuming that all maps are continuous.
			
			We denote the category of prestreaks and their morphisms by $\Pstr$.
			
			The simplest example of a prestreak are natural numbers.
			\begin{proposition}\label{Proposition: N_initial_prestreak}
				The set of natural numbers $\NN$, together with the usual order and operations, is a prestreak. Moreover, for any prestreak $X$ there exists a unique morphism $\ini[X]\colon \NN \to X$; it is given inductively by $\ini[X](0) = 0$ and $\ini[X](n + 1) = \ini[X](n) + 1$.
			\end{proposition}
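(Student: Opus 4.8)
The plan is to handle the two assertions separately: first that $\NN$, with its usual order and operations, satisfies all eight clauses of Definition~\ref{Definition: prestreaks}, then that $\ini[X]$ as described is the unique prestreak morphism out of it.

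For the first part, the purely algebraic clauses --- $(\NN, +, 0)$ a commutative monoid, $(\NN_{>0}, \cdot, 1)$ a commutative monoid, distributivity, and compatibility of $+$ and $\cdot$ with $<$ --- are standard consequences of the Peano axioms; the only points needing a moment's care are that a product of positive naturals is again positive (so that $\cdot$ is genuinely defined on $\NN_{>0}$ and that $1 \in \NN_{>0}$), and that $a \cdot x < b \cdot x \impl a < b$ for $x > 0$, both proved by induction. That $(\NN, <)$ is a strict order uses that $<$ is decidable on $\NN$: asymmetry is immediate, and cotransitivity reduces, by case analysis on the decidable relations involved, to transitivity and comparability of $<$ on $\NN$, which again follow from the Peano axioms. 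Finally, since $<$ and $\leq$ are decidable relations on $\NN$, they are decidable subsets of $\NN \times \NN$, hence both open and closed by the discussion in Section~\ref{Section: setting} (decidable implies open and closed); in particular $<$ is an open relation and $\leq$ a closed one. Continuity of morphisms needs no separate check, being automatic in our setting.

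For the second part, the existence of a map $\ini[X]\colon \NN \to X$ with $\ini[X](0) = 0$ and $\ini[X](n+1) = \ini[X](n)+1$ is exactly the recursion principle of the natural numbers object, so such a map exists and is unique \emph{as a map}. It then remains to verify it is a prestreak morphism. The values at $0$ and $1$ are immediate: $\ini[X](0) = 0$ by definition and $\ini[X](1) = \ini[X](0)+1 = 0+1 = 1$. For $<$: since $0 < 1$ in any prestreak, adding $\ini[X](n)$ gives $\ini[X](n) < \ini[X](n)+1 = \ini[X](n+1)$, and an easy induction then yields $a < b \impl \ini[X](a) < \ini[X](b)$ by transitivity of $<$. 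Preservation of $+$ follows by induction on $b$, using $\ini[X](a + (b+1)) = \ini[X]((a+b)+1) = \ini[X](a+b)+1$. Preservation of $\cdot$ on positive arguments: first note that $a > 0$ in $\NN$ gives $\ini[X](0) < \ini[X](a)$, i.e. $\ini[X](a) > 0$, so that all the products in $X$ under consideration are defined; then induct on $b \geq 1$, the step using distributivity in $X$.

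Uniqueness among prestreak morphisms is the clean part: if $g\colon \NN \to X$ is any prestreak morphism then $g(0) = 0$ is required and $g(n+1) = g(n) + g(1) = g(n) + 1$, so $g$ satisfies the same recursion equations as $\ini[X]$, whence $g = \ini[X]$ by the uniqueness clause of recursion. The only genuinely delicate points throughout are the constructive ones --- keeping the multiplicative clauses restricted to positive elements, and checking that $\ini[X]$ respects that restriction --- but both are dispatched by the single observation that $\ini[X]$ preserves $<$ and hence maps $\NN_{>0}$ into $X_{>0}$; I expect this bookkeeping, rather than any conceptual difficulty, to be the main thing to watch.
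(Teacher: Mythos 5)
Your proposal is correct and follows essentially the same route as the paper: verify the prestreak axioms for $\NN$ directly (with the topological clauses handled via decidability of $<$ and $\leq$ implying open and closed), define $\ini[X]$ by recursion, check it is a morphism using the inductive definitions of $+$ and $\cdot$ on $\NN$, and get uniqueness because any morphism must satisfy the same recursion equations $f(0)=0$, $f(n+1)=f(n)+1$. You merely spell out in more detail the inductions the paper compresses into "standard" and "follows from the inductive definitions", including the correct care about positivity when invoking distributivity.
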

			\begin{proof}
				That natural numbers satisfy the prestreak conditions is standard; we note only that $<$ and $\leq$ are for natural numbers decidable even constructively, therefore open and closed.
				
				The map $\ini[X]$ is indeed a prestreak morphism; this follows from the inductive definitions of $+$ and $\cdot$ on natural numbers
				\[0 + n \dfeq n  \qquad  s(m) + n \dfeq s(m + n)\]
				\[0 \cdot n \dfeq 0  \qquad  s(m) \cdot n \dfeq m \cdot n + n\]
				(where $s\colon \NN \to \NN$ is the succesor map) and the fact $s(0) = 1$.
				
				Finally, any morphism $f\colon \NN \to X$ must satisfy $f(0) = 0$ and $f(n + 1) = f(n) + 1$, so $\ini[X]$ is indeed a unique one.
			\end{proof}
			
			In categorical language, this proposition states that $\NN$ is an initial prestreak.
			
			Observe that $\ini[X]$ is injective --- if $\ini[X](m) = \ini[X](n)$, then it cannot be $m < n$ (since $<$ is preserved by morphisms) and likewise not $m > n$, so $m = n$. In view of this we will regard $\ini[X]$ as an inclusion of natural numbers into a prestreak $X$, and in this sense $\NN \subseteq X$ for every prestreak $X$. In other words, natural numbers are the smallest prestreak. In particular, every prestreak contains infinitely many elements.
			
			We can always extend multiplication in an arbitrary prestreak $X$ from $X_{> 0} \times X_{> 0} \to X_{> 0}$ to $\big(X_{> 0} \cup \{0\}\big) \times \big(X_{> 0} \cup \{0\}\big) \to \big(X_{> 0} \cup \{0\}\big)$ by defining $0 \cdot a = a \cdot 0 \dfeq 0$ for all $a \in X_{> 0} \cup \{0\}$, and it still remains distributive over addition. When necessary, we will interpret multiplication in this way.
			
			Note that $\big(X_{> 0} \cup \{0\}\big)$ need not be equal to $X_{\geq 0}$ (though of course it is its subset). One reason is that $\leq$ need not be antisymmetric (consider for example the prestreak $\NN + \{0'\}$ in which we define $0 + 0' = 0' + 0 = 0'$, but otherwise $0'$ behaves exactly as $0$). However, even if $\leq$ is a partial order, the equality might fail, as in typical constructive models we have $\RR_{\geq 0} \neq \RR_{> 0} \cup \{0\}$.
			
			That said, we do have the equality $\NN_{> 0} \cup \{0\} = \NN_{\geq 0} = \NN$, so in any prestreak $X$ we can multiply with all natural numbers (via $\NN \subseteq X$). In fact, we can extend the multiplication with natural numbers from $X_{> 0} \cup \{0\}$ to the whole of $X$ via the inductive definition $0 \cdot a \dfeq 0$ and $(n+1) \cdot a \dfeq n \cdot a + a$.
			
			\begin{proposition}\label{Proposition: multiplication_with_natural_numbers}
				The following holds in a prestreak $X$ for all $a, b \in X$ and $n \in \NN$:
				\begin{enumerate}
					\item
						$n \cdot (a + b) = n \cdot a + n \cdot b$,\footnote{We know that distributivity for positive numbers holds by the definition of a prestreak; the point of this statement is that it holds for general $a, b \in X$.}
					\item
						$a < b \land n > 0 \iff n \cdot a < n \cdot b$.
				\end{enumerate}
			\end{proposition}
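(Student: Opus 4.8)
The plan is to handle the two parts separately, each by induction on $n$ using the inductive definition $0 \cdot a = 0$, $(n+1)\cdot a = n\cdot a + a$, and relying only on the monoid laws for $+$, the cancellation and monotonicity properties of $<$ and $\leq$ collected in the preceding proposition, and the decidability of $<$ on $\NN$ (so that $n = 0 \lor n > 0$ for every $n \in \NN$).

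For part (1) I would induct on $n$. The base case follows from $0 + 0 = 0$. For the step, $(n+1)\cdot(a+b) = n\cdot(a+b) + (a+b) = (n\cdot a + n\cdot b) + (a+b)$ by the inductive hypothesis, and then commutativity and associativity of $+$ rearrange this to $(n\cdot a + a) + (n\cdot b + b) = (n+1)\cdot a + (n+1)\cdot b$. Nothing here is delicate; it is pure monoid bookkeeping.

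For part (2), the forward direction ($a < b \land n > 0 \implies n\cdot a < n\cdot b$): write $n = m+1$ and induct on $m$. The base $m = 0$ reduces to $1\cdot a = a < b = 1\cdot b$ (using $0 + a = a$). For the step, from the inductive hypothesis $(m+1)\cdot a < (m+1)\cdot b$ and the hypothesis $a < b$, the prestreak law $a < b \land c < d \implies a + c < b + d$ gives $(m+1)\cdot a + a < (m+1)\cdot b + b$, i.e.\ $(m+2)\cdot a < (m+2)\cdot b$. The reverse direction ($n\cdot a < n\cdot b \implies a < b \land n > 0$) begins with the observation that $n > 0$, since $n = 0$ would force $n\cdot a = 0 = n\cdot b$, contradicting irreflexivity of $<$; this is where the decidability $n = 0 \lor n > 0$ on $\NN$ is used. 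It then remains to extract $a < b$, and I would prove, by a separate induction on $n \geq 1$, the implication $n\cdot a < n\cdot b \implies a < b$: for $n = 1$ this is immediate since $1\cdot a = a$; for $n = m+1$ with $m \geq 1$, apply cotransitivity of $<$ to $m\cdot a + a < m\cdot b + b$ with the intermediate term $m\cdot b + a$, yielding either $m\cdot a + a < m\cdot b + a$ — whence $m\cdot a < m\cdot b$ by cancellation of $+a$, and then $a < b$ by the inductive hypothesis — or $m\cdot b + a < m\cdot b + b$, whence $a < b$ directly by cancellation (using commutativity to move $m\cdot b$ to the same side).

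The main obstacle is precisely this last point: obtaining $a < b$ itself, and not merely $\lnot\lnot(a < b)$, from $n\cdot a < n\cdot b$ constructively. Arguing by contradiction from $b \leq a$ only refutes $b \leq a$ and so gives the doubly negated statement; the remedy is to run the dedicated induction above and use cotransitivity (a positive splitting principle) rather than a proof by contradiction. Everything else is routine, and parts (1) and (2) are logically independent, so they may be proved in either order.
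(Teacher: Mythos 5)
Your proposal is correct and follows essentially the same route as the paper: part (1) by monoid bookkeeping under induction, and the reverse direction of part (2) by induction combined with cotransitivity applied to a mixed intermediate term followed by cancellation (the paper uses $n\cdot a + b$ where you use $m\cdot b + a$, a trivially symmetric choice). Your explicit remark that a proof by contradiction would only yield $\lnot\lnot(a<b)$, and that cotransitivity is the constructively correct substitute, is exactly the point the paper's argument is built around.
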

			\begin{proof}
				Both statements are proved by induction on $n \in \NN$.
				\begin{enumerate}
					\item
						If $n = 0$, then we have $0 = 0 + 0$. If the equation holds for $n$, then
						\[(n + 1) \cdot (a + b) = n \cdot (a + b) + a + b =\]
						\[= n \cdot a + n \cdot b + a + b = (n + 1) \cdot a + (n + 1) \cdot b,\]
						so it holds for $n+1$ as well.
					\item
						Both sides of the stated equivalence are false for $n = 0$. Suppose the equivalence holds for $n$; we want to prove it for $n+1$. Of course $n + 1 > 0$, so we're proving $a < b \iff (n+1) \cdot a < (n+1) \cdot b$. This clearly holds for $n = 0$, so assume hereafter $n > 0$.
						
						For the `only if' direction, if $a < b$, then by induction hypothesis $n \cdot a < n \cdot b$, and then $(n+1) \cdot a = n \cdot a + a < n \cdot b + b = (n+1) \cdot b$. Conversely, assume $(n+1) \cdot a < (n+1) \cdot b$. By cotransitivity $(n+1) \cdot a < n \cdot a + b$ or $n \cdot a + b < (n+1) \cdot b$. Rewriting the first case, we obtain $n \cdot a + a < n \cdot a + b$, so $a < b$. In the second case we have $n \cdot a + b < n \cdot b + b$; cancel $b$, then use the induction hypothesis.
				\end{enumerate}
			\end{proof}
			
			We have seen that natural numbers $\NN$ can be embedded into an arbitrary prestreak. Clearly this doesn't hold for other number sets --- we can't embed, say, the integers $\ZZ$ or the rationals $\QQ$ into the prestreak $\NN$ via a strict-order-preserving map. However, with the help of multiplication with natural numbers we can still define comparison between rational numbers and elements of a prestreak.
			
			Let $X$ be a prestreak. Any $q \in \QQ$ can be written as $q = \tfrac{a-b}{c}$ where $a, b \in \NN$, $c \in \NN_{> 0}$. For $x \in X$ we define
			\[x < q \dfeq c \cdot x + b < a \qquad \text{and analogously} \qquad q < x \dfeq a < c \cdot x + b.\]
			This is well-defined --- independent of the choices for $a, b, c$. Let $q = \tfrac{a-b}{c} = \tfrac{a'-b'}{c'}$ where $a, b, a', b' \in \NN$, $c, c' \in \NN_{> 0}$. That means $c' a + c b' = c a' + c' b$, so
			\[c \cdot x + b < a \iff c' \cdot (c \cdot x + b) < c' \cdot a \iff c' c \cdot x + c' b < c' a \iff\]
			\[\iff c' c \cdot x + c' b + c b' < c' a + c b' \iff c' c \cdot x + c' b + c b' < c' a + c b' \iff\]
			\[\iff c c' \cdot x + c b' + c' b < c a' + c' b \iff c c' \cdot x + c b' + c' b < c a' + c' b \iff\]
			\[\iff c c' \cdot x + c b' < c a' \iff c' \cdot x + b' < a'\]
			(and similarly for $q < x$). Notice also that for $x \in X$ and $n \in \NN$ the statements $x < n$ and $n < x$ are unambiguous, regardless whether we view $\NN$ as a subset of $X$, or of $\QQ$. Moreover, if $X = \QQ$, then this order matches the usual one on the rationals, so again there is no ambiguity when using the symbol $<$.
			
			Unsurprisingly, this relation $<$ has similar properties as the strict order.
			\begin{proposition}\label{Proposition: prestreak_comparison_with_rationals}
				Let $X$ be a streak. The following holds for all $x, y \in X$, $q, r \in \QQ$:
				\[\lnot(x < q \land q < x),\]
				\[x < q \implies x < y \lor y < q,  \qquad  q < x \implies q < y \lor y < x,\]
				\[x < q \implies x < r \lor r < q,  \qquad  q < x \implies q < r \lor r < x,\]
				\[q < r \land r < x \implies q < x,  \qquad  q \leq r \land r < x \implies q < x,\]
				\[x < q \land q < r \implies x < r,  \qquad  x < q \land q \leq r \implies x < r,\]
				\[q < x \land x < r \implies q < r,  \qquad  x < q \land q < y \implies x < y,\]
				\[q < x \land r < y \implies q + r < x + y,  \qquad  x < q \land y < r \implies x + y < q + r,\]
				and if additionally $q, r \geq 0$, $x, y > 0$,
				\[q < x \land r < y \implies q \cdot r < x \cdot y,  \qquad  x < q \land y < r \implies x \cdot y < q \cdot r.\]
				Moreover, any prestreak morphism $f\colon X \to Y$ preserves the comparison with rationals:
				\[x < q \implies f(x) < q, \qquad q < x \implies q < f(x).\]
			\end{proposition}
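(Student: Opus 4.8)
The plan is to unfold the definition of comparison with rationals so that every clause reduces to a statement about the strict order $<$ and the operation $+$ on $X$ together with the inclusion $\NN \subseteq X$, and then appeal to the strict-order axioms, the arithmetic properties of prestreaks proved above, and Proposition~\ref{Proposition: multiplication_with_natural_numbers}. Fix representations $q = \tfrac{a - b}{c}$ and $r = \tfrac{a' - b'}{c'}$ with $a, b, a', b' \in \NN$ and $c, c' \in \NN_{> 0}$; this is legitimate by the well-definedness established just above. The recurring device is to clear denominators: multiplication by a positive natural number preserves and reflects $<$ on $X$ (Proposition~\ref{Proposition: multiplication_with_natural_numbers}(2)), natural-number multiplication distributes over $+$ on all of $X$ (Proposition~\ref{Proposition: multiplication_with_natural_numbers}(1)), and $c \cdot (c' \cdot x) = (cc') \cdot x$ (a one-line induction); so one may pass freely between $x < q$ and $cx + b < a$, between $q < r$ and the inequality $c'a + cb' < c'b + ca'$ of natural numbers (which, being between naturals, is decidable and transports along $\NN \subseteq X$), and so on.

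With this in place the individual items are short. Asymmetry $\lnot(x < q \land q < x)$ is asymmetry of $<$ on $X$ applied to $cx + b < a$ and $a < cx + b$. The clauses $x < q \implies x < y \lor y < q$ and $q < x \implies q < y \lor y < x$ are cotransitivity of $<$ on $X$ applied to $cx + b < a$, respectively $a < cx + b$, with the intermediate element $cy + b$, followed by cancelling $b$ and dividing by $c$. The transitive clauses --- $x < q \land q < y \implies x < y$, $q < x \land x < r \implies q < r$, $q < r \land r < x \implies q < x$, $x < q \land q < r \implies x < r$, and the like --- follow from transitivity of $<$ on $X$ after putting everything over the common denominator $cc'$; the additivity clauses $q < x \land r < y \implies q + r < x + y$ and $x < q \land y < r \implies x + y < q + r$ follow from the prestreak arithmetic ($a < b \land c < d \implies a + c < b + d$) once $q + r$ is written as $\tfrac{(c'a + ca') - (c'b + cb')}{cc'}$. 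The clauses with a non-strict hypothesis, such as $q \leq r \land r < x \implies q < x$ and $x < q \land q \leq r \implies x < r$, and the two-rational cotransitivity clauses $x < q \implies x < r \lor r < q$ and $q < x \implies q < r \lor r < x$, are reduced to the strict cases by first splitting on the decidable order of $\QQ$ ($q < r$ or $q = r$; respectively $r < q$ or $q \leq r$).

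For the multiplicative clauses one uses $q, r \geq 0$ to pick representations with $b = b' = 0$, so $q = \tfrac{a}{c}$, $r = \tfrac{a'}{c'}$, and then splits, using decidability of $\QQ$, on whether each of $q, r$ is $0$ or positive: if one of them is $0$ the claim collapses to $0 < x \cdot y$, which holds because $x, y > 0$ and a product of positive elements is positive; if both are positive then $a, a' > 0$, and from $a < cx$ and $a' < c'y$ the multiplicative part of the prestreak arithmetic gives $a \cdot a' < (cx) \cdot (c'y) = (cc') \cdot (x \cdot y)$, i.e.\ $q \cdot r < x \cdot y$ --- here one also uses that natural-number multiplication agrees with prestreak multiplication on positive elements (another short induction via distributivity). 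Finally, preservation by a morphism $f \colon X \to Y$ is immediate once one notes that $f$ restricted to $\NN$ is the identity --- it equals $\ini[Y]$ by Proposition~\ref{Proposition: N_initial_prestreak} --- and $f$ is additive, so $f(cx + b) = c \cdot f(x) + b$; applying $f$ to $cx + b < a$ and using that $f$ preserves $<$ gives $c \cdot f(x) + b < a$, that is $f(x) < q$, and symmetrically for $q < x$. There is no real obstacle beyond bookkeeping; the only places that demand genuine care are the case analyses forced by constructive logic in the mixed-$\leq$ and multiplicative clauses, and keeping the common-denominator arithmetic honest.
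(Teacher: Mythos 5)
Your proposal is correct and is exactly the routine verification the paper elides with ``Follows easily from the definitions'': unfold $x<q$ and $q<x$ via a representation $q=\tfrac{a-b}{c}$, clear denominators using Proposition~\ref{Proposition: multiplication_with_natural_numbers}, and invoke the strict-order and arithmetic axioms of a prestreak, with decidability of $<$ on $\QQ$ handling the mixed and multiplicative cases. The only points needing the care you already flag are that the embedding $\NN\subseteq X$ reflects $<$ (by decidability on $\NN$) and that inductive multiplication by naturals agrees with the prestreak product on positive elements; both are the short inductions you indicate.
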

			\begin{proof}
				Follows easily from the definitions.
			\end{proof}
			
		\subsection{Archimedean prestreaks}
		
			General prestreaks are in a certain sense too big to properly allow us to define real numbers. We'll cut them down in two ways (to obtain the notion of a streak); we consider the first way --- the \df{archimedean property} --- in this subsection.
			
			The archimedean property for reals, and more generally for an ordered field $X$, states that ``there are no infinite elements'', or more precisely, every element is bounded by some natural number: $\xall{a}{X}\xsome{n}{\NN}{a < n}$. However, in more general structures, this is not enough. In an ordered (unital) ring (where we don't have division) we have to generalize this to $\xall{a}{X}\xall{b}{X_{> 0}}\xsome{n}{\NN}{a < n \cdot b}$. In a prestreak we don't even have subtraction, so this needs to be further generalized.
			\begin{definition}
				A prestreak $X$ is \df{archimedean} when it satisfies
				\[\all[2]{a,b,c,d}{X}{b < d \implies \xsome{n}{\NN}{a + n \cdot b < c + n \cdot d}}.\]
				We denote the category of archimedean prestreaks and prestreak morphisms by $\Apstr$.
			\end{definition}
			
			We can always increase $n$, witnessing the archimedean property, if necessary.
			\begin{lemma}
				Suppose $X$ is a prestreak and $a, b, c, d \in X$, $n \in \NN$ satisfy $b < d$ and $a + n \cdot b < c + n \cdot d$. Then also $a + m \cdot b < c + m \cdot d$ for all $m \in \NN_{\geq n}$.
			\end{lemma}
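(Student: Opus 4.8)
The plan is to argue by induction on $m \in \NN_{\geq n}$ (equivalently, induction on $k \in \NN$ with $m = n + k$). The base case $m = n$ is precisely the hypothesis $a + n \cdot b < c + n \cdot d$, so there is nothing to prove.

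For the inductive step I would assume $a + m \cdot b < c + m \cdot d$ for some $m \geq n$ and derive the same inequality with $m$ replaced by $m+1$. By the inductive definition of multiplication with natural numbers, $(m+1) \cdot b = m \cdot b + b$ and $(m+1) \cdot d = m \cdot d + d$, so, rearranging summands in the commutative monoid $(X, +, 0)$, the goal is $(a + m \cdot b) + b < (c + m \cdot d) + d$. Now I have two strict inequalities at hand: $a + m \cdot b < c + m \cdot d$ from the induction hypothesis, and $b < d$ from the assumptions of the lemma. Adding them via the rule $a' < b' \land c' < d' \implies a' + c' < b' + d'$ from the Proposition following Definition~\ref{Definition: prestreaks} yields exactly $(a + m \cdot b) + b < (c + m \cdot d) + d$, completing the induction.

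There is essentially no obstacle here; the only points requiring a little care are the silent use of commutativity and associativity of $+$ to rewrite $a + (m+1) \cdot b$ as $(a + m \cdot b) + b$ (available from the commutative-monoid axiom for a prestreak), and invoking the genuinely two-sided-strict monotonicity of $+$ rather than a weaker mixed version. Alternatively, one can avoid induction altogether: for $m > n$ write $k \dfeq m \monus n > 0$, apply Proposition~\ref{Proposition: multiplication_with_natural_numbers}(2) to get $k \cdot b < k \cdot d$ from $b < d$, and then add this to the hypothesis $a + n \cdot b < c + n \cdot d$ (using $m \cdot b = n \cdot b + k \cdot b$ and $m \cdot d = n \cdot d + k \cdot d$, again by the distributivity of multiplication with natural numbers over addition); the case $m = n$ is the hypothesis itself. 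I would present the inductive argument as the main one since it is the most self-contained, and mention this shortcut as a remark.
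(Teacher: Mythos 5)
Your proof is correct and is exactly the ``obvious induction on $m$'' that the paper's one-line proof refers to: base case $m=n$ is the hypothesis, and the inductive step adds $b<d$ to the induction hypothesis using the two-sided monotonicity of $+$. Nothing further is needed.
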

			\begin{proof}
				Obvious induction on $m$.
			\end{proof}
			
			The actual point of the archimedean property is that it means that ``rational numbers are dense in $X$''. We now make precise what this means for prestreaks. We start by observing that the archimedean property lets us write a prestreak as a union of rational intervals (with regard to the comparison between prestreak elements and rationals, given at the end of the previous subsection).
					
			\begin{lemma}\label{Lemma: archimedean_prestreak_is_a_union_of_rational_intervals}
				Let $X$ be an archimedean prestreak. Then for any $b \in \NN_{> 0}$:
				\begin{enumerate}
					\item $\intoo[X]{0}{n} = \bigcup_{a \in \intoo[\NN]{0}{n b}} \intoo[X]{\tfrac{a-1}{b}}{\tfrac{a+1}{b}}$ for all $n \in \NN$ such that $n \cdot b \geq 2$,
					\item $X_{> 0} = \bigcup_{a \in \NN_{> 0}} \intoo[X]{\tfrac{a-1}{b}}{\tfrac{a+1}{b}}$,
					\item $X = \bigcup_{a \in \ZZ} \intoo[X]{\tfrac{a-1}{b}}{\tfrac{a+1}{b}}$.
				\end{enumerate}
			\end{lemma}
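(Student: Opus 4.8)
In each of the three parts the inclusion $\supseteq$ is immediate from the order properties of comparison with rationals (Proposition~\ref{Proposition: prestreak_comparison_with_rationals}): if $x \in \intoo[X]{\tfrac{a-1}{b}}{\tfrac{a+1}{b}}$ with $a \geq 1$, then $0 \leq \tfrac{a-1}{b} < x$ forces $0 < x$, and if moreover $a + 1 \leq nb$ then $x < \tfrac{a+1}{b} \leq n$; so the content is entirely in the inclusions $\subseteq$. A preliminary reduction I would make first: since multiplication by the natural number $b$ reflects $<$ (Proposition~\ref{Proposition: multiplication_with_natural_numbers}), unwinding the definition of comparison with rationals turns ``$\tfrac{a-1}{b} < x < \tfrac{a+1}{b}$'' into ``$a - 1 < b \cdot x < a + 1$'' and ``$0 < x < n$'' into ``$0 < b \cdot x < nb$''. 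Thus all three statements are really about locating $b \cdot x$ between two consecutive integers.

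The core is the following claim, which uses neither the archimedean property nor multiplication: \emph{for every $N \in \NN$ with $N \geq 2$ and every $y \in X$ with $0 < y < N$ there is $a \in \NN$ with $0 < a < N$ and $a - 1 < y < a + 1$.} I would prove it by induction on $N$. The base $N = 2$ forces $a = 1$, which works by hypothesis. For the step, given $0 < y < N + 1$, cotransitivity of $<$ applied to $N - 1 < N$ (legitimate since $N - 1, N \in \NN \subseteq X$) yields $N - 1 < y$ or $y < N$; in the first case $a = N$ works (using $N \geq 2$ to get $0 < N < N + 1$), in the second the induction hypothesis applied to $0 < y < N$ gives an $a < N < N + 1$. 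Part~(1) is then exactly this claim with $y = b \cdot x$ and $N = nb$ (note $nb \geq 2$).

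For part~(2), the archimedean property, instantiated at $(a,b,c,d) = (x, 0, 0, 1)$ — allowed since $0 < 1$, and whose conclusion reads $x < n$ — gives some $n \in \NN$ with $x < n$, and after enlarging $n$ we may assume $n \geq 2$; now $0 < x < n$ and part~(1) produces $a \in \intoo[\NN]{0}{nb} \subseteq \NN_{> 0}$ with $x \in \intoo[X]{\tfrac{a-1}{b}}{\tfrac{a+1}{b}}$. For part~(3) the point to be careful about is that a prestreak has no subtraction, so rather than translating $x$ to the origin I would shift by a natural number: two instances of the archimedean property (the one just used, and $(a,b,c,d) = (0,0,x,1)$, whose conclusion reads $-n' < x$) give, after taking the larger index, some $n \geq 1$ with $-n < x < n$, hence $x + n \in \intoo[X]{0}{2n}$. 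Applying part~(1) to $x + n$ yields $a' \in \NN_{> 0}$ with $\tfrac{a'-1}{b} < x + n < \tfrac{a'+1}{b}$; unwinding the comparison with rationals and carrying the $n$ across (a legitimate move in $\QQ$, which it need not be in $X$) this becomes $\tfrac{a-1}{b} < x < \tfrac{a+1}{b}$ with $a \dfeq a' - nb \in \ZZ$.

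The only genuinely delicate point — and the one I would watch most carefully — is constructivity in the core claim: one cannot decide the integer part of $y$, but because the intervals $\intoo[X]{\tfrac{a-1}{b}}{\tfrac{a+1}{b}}$ for consecutive $a$ overlap, the weak dichotomy ``$N - 1 < y$ or $y < N$'' coming from cotransitivity is precisely enough to feed the induction. Everything else is routine bookkeeping with the definition of comparison between prestreak elements and rationals and the order facts of Propositions~\ref{Proposition: prestreak_comparison_with_rationals} and~\ref{Proposition: multiplication_with_natural_numbers}.
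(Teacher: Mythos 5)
Your proof is correct and follows essentially the same route as the paper's: cotransitivity of $<$ to locate the element among the overlapping rational intervals in part (1), the archimedean property with the same instantiations $(x,0,0,1)$ and $(0,0,x,1)$ for parts (2) and (3), and translation by a natural number (legitimate on the rational side even without subtraction in $X$) to reduce (3) to the positive case. The only cosmetic difference is in organizing the finite search of part (1): the paper makes all $nb$ cotransitivity choices at once and takes the first index where the chosen disjunct switches, while you package the same choices as a downward induction on the upper bound after rescaling by $b$; both are equally constructive.
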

			\begin{proof}
				\begin{enumerate}
					\item
						For $x \in X$, $0 < x < n$, consider the disjunctions $\tfrac{m}{b} < x \lor x < \tfrac{m+1}{b}$ for $m \in \NN_{< n b}$. Write a finite sequence: in each of the disjunctions make a choice of a true disjunct, and write $0$ if the first one is chosen, and $1$ if the second one is. If the sequence contains no $1$s, let $a = n b - 1$; if the sequence contains no $0$s, let $a = 1$ (we need the condition $n \cdot b \geq 2$ in these two cases to ensure $a \in \intoo[\NN]{0}{n b}$). Otherwise, let $a$ be that $m$ for which the first $1$ appears. Then $x \in \intoo[X]{\tfrac{a-1}{b}}{\tfrac{a+1}{b}}$.
					\item
						By the archimedean property (take $a = x$, $b = 0$, $c = 0$, $d = 1$) we have for any $x \in X_{> 0}$ some $n \in \NN$ (without loss of generality assume $n \geq 2$, so that $n \cdot b \geq 2$) such that $x < n$. Now use the previous item.
					\item
						Take any $x \in X$. Use the archimedean property to obtain $n \in \NN$ such that $x + n > 0$ (take $a = 0$, $b = 0$, $c = x$, $d = 1$). By the previous item there exists $a \in \NN_{> 0}$ such that $x + n \in \intoo[X]{\tfrac{a-1}{b}}{\tfrac{a+1}{b}}$. It follows that $x \in \intoo[X]{\tfrac{a - n \cdot b - 1}{b}}{\tfrac{a - n \cdot b + 1}{b}}$.
				\end{enumerate}
			\end{proof}
			
			Rationals are dense in an archimedean prestreak in the following sense(s).
			\begin{lemma}\label{Lemma: density_of_rationals_in_an_archimedean_prestreak}
				Let $X$ be an archimedean prestreak.
				\begin{enumerate}
					\item
						For an element $x \in X$ and a rational $q \in \QQ$ we have
						\[x < q \implies \xsome{r}{\QQ}{x < r < q} \qquad \text{and} \qquad q < x \implies \xsome{r}{\QQ}{q < r < x}.\]
					\item 
						For $x, y \in X$ we have
						\[x < y \iff \some{q}{\QQ}{x < q < y} \iff \some{q, r}{\QQ}{x < q < r < y}.\]
				\end{enumerate}
			\end{lemma}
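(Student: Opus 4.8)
The plan is to reduce everything to two genuine statements and then bootstrap. In the three-term equivalence of part~(2), the ``backward'' directions are immediate from Proposition~\ref{Proposition: prestreak_comparison_with_rationals}: from $x<q<r<y$ one gets $q<y$ by transitivity of the comparison with rationals, hence $x<q<y$; and from $x<q<y$ one gets $x<y$ directly. So the only real content is part~(1) and the single implication ``$x<y \Rightarrow$ there is $q\in\QQ$ with $x<q<y$'', and part~(2)'s remaining forward implication (to $\exists q,r.\ x<q<r<y$) will then follow from that implication together with part~(1).

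For part~(1) I would write $q=\tfrac{k-l}{m}$ with $k,l\in\NN$, $m\in\NN_{>0}$, so that by the definition of the comparison between elements and rationals, $x<q$ unfolds to $m\cdot x+l<k$ in $X$ and $q<x$ to $k<m\cdot x+l$. In the case $x<q$, I would feed the archimedean property the quadruple $(a,b,c,d)=(m,\ m\cdot x+l,\ 0,\ k)$, whose hypothesis $b<d$ is exactly $x<q$; it yields $n\in\NN$ with $m+n\cdot(m\cdot x+l)<n\cdot k$. Since $m>0$ necessarily $n\geq 1$, and distributing $n$ over the sum (Proposition~\ref{Proposition: multiplication_with_natural_numbers}) this reads $(nm)\cdot x+(nl+m)<nk$, which is precisely the unfolding of $x<r$ for the rational $r\dfeq q-\tfrac1n=\tfrac{nk-nl-m}{nm}$; and $r<q$ because $q-r=\tfrac1n>0$. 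The case $q<x$ is the mirror image: apply the archimedean property to $(m,\ k,\ 0,\ m\cdot x+l)$ to get $n\geq 1$ with $m+nk<n\cdot(m\cdot x+l)$, which unfolds to $r<x$ for $r\dfeq q+\tfrac1n$, together with $q<r$.

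It then remains to prove $x<y \Rightarrow \exists q\in\QQ.\ x<q<y$. Here I would first magnify: applying the archimedean property to $(3,x,0,y)$ gives $n\geq 1$ with $n\cdot x+3<n\cdot y$. Then I would discretize via Lemma~\ref{Lemma: archimedean_prestreak_is_a_union_of_rational_intervals}(3), taken with denominator $1$: there is $a\in\ZZ$ with $a-1<n\cdot x<a+1$. From $a-1<n\cdot x$ and $2<3$, Proposition~\ref{Proposition: prestreak_comparison_with_rationals} gives $a+1<n\cdot x+3<n\cdot y$, so the integer $a+1$ satisfies $n\cdot x<a+1<n\cdot y$; reinterpreting these two inequalities as comparisons with the rational $\tfrac{a+1}{n}$ (they unfold to the very same inequalities in $X$ after clearing denominators) yields $x<\tfrac{a+1}{n}<y$, so $q\dfeq\tfrac{a+1}{n}$ works. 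Finally, given such a $q$, apply the already-proved $q<x$ case of part~(1) to $q<y$ to get a rational $r$ with $q<r<y$; then $x<q<r<y$, which completes part~(2).

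I expect the only delicate point to be the bookkeeping in part~(1): recognizing that one should hand the archimedean property the quadruple whose ``$b$'' and ``$d$'' slots carry the two sides of the unfolded strict inequality relating $x$ and $q$, and whose ``$a$'' slot carries the denominator $m$ (with ``$c$''$=0$), since this is exactly what manufactures the ``$\tfrac1n$ of slack'' needed to fit a rational strictly in between. Once the right quadruple is identified, the rest is routine unwinding of the definition of $<$ between elements and rationals, together with the arithmetic collected in Propositions~\ref{Proposition: prestreak_comparison_with_rationals} and~\ref{Proposition: multiplication_with_natural_numbers}.
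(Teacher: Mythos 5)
Your proof is correct and follows essentially the same route as the paper: part (1) by feeding the archimedean property exactly the quadruple that manufactures $\tfrac1n$ of slack below $q$ (yielding $r=q-\tfrac1n$), and part (2) by archimedean magnification plus Lemma~\ref{Lemma: archimedean_prestreak_is_a_union_of_rational_intervals}. The only cosmetic difference is that the paper extracts both intermediate rationals $\tfrac{a+1}{3n}<\tfrac{a+2}{3n}$ at once from the interval lemma applied to $x$ with denominator $3n$, while you extract one rational and then bootstrap to two via part (1); both are fine.
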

			\begin{proof}
				\begin{enumerate}
					\item
						Suppose $x < q$ and write $q = \tfrac{a-b}{c}$ where $a, b \in \NN$, $c \in \NN_{> 0}$. We then have $c \cdot x + b < a$, so by the archimedean property there exists $n \in \NN$ (necessarily greater than $0$) such that $c + n \cdot (c \cdot x + b) < n \cdot a$. Clearly we can then take $r = q - \frac{1}{n} = \frac{n a - (n b + c)}{n c}$.
						
						The case $q < x$ is analogous.
					\item
						The implications $\Leftarrow$ are clear, so it remains to prove
						\[x < y \implies \some{q, r}{\QQ}{x < q < r < y}.\]
						Assume $x < y$; then by the archimedean property there is $n \in \NN$ (necessarily $\geq 1$) with $1 + n \cdot x < n \cdot y$. By the previous lemma we have $a \in \ZZ$ such that $\tfrac{a-1}{3n} < x < \tfrac{a+1}{3n}$. We claim $x < \tfrac{a+1}{3n} < \tfrac{a+2}{3n} < y$. Only the last inequality needs proof. Write $a-1 = b - c$ where $b, c \in \NN$; then $a+2 = (b+3) - c$. We calculate
						\[b + 3 < c + 3n \cdot x + 3 = c + 3(n \cdot x + 1) < c + 3n \cdot y.\]
				\end{enumerate}
			\end{proof}
			
			We can go in the other direction as well and show that suitable ``density of rationals'' implies the archimedean property, obtaining its characterization.
			\begin{lemma}\label{Lemma: characterization_of_archimedean_prestreaks}
				Let $X$ be a prestreak such that for all $x, y \in X$ with $x < y$ there exist rationals $q, r, s, t \in \QQ$ with $q < x < r < s < y < t$. Then $X$ is archimedean.
			\end{lemma}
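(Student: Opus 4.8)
The plan is to produce, for arbitrary $a, b, c, d \in X$ with $b < d$, some $n \in \NN$ with $a + n\cdot b < c + n\cdot d$: I would trap $a$, $c$, $b$, $d$ between suitable rationals using the density hypothesis, settle the corresponding inequality inside $\QQ$ by the elementary archimedean property of the rationals, and then carry the result back into $X$ using the comparison rules of Proposition~\ref{Proposition: prestreak_comparison_with_rationals}.

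The first step extracts the rational bounds. The key observation is that for every $x \in X$ we have $x < x+1$ (since $0 < 1$ and adding $x$ preserves $<$), so the hypothesis applies to the pairs $a < a+1$, $c < c+1$ and $b < d$; this yields rationals $\alpha$ with $a < \alpha$, $\gamma$ with $\gamma < c$, and --- taking the two ``interior'' rationals the hypothesis provides for $b < d$ --- rationals $\beta, \delta$ with $b < \beta < \delta < d$.

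Next, since $\delta - \beta$ is a strictly positive rational, the archimedean property of $\QQ$ supplies $n \in \NN$ with $\alpha + n\beta \le \gamma + n\delta$ in $\QQ$; increasing $n$ if necessary (harmless since $\beta \le \delta$) I may assume $n \ge 1$. From $b < \beta$, by an easy induction on $n$ using the clause ``$x < q \land y < r \implies x + y < q + r$'' of Proposition~\ref{Proposition: prestreak_comparison_with_rationals} together with the inductive definition of multiplication by naturals (Proposition~\ref{Proposition: multiplication_with_natural_numbers}), one gets $n\cdot b < n\beta$; symmetrically $n\delta < n\cdot d$. Then Proposition~\ref{Proposition: prestreak_comparison_with_rationals} yields $a + n\cdot b < \alpha + n\beta$ and $\gamma + n\delta < c + n\cdot d$, and chaining these with $\alpha + n\beta \le \gamma + n\delta$ via the transitivity clauses of that proposition (which let one insert a rational between two such comparisons) gives $a + n\cdot b < c + n\cdot d$, as required.

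The conceptual content is slight; the work is bookkeeping, and that is where I expect the only friction. One must notice that $x < x+1$ is precisely what makes the density hypothesis usable on a bare element of $X$; one must keep straight which of the four rationals furnished by the hypothesis plays which role; and, since the rationals are \emph{not} elements of $X$, every inequality mixing an element of $X$ with a rational --- in particular the two amplified estimates $n\cdot b < n\beta$, $n\delta < n\cdot d$ and each link of the final chain --- has to be justified through Proposition~\ref{Proposition: prestreak_comparison_with_rationals} rather than by naive arithmetic in $X$.
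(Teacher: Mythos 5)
Your proposal is correct and follows essentially the same route as the paper: bound $a$ above and $c$ below by rationals (via $a<a+1$, $c<c+1$), squeeze two rationals between $b$ and $d$, settle the inequality in $\QQ$ by its archimedean property, and transfer back through the mixed comparison rules. The paper merely makes the choice of $n$ explicit as a product $k\cdot l$ instead of invoking archimedeanness of $\QQ$ abstractly; the substance is identical.
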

			\begin{proof}
				Take any $a, b, c, d \in X$ with $b < d$. By assumption there exist $r, s \in \QQ$ with $b < r < s < d$. Since $a < a + 1$ and $c < c + 1$, there are $q, t \in \QQ$ with $q < c$ and $a + 1 < t$. Let $k, l \in \NN_{> 0}$ be such that $\frac{1}{k} < s-r$ and $l > t-q$. Define $n \dfeq k \cdot l$.
				
				Note that
				\[\big(q + n \cdot s\big) - \big(t + n \cdot r\big) = k l \cdot (s-r) - (t-q) > 0,\]
				so
				\[a + n \cdot b < t + n \cdot r < q + n \cdot s < c + n \cdot d,\]
				proving the claim.
			\end{proof}
			
			These results have consequences for prestreak morphisms with an archimedean codomain: they not only preserve (as all morphisms) the order $<$ (both internal and with rationals), but also reflect it. First a more general lemma, though.
			
			\begin{lemma}\label{Lemma: preservation_and_reflection_of_order}
				Let $X$, $Y$ be prestreaks and $f\colon X \to Y$ a map (we do not assume that it is a prestreak morphism). Consider the following statements (each expressing preservation or reflection of a sort of $<$).
				\begin{enumerate}
					\item[(a)]
						$\xall{q}{\QQ}\all[1]{b}{X}{q < b \implies q < f(b)}$
					\item[(b)]
						$\xall{q}{\QQ}\all[1]{b}{X}{q < f(b) \implies q < b}$
					\item[(c)]
						$\xall{r}{\QQ}\all[1]{a}{X}{a < r \implies f(a) < r}$
					\item[(d)]
						$\xall{r}{\QQ}\all[1]{a}{X}{f(a) < r \implies a < r}$
					\item[(e)]
						$\all[1]{a, b}{X}{a < b \implies f(a) < f(b)}$
					\item[(f)]
						$\all[1]{a, b}{X}{f(a) < f(b) \implies a < b}$
				\end{enumerate}
				Then the following holds.
				\begin{enumerate}
					\item
						If $Y$ is archimedean, then
						\begin{itemize}
							\item
								(b) $\land$ (d) $\implies$ (f),
							\item
								(a) $\implies$ (d),
							\item
								(c) $\implies$ (b).
						\end{itemize}
					\item
						If $X$ is archimedean, then
						\begin{itemize}
							\item
								(a) $\land$ (c) $\implies$ (e),
							\item
								(d) $\implies$ (a),
							\item
								(b) $\implies$ (c).
						\end{itemize}
					\item
						If both $X$ and $Y$ are archimedean, then (a) $\iff$ (d), (c) $\iff$ (b), and each of (a) $\land$ (b), (a) $\land$ (c), (b) $\land$ (d), (c) $\land$ (d) implies all six statements (a), (b), (c), (d), (e), (f).
				\end{enumerate}
			\end{lemma}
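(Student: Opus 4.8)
The plan is to use the density of rationals in an archimedean prestreak (Lemma~\ref{Lemma: density_of_rationals_in_an_archimedean_prestreak}) as the single engine driving every implication. The recurring template is: start from a strict inequality (either $a<b$ in $X$, or $f(a)<f(b)$ in $Y$, or some $f(a)<r$ or $q<f(b)$), insert a rational strictly between the two sides using density in whichever of $X$, $Y$ is assumed archimedean, feed the resulting rational comparisons into the hypothesis at hand --- for the ``reflection'' statements $(b)$, $(d)$, $(f)$ one feeds instead into the \emph{contrapositive} of that hypothesis --- and finally glue the pieces back with the transitivity properties of comparison with rationals from Proposition~\ref{Proposition: prestreak_comparison_with_rationals}.

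I would first dispatch part~1, where $Y$ is archimedean. For $(a)\impl(d)$: given $f(a)<r$, density in $Y$ yields $s\in\QQ$ with $f(a)<s<r$; hence $f(a)\le s$, and the contrapositive of $(a)$, which reads $f(b)\le q\impl b\le q$, gives $a\le s$, so $a\le s<r$ yields $a<r$. The implication $(c)\impl(b)$ is the exact mirror image (pick $s$ with $q<s<f(b)$, use the contrapositive of $(c)$ to get $s\le b$, conclude $q<b$). For $(b)\land(d)\impl(f)$: given $f(a)<f(b)$, density in $Y$ gives $q\in\QQ$ with $f(a)<q<f(b)$; then $(d)$ gives $a<q$, $(b)$ gives $q<b$, so $a<b$. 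Part~2 ($X$ archimedean) is proved by the identical moves with the roles of $X$ and $Y$ swapped: $(d)\impl(a)$ and $(b)\impl(c)$ use density in $X$ together with the contrapositives of $(d)$ and $(b)$ respectively, while $(a)\land(c)\impl(e)$ uses density in $X$ to get $a<q<b$, applies $(c)$ to $a<q$ and $(a)$ to $q<b$, and stitches $f(a)<q<f(b)$ together.

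Part~3 is then bookkeeping: $(a)\iff(d)$ and $(c)\iff(b)$ follow by combining the relevant halves of parts~1 and~2; with these equivalences in hand, each of the four listed conjunctions $(a)\land(b)$, $(a)\land(c)$, $(b)\land(d)$, $(c)\land(d)$ immediately entails all of $(a)$, $(b)$, $(c)$, $(d)$, and then $(e)$ and $(f)$ follow from the already established $(a)\land(c)\impl(e)$ and $(b)\land(d)\impl(f)$.

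The only genuinely delicate point, and the part I would be most careful about, is the constructive handling of the reflection directions. There one must work with the non-strict comparison $a\le q$ (for $q\in\QQ$) on the nose and apply $(a)$--$(d)$, which are implications between inequalities, through their contrapositives; this relies on the small auxiliary facts $a\le q\land q<r\impl a<r$ and $q<r\land r\le a\impl q<a$ for $a$ in a prestreak and $q,r\in\QQ$. These hold for the same reason as the analogous strengthenings of transitivity in a bare strict order: clear denominators and apply cotransitivity in the prestreak to the resulting natural-number inequality. Once these are noted, every individual implication is a routine re-run of the same two-line argument.
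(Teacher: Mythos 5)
Your proof is correct and follows essentially the same route as the paper's: interpolate a rational via Lemma~\ref{Lemma: density_of_rationals_in_an_archimedean_prestreak} in whichever prestreak is archimedean, then feed the rational comparisons into the hypotheses (for the reflection directions your contrapositive-plus-mixed-transitivity step is just a repackaging of the paper's cotransitivity-plus-contradiction step, since the auxiliary facts $a\leq q\land q<r\implies a<r$ and $q<r\land r\leq a\implies q<a$ are themselves instances of cotransitivity after clearing denominators, exactly as you note). Your part~3 is in fact slightly cleaner than the paper's, which proves (a)\,$\land$\,(b)\,$\implies$\,(c) separately even though it already follows from the equivalence (b)\,$\iff$\,(c) established in parts~1 and~2.
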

			\begin{proof}
				\begin{enumerate}
					\item
						Let $a, b \in X$ with $f(a) < f(b)$. Since $Y$ is archimedean, there exists $q \in \QQ$ with $f(a) < q < f(b)$ by Lemma~\ref{Lemma: density_of_rationals_in_an_archimedean_prestreak}. The assumptions (b) and (d) give us $a < q < b$ whence $a < b$.
						
						As far as the second claim is concerned, assume (a) and take any $r \in \QQ$, $a \in X$ with $f(a) < r$. Since $Y$ is archimedean, there exists $q \in \QQ$ with $f(a) < q < r$ by Lemma~\ref{Lemma: density_of_rationals_in_an_archimedean_prestreak}. By cotransitivity $q < a \lor a < r$, but the disjunct $q < a$ leeds to contradiction, as it with (a) implies $f(q) < a$. Therefore $a < r$, as claimed.
						
						The implication (c) $\implies$ (b) is proved the same way.
					\item
						The same as in the previous item.
					\item
						It remains to prove only (a) $\land$ (b) $\implies$ (c) (as well as (c) $\land$ (d) $\implies$ (a), but that is shown the same way).
						
						Assume (a) and (b). Take $r \in \QQ$, $a \in X$ and suppose $a < r$. Since $X$ is archimedean, there exists $q \in \QQ$ with $a < q < r$ by Lemma~\ref{Lemma: density_of_rationals_in_an_archimedean_prestreak}. Assume we had $q < f(a)$. By (b) this means $q < a$, in contradiction to $a < q$. Thus $f(a) \leq q < r$, so $f(a) < r$.
				\end{enumerate}
			\end{proof}
			
			\begin{corollary}\label{Corollary: archimedean_prestreak_morphisms_reflect_order}
				For a morphism $f\colon X \to Y$ with $Y$ archimedean we have
				\[x < q \iff f(x) < q,  \qquad\qquad  q < y \iff q < f(y),\]
				\[x < y \iff f(x) < f(y),\]
				for all $x, y \in X$ and $q \in \QQ$.
			\end{corollary}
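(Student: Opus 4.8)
The plan is to read this off directly from Lemma~\ref{Lemma: preservation_and_reflection_of_order}(1), so essentially no new work is required. First I would observe that, in the notation of that lemma applied to our morphism $f\colon X \to Y$, statements (a), (c) and (e) already hold unconditionally: (e) is part of the very definition of a prestreak morphism, while (a) and (c) are exactly the two implications ``$x < q \implies f(x) < q$'' (read as (c), with $a < r \implies f(a) < r$) and ``$q < x \implies q < f(x)$'' (read as (a), with $q < b \implies q < f(b)$) recorded at the end of Proposition~\ref{Proposition: prestreak_comparison_with_rationals}.

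Next, since $Y$ is archimedean, I would invoke the three implications collected in part~(1) of the lemma. From (a) we obtain (d), i.e.\ $f(x) < q \implies x < q$; from (c) we obtain (b), i.e.\ $q < f(x) \implies q < x$; and then from (b)~$\land$~(d) we obtain (f), i.e.\ $f(x) < f(y) \implies x < y$. At this point all six statements (a)--(f) are available.

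Finally I would package the results into the three biconditionals: (c)~$\land$~(d) is precisely $x < q \iff f(x) < q$; (a)~$\land$~(b) is $q < y \iff q < f(y)$; and (e)~$\land$~(f) is $x < y \iff f(x) < f(y)$. This is the claimed corollary.

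The only point that takes any care is the bookkeeping --- keeping straight which of the six one-directional statements corresponds to which half of which biconditional --- together with the observation that the archimedean hypothesis is used only on the codomain $Y$ (we never need $X$ archimedean, since we only apply part~(1) of the lemma). There is thus no real obstacle to overcome; the content is entirely in the earlier lemma.
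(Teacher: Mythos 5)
Your proof is correct and is exactly the paper's argument: the paper's own proof says, in one line, that $f$ preserves all versions of $<$ as a morphism and reflects them by Lemma~\ref{Lemma: preservation_and_reflection_of_order}; you have merely spelled out the bookkeeping of which implications of part~(1) are chained together. No further comment is needed.
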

			\begin{proof}
				As a morphism, $f$ preserves all versions of $<$. By Lemma~\ref{Lemma: preservation_and_reflection_of_order} it reflects them as well.
			\end{proof}
			
			The archimedean property has another consequence: it lets us define comparison between elements of different prestreaks. Let $X$, $Y$ be archimedean prestreaks and $a \in X$, $b \in Y$. We define
			\[a < b \dfeq \xsome{q}{\QQ}{a < q < b}\]
			(and, as usual, $a \leq b \dfeq \lnot(b < a)$. Use the symbol $<$ for this relation as well is admissable since by the results of this subsection this new order is a generalization of all previously defined ones (in cases $X = Y$ or one of $X$, $Y$ is $\QQ$). In fact, a general invariance of order holds.
			\begin{lemma}\label{Lemma: invariance_of_order_across_archimedean_prestreaks}
				Let $X$, $X'$, $X''$, $Y$, $Y'$, $Y''$ be archimedean streaks and $f\colon X \to X'$, $g\colon X \to X''$, $h\colon Y \to Y'$, $k\colon Y \to Y''$ morphisms. Then for any $a \in X$, $b \in Y$
				\[f(a) < h(b) \iff g(a) < k(b) \qquad \text{and} \qquad f(a) \leq h(b) \iff g(a) \leq k(b).\]
			\end{lemma}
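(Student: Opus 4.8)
The plan is to reduce both sides of the claimed equivalence to a single statement about comparison with rationals, exploiting the fact that morphisms into archimedean prestreaks \emph{reflect} (and not merely preserve) the strict order, as recorded in Corollary~\ref{Corollary: archimedean_prestreak_morphisms_reflect_order}.

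First I would unwind the left-hand relation. Since $X'$ and $Y'$ are archimedean, $f(a) < h(b)$ is by definition $\xsome{q}{\QQ}{f(a) < q \land q < h(b)}$. Applying Corollary~\ref{Corollary: archimedean_prestreak_morphisms_reflect_order} to $f\colon X \to X'$ gives $f(a) < q \iff a < q$, and applying it to $h\colon Y \to Y'$ gives $q < h(b) \iff q < b$. Hence
\[f(a) < h(b) \iff \xsome{q}{\QQ}{a < q \land q < b}.\]
The identical computation with $g\colon X \to X''$ and $k\colon Y \to Y''$ in place of $f$ and $h$ yields $g(a) < k(b) \iff \xsome{q}{\QQ}{a < q \land q < b}$ as well; note that $a < q$ and $q < b$ here are the prestreak--rational comparisons in $X$ and $Y$, which do not involve any morphism. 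Combining the two equivalences gives $f(a) < h(b) \iff g(a) < k(b)$ — indeed both relations coincide with the cross-prestreak order $a < b$.

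For the nonstrict version, recall $f(a) \leq h(b) \dfeq \lnot(h(b) < f(a))$ and $g(a) \leq k(b) \dfeq \lnot(k(b) < g(a))$. Applying the strict case just proved, but with the triples $(X, f, g)$ and $(Y, h, k)$ interchanged, gives $h(b) < f(a) \iff k(b) < g(a)$; negating both sides yields $f(a) \leq h(b) \iff g(a) \leq k(b)$.

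There is no real obstacle here: all the content sits in Corollary~\ref{Corollary: archimedean_prestreak_morphisms_reflect_order}, and once comparison with rationals is known to be reflected, both composite relations literally become the same existential statement over $\QQ$. The only points needing a moment's care are that the relevant cross-prestreak relations are well-defined (each of $X'$, $X''$, $Y'$, $Y''$ is archimedean, so $f(a) < q$, $q < h(b)$, etc.\ make sense), and that the reflection is applied on the correct side, namely to the codomains, where archimedeanness is assumed.
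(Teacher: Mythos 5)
Your proof is correct and follows essentially the same route as the paper: both reduce $f(a) < h(b)$ and $g(a) < k(b)$ to the common statement $\xsome{q}{\QQ}{a < q < b}$ via Corollary~\ref{Corollary: archimedean_prestreak_morphisms_reflect_order}, and then handle $\leq$ by negation. Your treatment of the nonstrict case is slightly more explicit (swapping the roles of the two triples before negating), but this is exactly what the paper's ``negate all sides'' step amounts to.
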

			\begin{proof}
				Using Corollary~\ref{Corollary: archimedean_prestreak_morphisms_reflect_order} we have
				\[f(a) < h(b) \iff \xsome{q}{\QQ}{f(a) < q < h(b)} \iff \xsome{q}{\QQ}{a < q < b} \iff\]
				\[\iff \xsome{q}{\QQ}{g(a) < q < k(b)} \iff g(a) < k(b).\]
				For $\leq$ just negate all sides of the equivalence.
			\end{proof}
			\begin{remark}
				Consider the statement of Lemma~\ref{Lemma: invariance_of_order_across_archimedean_prestreaks} when some of the morphisms are identities.
			\end{remark}
		
		\subsection{Streaks}
		
			The structure of a prestreak is the foundation, upon which we'll add the various additional structures, up to the point of reals, but is still too general. First of all, as discussed in the previous section, we require the archimedean property, so that we are bound to structures in which rationals are dense. Second, we'll want to have a preorder category (\ie one where there exists at most one morphism from one given object to another) which $\Pstr$ and $\Apstr$ are not, so we cut them down further, to finally get our desired notion.
			
			\begin{definition}\label{Definition: streak}
				\
				\begin{itemize}
					\item
						A prestreak is called \df{tight} when $\leq$ is antisymmetric (thus a partial order):
						\[a \leq b \land b \leq a \implies a = b,\]
						or equivalently, when $\apart$ is tight (thus a tight apartness):
						\[\lnot(a \apart b) \implies a = b.\]
					\item
						A \df{streak}\footnote{There are two reasons, why I settled for the name `streak'. First, if you draw examples (as subsets of the real line) and behold them from afar, they actually look like streaks (try some examples, such as generated by the following subsets of $\RR$: $\emptyset$, $\{-1\}$, $\{1.5\}$, $\{0.5\}$, $\{-1.5\}$, $\{-1, \pi\}$, \ldots). Second, originally I defined streaks directly rather than via prestreaks, and I liked the pun (``a \df{streak} is a strict linear order\ldots''), especially since it works not just in English, but also in my own language, even though the words are completely different (``\df{proga} je stroga linearna urejenost\ldots'').} is a tight archimedean prestreak.
				\end{itemize}
			\end{definition}
			We use the same notion of a morphism for streaks as for prestreaks, \ie the category of streaks $\Str$ is a full subcategory of $\Pstr$ (and $\Apstr$).
			
			Streaks are cancellative monoids for addition.
			\begin{proposition}
				For a streak $X$ and $a, b, x \in X$ it holds
				\[a + x = b + x \iff a = b\]
				(the left-to-right implication is called the \df{cancellation property}).
			\end{proposition}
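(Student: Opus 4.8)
The plan is to reduce the cancellation property to antisymmetry of $\leq$, which holds because $X$ is a streak (hence tight). The reverse implication is immediate: $+$ is a well-defined operation, so $a = b$ gives $a + x = b + x$ with no work. So the content is the forward direction $a + x = b + x \implies a = b$.

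First I would recall the prestreak axiom that adding an element preserves and reflects $<$, i.e.\ $a + x < b + x \iff a < b$ for all $a, b, x \in X$. Starting from the hypothesis $a + x = b + x$, irreflexivity of $<$ (noted right after the definition of a strict order) applied to this common element yields $\lnot(a + x < b + x)$ and $\lnot(b + x < a + x)$. Transporting these across the above equivalence gives $\lnot(a < b)$ and $\lnot(b < a)$, which by definition of the induced preorder are exactly $b \leq a$ and $a \leq b$.

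Finally, since $X$ is a streak, $\leq$ is antisymmetric, so $a \leq b$ and $b \leq a$ force $a = b$, completing the proof. I do not anticipate any real obstacle here: the only ingredients are the reflection of $<$ under addition, irreflexivity of $<$, and tightness of the streak, all of which are available from the definitions and earlier remarks; the proof is a short diagram-free calculation. The one point worth flagging explicitly in the write-up is that tightness (equivalently antisymmetry of $\leq$) is genuinely used, so the statement is about streaks rather than arbitrary prestreaks.
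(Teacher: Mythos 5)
Your proof is correct: the reduction of $a + x = b + x$ to $\lnot(a < b)$ and $\lnot(b < a)$ via the compatibility of $+$ with $<$ and irreflexivity, followed by tightness (antisymmetry of $\leq$), is exactly the routine argument the paper has in mind -- indeed the paper states this proposition without any proof, treating it as immediate from the prestreak axiom $a + x < b + x \iff a < b$ and the streak's tightness. Your closing remark that tightness is genuinely needed (so the statement is about streaks, not general prestreaks) is also accurate; the archimedean property plays no role here.
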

			
			One of corollaries is that in streaks we can define \df{subtraction} as a partial operation. For a streak $X$ and $a, b \in X$ we define $b-a$ to be that $x \in X$ (if it exists) which satisfies $a + x = b$. If there is another $x' \in X$ with $a + x' = b$, it follows $x = x'$ by the cancellation property.
			
			We can always calculate the differences $a-a = 0$ and $a-0 = a$ for any $a \in X$. Moreover, given $a, b, c, d \in X$, if the differences $b-a$ and $d-c$ exist, then the difference $(b+d)-(a+c)$ exists also, namely $(b+d)-(a+c) = (b-a) + (d-c)$; to see this, simply calculate $a+c + \big((b-a) + (d-c)\big) = a + (b-a) + c + (d-c) = b+d$.
			
			Streak morphisms preserve subtraction: if $f\colon X \to Y$ is a morphism between streaks and $a, b \in X$ are elements such that $b-a$ exists in $X$, then the difference $f(b)-f(a)$ exists in $Y$, and $f(b-a) = f(b)-f(a)$ since
			\[f(a) + f(b-a) = f\big(a + (b-a)\big) = f(b).\]
			
			Tightness of $\apart$ enables us to infer equality of two elements, if they have the same lower bounds, or if they have the same upper bounds. However, with the archimedean property knowing just the lower or upper \emph{rational} bounds is sufficient.
			\begin{lemma}\label{Lemma: element_in_a_streak_determined_by_rational_bounds}
				Let $X$ be a streak and $a, b \in X$. The following statements are equivalent:
				\begin{enumerate}
					\item
						$a = b$,
					\item
						$\all{q}{\QQ}{q < a \iff q < b}$,
					\item
						$\all{q}{\QQ}{a < q \iff b < q}$,
					\item
						$\all[1]{q}{\QQ}{(q < a \implies q < b) \land (a < q \implies b < q)}$.
				\end{enumerate}
			\end{lemma}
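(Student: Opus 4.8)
The plan is to run the equivalences as a circle: first dispatch $(1)\impl(2)$, $(1)\impl(3)$ and $(1)\impl(4)$, which are immediate since substituting $b = a$ makes all the displayed biconditionals and implications trivially true; then prove that each of $(2)$, $(3)$, $(4)$ implies $(1)$. Because $X$ is a streak, $\apart$ is tight (Definition~\ref{Definition: streak}), so to get $a = b$ in each of these three reverse implications it suffices to establish $\lnot(a \apart b)$, i.e.\ both $\lnot(a < b)$ and $\lnot(b < a)$. The single tool that makes all three work is Lemma~\ref{Lemma: density_of_rationals_in_an_archimedean_prestreak}(2): whenever $a < b$ (resp.\ $b < a$) in the streak $X$, there is a rational $q$ with $a < q < b$ (resp.\ $b < q < a$).

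For $(2)\impl(1)$: if $a < b$, take $q$ with $a < q < b$; then $q < b$, so $(2)$ gives $q < a$, contradicting $a < q$ by asymmetry; symmetrically, if $b < a$, take $q$ with $b < q < a$, then $q < a$, so $(2)$ gives $q < b$, contradicting $b < q$. Hence $\lnot(a \apart b)$, so $a = b$ by tightness. For $(3)\impl(1)$ the argument is the mirror image: from $a < b$ with $a < q < b$ one has $a < q$, hence $b < q$ by $(3)$, contradicting $q < b$; and dually for $b < a$. For $(4)\impl(1)$: from $a < b$ with $a < q < b$, the second conjunct of $(4)$ (applied to this $q$) yields $b < q$, contradicting $q < b$; from $b < a$ with $b < q < a$, the first conjunct of $(4)$ yields $q < b$, contradicting $b < q$. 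In all three cases we conclude $\lnot(a \apart b)$ and therefore $a = b$.

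I do not expect a genuine obstacle; the only subtlety is to stay constructive, namely to route the final inference through ``not apart, hence equal'' (tightness of a streak) rather than through any trichotomy on $<$, and to manufacture the separating rational via the density lemma instead of comparing $a$ and $b$ directly. If one prefers a shorter bookkeeping, one can instead observe that $(2)\impl(4)$ and $(3)\impl(4)$ hold directly --- e.g.\ given $a < q$, density produces $r$ with $a < r < q$, and the contrapositive of $(2)$ turns $a \leq r$ into $b \leq r$, whence $b < q$ --- so that only $(4)\impl(1)$ needs to be proved among the reverse directions; but proving the three reverse implications in parallel as above is equally brief.
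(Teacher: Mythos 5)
Your proposal is correct and follows essentially the same route as the paper: the forward directions are trivial, and each reverse direction is obtained by assuming $a < b$ (resp.\ $b < a$), interpolating a rational $q$ via Lemma~\ref{Lemma: density_of_rationals_in_an_archimedean_prestreak}, deriving a contradiction, and concluding $a = b$ from tightness. The only cosmetic difference is that the paper proves $(2\impl 1)$ and declares $(3\impl 1)$ analogous, while you spell out all three reverse implications (plus an optional shortcut), which changes nothing of substance.
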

			\begin{proof}
				Of course $(1)$ implies the other statements. Among $(2 \impl 1)$ and $(3 \impl 1)$ we prove only the former, as the latter is analogous.
				
				Suppose $a < b$. By Lemma~\ref{Lemma: density_of_rationals_in_an_archimedean_prestreak} there exists $q \in \QQ$ such that $a < q < b$, contradicting our assumption $(2)$ (since $a < q$ implies $\lnot(q < a)$). In exactly the same way we obtain a contradiction from $b < a$. Thus $a = b$ by tightness.
				
				Finally, we prove $(4 \impl 1)$. Suppose $a < b$; we then have $q \in \QQ$ with $a < q < b$ by Lemma~\ref{Lemma: density_of_rationals_in_an_archimedean_prestreak}. Applying $(4)$, we obtain $b < q < b$, a contradiction. Similarly $b < a$ leads to contradiction as well. Thus $a = b$.
			\end{proof}
			
			\begin{theorem}\label{Theorem: preservation_of_rationals_implies_streak_morphism}
				Let $X$ be a prestreak and $Y$ a streak.
				\begin{enumerate}
					\item
						There exists at most one map $f\colon X \to Y$ which preserves comparison with rational numbers on both sides, \ie $\xall{q}{\QQ}\all{a}{X}{q < a \implies q < f(a)}$ and $\xall{q}{\QQ}\all{a}{X}{a < q \implies f(a) < q}$.
					\item
						If $X$ is archimedean and a map $f\colon X \to Y$ preserves comparison with rationals on both sides, then $f$ is a morphism.
				\end{enumerate}
			\end{theorem}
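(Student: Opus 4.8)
The plan is to push everything through comparison with rationals, using that an element of a streak is pinned down by its rational bounds (Lemma~\ref{Lemma: element_in_a_streak_determined_by_rational_bounds}). For part~1, suppose $f,g\colon X\to Y$ both preserve comparison with rationals on both sides. Since $Y$ is a streak, it is archimedean, so part~1 of Lemma~\ref{Lemma: preservation_and_reflection_of_order} upgrades ``preserves'' to ``preserves and reflects'' (conditions (a)--(d) all hold for $f$ and for $g$): for every $a\in X$ and $q\in\QQ$ we get $q<f(a)\iff q<a\iff q<g(a)$, whence $f(a)=g(a)$ by Lemma~\ref{Lemma: element_in_a_streak_determined_by_rational_bounds}, so $f=g$. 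Archimedeanness of $X$ plays no role here.

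For part~2, assume in addition that $X$ is archimedean. Then $f$ satisfies conditions (a) and (c) of Lemma~\ref{Lemma: preservation_and_reflection_of_order}, so by part~3 of that lemma it satisfies all of (a)--(f); in particular $f$ preserves the strict order, and $q<f(a)\iff q<a$ and $f(a)<q\iff a<q$ for all $a\in X$, $q\in\QQ$. Since $a>0$ gives $f(0)<f(a)$, the map $f$ sends positive elements to positive ones (once we know $f(0)=0$). And indeed $f(0)=0$ and $f(1)=1$: for $q\in\QQ$ we have $q<f(0)\iff q<0_X$, and by the very definition of comparison with rationals $q<0_X\iff q<0$ in $\QQ$ $\iff q<0_Y$; likewise with $1$; so Lemma~\ref{Lemma: element_in_a_streak_determined_by_rational_bounds} forces $f(0)=0$ and $f(1)=1$.

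It remains to prove additivity and, on positive elements, multiplicativity, and for both I would first record a \textbf{splitting fact} valid in any archimedean prestreak $Z$: for $x,y\in Z$ and $q\in\QQ$,
\[q<x+y\iff \xsome{q_1,q_2}{\QQ}{q_1<x\land q_2<y\land q<q_1+q_2},\]
and, when $x,y>0$, similarly $q<x\cdot y$ iff there are $q_1,q_2\in\QQ_{\geq 0}$ with $q_1<x$, $q_2<y$, $q<q_1\cdot q_2$. The implications $\Leftarrow$ are instances of Proposition~\ref{Proposition: prestreak_comparison_with_rationals}. For $\Rightarrow$, use Lemma~\ref{Lemma: density_of_rationals_in_an_archimedean_prestreak} to pick a rational $r$ with $q<r<x+y$ (resp.\ $q<r<xy$), then use Lemma~\ref{Lemma: archimedean_prestreak_is_a_union_of_rational_intervals} to approximate $x$ and $y$ from below by rationals $q_1,q_2$ of common denominator $c$ with $x<q_1+\tfrac2c$ and $y<q_2+\tfrac2c$; Proposition~\ref{Proposition: prestreak_comparison_with_rationals} then bounds $x+y$ above by $q_1+q_2+\tfrac4c$ (resp.\ $xy$ above by $(q_1+\tfrac2c)(q_2+\tfrac2c)$), and choosing $c$ large enough that the accumulated error is $\leq r-q$ yields $q<q_1+q_2$. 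Granting this, additivity drops out: for $q\in\QQ$,
\[q<f(a+b)\iff q<a+b\iff \xsome{q_1,q_2}{\QQ}{q_1<a\land q_2<b\land q<q_1+q_2},\]
and replacing each of $q_1<a$, $q_2<b$ by the equivalent $q_1<f(a)$, $q_2<f(b)$ turns the right-hand side into $q<f(a)+f(b)$, so Lemma~\ref{Lemma: element_in_a_streak_determined_by_rational_bounds} gives $f(a+b)=f(a)+f(b)$. Multiplicativity on positives is the same argument with the multiplicative halves of the splitting fact and of Proposition~\ref{Proposition: prestreak_comparison_with_rationals}, the cases $q\leq 0$ being immediate since $f(a)f(b)>0$.

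The main obstacle is the $\Rightarrow$ direction of the splitting fact, especially its multiplicative form: one must fix an archimedean upper bound $x<n$, $y<n'$ first so that the approximants $q_1,q_2$ are bounded independently of the fineness parameter $c$, and only then choose $c$ (after $r$), so that the error term $\tfrac2c(q_1+q_2)+\tfrac4{c^2}$ can genuinely be forced $\leq r-q$. Once that is arranged, the rest is routine bookkeeping with Lemmas~\ref{Lemma: density_of_rationals_in_an_archimedean_prestreak}, \ref{Lemma: archimedean_prestreak_is_a_union_of_rational_intervals} and \ref{Lemma: element_in_a_streak_determined_by_rational_bounds} together with Proposition~\ref{Proposition: prestreak_comparison_with_rationals}.
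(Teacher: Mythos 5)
Your proof is correct and follows essentially the same route as the paper's: part 1 is identical, and part 2 rests on the same ingredients --- Lemma~\ref{Lemma: preservation_and_reflection_of_order} to upgrade preservation to reflection, approximation of $a$ and $b$ by rational intervals of width $\tfrac{2}{k}$ via Lemma~\ref{Lemma: archimedean_prestreak_is_a_union_of_rational_intervals}, and the same error control (including fixing an archimedean bound $n$ on the factors \emph{before} choosing $k$ in the multiplicative case, exactly the paper's $\tfrac{4n}{k}$ estimate). The only differences are in packaging: you isolate the estimates into a direct ``splitting'' characterization of $q < x + y$ and $q < x \cdot y$ and conclude via Lemma~\ref{Lemma: element_in_a_streak_determined_by_rational_bounds}, whereas the paper runs the same estimates inside a proof by contradiction using tightness; and you obtain $f(0) = 0$, $f(1) = 1$ directly from rational bounds rather than by cancellation --- both are fine.
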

			\begin{proof}
				\begin{enumerate}
					\item
						Suppose $f, g\colon X \to Y$ both preserve comparison with rationals. By Lemma~\ref{Lemma: preservation_and_reflection_of_order} they reflect it as well. Thus for any $x \in X$ and $q \in \QQ$
						\[q < f(x) \iff q < x \iff q < g(x),\]
						so $f(x) = g(x)$ by Lemma~\ref{Lemma: element_in_a_streak_determined_by_rational_bounds}.
					\item
						Let $X$ be archimedean and $f\colon X \to Y$ a map which preserves comparison with rationals. By Lemma~\ref{Lemma: preservation_and_reflection_of_order} it then preserves and reflects all versions of $<$.
						
						Take $a, b \in X$. Suppose $f(a + b) < f(a) + f(b)$; then there exist $q, r \in \QQ$ such that $f(a + b) < q < r < f(a) + f(b)$ by Lemma~\ref{Lemma: density_of_rationals_in_an_archimedean_prestreak}. As $f$ reflects order, we have $a + b < q$. Let $k \in \NN_{> 0}$ be large enough such that $\frac{1}{k} < \frac{r-q}{2}$. By Lemma~\ref{Lemma: archimedean_prestreak_is_a_union_of_rational_intervals} there exist $i, j \in \ZZ$ such that $a \in \intoo[X]{\frac{i-1}{k}}{\frac{i+1}{k}}$ and $b \in \intoo[X]{\frac{j-1}{k}}{\frac{j+1}{k}}$. Since $f$ preserves comparison with rationals, we infer $f(a) < \frac{i+1}{k}$ and $f(b) < \frac{j+1}{k}$, so
						\[f(a) + f(b) < \tfrac{i+1}{k} + \tfrac{j+1}{k} = \tfrac{i-1}{k} + \tfrac{j-1}{k} + \tfrac{2}{k} < a + b + r-q < q + r-q = r,\]
						a contradition with $f(a) + f(b) > r$.
						
						Similarly, suppose $f(a + b) > f(a) + f(b)$. Find $q, r \in \QQ$ such that $f(a) + f(b) < q < r < f(a + b)$, a large enough $k \in \NN_{> 0}$ so that $\frac{1}{k} < \frac{r-q}{2}$, and $i, j \in \ZZ$ such that $f(a) \in \intoo[Y]{\frac{i-1}{k}}{\frac{i+1}{k}}$ and $f(b) \in \intoo[Y]{\frac{j-1}{k}}{\frac{j+1}{k}}$. Then $a \in \intoo[X]{\frac{i-1}{k}}{\frac{i+1}{k}}$ and $b \in \intoo[X]{\frac{j-1}{k}}{\frac{j+1}{k}}$, so
						\[a + b < \tfrac{i+1}{k} + \tfrac{j+1}{k} = \tfrac{i-1}{k} + \tfrac{j-1}{k} + \tfrac{2}{k} < f(a) + f(b) + r-q < q + r-q = r,\]
						contradicting $f(a + b) > r$.
						
						This shows $f(a + b) = f(a) + f(b)$. In particular $f(0) = f(0 + 0) = f(0) + f(0)$, whence $f(0) = 0$ by cancellation.
						
						Assume now additionally that $a, b > 0$ and suppose $f(a \cdot b) < f(a) \cdot f(b)$. Find $q, r \in \QQ$, $f(a \cdot b) < q < r < f(a) \cdot f(b)$ (in particular $a \cdot b < q$), use the archimedean property to get $n \in \NN_{\geq 2}$ such that $f(a) < n$, $f(b) < n$, and choose $k \in \NN_{> 0}$ large enough so that $\frac{n}{k} < \frac{r - q}{4}$. By Lemma~\ref{Lemma: density_of_rationals_in_an_archimedean_prestreak} there are $i, j \in \intoo[\NN]{0}{n k}$ such that $f(a) \in \intoo[Y]{\frac{i-1}{k}}{\frac{i+1}{k}}$ and $f(b) \in \intoo[Y]{\frac{j-1}{k}}{\frac{j+1}{k}}$, therefore also $a \in \intoo[X]{\frac{i-1}{k}}{\frac{i+1}{k}}$ and $b \in \intoo[X]{\frac{j-1}{k}}{\frac{j+1}{k}}$. We calculate
						\[f(a) \cdot f(b) < \tfrac{i+1}{k} \cdot \tfrac{j+1}{k} = \tfrac{i-1}{k} \cdot \tfrac{j-1}{k} + \tfrac{2 (i + j)}{k^2} < a \cdot b + \tfrac{4 n k}{k^2} < q + \tfrac{4 n}{k} < r,\]
						a contradiction. Use a similar method to derive a contradiction from $f(a \cdot b) < f(a) \cdot f(b)$. We conclude $f(a \cdot b) = f(a) \cdot f(b)$. In particular $f(1) = f(1 \cdot 1) = f(1) \cdot f(1)$ whence $f(1) = 1$.
				\end{enumerate}
			\end{proof}
			
			\begin{corollary}\label{Corollary: streaks_preorder_category}
				There exists at most one morphism from a prestreak $X$ to a streak $Y$. In particular, $\Str$ is a preorder category.
			\end{corollary}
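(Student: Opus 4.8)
The plan is to reduce everything to Theorem~\ref{Theorem: preservation_of_rationals_implies_streak_morphism}(1), which already does the real work. First I would recall that, by the last part of Proposition~\ref{Proposition: prestreak_comparison_with_rationals}, \emph{every} prestreak morphism $f\colon X \to Y$ automatically preserves comparison with rationals on both sides, \ie $\xall{q}{\QQ}\all{a}{X}{q < a \implies q < f(a)}$ and $\xall{q}{\QQ}\all{a}{X}{a < q \implies f(a) < q}$. So if $f, g\colon X \to Y$ are two morphisms into the streak $Y$, both preserve rational comparison, and Theorem~\ref{Theorem: preservation_of_rationals_implies_streak_morphism}(1) — whose hypothesis is exactly ``$X$ a prestreak, $Y$ a streak, and the map preserves rational comparison on both sides'' — forces $f = g$. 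That gives the first assertion: there is at most one morphism $X \to Y$.

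For the second assertion, I would simply observe that $\Str$ is the full subcategory of $\Pstr$ on streaks (Definition~\ref{Definition: streak} and the remark following it), so any morphism between two streaks in $\Str$ is in particular a prestreak morphism between a prestreak and a streak; the first part then says there is at most one such. By the definition of a preorder category (at most one morphism between any ordered pair of objects), $\Str$ is a preorder category.

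I do not expect any genuine obstacle here: the corollary is a packaging of Theorem~\ref{Theorem: preservation_of_rationals_implies_streak_morphism} together with the observation that morphisms preserve rational comparison. The only point worth stating explicitly is that uniqueness is obtained purely from preservation of the order with rationals (no appeal to reflection, additivity, or multiplicativity is needed beyond what is already baked into Theorem~\ref{Theorem: preservation_of_rationals_implies_streak_morphism}(1), which internally uses Lemma~\ref{Lemma: preservation_and_reflection_of_order} and Lemma~\ref{Lemma: element_in_a_streak_determined_by_rational_bounds}).
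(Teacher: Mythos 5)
Your proposal is correct and is exactly the paper's argument: every prestreak morphism preserves comparison with rationals (Proposition~\ref{Proposition: prestreak_comparison_with_rationals}), so Theorem~\ref{Theorem: preservation_of_rationals_implies_streak_morphism}(1) gives uniqueness, and the preorder-category claim follows since streaks are prestreaks. No differences worth noting.
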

			\begin{proof}
				Every morphism preserves comparison with rationals, so we may apply the preceding theorem to get uniqueness.
			\end{proof}
			
			We have seen what happens when the codomain of a morphism is a streak. We consider now also the domain.
			\begin{proposition}\label{Proposition: streak_morphisms_injective}
				Let $f\colon X \to Y$ be a morphism from a streak $X$ to a prestreak $Y$. Then $f$ is injective.
			\end{proposition}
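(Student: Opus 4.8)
The plan is to exploit the defining feature that $X$, being a streak, is tight: by Definition~\ref{Definition: streak} the apartness relation $\apart$ on $X$ satisfies the tightness condition $\lnot(a \apart b) \implies a = b$. So to establish injectivity of $f$ it suffices to show that whenever $f(a) = f(b)$ we have $\lnot(a \apart b)$, that is, $\lnot(a < b)$ and $\lnot(b < a)$.

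Concretely, I would fix $a, b \in X$ with $f(a) = f(b)$ and suppose, towards a contradiction, that $a < b$. Since $f$ is a prestreak morphism it preserves the strict order, so $f(a) < f(b)$; substituting $f(b) = f(a)$ yields $f(a) < f(a)$, contradicting the irreflexivity of the strict order on $Y$ (recall that every strict order is irreflexive, by taking $a = b$ in the asymmetry condition). Hence $\lnot(a < b)$, and by the symmetric argument $\lnot(b < a)$ as well. Therefore $\lnot(a \apart b)$, and tightness of $X$ gives $a = b$.

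I do not anticipate a genuine obstacle here: the argument uses only that $f$ preserves $<$, that $<$ on $Y$ is irreflexive (which holds for any strict order, so the codomain need not even be a full prestreak), and that $X$ is tight. The single point worth flagging is that, unlike the special case $X = \NN$ handled earlier (where $<$ is decidable), we cannot here pass from $\lnot(a < b)$ and $\lnot(b < a)$ to $a = b$ by case analysis; it is exactly the tightness clause in the definition of a streak that makes this last step legitimate.
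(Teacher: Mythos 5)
Your proof is correct and follows essentially the same route as the paper: the paper's proof also uses preservation of $<$ (hence of $\apart$), the fact that $f(a)=f(b)$ forces $\lnot\big(f(a)\apart f(b)\big)$, and tightness of $\apart$ on the streak $X$ to conclude $a=b$. Your version merely unwinds the contrapositive into an explicit contradiction with irreflexivity, which is the same argument in slightly more detail.
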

			\begin{proof}
				By definition morphisms preserve $<$ so they preserve $\apart$; using the contrapositive of this statement and tightness of $\apart$ in $X$, we obtain
				\[f(a) = f(b) \implies \lnot\big(f(a) \apart f(b)\big) \implies \lnot(a \apart b) \implies a = b\]
				for all $a, b \in X$.
			\end{proof}
			
			\begin{corollary}\label{Corollary: surjective_streak_morphism_is_iso}
				A surjective streak morphism is an isomorphism.
			\end{corollary}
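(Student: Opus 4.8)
The plan is to produce an explicit two-sided inverse of $f$ in $\Str$, namely the set-theoretic inverse map. First I would observe that $f$ is a bijection: it is surjective by hypothesis, and it is injective by Proposition~\ref{Proposition: streak_morphisms_injective} (the domain $X$ is a streak and the codomain $Y$ is in particular a prestreak). Let $g\colon Y \to X$ be the inverse map. Since every map is continuous in our setting, to see that $g$ is a streak morphism it suffices to check that it preserves $<$, $+$, $0$, $\cdot$ and $1$.

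The equational parts are routine: for $y_1, y_2 \in Y$ write $y_i = f(x_i)$; then $f(x_1 + x_2) = f(x_1) + f(x_2) = y_1 + y_2$ gives $g(y_1 + y_2) = x_1 + x_2 = g(y_1) + g(y_2)$, while $f(0) = 0$ and $f(1) = 1$ give $g(0) = 0$ and $g(1) = 1$. For the multiplicative identity $g(y_1 \cdot y_2) = g(y_1) \cdot g(y_2)$ (which is only asserted for $y_1, y_2 > 0$) I first need that $g$ carries positive elements to positive elements; this will drop out of the order statement below, after which the same computation as for $+$ applies verbatim.

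So the one substantive point is that $g$ preserves $<$. Here I would invoke Corollary~\ref{Corollary: archimedean_prestreak_morphisms_reflect_order}: because $Y$ is a streak it is archimedean, so the morphism $f$ not only preserves but also reflects the strict order, that is, $x_1 < x_2 \iff f(x_1) < f(x_2)$. Hence for $y_1 < y_2$ in $Y$, writing $y_i = f(x_i)$ we get $x_1 < x_2$, that is $g(y_1) < g(y_2)$; in particular $y > 0 = f(0)$ forces $g(y) > 0$, which is what was missing above. (Alternatively one could note that $g$ preserves comparison with rationals on both sides and then appeal to Theorem~\ref{Theorem: preservation_of_rationals_implies_streak_morphism}(2), $Y$ being archimedean; or, once any morphism $Y \to X$ is in hand, use that $\Str$ is a preorder category by Corollary~\ref{Corollary: streaks_preorder_category}.) With $g$ now a streak morphism and $g \circ f = \id[X]$, $f \circ g = \id[Y]$ by construction, $f$ is an isomorphism, so $X \ism Y$.

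I do not anticipate a real obstacle; the only thing to watch is that order-reflection is being used for an honest morphism into an archimedean streak — it fails for arbitrary maps — which is precisely the hypothesis available here.
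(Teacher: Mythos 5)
Your proposal is correct and follows the paper's own argument: bijectivity from Proposition~\ref{Proposition: streak_morphisms_injective}, order-reflection of $f$ (hence order-preservation of $f^{-1}$) from Corollary~\ref{Corollary: archimedean_prestreak_morphisms_reflect_order}, and the standard algebraic argument for the equational structure. You simply spell out the details the paper leaves implicit, including the point that $f^{-1}$ sends positives to positives so that preservation of $\cdot$ makes sense.
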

			\begin{proof}
				A surjective streak morphism $f\colon X \to Y$ is bijective by Proposition~\ref{Proposition: streak_morphisms_injective} and $f^{-1}$ preserves order (that is, $f$ reflects order) by Corollary~\ref{Corollary: archimedean_prestreak_morphisms_reflect_order}. Also, there is a standard way in algebra, how one shows that a bijective homomorphism is an isomorphism, and this works to show that $f^{-1}$ preserves $+$, $0$, $\cdot$, $1$ in our case as well.
			\end{proof}
			
			Between Corollary~\ref{Corollary: streaks_preorder_category} and Proposition~\ref{Proposition: streak_morphisms_injective} we see, that we can view streaks (up to isomorphism) as certain subsets, ordered by inclusion. The smallest among these subsets are the natural numbers.
			\begin{proposition}\label{Proposition: N_initial_streak}
				$\NN$ is the initial streak.
			\end{proposition}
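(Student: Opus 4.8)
The plan is to bootstrap off Proposition~\ref{Proposition: N_initial_prestreak}, which already establishes that $\NN$ is the initial prestreak. Since $\Str$ is by definition a full subcategory of $\Pstr$, fullness means that for any streak $Y$ the morphisms $\NN \to Y$ in $\Str$ coincide with the prestreak morphisms $\NN \to Y$; so as soon as we know that $\NN$ is itself a streak, the existence and uniqueness of the morphism $\ini[Y]\colon \NN \to Y$ in $\Str$ is inherited directly from $\Pstr$, and initiality is proved.

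Thus the whole content of the statement reduces to checking that $\NN$ (with its usual order and operations) is a \emph{tight archimedean} prestreak --- it is a prestreak by Proposition~\ref{Proposition: N_initial_prestreak}. Tightness is immediate, since $\leq$ on $\NN$ is the ordinary antisymmetric order: $m \leq n \land n \leq m$ forces $m = n$.

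It remains to verify the archimedean property, \ie $\all[2]{a,b,c,d}{\NN}{b < d \implies \xsome{n}{\NN}{a + n \cdot b < c + n \cdot d}}$. The point is simply to produce a witness: from $b < d$ we get $d \geq b + 1$, hence $n \cdot d \geq n \cdot b + n$ for all $n \in \NN$, and choosing $n \dfeq a + 1$ yields $c + n \cdot d \geq c + n \cdot b + n > a + n \cdot b$. This finishes the verification that $\NN$ is a streak, and with the first paragraph the proof is complete.

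There is no real obstacle here; the statement is a packaging of the earlier prestreak theory, and the only computation --- the choice $n = a + 1$ in the archimedean condition --- is routine. One could alternatively invoke Lemma~\ref{Lemma: characterization_of_archimedean_prestreaks} after noting that $\QQ$ is dense in $\NN$ in the required sense, but the direct argument above is shorter.
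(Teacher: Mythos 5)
Your proof is correct and follows essentially the same route as the paper: reduce to checking that $\NN$ is a tight archimedean prestreak via Proposition~\ref{Proposition: N_initial_prestreak} and fullness of $\Str$ in $\Pstr$, then exhibit an explicit witness for the archimedean condition (the paper takes $n = \sup\{a,c\} - c + 1$ where you take $n = a+1$; both work). The only quibble is the closing aside --- the phrase ``$\QQ$ is dense in $\NN$'' is not what Lemma~\ref{Lemma: characterization_of_archimedean_prestreaks} requires (it asks for rationals interleaving any $x < y$ in $\NN$, which does hold) --- but this does not affect your main argument.
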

			\begin{proof}
				We already know that $\NN$ is an initial prestreak from Proposition~\ref{Proposition: N_initial_prestreak}, and since $\Str$ is a full subcategory in $\Pstr$, we just need to check that $\NN$ is a streak.
				
				Of course, $\leq$ is antisymmetric in $\NN$. As for the archimedean property, take any $a, b, c, d \in \NN$ with $b < d$. Set $n = \sup\{a, c\} - c + 1$; then $n \in \NN$ and
				\[a + n \cdot b < c + n + n \cdot b = c + n \cdot (b+1) \leq c + n \cdot d.\]
			\end{proof}
			
			The point of the introduction of streaks is that the other extreme --- the largest streak --- is the set of real numbers (indeed, streaks are, up to isomorphism, just subsets of $\RR$ which contain $0$, $1$, and are closed under addition and multiplication of positive elements).
			
			But before we discuss this (in Section~\ref{Section: reals}), we say a few more things about streaks. As one can gleam from Theorem~\ref{Theorem: preservation_of_rationals_implies_streak_morphism}, it is sufficient for a map between streaks just to preserve comparison with rationals, and it already follows that it is a streak morphism. In a similar vein, one can consider an alternative definition of a streak, where comparison with rationals is taken as a primitive piece of the structure (the property $a < b \iff \xsome{q}{\QQ}{a < q < b}$ from Lemma~\ref{Lemma: density_of_rationals_in_an_archimedean_prestreak} provides the connection between the two definitions).
			
			\begin{proposition}[Alternative definition of a streak]\label{Proposition: alt_streak}
				Let $X$ be equipped with relations $< \subseteq \QQ \times X$ and $< \subseteq X \times \QQ$ (which we denote by the same symbol) and operations $+\colon X \times X \to X$ and $\cdot\colon X_{> 0} \times X_{> 0} \to X_{> 0}$ (where $X_{> 0}$ stands for the set of all elements in $X$ which are bigger than the \emph{rational} $0$). Suppose the following conditions hold:
				\begin{itemize}
					\item
						boundedness: $\xall{a}{X}\xsome{q,r}{\QQ}{q < a < r}$,
					\item
						all possible cotransitivity conditions: for all $a \in X$ and $q, r \in \QQ$
						\[q < a \implies q < r \lor r < a,  \qquad  a < q \implies a < r \lor r < q,\]
						\[q < r \implies q < a \lor a < r,\]
					\item
						for all $a \in X$ and $q \in \QQ$
						\[q < a \implies \xsome{r}{\QQ}{q < r < a},  \qquad  a < q \implies \xsome{r}{\QQ}{a < r < q},\]
					\item
						asymetry: $\xall{a}{X}\xall{q}{\QQ}{\lnot\big(q < a \land a < q\big)}$,
					\item
						for all $a, b \in X$
						\[a = b \iff \all{q}{\QQ}{q < a \iff q < b} \iff \all{q}{\QQ}{a < q \iff b < q},\]
					\item
						both relations $<$ are open and their negations $\leq$ are closed,
					\item
						$+$ makes $X$ into a commutative monoid,
					\item
						for all $q, r \in \QQ$ and $x, y \in X$
						\[q < x \land r < y \implies q + r < x + y, \qquad x < q \land y < r \implies x + y < q + r,\]
					\item
						$\cdot$ makes $X_{> 0}$ into a commutative monoid and distributes over $+$,
					\item
						for all $q, r \in \QQ_{> 0}$ and $x, y \in X_{> 0}$
						\[q < x \land r < y \implies q \cdot r < x \cdot y, \qquad x < q \land y < r \implies x \cdot y < q \cdot r.\]
				\end{itemize}
				Then $X$ is a streak if we define for $a, b \in X$
				\[a < b \dfeq \some{q}{\QQ}{a < q \land q < b}.\]
				Conversely, any streak satisfies the above conditions.
			\end{proposition}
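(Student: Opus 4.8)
The plan is to prove the two implications separately. The converse (a streak satisfies the listed conditions) is a catalogue of earlier results: boundedness and the three interpolation clauses are Lemma~\ref{Lemma: archimedean_prestreak_is_a_union_of_rational_intervals} and Lemma~\ref{Lemma: density_of_rationals_in_an_archimedean_prestreak}(1); all cotransitivity clauses, asymmetry, and the additive and multiplicative compatibility with rationals are contained in Proposition~\ref{Proposition: prestreak_comparison_with_rationals}; the characterisation of equality by rational bounds is Lemma~\ref{Lemma: element_in_a_streak_determined_by_rational_bounds}; the relations $<$ (between $\QQ$ and $X$ in either order) are open and their negations $\leq$ closed because, $\QQ$ being discrete, this reduces to the slices $\st{x \in X}{q < x}$, $\st{x \in X}{x < q}$ being open --- and, writing $q = \tfrac{a-b}{c}$, these are preimages of the open $\st{(u,v) \in X \times X}{u < v}$ (respectively of the closed $\leq$) under the continuous map $x \mapsto c \cdot x + b$; the monoid, distributivity and commutativity clauses are immediate; and $a < b \iff \some{q}{\QQ}{a < q \land q < b}$ is Lemma~\ref{Lemma: density_of_rationals_in_an_archimedean_prestreak}(2), which also shows the relation defined in the statement coincides with the streak's own $<$.

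For the forward direction, write $0_X$, $1_X$ for the identities of the additive and multiplicative monoids and put $a < b \dfeq \some{q}{\QQ}{a < q \land q < b}$ on $X$; the task is to verify the streak axioms for $(X, <, +, 0_X, \cdot, 1_X)$. I would first record a handful of ``mixed transitivity'' facts (e.g.\ $q < r \land r < x \implies q < x$ and $x < q \land q < r \implies x < r$ for $q, r \in \QQ$, $x \in X$, and variants), each following from the cotransitivity clauses, asymmetry, and decidability of $<$ on $\QQ$. From these, asymmetry and cotransitivity of the defined $<$ on $X$ are short (given $a < q < b$ one interpolates a second rational to peel a third element off either side), so $(X, <)$ is a strict order; tightness follows from the $a = b \iff \all{q}{\QQ}{q < a \iff q < b}$ clause after a similar interpolation.

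The step I expect to be the crux is pinning down the identity elements: proving $q < 0_X \iff q < 0$ and $0_X < q \iff 0 < q$ for all $q \in \QQ$, and likewise for $1_X$ and $1$ (with $0 < 1_X$). The trick is idempotency: if $0 < 0_X$, interpolate a positive rational $r$ with $r < 0_X$; applying additive compatibility repeatedly to $0_X + 0_X = 0_X$ gives $n \cdot r < 0_X$ for every $n \in \NN$, contradicting boundedness of $0_X$ from above. The case $0_X < 0$ is symmetric (now contradicting boundedness from below), and $1 < 1_X$, $1_X < 1$ go the same way via $1_X \cdot 1_X = 1_X$, multiplicative compatibility and $0 < 1_X$ (which holds since $1_X \in X_{> 0}$). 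Once the signs of $0_X$, $1_X$ are fixed, the equivalences with an arbitrary rational drop out of the cotransitivity clauses; in particular $\st{x \in X}{0_X < x} = \st{x \in X}{0 < x} = X_{> 0}$, so the monoid/distributivity hypotheses on $X_{> 0}$ are exactly what a streak needs.

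It then remains to check the rest. Openness of $<$ on $X \times X$ is clear from $\st{(a,b) \in X \times X}{a < b} = \bigcup_{q \in \QQ} \st{a \in X}{a < q} \times \st{b \in X}{q < b}$, a countable union of products of opens; closedness of $\leq$ reduces, after rewriting $a \leq b$ as $\all{q}{\QQ}{\lnot\lnot\big((q \leq b) \lor (a \leq q)\big)}$, to closure of $\cltp$ under countable intersections and double-negated finite unions. For ``$a + x < b + x \iff a < b$'' and ``$a \cdot x < b \cdot x \iff a < b$'' on $X_{> 0}$ I would first re-prove, from the cotransitivity clauses and boundedness alone, the approximation fact that every $x \in X$ lies in rational intervals $\intoo[X]{s}{t}$ of arbitrarily small width (the grid argument of Lemma~\ref{Lemma: archimedean_prestreak_is_a_union_of_rational_intervals}), and then combine it with the additive/multiplicative compatibility with rationals to transport a comparison across $+$ and $\cdot$ in both directions. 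Finally, archimedeanness follows from Lemma~\ref{Lemma: characterization_of_archimedean_prestreaks}: given $x < y$, the witnessing rational together with the interpolation clauses and boundedness yields rationals $q < x < r < s < y < t$, which is precisely its hypothesis. Hence $(X, <, +, 0_X, \cdot, 1_X)$ is an archimedean prestreak with antisymmetric $\leq$, i.e.\ a streak.
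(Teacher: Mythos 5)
Your proposal is correct and follows essentially the same route as the paper's proof: establish the strict‑order axioms and tightness for the defined $<$, observe that the grid argument of Lemma~\ref{Lemma: archimedean_prestreak_is_a_union_of_rational_intervals} goes through using only the listed hypotheses, use the resulting small rational intervals to transport comparisons across $+$ and $\cdot$ in both directions, and deduce the archimedean property from the interpolating rationals (you cite Lemma~\ref{Lemma: characterization_of_archimedean_prestreaks}, while the paper redoes the computation directly; the two amount to the same thing, though when citing the lemma you should note that by that stage the primitive comparison with rationals agrees with the one a prestreak derives from its internal order). The one genuine difference is your ``crux'' step pinning down the monoid units: the paper's proof never checks that the additive unit $0_X$ and the multiplicative unit $1_X$ sit at the rational positions $0$ and $1$, yet this is needed, because the hypotheses define $X_{>0}$ by comparison with the \emph{rational} $0$ while the streak axioms refer to positivity relative to the additive unit under the internal order. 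Your idempotency argument ($0_X + 0_X = 0_X$ forces $n \cdot r < 0_X$ for every $n$ once a positive rational $r < 0_X$ is interpolated, contradicting boundedness, and likewise for $1_X$ via $1_X \cdot 1_X = 1_X$) closes this point cleanly, so on this detail your write‑up is more complete than the paper's own proof; everything else matches the paper's decomposition, with your sketch of the reverse direction for multiplication being the only place where the paper supplies substantially more computation than you do.
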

			\begin{proof}
				We already know from definitions and previous propositions that streaks satisfy above conditions. Here is the verification of the converse. For easier readability we split it into parts.
				
				\begin{itemize}
					\item\proven{properties of $<$ and $\leq$}
						Suppose we have $a < b$ and $b < a$, \ie there are $q, r \in \QQ$ such that $a < q < b$ and $b < r < a$. Since $<$ is decidable on $\QQ$ and $a$, $b$ are interchangable, we may without loss of generality assume $q \leq r$. By cotransitivity we have $r < q \lor q < a$, but the first disjunct is false, so $q < a$ which together with $a < q$ contradicts the asymetry condition. Hence $<$ is asymetric on $X$ as well.
						
						Suppose $a < b$, so $a < q < b$ for some $q \in \QQ$. Let $x \in X$. There is some $r \in \QQ$ such that $q < r < b$, so by cotransitivity $q < x \lor x < r$. If the first is the case, we have $a < x$, and if the second, then $x < b$.
						
						We have $a \leq b \iff \lnot(b < a) \iff \lnot\some{q}{\QQ}{a < q \land q < b} \iff \all{q}{\QQ}{a < q \implies b \leq q}$. Assume the last, then take any $q \in \QQ$ such that $a < q$. Then there is $r \in \QQ$, $a < r < q$. By cotransitivity $r < b \lor b < q$, but the first disjunt cannot hold by our assumption, so $b < q$. We conclude that $a \leq b$ is furthermore equivalent to $\all{q}{\QQ}{a < q \implies b < q}$, so $a \leq b \land b \leq a$ implies $a = b$, meaning that $\leq$ is antisymmetric.
						
						Since $\QQ$ is countable, the predicate $\some{q}{\QQ}{a < q \land q < b}$ is open, therefore $<$ is an open relation. Similarly we see that $\leq$ is closed, as $\lnot\some{q}{\QQ}{b < q \land q < a}$ is equivalent to $\xall{q}{\QQ}{\lnot\lnot(a \leq q \lor q \leq b)}$.
				\end{itemize}
				
				Note that all forms of transitivity hold. For $q, r \in \QQ$ and $x \in X$, if $q < r$ and $r < x$, then by cotransitivity $r < q \lor q < x$, but the first disjunct leads to contradiction with asymmetry. The other two cases $x < q < r \implies x < r$ and $q < x < r \implies q < r$ are shown similarly.
				
				Also observe that the proof of Lemma~\ref{Lemma: archimedean_prestreak_is_a_union_of_rational_intervals} uses only the properties, given in the text of this proposition, so we may use its results for $X$. We do so several times in the remainder of the proof.
				
				\begin{itemize}
					\item\proven{interaction of $+$ and $<$}
						Take any $a, b, x \in X$. Assume $a < b$; then we have $q \in \QQ$ with $a < q < b$ and $r \in \QQ$ with $q < r < b$. Let $k \in \NN_{> 0}$ be large enough so that $\frac{1}{k} < \frac{r-q}{2}$ and let $i \in \ZZ$ be such that $x \in \intoo[X]{\frac{i-1}{k}}{\frac{i+1}{k}}$. Then $a + x < q + \frac{i+1}{k} < r + \frac{i-1}{k} < b + x$.
						
						The converse direction has the similar idea, albeit it is slightly more involved. Suppose we have $a + x < q < r < b + x$ with $q, r \in \QQ$ and $x \in \intoo[X]{\frac{i-1}{k}}{\frac{i+1}{k}}$ where $k \in \NN$ is this time large enough for $\frac{1}{k} < \frac{r-q}{5}$ to hold. By cotransitivity $q - \frac{i-1}{k} < a \lor a < q - \frac{i-2}{k}$, but the first disjunct leads to contradiction $q < a + x$, therefore $a < q - \frac{i-2}{k}$. Similarly $r - \frac{i+2}{k} < b \lor b < r - \frac{i+1}{k}$, but the second disjunct leads to contradiction $b + x < r$, therefore $r - \frac{i+2}{k} < b$. Altogether we have $a < q - \frac{i-2}{k} < r - \frac{i+2}{k} < b$.
					\item\proven{interaction of $\cdot$ and $<$}
						This part works similarly as the previous one, but there are more bounds that need to be considered.
						
						Take $a, b, x \in X_{> 0}$, assume $a < b$ and find $q, r, s \in \QQ$ with $a < q < r < b$ and $0 < s < x$. Let $k \in \NN$ be large enough for $k > \frac{2 r}{(r-q) \cdot s}$ to hold. Find $i \in \NN_{> 0}$ with $x \in \intoo[X]{\frac{i-1}{k}}{\frac{i+1}{k}}$. We have $a \cdot x < q \cdot \frac{i+1}{k} < r \cdot \frac{i-1}{k} < b \cdot x$, the middle inequality being valid because of the following.
						\[s < x < \tfrac{i+1}{k} \implies \frac{1}{i+1} < \frac{1}{k s}\]
						\[\frac{r \cdot \tfrac{i-1}{k}}{q \cdot \tfrac{i+1}{k}} = \frac{r}{q} \cdot \frac{i-1}{i+1} = \frac{r}{q} \cdot \left(1 - \frac{2}{i+1}\right) > \frac{r}{q} \cdot \left(1 - \frac{2}{k s}\right) > \frac{r}{q} \cdot \left(1 - \frac{r-q}{r}\right) = 1\]
						
						For the converse, assume $a \cdot x < b \cdot x$ and find $q, q', r, r', s \in \QQ$ with $a \cdot x < q' < q < r < r' < b \cdot x$ and $0 < s < x$. Let $k \in \NN$ be large enough for $k > \frac{2 r}{(r-q) \cdot s}$ and $\frac{1}{k} < \frac{s}{2}$ to hold. Find $i \in \NN_{> 0}$ with $x \in \intoo[X]{\frac{i-1}{k}}{\frac{i+1}{k}}$. Since $\frac{i+1}{k} > x > s > \frac{2}{k}$, we have $i > 1$. Also, as before, $\frac{1}{i+1} < \frac{1}{k s}$ and
						\[\frac{r \cdot \frac{k}{i+1}}{q \cdot \frac{k}{i-1}} = \frac{r}{q} \cdot \frac{i-1}{i+1} > 1.\]
						By cotransitivity $q' \cdot \frac{k}{i-1} < a \lor a < q \cdot \frac{k}{i-1}$, but the first disjunct implies $q' < a \cdot x$, a contradiction. Similarly in $r \cdot \frac{k}{i+1} < b \lor b < r' \cdot \frac{k}{i+1}$ the second disjunct leads to contradiction $b \cdot x < r'$. In conclusion $a < q \cdot \frac{k}{i-1} < r \cdot \frac{k}{i+1} < b$.
					\item\proven{archimedean property}
						Since $X$ is a monoid for $+$, we have multiplication with  natural numbers and the statement of the archimedean property makes sense for $X$. Take any $a, b, c, d \in X$ and $q \in \QQ$ such that $b < q < d$. Take $r \in \QQ$ with $q < r < d$. Let $k \in \NN_{> 0}$ be large enough so that $\frac{1}{k} < \frac{r-q}{2}$, and let $i, j \in \ZZ$ be such that $a \in \intoo[X]{\frac{i-1}{k}}{\frac{i+1}{k}}$, $c \in \intoo[X]{\frac{j-1}{k}}{\frac{j+1}{k}}$. Since $\QQ$ is archimedean,\footnote{This follows from Theorem~\ref{Theorem: from_ring_streaks_to_field_streaks} below, but it is not difficult to check it directly. One can for example multiply the required inequality with all denominators and take negative terms to the other side of the inequality, thus translating the archimedean condition of $\QQ$ to that of $\NN$. Or one can verify that in any strictly ordered field $F$ the archimedean condition is equivalent to $\xall{x}{F}\xsome{n}{\NN}{x < n}$.} there is $n \in \NN$ with $\frac{i+1}{k} + n \cdot q < \frac{j-1}{k} + n \
 cdot r$. Clearly then $a + n \cdot b < c + n \cdot d$.
				\end{itemize}
			\end{proof}
			
			\begin{remark}
				Since $<$ is decidable on $\QQ$, two cotransitivity conditions can be restated as
				\[q < a \land r \leq q \implies r < a,  \qquad  a < q \land q \leq r \implies a < r\]
				for all $a \in X$, $q, r \in \QQ$. Thus the first three items in Proposition~\ref{Proposition: alt_streak} essentially correspond to the conditions for Dedekind cuts.
			\end{remark}
			
		\subsection{Multiplicative streaks}
		
			Somewhere along the path from streaks to the real numbers we'll eventually want the multiplication to become a total operation. Despite this fact, we did not assume the totality of $\cdot$ already in the definition of a (pre)streak, since it is much more convenient to start with multiplication just for positive numbers, as this preserves the order and makes defining $\cdot$ in concrete examples way easier. Compare, for example, multiplication of Dedekind reals just for positive ones, or as a total operation. The latter is especially problematic constructively, as separation of cases is not allowed (see~\cite{Richman_F_2008:_real_numbers_and_other_completions} for a discussion on this subject).
			
			Nevertheless, we want to see what happens when we do have total multiplication in a streak. Actually, the definition can be stated for prestreaks in general.
			
			\begin{definition}\label{Definition: multiplicative_prestreak}
				A prestreak $X$ is called \df{multiplicative} when it is equipped with a total multiplication operation $\cdot\colon X \times X \to X$ which is commutative, associative, distributes over addition, satisfies the \df{multiplicity condition}
				\[b < a \land d < c \implies a \cdot d + b \cdot c < a \cdot c + b \cdot d\]
				for all $a, b, c, d \in X$, restricts to the prestreak multiplication on $X_{> 0}$, and $1$ is the unit for total $\cdot$ also.
			\end{definition}
			
			First we discuss what the deal with the multiplicity condition is. If a prestreak has subtraction (that is, it is not just a monoid, but a group for $+$), then the multiplicity condition can be rewritten as
			\[0 < a-b \land 0 < c-d \implies 0 < (a-b) \cdot (c-d).\]
			But products of positive elements are positive in any prestreak by definition; thus the multiplicity condition is vacant, when we have subtraction. In general, it provides a necessary generalization of the fact that positive times positive is positive, when subtraction is not available.
			
			Even when we don't have total multiplication, the multiplicity condition still holds for positive elements in an archimedean prestreak.
			\begin{proposition}\label{Proposition: multiplicity_condition_for_positive_elements}
				Let $X$ be an archimedean prestreak and $a, b, c, d \in X$ such that $0 < b < a$ and $0 < d < c$ holds. Then we have $a \cdot d + b \cdot c < a \cdot c + b \cdot d$.
			\end{proposition}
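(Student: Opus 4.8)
The plan is to prove the inequality by sandwiching $a$, $b$, $c$, $d$ between rational numbers and reducing everything to an inequality \emph{between rationals}, where subtraction is available. First I would use the strictness of the hypotheses together with Lemma~\ref{Lemma: density_of_rationals_in_an_archimedean_prestreak} to fix rationals $q_1 < q_2$ with $b < q_1 < q_2 < a$ and $r_1 < r_2$ with $d < r_1 < r_2 < c$, put $\delta_1 \dfeq q_2 - q_1 > 0$ and $\delta_2 \dfeq r_2 - r_1 > 0$, choose via the archimedean property a natural number $M$ bounding $a$, $b$, $c$, $d$ from above, and fix positive rationals $s_1 < b$ and $s_2 < d$ (which will keep lower endpoints positive later).

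Next, invoking Lemma~\ref{Lemma: archimedean_prestreak_is_a_union_of_rational_intervals} with a sufficiently large $k \in \NN_{> 0}$ (the precise smallness requirement on $h \dfeq \tfrac{2}{k}$ to be collected at the end), I obtain rationals $\underline a < a < \overline a$, $\underline b < b < \overline b$, $\underline c < c < \overline c$, $\underline d < d < \overline d$ with $\overline a - \underline a = \overline b - \underline b = \overline c - \underline c = \overline d - \underline d = h$. Using Proposition~\ref{Proposition: prestreak_comparison_with_rationals} (the part about multiplying comparisons with rationals, applicable since for $k$ large everything in sight is positive) I get $a \cdot d + b \cdot c < \overline a\,\overline d + \overline b\,\overline c$ and $\underline a\,\underline c + \underline b\,\underline d < a \cdot c + b \cdot d$. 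So it suffices to establish the purely rational inequality $\overline a\,\overline d + \overline b\,\overline c \leq \underline a\,\underline c + \underline b\,\underline d$, after which transitivity of comparison with rationals chains the three facts into $a \cdot d + b \cdot c < a \cdot c + b \cdot d$.

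For the rational inequality, substituting $\overline a = \underline a + h$ and so on gives
\[\underline a\,\underline c + \underline b\,\underline d - \overline a\,\overline d - \overline b\,\overline c = (\underline a - \underline b)(\underline c - \underline d) - h\,(\underline a + \underline b + \underline c + \underline d) - 2h^2.\]
The subtracted terms are $O(h)$, bounded e.g.\ by $6Mh$. The crucial point is that the leading term has a \emph{fixed} positive lower bound, independent of $k$: since $\underline b < b < q_1$ while $\underline a > a - h > q_2 - h$, we get $\underline a - \underline b > \delta_1 - h$, and symmetrically $\underline c - \underline d > \delta_2 - h$, so once $h < \tfrac12\min(\delta_1, \delta_2)$ the product exceeds $\tfrac14\delta_1\delta_2 > 0$. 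Hence choosing $k$ large enough that in addition $h < \delta_1\delta_2 / (24 M)$, and $h < \min(s_1, s_2)$ (to keep all lower endpoints positive), makes the right-hand side positive, as required.

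The routine component is the arithmetic bookkeeping with all these rationals; the one genuine subtlety, and where care is needed, is that because $X$ carries no subtraction, each approximation must go in the correct direction — upper endpoints for $a\cdot d + b\cdot c$, lower endpoints for $a\cdot c + b\cdot d$ — and the inevitable loss this incurs is absorbed only because the strict hypotheses $b < a$ and $d < c$ provide, through the witnesses $q_1 < q_2$ and $r_1 < r_2$, a gap of a priori fixed size that the $O(h)$ error cannot overtake.
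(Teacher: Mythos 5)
Your proposal is correct and follows essentially the same route as the paper's proof: both sandwich $a,b,c,d$ in rational intervals of width $O(1/k)$ via Lemma~\ref{Lemma: archimedean_prestreak_is_a_union_of_rational_intervals}, reduce the claim to a purely rational inequality whose leading term $(\underline a - \underline b)(\underline c - \underline d)$ is bounded below by a fixed positive quantity coming from the strict hypotheses, and absorb the $O(h)$ error by taking $k$ large. Your bookkeeping via the factored identity is if anything slightly cleaner than the paper's explicit computation with $i,j,k,l$ over a common denominator $n$.
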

			\begin{proof}
				Find rationals $q, r, s, t, u \in \QQ$ such that $b < q < r < a < s$ and $d < t < u < c < s$ and let $n \in \NN_{> 0}$ be large enough that $\frac{1}{n} < \frac{(r-q) (u-t)}{72 (s+1)}$, $\frac{1}{n} < \frac{r-q}{3}$ and $\frac{u-t}{3}$ hold. Let $i, j, k, l \in \NN$ be such that $a \in \intoo[X]{\frac{i-1}{n}}{\frac{i+1}{n}}$, $b \in \intoo[X]{\frac{j-1}{n}}{\frac{j+1}{n}}$, $c \in \intoo[X]{\frac{k-1}{n}}{\frac{k+1}{n}}$, $d \in \intoo[X]{\frac{l-1}{n}}{\frac{l+1}{n}}$.
				
				Since $\frac{i+1}{n} > a > r$ and $\frac{1}{n} < \frac{r-q}{3}$, we have $\frac{i}{n} > \frac{2r+q}{3}$. Similarly we obtain $\frac{j}{n} < \frac{2q+r}{3}$, $\frac{k}{n} > \frac{2u+t}{3}$ and $\frac{l}{n} < \frac{2t+u}{3}$. Hence $\frac{i-j}{n} > \frac{r-q}{3}$ and $\frac{k-l}{n} > \frac{u-t}{3}$. Also, $\frac{i+j+k+l-4}{n} < a + b + c + d < 4 s$, whence $\frac{i+j+k+l}{n} < 4 s + \frac{4}{n} \leq 4 (s+1)$.
				
				All of this allows us to show (using positiveness of terms when needed)
				\[\frac{(i-j) (k-l) - 2 (i+j+k+l)}{n^2} > \frac{r-q}{3} \cdot \frac{u-t}{3} - \frac{8 (s+1)}{n} > 0.\]
				Therefore (using some properties from Proposition~\ref{Proposition: alt_streak})
				\[a \cdot d + b \cdot c < \frac{i+1}{n} \cdot \frac{l+1}{n} + \frac{j+1}{n} \cdot \frac{k+1}{n} =\]
				\[= \frac{i-1}{n} \cdot \frac{k-1}{n} + \frac{j-1}{n} \cdot \frac{l-1}{n} - \frac{(i-j) (k-l) - 2 (i+j+k+l)}{n^2} <\]
				\[< \frac{i-1}{n} \cdot \frac{k-1}{n} + \frac{j-1}{n} \cdot \frac{l-1}{n} < a \cdot c + b \cdot d.\]
			\end{proof}
			
			The multiplicity condition can also be seen as the generalization of the condition $a < b \implies a \cdot x < b \cdot x$ for $a, b, x \in X_{> 0}$ in a general prestreak $X$. However, in a general prestreak the reverse implication was also assumed, and indeed we should also assume the reverse implication $a \cdot d + b \cdot c \apart a \cdot c + b \cdot d \implies b \apart a \land d \apart c$ for $\apart$ in a general multiplicative prestreak (the reverse implication for $<$ cannot hold; consider \eg $a = c = 0$, $b = d = 1$). However, we will not make this additional assumption, as we'll only be really interested in archimedean multiplicative prestreak, and for those this is automatically the case, as we check presently.
			
			\begin{lemma}\label{Lemma: bounds_in_multiplicative_prestreak}
				Let $X$ be a multiplicative prestreak, $n \in \NN$ and $a, b, c, d \in X$ elements, satisfying
				\[n \cdot b < 1 + n \cdot a, \qquad n \cdot d < 1 + n \cdot c,\]
				\[n \cdot a < 1 + n \cdot b, \qquad n \cdot c < 1 + n \cdot d.\]
				Then
				\[n^2 \cdot (a \cdot d + b \cdot c) < 1 + n^2 \cdot (a \cdot c + b \cdot d),\]
				\[n^2 \cdot (a \cdot c + b \cdot d) < 1 + n^2 \cdot (a \cdot d + b \cdot c).\]
			\end{lemma}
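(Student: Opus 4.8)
The plan is to reduce the statement to a purely order-algebraic identity by abbreviating $A \dfeq n \cdot a$, $B \dfeq n \cdot b$, $C \dfeq n \cdot c$, $D \dfeq n \cdot d$ (multiplication with the natural number $n$). Using distributivity of the total multiplication over addition together with Proposition~\ref{Proposition: multiplication_with_natural_numbers}, one checks that $n^2 \cdot (a \cdot d) = A \cdot D$ and likewise for the other three products; hence the four hypotheses read
\[ B < 1 + A, \qquad A < 1 + B, \qquad D < 1 + C, \qquad C < 1 + D, \]
and the two desired inequalities become $A\cdot D + B\cdot C < 1 + A\cdot C + B\cdot D$ and $A\cdot C + B\cdot D < 1 + A\cdot D + B\cdot C$. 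These two are interchanged by swapping $C$ and $D$, an operation that leaves the four hypotheses invariant, so it is enough to prove the second one.

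For that I would apply the multiplicity condition (Definition~\ref{Definition: multiplicative_prestreak}) twice. Applying it with $a \dfeq 1 + A$, $b \dfeq B$, $c \dfeq 1 + D$, $d \dfeq C$ (legitimate since $B < 1+A$ and $C < 1+D$) and expanding by distributivity gives
\[ A\cdot C + B\cdot D + B + C < 1 + A\cdot D + B\cdot C + A + D; \]
applying it with $a \dfeq 1 + B$, $b \dfeq A$, $c \dfeq 1 + C$, $d \dfeq D$ (legitimate since $A < 1+B$ and $D < 1+C$) gives
\[ A\cdot C + B\cdot D + A + D < 1 + A\cdot D + B\cdot C + B + C. \]
Adding these two strict inequalities (valid by the basic monotonicity of $+$ in a prestreak) and cancelling the common summand $A + B + C + D$ (the ``reflects $<$'' axiom for $+$) leaves $2 \cdot (A\cdot C + B\cdot D) < 2 \cdot (1 + A\cdot D + B\cdot C)$; dividing by the natural number $2$ via Proposition~\ref{Proposition: multiplication_with_natural_numbers} (since $2 > 0$) yields $A\cdot C + B\cdot D < 1 + A\cdot D + B\cdot C$, as required.

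The only point that needs a little thought is the choice of the two substitutions. Heuristically, each application of the multiplicity condition is just the assertion that a product of two ``positive'' quantities is positive: the first amounts to $0 < (1 + A - B)(1 + D - C)$ and the second to $0 < (1 + B - A)(1 + C - D)$, and one must pick exactly the pair whose sum telescopes — via the subtraction-free analogue of $(1+x)(1-y) + (1-x)(1+y) = 2 - 2xy$ — to the bound $A\cdot C + B\cdot D < 1 + A\cdot D + B\cdot C$. Everything else (expanding the products, adding the inequalities, one additive cancellation, one division by $2$) is routine, so I expect no genuine obstacle beyond this bookkeeping.
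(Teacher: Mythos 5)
Your proof is correct and is essentially the paper's own argument: the same two applications of the multiplicity condition to the shifted quantities (your pairings for the second inequality coincide exactly with the paper's), followed by adding, cancelling the linear terms, and halving via Proposition~\ref{Proposition: multiplication_with_natural_numbers}. The only difference is cosmetic — you obtain the first inequality from the second by the $c \leftrightarrow d$ symmetry, whereas the paper repeats the argument with the hypotheses re-paired.
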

			\begin{proof}
				Use the multiplicity condition first for $n \cdot b < 1 + n \cdot a$ and $n \cdot d < 1 + n \cdot c$, then for $n \cdot a < 1 + n \cdot b$ and $n \cdot c < 1 + n \cdot d$, to obtain
				\[n \cdot d + n^2 \cdot a \cdot d + n \cdot b + n^2 \cdot b \cdot c < 1 + n \cdot a + n \cdot c + n^2 \cdot a \cdot c + n^2 \cdot b \cdot d,\]
				\[n \cdot c + n^2 \cdot b \cdot c + n \cdot a + n^2 \cdot a \cdot d < 1 + n \cdot b + n \cdot d + n^2 \cdot b \cdot d + n^2 \cdot a \cdot c.\]
				Add the two inequalities, then cancel all that can be cancelled, to obtain the first claimed result.
				
				Proceed analogously after writing the multiplicity condition first for the pair $n \cdot b < 1 + n \cdot a$, $n \cdot c < 1 + n \cdot d$, then for $n \cdot a < 1 + n \cdot b$, $n \cdot d < 1 + n \cdot c$, to get the second claimed inequality.
			\end{proof}
			
			\begin{proposition}\label{Proposition: reverse_multiplicity_condition}
				Let $X$ be a multiplicative prestreak.
				\begin{enumerate}
					\item
						We have
						\[b \apart a \land d \apart c \implies a \cdot d + b \cdot c \apart a \cdot c + b \cdot d\]
						for all $a, b, c, d \in X$.
					\item
						If $X$ is archimedean, the reverse implication
						\[a \cdot d + b \cdot c \apart a \cdot c + b \cdot d \implies b \apart a \land d \apart c\]
						also holds.
				\end{enumerate}
			\end{proposition}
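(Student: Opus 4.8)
The plan for the first claim is a straightforward case split. From $b \apart a$ we get $b < a$ or $a < b$, and from $d \apart c$ we get $d < c$ or $c < d$. In each of the four resulting cases the multiplicity condition of Definition~\ref{Definition: multiplicative_prestreak} applies after possibly renaming $a \leftrightarrow b$ and/or $c \leftrightarrow d$: its hypothesis ``the second argument is below the first, and the fourth below the third'' is symmetric under these renamings, while its conclusion merely swaps its two sides. Hence in every case one of $a \cdot d + b \cdot c$, $a \cdot c + b \cdot d$ is strictly below the other, so $a \cdot d + b \cdot c \apart a \cdot c + b \cdot d$. No archimedean hypothesis is needed here.

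For the second claim the obstacle is that the multiplicative reflection law $x \cdot z < y \cdot z \iff x < y$ is available only for positive $x, y, z$, so I would first use the archimedean property to reduce to that situation. Instantiating the archimedean condition with its first two parameters $0$, its third parameter $e$, and its fourth parameter $1$ (so that the required strict inequality is just $0 < 1$) yields, for each $e \in \{a, b, c, d\}$, a natural number $n_e$ with $e + n_e > 0$; letting $N$ be the largest of these four, all of $a + N$, $b + N$, $c + N$, $d + N$ lie in $X_{> 0}$. Adding $N$ preserves and reflects $<$, hence $\apart$; moreover, expanding the products via distributivity of the total multiplication shows that $(a + N)(d + N) + (b + N)(c + N)$ and $(a + N)(c + N) + (b + N)(d + N)$ exceed $a \cdot d + b \cdot c$ and $a \cdot c + b \cdot d$ by one and the same element, so by additive reflection both the hypothesis and the desired conclusion are unchanged under the substitution $a, b, c, d \mapsto a + N, b + N, c + N, d + N$. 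Thus I may assume $a, b, c, d \in X_{> 0}$.

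Under that assumption I would argue directly, not by contraposition, which would only deliver double negations. Unfolding the hypothesis, assume $a \cdot d + b \cdot c < a \cdot c + b \cdot d$ (the case $a \cdot c + b \cdot d < a \cdot d + b \cdot c$ is symmetric under swapping $c$ and $d$). Cotransitivity of $<$ with the intermediate term $a \cdot d + b \cdot d$ gives $a \cdot d + b \cdot c < a \cdot d + b \cdot d$ or $a \cdot d + b \cdot d < a \cdot c + b \cdot d$; cancelling the common summand and then the common positive factor ($b$ in the first case, $a$ in the second) turns these into $c < d$ and $d < c$ respectively, so $d \apart c$. Likewise, cotransitivity with the intermediate term $a \cdot d + a \cdot c$ gives $b \cdot c < a \cdot c$ or $a \cdot d < b \cdot d$, hence $b < a$ or $a < b$, so $b \apart a$. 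Transporting these back along the shift by $N$ yields $b \apart a$ and $d \apart c$ for the original elements. The only thing requiring care is tracking which positive factor is being cancelled at each step, which is precisely why the reduction to positive elements comes first; everything else uses only the prestreak axioms governing $<$.
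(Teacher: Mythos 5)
Your proof is correct, and part 1 is the same four-way case split the paper uses. For part 2, however, you take a genuinely different and noticeably cleaner route. The paper's proof keeps $a,b,c,d$ as they are and works with approximate comparisons: it chooses a single $n$ witnessing both $1 + n \cdot (a \cdot d + b \cdot c) < n \cdot (a \cdot c + b \cdot d)$ and the bounds $b < a + n$, $a < b + n$, $d < c + n$, $c < d + n$, then uses cotransitivity to produce disjunctions such as $n^2 \cdot a < n^2 \cdot b \lor n^2 \cdot b < 1 + n^2 \cdot a$, and in the ``approximate'' branches derives contradictions via the multiplicity condition and Lemma~\ref{Lemma: bounds_in_multiplicative_prestreak}. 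You instead use the archimedean property only to translate all four elements into $X_{> 0}$, noting that the two cross-sums are shifted by the \emph{identical} element $N \cdot (a + b + c + d) + 2 N \cdot N$, so that by additive reflection both the hypothesis and the conclusion are invariant under the shift; the positive case then needs only cotransitivity and the additive and multiplicative cancellation axioms of a prestreak --- no multiplicity condition and no Lemma~\ref{Lemma: bounds_in_multiplicative_prestreak} at all. This realizes, without actual subtraction, the paper's own observation after Definition~\ref{Definition: multiplicative_prestreak} that the multiplicity condition trivializes in the presence of subtraction: you simulate subtraction by a large natural-number shift. The ingredients you rely on (distributivity and commutativity of the total multiplication, its agreement with the partial one on positives, preservation and reflection of $<$ under adding a fixed element) are all supplied by Definition~\ref{Definition: multiplicative_prestreak} and the prestreak axioms, so the argument is complete.
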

			\begin{proof}
				\begin{enumerate}
					\item
						Easily follows from consideration of all four cases in $(b < a \lor a < b) \land (d < c \lor c < d)$.
					\item
						Assume $a \cdot d + b \cdot c < a \cdot c + b \cdot d$ (the case $a \cdot d + b \cdot c > a \cdot c + b \cdot d$ is proved analogously, or, easier still, just switch $a$ and $b$).
						
						Use the archimedean property to find $n \in \NN$ (necessarily $n > 0$) that fulfils
						\[1 + n \cdot (a \cdot d + b \cdot c) < n \cdot (a \cdot c + b \cdot d),\]
						as well as $b < a + n$, $a < b + n$, $d < c + n$, $c < d + n$. By cotransitivity all the following disjunctions are valid:
						\[n^2 \cdot a < n^2 \cdot b \lor n^2 \cdot b < 1 + n^2 \cdot a, \qquad n^2 \cdot c < n^2 \cdot d \lor n^2 \cdot d < 1 + n^2 \cdot c,\]
						\[n^2 \cdot b < n^2 \cdot a \lor n^2 \cdot a < 1 + n^2 \cdot b, \qquad n^2 \cdot d < n^2 \cdot c \lor n^2 \cdot c < 1 + n^2 \cdot d.\]
						Some of the cases obviously lead to the desired conclusion $b \apart a \land d \apart c$.
						
						Of the others, consider $n^2 \cdot a < n^2 \cdot b$ --- that is, $a < b$  --- and $n^2 \cdot d < 1 + n^2 \cdot c$. Applying the multiplicity condition, we get
						\[b \cdot n^2 \cdot d + a \cdot (1 + n^2 \cdot c) < b \cdot (1 + n^2 \cdot c) + a \cdot n^2 \cdot d,\]
						or equivalently,
						\[a + n^2 \cdot (a \cdot c + b \cdot d) < b + n^2 \cdot (a \cdot d + b \cdot c).\]
						Adding this to the inequality $1 + n \cdot (a \cdot d + b \cdot c) < n \cdot (a \cdot c + b \cdot d)$ from above, multiplied by $n$, then canceling all that we can, we obtain $a + n < b$, a contradiction.
						
						Now consider the case when all of $n^2 \cdot b < 1 + n^2 \cdot a$, $n^2 \cdot a < 1 + n^2 \cdot b$, $n^2 \cdot d < 1 + n^2 \cdot c$, $n^2 \cdot c < 1 + n^2 \cdot d$ hold. By Lemma~\ref{Lemma: bounds_in_multiplicative_prestreak} we then have
						\[n^4 \cdot (a \cdot c + b \cdot d) < 1 + n^4 \cdot (a \cdot d + b \cdot c).\]
						However, if we add this to the inequality $1 + n \cdot (a \cdot d + b \cdot c) < n \cdot (a \cdot c + b \cdot d)$ from above, multiplied by $n^3$, then cancel all that we can, we get $n^3 < 1$, a contradiction.
						
						The remaining possible cases are similar.
				\end{enumerate}
			\end{proof}
			
			We can now generalize the property, connecting $\cdot$ and $<$.
			\begin{proposition}
				Let $X$ be a multiplicative archimedean prestreak and $a, b, x \in X$. If $x > 0$, then
				\[a < b \iff a \cdot x < b \cdot x,\]
				and in case $x < 0$, we have
				\[a < b \iff a \cdot x > b \cdot x.\]
			\end{proposition}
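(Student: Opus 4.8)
The plan is to reduce both equivalences to a single piece of algebraic bookkeeping: recast the inequality $a \cdot x < b \cdot x$ as an instance of the multiplicity condition of Definition~\ref{Definition: multiplicative_prestreak}. Using distributivity, commutativity, and that $1$ is the unit for $\cdot$, one has the identities
\[b \cdot 1 + a \cdot (x+1) = (a + b) + a \cdot x, \qquad b \cdot (x+1) + a \cdot 1 = (a + b) + b \cdot x,\]
so, since adding the element $a+b$ reflects $<$ (a prestreak axiom), $a \cdot x < b \cdot x$ holds if and only if $b \cdot 1 + a \cdot (x+1) < b \cdot (x+1) + a \cdot 1$ — and the latter is exactly the conclusion of the multiplicity condition applied with the tuple $(b, a, x+1, 1)$ in the roles of $(a,b,c,d)$, whose hypothesis reads $a < b \land 1 < x+1$, i.e. $a < b \land 0 < x$. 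For the case $x < 0$ one instead takes the tuple $(b, a, 1, x+1)$: its multiplicity hypothesis is $a < b \land x+1 < 1$, i.e. $a < b \land x < 0$, and, by the same identities, its conclusion rearranges to $b \cdot x < a \cdot x$.

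Granting this, the case $x > 0$ goes as follows. For the forward implication, from $a < b$ and $0 < x$ the multiplicity condition (tuple $(b,a,x+1,1)$) gives $b \cdot 1 + a \cdot (x+1) < b \cdot (x+1) + a \cdot 1$, which by the identities above is $a \cdot x < b \cdot x$. For the reverse implication, suppose $a \cdot x < b \cdot x$; by the same identities this is $b \cdot 1 + a \cdot (x+1) < b \cdot (x+1) + a \cdot 1$, in particular $b \cdot 1 + a \cdot (x+1) \apart b \cdot (x+1) + a \cdot 1$, so Proposition~\ref{Proposition: reverse_multiplicity_condition}(2) (here we use that $X$ is archimedean) yields $a \apart b$, i.e. $a < b \lor b < a$. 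The second disjunct is impossible: $b < a$ would give $b \cdot x < a \cdot x$ by the forward implication already established, contradicting $a \cdot x < b \cdot x$ and asymmetry of $<$. Hence $a < b$. The case $x < 0$ is entirely parallel, working with the tuple $(b, a, 1, x+1)$: the multiplicity condition turns $a < b$, $x < 0$ into $b \cdot x < a \cdot x$, i.e. $a \cdot x > b \cdot x$; and if $b \cdot x < a \cdot x$, the corresponding apartness and Proposition~\ref{Proposition: reverse_multiplicity_condition}(2) give $a \apart b$, with $b < a$ excluded by the forward direction plus asymmetry, so $a < b$.

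The only real subtlety — and the reason the statement assumes archimedean — is the reflection direction: in a non-tight prestreak one cannot pass from $\lnot(a \apart b)$ to $a = b$, nor even conclude $a \cdot 0 = 0$, so a naive cancellation argument is unavailable and one genuinely needs the archimedean reverse-multiplicity result of Proposition~\ref{Proposition: reverse_multiplicity_condition}. This is also why I route everything through $x+1$ and $1$ rather than through $0$. Apart from choosing the four variables in the multiplicity condition correctly, which I expect to be the only fiddly point, the remaining manipulations are routine commutative-monoid algebra.
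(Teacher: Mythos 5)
Your proof is correct and follows essentially the same route as the paper's: the forward implications are instances of the multiplicity condition, and the reverse ones combine Proposition~\ref{Proposition: reverse_multiplicity_condition}(2) (giving $a \apart b$) with the forward direction and asymmetry of $<$. The only difference is that you instantiate the multiplicity condition at $(b, a, x+1, 1)$ and cancel the summand $a+b$ via the additive axiom, whereas the paper simply specializes at $x$ and $0$; your variant is in fact slightly more careful, since $a \cdot 0 = 0$ is not available in a mere (non-tight) prestreak.
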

			\begin{proof}
				The directions $(\impl)$ are just cases of the multiplicity condition when one of the elements is $0$.
				
				Conversely, suppose $x > 0$ and $a \cdot x < b \cdot x$. Proposition~\ref{Proposition: reverse_multiplicity_condition} then implies $b \apart a$ and $x \apart 0$. We thus have $a < b \lor b < a$. If the first disjunct holds, we are done. Assume the second one. Then by the already shown direction $(\impl)$ we have $b \cdot x < a \cdot x$, a contradiction.
				
				The case $x < 0$ is shown the same way.
			\end{proof}
			
			Here is how the total mulitplication behaves with regard to signs (answer: as expected).
			\begin{proposition}\label{Proposition: multiplication_and_signs}
				Let $X$ be a multiplicative prestreak and $a, b \in X$.
				\begin{enumerate}
					\item
						The following holds:
						\[a > 0 \land b > 0 \implies a \cdot b > 0,\]
						\[a > 0 \land b < 0 \implies a \cdot b < 0,\]
						\[a < 0 \land b > 0 \implies a \cdot b < 0,\]
						\[a < 0 \land b < 0 \implies a \cdot b > 0.\]
						Hence, $a \apart 0 \land b \apart 0 \implies a \cdot b \apart 0$.
					\item
						If $X$ is archimedean, then we also have reverse implications in the following sense:
						\[a \cdot b > 0 \implies \big(a > 0 \land b > 0\big) \lor \big(a < 0 \land b < 0\big),\]
						\[a \cdot b < 0 \implies \big(a > 0 \land b < 0\big) \lor \big(a < 0 \land b > 0\big).\]
						In particular $a \cdot b \apart 0 \implies a \apart 0 \land b \apart 0$.
				\end{enumerate}
			\end{proposition}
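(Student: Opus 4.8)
The plan is to reduce both parts to the multiplicity condition (and its converse from Proposition~\ref{Proposition: reverse_multiplicity_condition}) by substituting $0$ for suitable arguments, and then to settle Part~2 by a four-way case split on signs combined with asymmetry of $<$.

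For Part~1 I would first observe that $a > 0 \land b > 0 \implies a \cdot b > 0$ comes for free, since the total multiplication restricts on $X_{> 0}$ to the prestreak multiplication, which is valued in $X_{> 0}$. For the remaining three implications I would feed the multiplicity condition $\beta < \alpha \land \delta < \gamma \implies \alpha \cdot \delta + \beta \cdot \gamma < \alpha \cdot \gamma + \beta \cdot \delta$ with one entry of each pair equal to $0$. Taking $\alpha = a$, $\beta = 0$, $\gamma = 0$, $\delta = b$ turns the hypotheses into $0 < a$ and $b < 0$ and the conclusion into $a \cdot b < 0$ (every term carrying a factor $0$ collapses to $0$ — at worst up to the equivalence $\nap$, which $<$, $\leq$ and $\apart$ all respect, so this costs nothing); this settles $a > 0 \land b < 0$, and $a < 0 \land b > 0$ follows by commutativity of $\cdot$. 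Taking instead $\alpha = 0$, $\beta = a$, $\gamma = 0$, $\delta = b$ turns the hypotheses into $a < 0$, $b < 0$ and the conclusion into $0 < a \cdot b$, settling $a < 0 \land b < 0$. The ``hence'' clause then drops out by expanding $a \apart 0$ into $a > 0 \lor a < 0$ and $b \apart 0$ into $b > 0 \lor b < 0$ and invoking the appropriate one of the four implications in each of the four cases.

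For Part~2 the key move is to apply Proposition~\ref{Proposition: reverse_multiplicity_condition}(2) — available precisely because $X$ is now assumed archimedean — with $\alpha = 0$, $\beta = a$, $\gamma = 0$, $\delta = b$: its hypothesis becomes $0 \apart a \cdot b$ and its conclusion becomes $a \apart 0 \land b \apart 0$. Thus $a \cdot b \apart 0$ immediately yields $a \apart 0$ and $b \apart 0$, which is the ``in particular'' assertion, and a fortiori this applies whenever $a \cdot b > 0$ or $a \cdot b < 0$. Now if $a \cdot b > 0$, then $a \apart 0$ and $b \apart 0$ place $(a,b)$ in one of the four sign combinations; by Part~1 the two mixed-sign combinations force $a \cdot b < 0$, contradicting $a \cdot b > 0$ by asymmetry of $<$, so only $(a > 0 \land b > 0) \lor (a < 0 \land b < 0)$ survives. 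The case $a \cdot b < 0$ is the mirror image: the two equal-sign combinations are excluded, leaving $(a > 0 \land b < 0) \lor (a < 0 \land b > 0)$.

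I do not anticipate a genuine obstacle; the content is bookkeeping. The only things needing care are choosing the zero-substitutions into the multiplicity condition so that exactly the intended inequality remains after the $0$-terms are discarded, and keeping track of the fact that the whole weight of Part~2 rests on Proposition~\ref{Proposition: reverse_multiplicity_condition}, which is where the archimedean hypothesis actually enters — Part~1 uses no archimedeanness at all.
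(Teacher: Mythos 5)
Your proof is correct and follows essentially the same route as the paper: Part 1 by specializing the multiplicity condition with two of the four entries set to $0$, and Part 2 by getting $a \apart 0 \land b \apart 0$ from Proposition~\ref{Proposition: reverse_multiplicity_condition} with two zeros and then excluding sign combinations via Part 1 and asymmetry. The only (harmless) deviation is deriving the first implication of Part 1 from the restriction of $\cdot$ to $X_{>0}$ rather than from the multiplicity condition, and your explicit remark that the $0$-terms collapse only up to $\nap$ is a point the paper leaves tacit.
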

			\begin{proof}
				\begin{enumerate}
					\item
						These are special cases of the multiplicity condition when two of the elements are $0$.
					\item
						The statement $a \cdot b \apart 0 \implies a \apart 0 \land b \apart 0$ follows directly from Proposition~\ref{Proposition: reverse_multiplicity_condition} (by setting two of the elements to $0$). The specific signs of $a$ and $b$ can be determined by considering all the possible cases and excluding those which lead to contradiction by the previous item.
				\end{enumerate}
			\end{proof}
			
			In the remainder of the subsection we observe two results which hold when we actually have a streak (rather than just a prestreak): multiplication in a multiplicative streak extends not just the standard streak multiplication (as per the definition), but also multiplication with natural numbers; moreover, any such extension of multiplication is unique.
			
			\begin{lemma}\label{Lemma: total_multiplication_coincides_with_multiplication_with_natural_numbers_in_multiplicative_streaks}
				Let $X$ be a multiplicative streak and let $\cdot'$ denote the total multiplication on $X$. Then for any $x \in X$ and $n \in \NN$ we have $n \cdot' x = n \cdot x$ (where $\cdot$ denotes the usual multiplication with natural numbers). In particular $0 \cdot' x = 0$ and $1 \cdot' x = x$.
			\end{lemma}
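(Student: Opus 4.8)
The plan is to prove $n \cdot' x = n \cdot x$ for fixed $x \in X$ by induction on $n \in \NN$, where $\cdot$ on the right denotes multiplication with natural numbers, defined by $0 \cdot a = 0$ and $(n+1) \cdot a = n \cdot a + a$.

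First I would dispatch the base case $n = 0$, i.e.\ the claim $0 \cdot' x = 0$. Here $0 \cdot x = 0$ holds by definition, so it suffices to show $0 \cdot' x = 0$. Using distributivity of the total multiplication over addition (part of Definition~\ref{Definition: multiplicative_prestreak}), $0 \cdot' x = (0 + 0) \cdot' x = 0 \cdot' x + 0 \cdot' x$; since $X$, being a streak, is a cancellative monoid for addition, cancelling $0 \cdot' x$ gives $0 \cdot' x = 0$. For the inductive step, assume $n \cdot' x = n \cdot x$. Recall that via the initial prestreak morphism $\ini[X]\colon \NN \to X$ (Proposition~\ref{Proposition: N_initial_prestreak}) the natural number $n+1$, viewed in $X$, is the sum $n + 1$ in $X$; hence distributivity of $\cdot'$ over $+$ together with the fact that $1$ is the unit for $\cdot'$ (Definition~\ref{Definition: multiplicative_prestreak}) gives
\[(n+1) \cdot' x = n \cdot' x + 1 \cdot' x = n \cdot' x + x.\]
Applying the induction hypothesis and the defining equation $(n+1) \cdot x = n \cdot x + x$ yields $(n+1) \cdot' x = n \cdot x + x = (n+1) \cdot x$, completing the induction. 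The two special cases are then immediate: $0 \cdot' x = 0$ is the base case, and $1 \cdot' x = x$ is just the unit law for $\cdot'$.

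I do not expect any real obstacle here; the only points needing a little care are reading $n+1$ correctly as an element of $X$ (so that distributivity of the \emph{total} multiplication applies to it), and invoking cancellation in the base case — both licensed by results already established in the excerpt.
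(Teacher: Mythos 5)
Your proof is correct and follows the same overall induction as the paper: the base case $0 \cdot' x = 0$ via distributivity and cancellation in the streak, and the inductive step $(n+1)\cdot' x = n \cdot' x + 1 \cdot' x = n\cdot x + x = (n+1)\cdot x$. The one place you diverge is how $1 \cdot' x = x$ is obtained. You read it directly off the clause ``$1$ is the unit for total $\cdot$'' in Definition~\ref{Definition: multiplicative_prestreak}, which is a legitimate reading of the definition as written and makes that case immediate. The paper instead treats $1 \cdot' x = x$ as something to be \emph{proved} (note that the lemma explicitly lists it among the conclusions): for $x > 0$ it follows because $\cdot'$ restricts to the prestreak multiplication on $X_{>0}$, and for general $x$ one uses the archimedean property to find $m \in \NN$ with $x + m > 0$, computes $x + m = 1 \cdot (x+m) = 1 \cdot' (x+m) = 1 \cdot' x + m$, and cancels $m$. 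So the paper's argument in effect shows that, for streaks, the unit clause for the total multiplication is redundant given the other axioms, whereas yours simply invokes it; both are valid, and yours is shorter.
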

			\begin{proof}
				We first check the two given special cases. As usual, we have
				\[0 \cdot' x = (0 + 0) \cdot' x = 0 \cdot' x + 0 \cdot' x\]
				whence $0 \cdot' x = 0 = 0 \cdot x$ by cancellation.
				
				As far as the case $n = 1$ is concerned, if $x = 0$, we already know $1 \cdot' 0 = 0$, and if $x > 0$, then $1 \cdot' x = 1 \cdot x = x$ because $\cdot'$ restricts to the usual multiplication on positive elements. Consider now a general $x \in X$; let $m \in \NN$ be such that $0 < x + m$ (use the archimedean property). Then
				\[x + m = 1 \cdot (x + m) = 1 \cdot' (x + m) = 1 \cdot' x + 1 \cdot' m = 1 \cdot' x + m.\]
				Cancel $m$ to obtain $1 \cdot' x = x = 1 \cdot x$.
				
				For general $n \in \NN$, use induction. The base is already covered. As for the inductive step, assume $n \cdot' x = n \cdot x$. Then
				\[(n+1) \cdot' x = n \cdot' x + 1 \cdot' x = n \cdot x + x = (n+1) \cdot x.\]
			\end{proof}
			
			\begin{proposition}\label{Proposition: uniqueness_of_multiplicative_structure_in_streaks}
				A streak has at most one multiplicative structure.
			\end{proposition}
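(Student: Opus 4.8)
The plan is to reduce multiplication of arbitrary elements to multiplication of positive ones, where the value is already pinned down by the definition of a multiplicative prestreak. Suppose $\cdot_1$ and $\cdot_2$ are two total operations, each making the streak $X$ into a multiplicative prestreak in the sense of Definition~\ref{Definition: multiplicative_prestreak}; the goal is to show $a \cdot_1 b = a \cdot_2 b$ for all $a, b \in X$. First I would invoke the archimedean property (in the form already used repeatedly above, with parameters of the shape $0$, $0$, $x$, $1$) to pick $m, n \in \NN$ with $a + m > 0$ and $b + n > 0$, so that both $a + m$ and $b + n$ lie in $X_{> 0}$.

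Next, for each $k \in \{1, 2\}$ I would expand, using commutativity of $\cdot_k$ and its distributivity over addition,
\[(a + m) \cdot_k (b + n) = a \cdot_k b + n \cdot_k a + m \cdot_k b + m \cdot_k n.\]
By Lemma~\ref{Lemma: total_multiplication_coincides_with_multiplication_with_natural_numbers_in_multiplicative_streaks} the three mixed summands do not depend on $k$: $n \cdot_k a = n \cdot a$ and $m \cdot_k b = m \cdot b$ are multiplications with natural numbers, and $m \cdot_k n = m n$ is just a product of natural numbers computed inside $\NN \subseteq X$. On the other hand, since $a + m$ and $b + n$ are positive and $\cdot_k$ restricts on $X_{> 0}$ to the prestreak multiplication, the left-hand side equals the prestreak product $(a + m) \cdot (b + n)$, which likewise does not depend on $k$.

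Hence $a \cdot_1 b + \big(n \cdot a + m \cdot b + m n\big) = a \cdot_2 b + \big(n \cdot a + m \cdot b + m n\big)$, and cancelling the common summand — streaks being cancellative monoids for $+$ — yields $a \cdot_1 b = a \cdot_2 b$, as desired. There is no genuine obstacle; the only points requiring care are that distributivity in Definition~\ref{Definition: multiplicative_prestreak} is postulated for all elements, not merely positive ones, so the expansion above is legitimate, and that the appeal to Lemma~\ref{Lemma: total_multiplication_coincides_with_multiplication_with_natural_numbers_in_multiplicative_streaks} (which secretly uses the archimedean property and tightness) is precisely what upgrades agreement of the two multiplications on $X_{> 0}$ to agreement on all of $X$.
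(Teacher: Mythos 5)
Your proposal is correct and follows essentially the same route as the paper's own proof: pick $m, n$ via the archimedean property so that $a+m, b+n > 0$, expand $(a+m)\cdot_k(b+n)$ by distributivity, identify the mixed terms using Lemma~\ref{Lemma: total_multiplication_coincides_with_multiplication_with_natural_numbers_in_multiplicative_streaks}, and cancel. No gaps.
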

			\begin{proof}
				Let $X$ be a streak, $\cdot\colon X_{> 0} \times X_{> 0} \to X_{> 0}$ its usual multiplication and $\cdot', \cdot''\colon X \times X \to X$ any two operations which make $X$ into a multiplicative streak.
				
				Take any $a, b \in X$. By the archimedean property there are $m, n \in \NN$ such that $0 < a + m$, $0 < b + n$. Then
				\[(a + m) \cdot (b + n) = (a + m) \cdot' (b + n) = a \cdot' b + n \cdot' a + m \cdot' b + m \cdot' n = a \cdot' b + n \cdot a + m \cdot b + m \cdot n\]
				by the previous lemma. The same formula holds for $\cdot''$ whence
				\[a \cdot' b + n \cdot a + m \cdot b + m \cdot n = a \cdot'' b + n \cdot a + m \cdot b + m \cdot n,\]
				and after cancellation, $a \cdot' b = a \cdot'' b$.
			\end{proof}
			
		\subsection{Dense streaks}\label{Subsection: dense_streaks}
		
			Concrete constructions of reals typically amount to some sort of completion of rationals. The point is, one starts with natural numbers, there are standard ways to construct rationals from them, and rationals are dense in reals (essentially by definition).
			
			However, there is no necessity to obtain the reals specifically from the rationals; any dense subset would do. Using some other set is not just a theoretical possibility, but is actually done in practice: many computer implementations of reals use the dyadic rationals (= rationals of the form $a/2^b$ with $a \in \ZZ$, $b \in \NN$). On the other hand, one could consider a set of ``primitives'', bigger that $\QQ$, say the field, obtained by adjoining radicals to $\QQ$ (thus having it closed for another operation, namely the taking of roots), and construct the reals from those.
			
			In any case, forming reals from some other dense set is no more difficult than forming them from rationals, so we'll do it in this greater generality. In this subsection we define and characterize, what it means for a streak to be ``dense''.
			
			\begin{definition}
				A streak $X$ is \df{dense} when for all $q, r \in \QQ$ with $q < r$ there exists $x \in X$ with $q < x < r$.
			\end{definition}
			
			Obviously $\QQ$ itself is dense by this definition (take $x = \frac{q + r}{2}$); in fact, we involved $\QQ$ in the definition of density because we can consider it a ``model dense streak''. However, density of a streak can be characterized without involving the rationals.
			
			\begin{lemma}\label{Lemma: countable_dense_substreak}
				Let $X$ be a streak with some $z \in \intoo[X]{-1}{0}$.
				\begin{enumerate}
					\item
						The substreak $S$, generated by $z$, is dense.
					\item
						Assume further that $X$ is multiplicative. Then the multiplicative substreak $M$, generated by $z$, is the image of the map $f\colon \finseq{\NN} \to X$,
						\[f\Big((a_i)_{i \in \NN_{< m}}\Big) \dfeq \sum_{i \in \NN_{< m}} a_i z^i,\]
						and thus a countable dense multiplicative substreak of $X$.\footnote{The point of the second item is to show that a dense streak has a countable dense substreak, at least when it is multiplicative. As we see later (in Subsection~\ref{Subsection: ring_streaks}), any streak can be embedded into a multiplicative streak, and we could consider $S$ and $M$ in that one, though clearly still $S \subseteq X$. $M$ is countable and $S \subseteq M$, so in classical mathematics $S$ is countable as well, implying that any dense streak has a countable dense substreak. Constructively this argument doesn't work, though, as a subset of a countable set need not be itself countable.}
				\end{enumerate}
			\end{lemma}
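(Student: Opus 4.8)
The plan is to work with $w \dfeq 1 + z$; since $-1 < z < 0$ we have $0 < w < 1$, and because $S$ contains $0,1,z$ and is closed under addition and under multiplication of its positive elements, $w$ and all its powers $w^k$ ($k \in \NN$), hence all $n w^k$, lie in $S$, as do $n z$ and $n + m z$ for $n, m \in \NN$. The first step I would carry out is to show that the powers of $w$ eventually drop below any positive rational: choosing (via Lemma~\ref{Lemma: density_of_rationals_in_an_archimedean_prestreak}) a rational $\rho$ with $w < \rho < 1$, an induction on $k$ through the multiplicative clauses of Proposition~\ref{Proposition: prestreak_comparison_with_rationals} gives $w^k < \rho^k$ for $k \geq 1$, and since $\QQ$ is archimedean, $\rho^k < \delta$ for $k$ large; so for every rational $\delta > 0$ there is $k$ with $0 < w^k < \delta$.

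Next comes density of $S$ on a positive interval: given rationals $0 < q < r$, use the first step to pick $k$ with $2 w^k < r - q$, and by the archimedean property $N \in \NN$ with $r < N w^k$. Following the proof of Lemma~\ref{Lemma: archimedean_prestreak_is_a_union_of_rational_intervals}, cotransitivity applied to $j w^k < (j+1) w^k$ gives for each $j < N$ the disjunction $j w^k < q \ \lor\ q < (j+1) w^k$; recording in a bit $b_j$ which disjunct is taken — with $b_0 = 0$ since $0 < q$ — produces a finite binary string which must contain a $1$: otherwise $(N-1) w^k < q$, and with $w^k < r - q$ this gives $N w^k < r$, contradicting $r < N w^k$. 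Letting $j^{\ast} \geq 1$ be the position of the first $1$, from $b_{j^{\ast}} = 1$ we get $q < (j^{\ast}+1) w^k$ and from $b_{j^{\ast}-1} = 0$ we get $(j^{\ast}-1) w^k < q$, so the element $(j^{\ast}+1) w^k = (j^{\ast}-1) w^k + 2 w^k$ of $S$ lies in $(q, r)$. The cases $q = 0$ (take a small $w^k$) and $q < 0 < r$ (take $0$) are immediate.

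For a general interval $(q, r)$ with $r \leq 0$ I would reduce to the previous case: by the archimedean property pick $m$ with $m z < q$, then by Lemma~\ref{Lemma: archimedean_prestreak_is_a_union_of_rational_intervals} rationals $t < m z < s$ with $s - t < (r-q)/2$, which also gives $s < q + (s - t) < r$. Now $0 < q - t < r - s$ are rationals, so the positive case supplies $p \in S$ with $q - t < p < r - s$, and $x \dfeq m z + p \in S$ satisfies $q < x < r$ by the additive clauses of Proposition~\ref{Proposition: prestreak_comparison_with_rationals} (from $t < m z$ and $q - t < p$ one gets $q < m z + p$; from $m z < s$ and $p < r - s$ one gets $m z + p < r$). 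This proves part~1. For part~2, $\im(f)$ is precisely the set of $\NN$-linear combinations of powers of $z$: it contains $0,1,z$ and is closed under $+$ and under the (now total) multiplication of $X$ — these being ordinary addition and multiplication of polynomials with coefficients in $\NN$ — so, being a subset of $X$ closed under all the operations, it is a multiplicative substreak of $X$ containing $z$ (tightness, the archimedean and multiplicity conditions, and openness of $<$ / closedness of $\leq$ are inherited from $X$); conversely any multiplicative substreak containing $z$ contains every $z^i$, hence $\im(f)$, so $M = \im(f)$. It is countable since $\finseq{\NN}$ is, and dense since it contains the substreak $S$, which is dense by part~1.

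The main obstacle is the constructive crossing argument in the positive case: one cannot decide whether a given $j w^k$ is below or above $q$, so the proof has to pass through the finite list of cotransitivity disjunctions and the decidable search for the first $1$ in the resulting binary string, exactly in the style of Lemma~\ref{Lemma: archimedean_prestreak_is_a_union_of_rational_intervals}. A further point to watch throughout is that rationals and differences like $q - t$ need not be elements of $X$, so every comparison between a candidate element of $X$ and a rational must be handled via the rational-comparison relations of Proposition~\ref{Proposition: prestreak_comparison_with_rationals}, and the merely partial subtraction of streaks must be avoided.
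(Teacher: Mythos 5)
Your proof is correct and follows essentially the same route as the paper's: powers of $w = 1+z$ become arbitrarily small, a finite cotransitivity search through the multiples $j\,w^k$ (recorded as a binary string with a decidable first $1$) locates an element of $S$ in any positive rational interval, and translation by a multiple of $z$ reduces the general interval to that case; part 2 is the same closure argument for $\im(f)$. The only differences are cosmetic (you choose $b_0 = 0$ outright since $0 < q$ is a true disjunct, where the paper forces it by taking $w^k < q$, and your translation step in fact already covers the special cases you list separately).
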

			\begin{proof}
				\begin{itemize}
					\item
						Take $q, r \in \QQ$ with $q < r$. We first find $x \in \intoo[S]{q}{r}$ in the special case $q > 0$.
						
						Find some $k \in \NN$ with $r < k$. We have $z+1 \in \intoo[X]{0}{1}$, so by Lemma~\ref{Lemma: density_of_rationals_in_an_archimedean_prestreak} there is some $s \in \QQ$ with $z+1 < s < 1$. Let $n \in \NN$ be large enough so that $s^n < \inf\left\{q, \frac{r-q}{2}\right\}$. Use the archimedean property to find $m \in \NN_{> 0}$ with $k < m \cdot (z+1)^n$. Construct a finite binary sequence $b\colon \NN_{\leq m} \to \{0, 1\}$ by choosing for each $i \in \NN_{\leq m}$ a true disjunct in $i \cdot (z+1)^n < q \lor q < (i+1) \cdot (z+1)^n$ and set $b_i \dfeq 0$ if the first disjunct is chosen, and $b_i \dfeq 1$ if the second is.
						
						We have $(z+1)^n < s^n < q$, so necessarily $b_0 = 0$. Likewise $q < r < k < m \cdot (z+1)^n$, so necessarily $b_m = 1$.
						
						Declare $j$ to be the first index, for which $b_j = 1$; then $q < (j+1) \cdot (z+1)^n$. Since $b_{j-1} = 0$, we have $(j-1) \cdot (z+1)^n < q$, and since also $2 \cdot (z+1)^n < 2 s^n < r-q$, we conclude $(j+1) \cdot (z+1)^n < r$. Thus $x \dfeq (j+1) \cdot (z+1)^n$ satisfies the required conditions.
						
						Consider now the general case (we no longer assume $q > 0$). Write $q = \frac{i-j}{k}$ where $i, j \in \NN$, $k \in \NN_{> 0}$. Since $z < 0$ (and therefore also $k \cdot z < 0$), there exists $n \in \NN$ with $j + k + n k \cdot z < i$, meaning $n \cdot z < q$. Let $t \in \NN_{> 0}$ be large enough so that $\frac{1}{t} < \frac{r-q}{2}$. Use Lemma~\ref{Lemma: archimedean_prestreak_is_a_union_of_rational_intervals} to find $u \in \ZZ$, so that $n \cdot z \in \intoo[X]{\frac{u-1}{t}}{\frac{u+1}{t}}$.
						
						We claim $0 < q - \frac{u-1}{t} < r - \frac{u+1}{t}$. The is because of
						\[\tfrac{u-1}{t} < n \cdot z < q, \qquad r - q > \tfrac{2}{t}.\]
						By the above there exists $x \in S$ with $q - \frac{u-1}{t} < x < r - \frac{u+1}{t}$. Then $x + n \cdot z \in \intoo[S]{q}{r}$.
					\item
						Obviously $0, 1 \in \im(f)$ and $\im(f)$ is closed under addition and multiplication, so $\im(f)$ is a multiplicative substreak of $X$. Also clearly any multiplicative substreak containing $z$ must contain all polynomials in $z$ with coefficients in $\NN$, so $\im(f)$ is indeed the smallest one.
						
						$M = \im(f)$ is countable since it is enumerated by the countable set $\finseq{\NN}$. Since it contains $S$, it must be dense itself.
				\end{itemize}
			\end{proof}
			
			\begin{theorem}[Characterization of dense streaks]\label{Theorem: characterization_of_dense_streaks}
				The following statements are equivalent for a streak $X$.
				\begin{enumerate}
					\item
						$X$ is dense.
					\item
						There exists an element in $X_{< 0}$ and $X$ has the \df{interpolation property}
						\[\all{a, b}{X}{a < b \implies \xsome{x}{X}{a < x < b}}.\]
					\item
						There exists an element in $X_{< 0}$ and an element in $\intoo[X]{0}{1}$.
					\item
						There exists an element in $\intoo[X]{-1}{0}$.
				\end{enumerate}
			\end{theorem}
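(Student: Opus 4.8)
The plan is to prove the four statements equivalent by running the cycle $(1) \Rightarrow (2) \Rightarrow (3) \Rightarrow (4) \Rightarrow (1)$, in which the only step that needs genuine work is $(3) \Rightarrow (4)$; the other three are short.

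For $(1) \Rightarrow (2)$: if $X$ is dense, applying density to the rationals $-1 < 0$ produces an element of $\intoo[X]{-1}{0}$, in particular an element of $X_{<0}$; and given $a, b \in X$ with $a < b$, Lemma~\ref{Lemma: density_of_rationals_in_an_archimedean_prestreak} yields rationals $q < r$ with $a < q < r < b$, so density gives $x \in X$ with $q < x < r$ and hence $a < x < b$. The implication $(2) \Rightarrow (3)$ is immediate: apply the interpolation property to $0 < 1$ to get an element of $\intoo[X]{0}{1}$, and carry the element of $X_{<0}$ over unchanged. The implication $(4) \Rightarrow (1)$ is exactly Lemma~\ref{Lemma: countable_dense_substreak}(1): the substreak of $X$ generated by a witness $z \in \intoo[X]{-1}{0}$ is dense, and since it is contained in $X$, the streak $X$ is dense as well.

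For $(3) \Rightarrow (4)$: fix $w \in X$ with $w < 0$ and $v \in X$ with $0 < v < 1$. Using Lemma~\ref{Lemma: density_of_rationals_in_an_archimedean_prestreak} choose a rational $s$ with $v < s < 1$ and set $t := \frac{s+1}{2}$, so that $0 < s < t < 1$. By the archimedean property (with $a = 0 = b$, $c = w$, $d = v$, using $0 < v$) there is $n \in \NN$ with $0 < w + n \cdot v$; put $x_j := w + j \cdot v$ for $j \in \{0, \dots, n\}$ (multiplication of a natural number with an element of $X$, cf.\ Proposition~\ref{Proposition: multiplication_with_natural_numbers}), so that $x_0 = w < 0$, $x_n > 0$ and $x_{j+1} = x_j + v$. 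For each $j$, cotransitivity of comparison with rationals (Proposition~\ref{Proposition: alt_streak}) applied to $-t < -s$ gives $x_j > -t$ or $x_j < -s$; choose a true disjunct for each $j$. Since $x_n > 0 > -t$, the first alternative holds at $j = n$, so there is a least index $j^* \in \{0, \dots, n\}$ at which the first alternative is chosen. If $j^* = 0$ then $-1 < -t < w$ and $w < 0$, so $w \in \intoo[X]{-1}{0}$. If $j^* \ge 1$ then the second alternative was chosen at $j^* - 1$, i.e.\ $x_{j^*-1} < -s$; combined with $v < s$ this gives $x_{j^*} = x_{j^*-1} + v < (-s) + s = 0$ by Proposition~\ref{Proposition: prestreak_comparison_with_rationals}, while $x_{j^*} > -t > -1$ by the choice at $j^*$; hence $x_{j^*} \in \intoo[X]{-1}{0}$. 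In either case $(4)$ holds.

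The main obstacle is precisely $(3) \Rightarrow (4)$. The naive approach of taking the first $n$ with $w + n > 0$ and subtracting $1$ does not work: that $n$ is not constructively the \emph{true} first crossing, so the resulting element is only trapped in an interval of width $1$ straddling $0$. The device above repairs this by walking up from $w$ in steps of size $v < 1$, and, at the moment the walk is detected to lie above the threshold $-t$ (with $t < 1$), using that the previous value lay below $-s$ (with $v < s$) to force the current value below $0$; thus the step size being bounded by a rational $< 1$ is exactly what lets the constructive search close up. Everything else is routine bookkeeping with the order laws, the archimedean property, and the comparison with rationals already developed in this section.
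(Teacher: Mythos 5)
Your proof is correct. The overall decomposition is the same as the paper's --- the cycle $(1)\Rightarrow(2)\Rightarrow(3)\Rightarrow(4)\Rightarrow(1)$, with $(1)\Rightarrow(2)$, $(2)\Rightarrow(3)$ and $(4)\Rightarrow(1)$ handled exactly as in the paper (the last one by the countable-dense-substreak lemma). The only genuinely different step is $(3)\Rightarrow(4)$, which is also where you correctly identify the constructive difficulty. The paper's version uses Lemma~\ref{Lemma: archimedean_prestreak_is_a_union_of_rational_intervals} to trap the negative element $a$ in a rational interval $\intoo{\frac{i-1}{n}}{\frac{i+1}{n}}$ with $\frac{1}{n}$ small relative to rational bounds $q < b < r$ on the element $b \in \intoo[X]{0}{1}$, and then translates $a$ by an integer, splitting (decidably) on whether $i$ is divisible by $n$; the element $b$ is only invoked in the one sub-case where the trapped interval straddles an integer, to nudge the translate strictly inside $\intoo[X]{-1}{0}$. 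Your version instead walks up from $w$ in increments of $v$ and performs a finite decidable search for the approximate crossing of $0$, using the two rational thresholds $-t < -s$ with $v < s < t < 1$ to guarantee that the first detected term above $-t$ is still below $0$. Both are finite searches driven by cotransitivity against a pair of rationals; yours uses $v$ as the step size throughout and avoids the divisibility case split, at the cost of a slightly longer setup, while the paper's is a more direct ``shift by an integer'' computation. Either is a valid constructive repair of the naive (and, as you note, invalid) ``subtract one from the first $n$ with $w+n>0$'' idea.
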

			\begin{proof}
				\begin{itemize}
					\item\proven{$(1 \impl 2)$}
						Since $X$ is dense, we have an element in $\intoo[X]{-1}{0} \subseteq X_{< 0}$. As for the interpolation property, take $a, b \in X$, $a < b$. By Lemma~\ref{Lemma: density_of_rationals_in_an_archimedean_prestreak} there exist $q, r \in \QQ$ with $a < q < r < b$. Use density of $X$ again to find $x \in \intoo[X]{q}{r}$. Then $x \in \intoo[X]{a}{b}$.
					\item\proven{$(2 \impl 3)$}
						Obvious.
					\item\proven{$(3 \impl 4)$}
						Let $a \in X_{< 0}$ and $b \in \intoo[X]{0}{1}$ be the assumed elements. Use Lemma~\ref{Lemma: density_of_rationals_in_an_archimedean_prestreak} to find $q, r \in \QQ$ such that $0 < q < b < r < 1$ and let $n \in \NN_{> 0}$ be large enough so that $\frac{1}{n} < \inf\{q, 1-r\}$. By Lemma~\ref{Lemma: archimedean_prestreak_is_a_union_of_rational_intervals} there exists $i \in \ZZ$ (necessarily $i \leq 0$, since $a$ is negative) such that $a \in \intoo[X]{\frac{i-1}{n}}{\frac{i+1}{n}}$. If $i = 0$, then $a \in \intoo[X]{-1}{0}$. If $i$ is not divisable by $n$, then $a$ plus the floor of $\frac{-i}{n}$ is in $\intoo[X]{-1}{0}$. If $i$ is negative and divisable by $n$, then $a + \frac{-i-n}{n} + b \in \intoo[X]{-1}{0}$.
					\item\proven{$(4 \impl 1)$}
						We may use Lemma~\ref{Lemma: countable_dense_substreak} to produce a dense substreak of $X$. Hence $X$ itself is dense.
				\end{itemize}
			\end{proof}
			
			Generally we could substitute $\QQ$ for any dense streak in the various theorems we had up to this point. Here is just a taste.
			
			\begin{proposition}
				Let $X$ be a dense streak.
				\begin{enumerate}
					\item
						$X$ has the interpolation property with regard to any streaks $Y$, $Z$ in the following sense: $\xall{a}{Y}\all{b}{Z}{a < b \implies \xsome{x}{X}{a < x < b}}$.
					\item
						A streak $Y$ is dense if and only if it has the interpolation property with regard to $X$: $\all{a, b}{X}{a < b \implies \xsome{y}{Y}{a < y < b}}$.
				\end{enumerate}
			\end{proposition}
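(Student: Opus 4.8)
The plan is to push everything through the definition of the order between elements of two archimedean prestreaks, $a < b \dfeq \xsome{q}{\QQ}{a < q \land q < b}$, using density of $X$ to interpolate between a pair of rationals and the transitivity clauses already collected in Proposition~\ref{Proposition: prestreak_comparison_with_rationals} (together with Lemma~\ref{Lemma: density_of_rationals_in_an_archimedean_prestreak}) to splice the pieces back together. Both halves then become short.

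For item~(1), let $a \in Y$, $b \in Z$ with $a < b$. By definition there is a rational $q$ with $a < q < b$, and since $q < b$ with $b$ in the archimedean prestreak $Z$, Lemma~\ref{Lemma: density_of_rationals_in_an_archimedean_prestreak} produces a rational $r$ with $q < r < b$, so that $a < q < r < b$. Density of $X$ applied to $q < r$ gives $x \in X$ with $q < x < r$. It remains to verify $a < x$ and $x < b$: from $a < q$, Lemma~\ref{Lemma: density_of_rationals_in_an_archimedean_prestreak} (used now in $Y$) yields a rational $s$ with $a < s < q$, and then $s \leq q < x$ gives $s < x$ by Proposition~\ref{Proposition: prestreak_comparison_with_rationals}, hence $a < s < x$, i.e.\ $a < x$; symmetrically one extracts a rational $t$ with $x < t < b$, so $x < b$.

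Item~(2), the ``only if'' direction, is precisely the instance of~(1) in which the dense streak is taken to be $Y$ and both auxiliary streaks are taken to be $X$. For the ``if'' direction, assume $Y$ has the stated interpolation property with respect to $X$ and let $q < r$ be rationals; set $m \dfeq \tfrac{q + r}{2}$. Applying density of $X$ once to $q < m$ and once to $m < r$ yields $a, b \in X$ with $q < a < m$ and $m < b < r$; then $a < m < b$ forces $a < b$ by Proposition~\ref{Proposition: prestreak_comparison_with_rationals}, so the interpolation hypothesis supplies $y \in Y$ with $a < y < b$, and finally $q < a < y$ and $y < b < r$ give $q < y < r$ (again by the transitivity clauses of Proposition~\ref{Proposition: prestreak_comparison_with_rationals}). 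Hence $Y$ is dense.

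I do not expect a genuine obstacle here; the only thing requiring care is keeping straight the three flavours of $<$ in play --- within a single streak, between a streak element and a rational, and between elements of two different streaks --- and invoking the correct transitivity or density-of-rationals clause at each splice. This proposition is exactly the promised ``taste'' of replacing $\QQ$ by an arbitrary dense streak in the earlier results, and that is how I would present it.
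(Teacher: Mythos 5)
Your proof is correct and follows essentially the same route as the paper's: for (1) interpolate a second rational $r$ with $q < r < b$ via Lemma~\ref{Lemma: density_of_rationals_in_an_archimedean_prestreak} and drop an element of $X$ between $q$ and $r$; for (2) the forward direction is the special case $Y$ dense, $Z = X$, and the converse splits $(q,r)$ at the midpoint exactly as the paper does. The only difference is that you spell out the final verification $a < x < b$ in (1) (where the paper just asserts it), and even there the auxiliary rational $s$ is redundant, since $a < q < x$ already witnesses $a < x$ by the definition of the cross-streak order.
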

			\begin{proof}
				\begin{enumerate}
					\item
						By definition $a < b$ means there exists $q \in \QQ$ with $a < q < b$. Use Lemma~\ref{Lemma: density_of_rationals_in_an_archimedean_prestreak} to find $r \in \QQ$ with $q < r < b$. By density of $X$ we have $x \in X$ between $q$, $r$ and therefore also between $a$, $b$.
					\item
						The implication $(\impl)$ is a special case of the previous item. Conversely, take $q, r \in \QQ$, $q < r$. By density of $X$ we may find $a \in \intoo[X]{q}{\frac{q+r}{2}}$ and $b \in \intoo[X]{\frac{q+r}{2}}{r}$. By assumption there exists $y \in \intoo[Y]{a}{b}$ for which it then holds $q < y < r$.
				\end{enumerate}
			\end{proof}

	\section{Reflective structures}\label{Section: reflections}
	
		In this section we consider how to add additional structure to (pre)streaks. After all, we want to define and study reals with the help of streaks, but the reals have way more structure than a general streak, being a field and a lattice, among other things.
		
		We want the addition of new structures to satisfy the following criteria.
		
		1) \textbf{Addition of new structure to (pre)streaks is canonical.}
		
		Roughly speaking, this means that for every (pre)streak $X$ we construct a new (pre)streak $X'$ such that $X'$ has the wanted additional structure and is either the smallest such (pre)streak containing $X$, or the largest such which is contained in $X$.
		
		Formally, this is captured by the categorical notion of a \df{reflection}, or its dual \df{coreflection}. We recall the definitions.
		\begin{definition}
			Let $\kat{C}$ be a category and $\kat{D}$ its full subcategory.
			\begin{itemize}
				\item
					Suppose that for every object $X$ in $\kat{C}$ we are given an object $R(X)$ in $\kat{D}$ and an arrow $\eta_X\colon X \to R(X)$ in $\kat{C}$ such that for every object $Y$ in $\kat{D}$ and every arrow $f\colon X \to Y$ in $\kat{C}$ there exists a unique arrow $\overline{f}\colon R(X) \to Y$ in $\kat{D}$ such that $\overline{f} \circ \eta_X = f$.
					\[\xymatrix@+1em{
						X \ar[r]^{\eta_X} \ar[rd]_f  &  R(X) \ar@{-->}[d]_{\exists{!}}^{\overline{f}}  \\
						&  Y
					}\]
					Then we say that $R(X)$ is the \df{reflection} of $X$ in $\kat{D}$, $\eta_X$ is the \df{unit} of the reflection, and $\kat{D}$ is a \df{reflective subcategory} of $\kat{C}$. Furthermore, we can extend $R$ to a functor $R\colon \kat{C} \to \kat{D}$: for $f\colon X \to Y$ in $\kat{C}$ we define $R(f)\colon R(X) \to R(Y)$ by $R(f) \dfeq \overline{\eta_Y \circ f}$. We call this functor $R$ the \df{reflector}.
				\item
					Dually, suppose that for every object $X$ in $\kat{C}$ we are given an object $R(X)$ in $\kat{D}$ and an arrow $\epsilon_X\colon R(X) \to X$ in $\kat{C}$ such that for every object $Y$ in $\kat{D}$ and every arrow $f\colon Y \to X$ in $\kat{C}$ there exists a unique arrow $\overline{f}\colon Y \to R(X)$ in $\kat{D}$ such that $\epsilon_X \circ \overline{f} = f$.
					\[\xymatrix@+1em{
						R(X) \ar[r]^{\epsilon_X}  &  X  \\
						Y \ar[ru]_f \ar@{-->}[u]^{\exists{!}}_{\overline{f}}  &
					}\]
					Then we say that $R(X)$ is the \df{coreflection} of $X$ in $\kat{D}$, $\epsilon_X$ is the \df{counit} of the coreflection, and $\kat{D}$ is a \df{coreflective subcategory} of $\kat{C}$. Furthermore, we can extend $R$ to a functor $R\colon \kat{C} \to \kat{D}$: for $f\colon Y \to X$ in $\kat{C}$ we define $R(f)\colon R(Y) \to R(X)$ by $R(f) \dfeq \overline{f \circ \epsilon_Y}$. This $R$ is called a \df{coreflector}.
			\end{itemize}
		\end{definition}
		
		In categorical language, the full subcategory $\kat{D}$ is reflective in $\kat{C}$ when the functor $R$ is left adjoint to the inclusion functor $\kat{D} \hookrightarrow \kat{C}$, with $\eta$ the unit of this adjunction (and dually, $\kat{D}$ is coreflective in $\kat{C}$ when $R$ is right adjoint to $\kat{D} \hookrightarrow \kat{C}$, and $\epsilon$ the counit of the adjunction).
		
		The condition simplifies for streaks.
		\begin{lemma}\label{Lemma: reflection_in_a_preorder_category}
			Let $\kat{C}$ be a preorder category such as $\Str$ (recall Corollary~\ref{Corollary: streaks_preorder_category}), let $\kat{D}$ be a full subcategory of $\kat{C}$ and let $R$ be a mapping from the objects of $\kat{C}$ to the objects of $\kat{D}$. Suppose the following holds:
			\begin{itemize}
				\item
					for every object $X$ in $\kat{C}$ there exists a morphism $X \to R(X)$,
				\item
					for every object $X$ in $\kat{C}$ and $Y$ in $\kat{D}$, if there exists a morphism $X \to Y$, then there exists a morphism $R(X) \to Y$.
			\end{itemize}
			Then $\kat{D}$ is reflective in $\kat{C}$ and $R$ a reflector. (An analogous statement dually holds for coreflections.)
		\end{lemma}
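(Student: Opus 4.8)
The plan is to verify the universal property of a reflection directly from the definition, exploiting the single structural fact about preorder categories that does all the work: between any two objects there is at most one morphism. Because of this, both the ``commutativity of the defining triangle'' and the ``uniqueness of $\overline{f}$'' clauses become automatic, and the only real content is to produce \emph{some} morphism $R(X) \to Y$ that lies in $\kat{D}$.

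First I would fix, for each object $X$ of $\kat{C}$, a morphism $\eta_X\colon X \to R(X)$; this exists by the first hypothesis, and is in fact the unique such morphism. Then, given an object $Y$ of $\kat{D}$ and a morphism $f\colon X \to Y$ in $\kat{C}$, the second hypothesis produces a morphism $g\colon R(X) \to Y$ in $\kat{C}$; since $\kat{D}$ is a \emph{full} subcategory and both $R(X)$ and $Y$ lie in $\kat{D}$, this $g$ is automatically a morphism of $\kat{D}$. I would set $\overline{f} \dfeq g$. Both $\overline{f} \circ \eta_X$ and $f$ are morphisms $X \to Y$ in the preorder category $\kat{C}$, hence equal, so the required triangle commutes.

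For uniqueness, I would observe that a full subcategory of a preorder category is again a preorder category, so $\kat{D}$ admits at most one morphism $R(X) \to Y$; thus any two candidates for $\overline{f}$ coincide. Together these show that $R(X)$ is the reflection of $X$ in $\kat{D}$ with unit $\eta_X$, that $\kat{D}$ is reflective in $\kat{C}$, and that $R$ is a reflector — the functor laws $R(\id) = \id$ and $R(g \circ f) = R(g) \circ R(f)$ needing no separate argument, since in a preorder category each side of these equations is the unique morphism with its prescribed domain and codomain. The dual statement for coreflections follows by reading everything in $\kat{C}\op$, which is again a preorder category with $\kat{D}\op$ a full subcategory of it: the two displayed hypotheses turn into exactly the hypotheses above, and a coreflection in $\kat{D}$ is precisely a reflection in $\kat{D}\op$.

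I do not anticipate a genuine obstacle here; the single point that needs a moment's care is the appeal to fullness of $\kat{D}$, without which the abstractly obtained morphism $R(X) \to Y$ need not be a morphism of $\kat{D}$ at all, so the reflection diagram would not even be defined in $\kat{D}$.
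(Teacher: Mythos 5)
Your proof is correct and follows essentially the same route as the paper's: pick the (automatically unique) morphism $\eta_X\colon X \to R(X)$, obtain $\overline{f}\colon R(X) \to Y$ from the second hypothesis, and let the preorder property force both the commutativity $\overline{f} \circ \eta_X = f$ and the uniqueness of $\overline{f}$. Your extra remarks on fullness, the functor laws for $R$, and dualization are fine but not points where the paper's argument differs in substance.
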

		\begin{proof}
			By assumption $\kat{C}$ is a preorder category, so let $\eta_X$ denote the unique given arrow $X \to R(X)$. Take any $f\colon X \to Y$. By assumption there exists a morphism $R(X) \to Y$ --- again unique, because $\kat{C}$ is a preorder category --- that we denote by $\overline{f}$. Finally, since there can be at most one morphism $X \to Y$, we have $f = \overline{f} \circ \eta_X$.
		\end{proof}
		
		2) \textbf{Addition of new structure to (pre)streaks is modular.}
		
		By this we mean that the addition of a new structure should not spoil any structure we added before; we can add any selection of structures we want. Formally, this means that methods of adding structures should commute. For example, we should (up to isomorphism) obtain the same result whether we first added the ring and then the lattice structure, or vice versa, and in both cases we should end up with a smallest superset which is both a ring and a lattice.
		
		\begin{lemma}\label{Lemma: commutativity_of_(co)reflections}
			Let $R'\colon \kat{C} \to \kat{D}'$, $R''\colon \kat{C} \to \kat{D}''$ be reflections, for which the restrictions $\rstr{R'}_{\kat{D}''}^{\kat{D}''}$ and $\rstr{R''}_{\kat{D}'}^{\kat{D}'}$ exist. Then they commute in the sense
			\[\rstr{R''}_{\kat{D}'}^{\kat{D}' \cap \kat{D}''} \circ R' \ism \rstr{R'}_{\kat{D}''}^{\kat{D}' \cap \kat{D}''} \circ R'',\]
			and these composita determine a reflection $\kat{C} \to \kat{D}' \cap \kat{D}''$. (Likewise for coreflections.)
		\end{lemma}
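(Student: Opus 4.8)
The plan is to identify both composites in the statement with a single reflector for the full subcategory $\kat{E} \dfeq \kat{D}' \cap \kat{D}''$ of $\kat{C}$, and then to invoke the uniqueness of adjoints. First I would check that $S \dfeq \rstr{R''}_{\kat{D}'}^{\kat{E}} \circ R'$ really is a functor $\kat{C} \to \kat{E}$: for an object $X$ we have $R'(X) \in \kat{D}'$, so the hypothesis that $\rstr{R''}_{\kat{D}'}^{\kat{D}'}$ exists gives $R''(R'(X)) \in \kat{D}'$, while $R''$ always lands in $\kat{D}''$; hence $S(X) \dfeq R''(R'(X)) \in \kat{E}$, and similarly $S$ acts on morphisms. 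As unit of $S$ I would take $\eta^S_X \dfeq \eta''_{R'(X)} \circ \eta'_X\colon X \to R'(X) \to S(X)$.

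Next I would verify the universal property of a reflection for $(S,\eta^S)$. Given $Y$ in $\kat{E}$ and $f\colon X \to Y$ in $\kat{C}$: since $Y \in \kat{D}'$, the reflection $R'$ yields a unique $\overline{f}\colon R'(X) \to Y$ with $\overline{f} \circ \eta'_X = f$; since $Y \in \kat{D}''$, the reflection $R''$ at the object $R'(X)$ yields a unique $\overline{\overline{f}}\colon S(X) \to Y$ with $\overline{\overline{f}} \circ \eta''_{R'(X)} = \overline{f}$, and then $\overline{\overline{f}} \circ \eta^S_X = f$. For uniqueness, any $g\colon S(X) \to Y$ with $g \circ \eta^S_X = f$ makes $g \circ \eta''_{R'(X)}$ a factorization of $f$ through $\eta'_X$, hence equal to $\overline{f}$ by the first uniqueness clause, whence $g = \overline{\overline{f}}$ by the second. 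So $\kat{E}$ is reflective in $\kat{C}$ with reflector $S$. Interchanging $R'$ and $R''$ (and using that $\rstr{R'}_{\kat{D}''}^{\kat{D}''}$ exists) shows by the same argument that $T \dfeq \rstr{R'}_{\kat{D}''}^{\kat{E}} \circ R''$ is also a reflector for $\kat{E}$ in $\kat{C}$.

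A reflector is a left adjoint to the inclusion $\kat{E} \hookrightarrow \kat{C}$, and left adjoints are unique up to natural isomorphism, so $S \ism T$; this is the claimed commutativity, and either composite witnesses $\kat{E}$ as a reflective subcategory of $\kat{C}$. Concretely the component $T(X) \to S(X)$ is the unique arrow extending $\eta^S_X$ along $\eta^T_X$ (legitimate because $S(X) \in \kat{E}$), its inverse arises symmetrically, and naturality is one further instance of the uniqueness clause. The coreflection statement follows by reversing all arrows throughout: replace reflectors by coreflectors, units $\eta$ by counits $\epsilon$, and extensions along units by lifts along counits.

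I expect the only delicacy to be the bookkeeping of the two nested uniqueness clauses in the universal property; there is no real obstacle. In the case relevant to this paper, where $\kat{C}$ is a preorder category such as $\Str$, everything is shorter still: a morphism $X \to S(X)$ exists, and for $Y \in \kat{E}$ a morphism $X \to Y$ induces one $R'(X) \to Y$ and then $S(X) = R''(R'(X)) \to Y$, so Lemma~\ref{Lemma: reflection_in_a_preorder_category} applies at once, and $S \ism T$ is automatic because both $S(X)$ and $T(X)$ are reflections of $X$ in $\kat{E}$ and any two such are canonically isomorphic.
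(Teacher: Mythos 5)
Your proof is correct and follows essentially the same route as the paper's: both identify each composite as a reflector onto $\kat{D}' \cap \kat{D}''$ (the paper asserts that composition of reflections is a reflection; you verify this explicitly by composing the units and chaining the two universal properties) and then conclude $\ism$ from the uniqueness of left adjoints to the inclusion $\kat{D}' \cap \kat{D}'' \hookrightarrow \kat{C}$. You have merely filled in the details the paper dismisses as evident, plus the (correct) remark that in a preorder category such as $\Str$ the whole argument collapses to Lemma~\ref{Lemma: reflection_in_a_preorder_category}.
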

		\begin{proof}
			It is evident that if $\rstr{R'}_{\kat{D}''}^{\kat{D}''}$ and $\rstr{R''}_{\kat{D}'}^{\kat{D}'}$ exist, then so do $\rstr{R''}_{\kat{D}'}^{\kat{D}' \cap \kat{D}''}$, $\rstr{R'}_{\kat{D}''}^{\kat{D}' \cap \kat{D}''}$, and these are again reflections. Furthermore, composition of reflections is a reflection, and any two reflections onto the same full subcategory are isomorphic (being the left adjoints to the same inclusion functor).
		\end{proof}
		
		This lemma means that to show that two reflections commute, we need to verify that imposition of new structure by one of the reflections preserves the structure, garanteed by the other. In the example above, if we are making a ring out of a lattice streak, the result will again be a lattice, and vice versa. The lemma then garantees that both ways of forming a lattice ring streak are isomorphic.
		
		3) \textbf{Addition of new structure behaves well with regard to the universal properties, used in definitions.}
		
		As mentioned, the reals will be given as the terminal streak. We have already seen, that natural numbers are the initial streak, and in this section we'll characterise other number sets via the universal property as well. Here is the relevant lemma.
		
		\begin{lemma}\label{Lemma: interaction_of_reflections_with_initiality_and_terminality}
			Let $\kat{D}$ be a (full) reflective subcategory of $\kat{C}$, witnessed by $R$ and $\eta$.
			\begin{enumerate}
				\item\label{Lemma: interaction_of_reflections_with_initiality_and_terminality: initiality}
					If $\zero$ is an initial object of $\kat{C}$, then $R(\zero)$ is an initial object of $\kat{D}$.
				\item\label{Lemma: interaction_of_reflections_with_initiality_and_terminality: terminality}
					If $\one$ is a terminal object of $\kat{C}$, then $R(\one)$ is a terminal object of both $\kat{C}$ and $\kat{D}$; in particular $R(\one) \ism \one$.
			\end{enumerate}
		\end{lemma}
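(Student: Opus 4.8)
The plan is to derive both parts directly from the universal property of the reflection, combining its uniqueness clause with the uniqueness clauses built into initiality and terminality, and using repeatedly that a full subcategory shares all morphisms between objects that lie in it.

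For part~\ref{Lemma: interaction_of_reflections_with_initiality_and_terminality: initiality}, I would fix an object $Y$ of $\kat{D}$ and exhibit a unique morphism $R(\zero) \to Y$. Existence: the unique arrow $\ini[Y]\colon \zero \to Y$ of $\kat{C}$ factors through $\eta_\zero$, giving $\overline{\ini[Y]}\colon R(\zero) \to Y$ in $\kat{D}$. Uniqueness: any $g\colon R(\zero) \to Y$ in $\kat{D}$ is also a $\kat{C}$-morphism (fullness), so $g \circ \eta_\zero\colon \zero \to Y$ must equal $\ini[Y]$ by initiality of $\zero$; the uniqueness clause of the reflection applied to $\ini[Y]$ then forces $g = \overline{\ini[Y]}$. (Conceptually this is just the fact that the reflector, being a left adjoint, preserves colimits and $\zero$ is the empty colimit, but the hands-on argument is shorter here.)

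For part~\ref{Lemma: interaction_of_reflections_with_initiality_and_terminality: terminality}, the key step is to show $\eta_\one$ is an isomorphism. Let $u\colon R(\one) \to \one$ be the unique morphism coming from terminality of $\one$. Since the only endomorphism of $\one$ is $\id[\one]$, we get $u \circ \eta_\one = \id[\one]$. For the other composite, apply the uniqueness clause of the reflection's universal property to the morphism $\eta_\one\colon \one \to R(\one)$: both $\id[R(\one)]$ and $\eta_\one \circ u$ are morphisms $R(\one) \to R(\one)$ in $\kat{D}$ satisfying $h \circ \eta_\one = \eta_\one$ (for the second, $(\eta_\one \circ u) \circ \eta_\one = \eta_\one \circ (u \circ \eta_\one) = \eta_\one$), so they coincide, i.e.\ $\eta_\one \circ u = \id[R(\one)]$. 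Hence $R(\one) \ism \one$. An object isomorphic to a terminal object is terminal, so $R(\one)$ is terminal in $\kat{C}$; and because $\kat{D}$ is full, for $Y$ in $\kat{D}$ the unique $\kat{C}$-morphism $Y \to R(\one)$ is automatically a $\kat{D}$-morphism, so $R(\one)$ is terminal in $\kat{D}$ as well.

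I do not anticipate a real obstacle; the only place demanding care is bookkeeping about \emph{which} uniqueness is being invoked at each step — terminality/initiality in the base category versus the universal property of the reflection — and making sure fullness of $\kat{D}$ is explicitly used when passing morphisms between the two categories. In the preorder-category setting relevant to $\Str$ (Lemma~\ref{Lemma: reflection_in_a_preorder_category}) both parts degenerate further, since there ``unique morphism'' means merely ``there is a morphism'', but the argument above is written to cover the general case.
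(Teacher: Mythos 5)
Your proof is correct and follows essentially the same route as the paper: for initiality, existence via the reflection's factorization and uniqueness via the fact that any two candidates agree after precomposition with $\eta_\zero$; for terminality, showing $\eta_\one$ and the unique map $R(\one)\to\one$ are mutually inverse using the uniqueness clauses of terminality and of the reflection. The only cosmetic difference is that the paper also notes the one-line ``left adjoints preserve colimits'' argument for part (1).
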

		\begin{proof}
			\begin{enumerate}
				\item
					Proof for categorists: left adjoints preserve colimits.
					
					For everyone else, take any object $X$ in $\kat{D}$. Then $X$ is an object also in $\kat{C}$, so there exists a morphism $\zero \to X$. By the definition of reflection there is a morphism $R(\zero) \to X$.
					
					To prove uniqueness, take any two morphisms $f, g\colon R(\zero) \to X$. They both make the diagram
					\[\xymatrix@+1em{
						\zero \ar[r]^{\eta_\zero} \ar[rd]_{\ini[X]}  &  R(\zero) \ar@<-0.5ex>[d]_f \ar@<0.5ex>[d]^g  \\
						&  X
					}\]
					commute since there is only one morphism $\zero \to X$. By the definition of reflection we have $f = g$.
				\item
					Consider the maps $\xymatrix{\one \ar@/^1ex/[r]^{\eta_\one} & R(\one) \ar@/^1ex/[l]^{\trm[R(\one)]}}$. Since there exists only one map $\one \to \one$, we have $\trm[R(\one)] \circ \eta_\one = \id[\one]$. This also implies that the diagram
					\[\xymatrix@+2em{
						\one \ar[r]^{\eta_\one} \ar[rd]_{\eta_\one}  &  R(\one) \ar@<-0.5ex>[d]_{\id[R(\one)]} \ar@<0.5ex>[d]^{\eta_\one \circ \trm[R(\one)]}  \\
						&  R(\one)
					}\]
					commutes, so by the definition of reflection we have $\eta_\one \circ \trm[R(\one)] = \id[R(\one)]$. We conclude $\one \ism R(\one)$.
					
					An object, isomorphic to a terminal one, is terminal itself, and since $R(\one)$ lies in the full subcategory $\kat{D}$, it is terminal there as well.
			\end{enumerate}
		\end{proof}
		
		We now study the concrete examples of (co)reflections, relevant for us.
		
		\subsection{Positive part}
		
			As a warmup exercise, we consider the coreflection of taking the positive part of a (pre)streak (together with $0$). The idea is that we can turn every prestreak into one with total multiplication by just restricting the prestreak to the multiplication domain.
			
			Specifically, we define a functor $\pos{}\colon \Pstr \to \Pstr$ by $\pos{X} \dfeq X_{> 0} \cup \{0\}$ and $\pos{f}(x) \dfeq f(x)$. Clearly if $X$ is a prestreak, then so is $\pos{X}$, and for any morphism $f$ we have $f(0) = 0$ and $0 < x \implies 0 < f(x)$, so this functor is well defined.
			
			Clearly $\pos{}$ restricts to a functor on streaks $\pos{}\colon \Str \to \Str$.
			
			Let $f\colon Y \to X$ be a morphism between (pre)streaks where $Y = \pos{Y}$. Then we can restrict $f$ to $\rstr{f}^{\pos{X}}$ since, as mentioned, morphisms preserve $0$ and $<$. Thus $\pos{}$ (more precisely its corestriction to (pre)streaks $X = \pos{X}$) is a coreflection on the category of (pre)streaks.
			
			\begin{proposition}\label{Proposition: positive_part_is_multiplicative}
				For any (archimedean pre)streak $X$ the (pre)streak $\pos{X}$ is multiplicative.
			\end{proposition}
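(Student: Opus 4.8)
The plan is to verify, one by one, the clauses of Definition~\ref{Definition: multiplicative_prestreak} for $\pos{X}$, taking as its total multiplication the extension of the prestreak multiplication of $X$ to $X_{>0}\cup\{0\}$ by $0\cdot a = a\cdot 0 \dfeq 0$ that was introduced earlier in Section~\ref{Section: streaks}. We already know $\pos{X}$ is a prestreak (and a streak if $X$ is one), and this operation is total on $\pos{X}$ and restricts to the prestreak multiplication on $(\pos{X})_{>0} = X_{>0}$ by construction. Commutativity, associativity, distributivity over addition, and the property that $1$ is a unit all hold on $X_{>0}$ by the prestreak axioms; and whenever one of the arguments is $0$, both sides of the relevant identity collapse (to $0$, or to the surviving factor) by the defining clause $0\cdot a = a\cdot 0 = 0$, so each of these extends to all of $\pos{X}$ by an entirely routine case split. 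Hence all of these cost essentially nothing.

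The one clause with content is the multiplicity condition: for all $a,b,c,d\in\pos{X}$, $b<a \land d<c \implies a\cdot d + b\cdot c < a\cdot c + b\cdot d$. Here I would exploit that membership in $\pos{X} = X_{>0}\cup\{0\}$ is, by definition, the disjunction $x>0 \lor x=0$; since the conclusion we are proving is a proposition (the relation $<$ being a subset of $X\times X$), we may legitimately case-split on this disjunction for $b$ and for $d$. If $b>0$ and $d>0$, then $b<a$ and $d<c$ give $0<a$ and $0<c$ by transitivity of $<$, so all four elements lie in $X_{>0}$ and the inequality is precisely Proposition~\ref{Proposition: multiplicity_condition_for_positive_elements} --- this is the single place where the archimedean hypothesis on $X$ is used. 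If $b=0$, then $a>0$, the terms $b\cdot c$ and $b\cdot d$ vanish, and the claim reduces to $a\cdot d < a\cdot c$: when $d>0$ (hence $c>0$) this is $d\cdot a < c\cdot a$, which follows from $d<c$ and the prestreak axiom that multiplying by the positive element $a$ preserves $<$; when $d=0$ it reads $0 < a\cdot c$, which holds because $a\cdot c \in X_{>0}$ is a product of positive elements. The remaining case $d=0$, $b>0$ is symmetric, using $b<a$ and multiplication by $c>0$.

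I do not expect a real obstacle: the genuinely hard part --- the all-positive multiplicity inequality, where the density of the rationals and thus the archimedean property do the work --- is already packaged in Proposition~\ref{Proposition: multiplicity_condition_for_positive_elements}, so what remains is bookkeeping over at most four cases according to whether $b$ and $d$ are zero. The only points needing a word of care are that the case analysis is admissible (it is, because $\pos{X}$ is carved out by the predicate $x>0\lor x=0$ and we are proving a proposition) and that every product and sum written above indeed lands in $\pos{X}$, which holds since $\pos{X}$ is closed under $+$ and under the extended $\cdot$.
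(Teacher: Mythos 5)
Your proposal is correct and follows essentially the same route as the paper: extend multiplication by $0\cdot a = a\cdot 0 = 0$, dispatch the algebraic identities by routine case splits, and prove the multiplicity condition by cases on whether $b$ and $d$ are zero or positive, invoking Proposition~\ref{Proposition: multiplicity_condition_for_positive_elements} in the all-positive case and the prestreak compatibility of $\cdot$ with $<$ otherwise. Your explicit remark that the case analysis is constructively admissible because membership in $\pos{X} = X_{>0}\cup\{0\}$ carries the disjunction $x>0\lor x=0$ is a point the paper leaves implicit, and it is a welcome one.
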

			\begin{proof}
				The total multiplication in $\pos{X}$ is of course given as an extension of the one on $X_{> 0}$ by declaring that $0$ times anything (and anything times $0$) is $0$. Clearly this multiplication is commutative, associative, distributes over $+$ and has $1$ as the unit.
				
				As for the multiplicity condition, take any $a, b, c, d \in \pos{X}$ with $b < a$, $d < c$. Clearly then $a$ and $c$ must be positive. The fact that $X_{> 0}$ is closed under multiplication by definition deals with the case $b = d = 0$. If precisely one of $b$, $d$ is $0$, the multiplicity condition amounts to the standard connection between $<$ and $\cdot$ in a prestreak. If all $a$, $b$, $c$, $d$ are positive, use Proposition~\ref{Proposition: multiplicity_condition_for_positive_elements}.
			\end{proof}
		
		\subsection{Archimedean property}\label{Subsection: archimedean_coreflection}
		
			Before we start adding additional structure to streaks, we want to have a way to transform an arbitrary prestreak to a streak. This means we need to impose the archimedean property and antisymmetry of $\leq$. We deal with the first in this subsection, and with the second in the next.
			
			Given a prestreak $(X, <, +, 0, \cdot, 1)$ we define a new prestreak $(\arch(X), <', +, 0, \cdot, 1)$ by $\arch(X) \dfeq \st{a \in X}{\some{n}{\NN}{a < n \land 0 < a + n}}$ where for $a, b \in \arch(X)$ we define
			\[a <' b \dfeq \xsome{n}{\NN}{n \cdot a + 1 < n \cdot b},\]
			and the new algebraic operations are the same as (more precisely, the restrictions to $\arch(X)$ of) the old ones.
			
			We claim that $\arch(X)$ is a prestreak. Since $<'$ is given by an open condition ($<$ is open and $\NN$ is overt), it is an open relation. We certainly have $0 \in \arch(X)$ (take $n = 1$) and $1 \in \arch(X)$ (take $n = 2$). Fix $a, b \in X$ and $m, n \in \NN$ such that $a < m$, $0 < a + m$, $b < n$, $0 < b + n$. Then $a + b < m + n$ and $0 < a + b + m + n$, so $a + b \in \arch(X)$. Assume now additionally that $a, b >' 0$, \ie there are $j, k \in \NN$ such that $1 < j \cdot a$ and $1 < k \cdot b$. Then $1 < j \cdot k \cdot a \cdot b$, so $a \cdot b >' 0$. Also, we have $a \cdot b < m \cdot n \leq 3 m n$ and $0 < (a + m) \cdot (b + n) = a \cdot b + n \cdot a + m \cdot b + m \cdot n \leq a \cdot b + 3 m n$. The other prestreak conditions are immediate.
			
			More to the point, $\arch(X)$ is an archimedean prestreak. To prove this, take any $a, b, c, d \in X$ whose presence in $\arch(X)$ is witnessed by $i, j, k, l \in \NN$, and let $b <' d$, witnessed by $m \in \NN$. Define $n \dfeq 1$ and $N \dfeq (i + k + 1) \cdot m$; we claim $n \cdot (a + N \cdot b) + 1 < n \cdot (c + N \cdot d)$, and so $a + N \cdot b <' c + N \cdot d$.
			\[a + N \cdot b + 1 = a + (i + k + 1) m \cdot b + 1 < i + c + k + (i + k + 1) m \cdot b + 1 =\]
			\[= c + (i + k + 1) \cdot (m \cdot b + 1) \leq c + N \cdot d\]
			
			Observe that if $f\colon X \to Y$ is a morphism between prestreaks, then its restriction $\arch(f)\colon \arch(X) \to \arch(Y)$, $\arch(f)(x) \dfeq f(x)$, is well defined since $f$ preserves all structure. Thus we've defined a functor $\arch\colon \Pstr \to \Apstr$.
			
			We claim that $\arch$ is a coreflector, with the counit of the coreflection being the inclusion $\alpha_X\colon \arch(X) \hookrightarrow X$. This is indeed a morphism: if $a, b \in X$ and $a <' b$, witnessed by $n \in \NN$ (necessarily $n > 0$), it follows $n \cdot a < n \cdot a + 1 < n \cdot b$, so $a < b$.
			
			Let $Y$ be an archimedean prestreak and $f\colon Y \to X$ a morphism. Clearly the image of $f$ is contained in $\arch(X)$ since $Y$ is archimedean and $f$ preserves the prestreak structure. Take $a, b \in Y$ such that $a < b$. By the archimedean property of $Y$ there exists $n \in \NN$ such that $1 + n \cdot a < 0 + n \cdot b$, therefore $1 + n \cdot f(a) < n \cdot f(d)$, and so $f(a) <' f(b)$.
			
			For any prestreak $X$ we have
			\[\arch(\pos{X}) = \pos{\arch(X)} = \{0\} \cup \st{a \in X_{> 0}}{\xsome{n}{\NN}{a < n}},\]
			and the strict order relation is in both orders of composition given as $a <' b \iff \xsome{n}{\NN}{n \cdot a + 1 < n \cdot b}$. In short, the functors $\pos{}$ and $\arch$ commute.
		
		\subsection{Partial order}
		
			In this subsection we impose the second streak condition onto prestreaks, namely the antisymmetry of $\leq$, or equivalently, tightness of $\apart$. Together with the result from the previous subsection, this enables us to canonically turn any prestreak into a streak. Unlike $\arch$ (and $\pos{}$) thus far which were coreflections, imposing antisymmetry (and all the further structures that we mention) is a reflection.
			
			The way to do it is the completly standard way to turn a preorder into a partial order, or an apartness relation into a tight one. Recall that we already defined for any prestreak (or even a mere strict order) $X$ for $a, b \in X$ to be equivalent, $a \nap b$, when $a \leq b \land b \leq a$, or equivalently, $\lnot(a \apart b)$, holds. Denote $Q(X) \dfeq X/_\nap$ and let $\theta_X\colon X \to Q(X)$ be the quotient map.
			
			It follows from the prestreak axioms that operations commute with the order structure, so they induce corresponding operations on the quotient $Q(X)$. Explicitly, we define for $a, b \in X$
			\[[a] < [b] \dfeq a < b, \qquad [a] + [b] \dfeq [a + b]\]
			whence it follows that $[0]$ is the zero element in $Q(X)$. For $a, b \in X_{> 0}$ we furthermore define
			\[[x] \cdot [y] \dfeq [x \cdot y],\]
			so the multiplicative unit in $Q(X)$ is $[1]$.
			
			Finally, recalling from Section~\ref{Section: setting} that quotients have the quotient topology, it is clear that the relation $<$ is open and $\leq$ closed in $Q(X)$.
			
			Since $\nap$ is equality on $Q(X)$ by definition, we conclude that $Q(X)$ is a tight prestreak.
			
			Clearly, if $Y$ is a prestreak with tight $<$, then a morphism $f\colon X \to Y$ factors as $f = \overline{f} \circ \theta_X$ where $\overline{f}([x]) \dfeq f(x)$ (the point is, this map is well defined since any morphism preserves $\nap$ which on $Y$ is simply the equality). As such, $Q$ is a reflector (with $\theta$ the unit of the reflection) of prestreaks into tight prestreaks.
			
			One easily verifies that $Q$ commutes with $\arch$, so we have a canonical way of turning a prestreak into a streak (take either of the compositions $Q \circ \arch$, $\arch \circ Q$). However, this canonical way is neither a reflection nor a coreflection, but rather a composition of both.
			
			It is also obvious that $Q$ commutes with $\pos{}$.
		
		\subsection{Lattices}
		
			Now that we've exhibited the way of turning prestreaks into streaks, we'll focus on the latter. The reason is that prestreaks are not particularly amenable to adding additional structure; for example, in this subsection we want to add the lattice structure, but suprema and infima are not uniquely defined unless $\leq$ is antisymmetric. The usefulness of prestreaks is that they are stepping stones toward streaks: typically our construction will entail first a construction of a prestreak (even if we started with a streak), and the desired reflected object will be its quotient (as in the previous subsection).
			
			We'll break the lattice structure into two parts --- meet- and join-semilattices. We start with the former.
			
			The idea is to represent an infimum of a finite set by that set itself. We restrict to inhabited finite sets, as we need only infima of those for a (semi)lattice, and moreover the empty set would represent $\infty$ which would later spoil the archimedean condition.
			
			To this end we denote the set of inhabited finite subsets of a set $X$ by $\inhfin(X)$. This set always exists under our assumptions; we can represent it for example as a quotient of $FS(X) \dfeq \st{a \in \finseq{X}}{\lnth(a) > 0}$ (recall that $\finseq{X}$ is the set of finite sequences of elements in $X$ and $\lnth(a)$ is the length of the sequence $a$).
			
			We claim that if $X$ has a prestreak structure, then so does $\inhfin(X)$ in the following way. Let $A, B \in \inhfin(X)$.
			\[A < B \dfeq \xsome{a}{A}\xall{b}{B}{a < b}  \qquad  A + B \dfeq \st{a + b}{a \in A \land b \in B}\]
			Clearly $+$ makes $\inhfin(X)$ into a commutative monoid, with $\{0\}$ as the additive unit. We have $\{0\} < A \iff \xall{a}{A}{0 < a}$, so we can define multiplication on $\inhfin(X)_{> \{0\}}$ by
			\[A \cdot B \dfeq \st{a \cdot b}{a \in A \land b \in B}.\]
			Again it is clear that this makes $\inhfin(X)_{> \{0\}}$ into a monoid, with $\{1\}$ as the multiplicative unit.
			
			Before we prove the other prestreak conditions, we note that the definition of $<$ on $\inhfin(X)$ is equivalent to the ostensibly weaker version where we swap the quantifiers.
			\begin{lemma}\label{Lemma: alt_less_than_on_finite_subsets}
				For all $A, B \in \inhfin(X)$ we have
				\[A < B \iff \xall{b}{B}\xsome{a}{A}{a < b}.\]
			\end{lemma}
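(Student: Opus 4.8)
The implication $\Rightarrow$ is immediate: if $a \in A$ witnesses $A < B$, i.e.\ $\xall{b}{B}{a < b}$, then for every $b \in B$ that same $a$ witnesses $\xsome{a}{A}{a < b}$, so $\xall{b}{B}\xsome{a}{A}{a < b}$ holds. The whole content is in the converse, and the plan is to prove it by reducing to finite sequences and inducting on length.

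So assume $\xall{b}{B}\xsome{a}{A}{a < b}$. First I would pick a finite sequence $b_0, \dots, b_{n-1}$ representing the inhabited finite subset $B$ (so $n \geq 1$ and $B = \{b_0, \dots, b_{n-1}\}$); the hypothesis then says that for each $i \in \NN_{<n}$ some element of $A$ lies strictly below $b_i$, and the goal $A < B$ amounts to the existence of a single $a^* \in A$ with $a^* < b_i$ for all $i$. I would prove this auxiliary statement by induction on $n$. For $n = 1$ the hypothesis at $i = 0$ is literally the conclusion. For the step, apply the induction hypothesis to the truncation $b_0, \dots, b_{n-1}$ to get $a^* \in A$ with $a^* < b_i$ for all $i < n$, and use the hypothesis at index $n$ to get $a' \in A$ with $a' < b_n$; then apply cotransitivity to $a' < b_n$ with third point $a^*$, obtaining $a' < a^*$ or $a^* < b_n$. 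In the first case $a' < a^* < b_i$ gives, by transitivity of $<$, that $a' < b_i$ for every $i < n$, and together with $a' < b_n$ the element $a'$ works; in the second case $a^*$ itself works, since it already dominates $b_0, \dots, b_{n-1}$ and now also $b_n$.

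The delicate point --- and the reason the statement is not entirely trivial constructively --- is that one cannot simply take a ``minimum'' of the witnessing elements of $A$; everything hinges on choosing the right instance of cotransitivity, namely applying it to the newly obtained inequality $a' < b_n$ with the inductively constructed dominating element $a^*$ as interpolant, rather than to one of the $a^* < b_i$. I would also note that the only disjunction used is the one handed to us by cotransitivity, and that in each branch we exhibit a concrete member of the fixed set $A$, so no decision on an undecidable proposition is made and the argument is constructively sound. (Picking the representative sequence of $B$ and replacing $\xall{b}{B}{}$ by $\xall{i}{\NN_{<n}}{}$ uses only that the quotient map onto $\inhfin(X)$ is surjective and that the equivalence on finite sequences identifies those with the same underlying subset.)
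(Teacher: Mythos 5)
Your proof is correct, and it takes a genuinely different route from the paper's. You induct on the length of a representing sequence of $B$, maintaining a single element $a^*$ of $A$ lying below all elements processed so far, and at each step you apply cotransitivity exactly once, to the new inequality $a' < b_n$ with $a^*$ as interpolant; both branches of the resulting disjunction hand you a concrete new dominating element, so the step goes through constructively with no choice beyond two existential eliminations. The paper instead first extracts (by finite choice) a sequence of witnesses $a_{s(j)} < b_j$ for all $j$ simultaneously, builds an $m \times n$ binary matrix of cotransitivity choices $a_{s(j)} < a_i \lor a_i < b_j$, and argues that if every row contained a zero one could produce a strictly decreasing, hence injective, sequence $\NN \to \NN_{<m}$ --- a contradiction with finiteness --- so some row is all ones and that $a_i$ witnesses $A < B$. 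Your induction is the more elementary argument: it avoids the global matrix, the descent construction, and the (decidable) case analysis on rows. What the paper's heavier machinery buys is reuse: essentially the same matrix-of-choices technique reappears immediately afterwards in the proofs that $<$ on $\inhfin(X)$ is cotransitive and that addition reflects $<$, so the lemma's proof doubles as a template for those arguments, whereas your induction would have to be adapted separately in each case. Both proofs are constructively sound.
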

			\begin{proof}
				Clearly the left side implies the right one. We prove the converse.
				
				Represent the two inhabited finite subsets as $A = \{a_0, \ldots, a_{m-1}\}$, $B = \{b_0, \ldots, b_{n-1}\}$ where $m, n \in \NN_{> 0}$. Let $s\colon \NN_{< n} \to \NN_{< m}$ be a finite sequence, such that $a_{s(j)} < b_j$ for all $j \in \NN_{< n}$. Construct a binary matrix $M = \big[c_{i,j}\big]_{(i, j) \in \NN_{< m} \times \NN_{< n}} \in \{0, 1\}^{m \times n}$ in the following way. For each $(i, j) \in \NN_{< m} \times \NN_{< n}$ choose a true disjunct in $a_{s(j)} < a_i \lor a_i < b_j$. Set $c_{k,j} \dfeq 0$ if the first disjunct was chosen, and $c_{k,j} \dfeq 1$ if the second one was (clearly we have $c_{s(j),j} = 1$ for all $j \in \NN_{< n}$).
				
				Suppose that every row of $M$ contained a zero. Let $z\colon \NN_{< m} \to \NN_{< n}$ be a finite sequence such that $z(i)$ is the smallest index with $c_{i,z(i)} = 0$, and let the infinite sequence $t\colon \NN \to \NN_{< m}$ be inductively defined by $t(0) \dfeq 0$ and $t(k+1) \dfeq s(z(t(k)))$. Then $t$ must be injective since $a \circ t$ is strictly decreasing, but there is no injective map $\NN \to \NN_{< m}$ --- a contradiction. Thus there exists a row of $M$ which contains only ones, proving $A < B$.
			\end{proof}
			
			\begin{theorem}
				$\inhfin(X)$ is a prestreak.
			\end{theorem}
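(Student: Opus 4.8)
The plan is to verify the eight clauses of Definition~\ref{Definition: prestreaks} for $\inhfin(X)$ in turn. The two commutative--monoid clauses --- $+$ with unit $\{0\}$, and $\cdot$ on $\inhfin(X)_{> \{0\}}$ with unit $\{1\}$ --- are already recorded just above the theorem, so what is left is to prove that $(\inhfin(X), <)$ is a strict order, that $\cdot$ distributes over $+$, that the translations $A \mapsto A + E$ and (on the positive part) $A \mapsto A \cdot E$ both preserve and reflect $<$, and that $<$ is an open and $\leq$ a closed relation. The workhorse throughout is Lemma~\ref{Lemma: alt_less_than_on_finite_subsets}, by which I pass freely between the defining form $A < B \iff \xsome{a}{A}\xall{b}{B}{a < b}$ --- convenient when I need a single element of $A$ strictly below every element of $B$ --- and the swapped form $A < B \iff \xall{b}{B}\xsome{a}{A}{a < b}$ --- convenient when I verify an inequality one target element at a time.

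\emph{Strict order.} Asymmetry is immediate from asymmetry in $X$: from $A < B$ and $B < A$, take $a_0 \in A$ below all of $B$ and $b_0 \in B$ below all of $A$, and observe $a_0 < b_0 < a_0$. For cotransitivity, $A < B \implies A < C \lor C < B$, extract $a_0 \in A$ with $a_0 < b$ for all $b \in B$; then for each pair $(c, b) \in C \times B$ feed $a_0 < b$ into cotransitivity of $X$ to select a true disjunct of $a_0 < c \lor c < b$, recording the selections in a binary $C \times B$ array. The condition ``some $c \in C$ has the disjunct $c < b$ selected throughout its row'' is built from finitely much decidable data, hence decidable: if it holds, that $c$ lies below every element of $B$, so $C < B$; if it fails, then every row contains the disjunct $a_0 < c$, so $a_0 < c$ for all $c \in C$ and $a_0$ witnesses $A < C$. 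I expect this finite combinatorial step --- of the same flavour as the proof of Lemma~\ref{Lemma: alt_less_than_on_finite_subsets} --- to be the only genuinely delicate point; the rest is bookkeeping over the corresponding facts in $X$.

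\emph{Interaction with the operations.} That $A \mapsto A + E$ preserves $<$ is cleanest in the swapped form: from $A < B$ with witness $a_0$, every target $b + e \in B + E$ is exceeded from below by $a_0 + e \in A + E$ (using the additive clause of $X$), so $A + E < B + E$ by Lemma~\ref{Lemma: alt_less_than_on_finite_subsets}. Reflection uses the defining form: from $A + E < B + E$ with witness $a_0 + e_0$, specialising the universal quantifier to the targets $b + e_0$ and cancelling $e_0$ in $X$ yields $a_0 < b$ for every $b \in B$, hence $A < B$. The multiplicative clauses on $\inhfin(X)_{> \{0\}}$ are word-for-word the same, now using that multiplication by a positive element of $X$ preserves and reflects $<$ (every element in play is positive since the sets lie in $\inhfin(X)_{> \{0\}}$). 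For distributivity I would expand both $(A + B) \cdot C$ and $A \cdot C + B \cdot C$ as explicit subsets of $X$ and compare them using $(a + b) \cdot c = a \cdot c + b \cdot c$ in $X$.

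\emph{Topology.} Both topological clauses come from unwinding $\inhfin(X)$ via its presentation as a quotient of $FS(X) = \coprod_{n \geq 1} X^n$. On a summand $X^m \times X^n$, with the entries of the two sequences written $a_i$ ($i < m$), $b_j$ ($j < n$), the relation $A < B$ reads $\xsome{i}{\NN_{< m}}\xall{j}{\NN_{< n}}{a_i < b_j}$: a finite union of finite intersections of preimages of the open relation $<$ on $X$ along coordinate projections, hence open since $\optp$ is closed under finite intersections and unions. Dually, on $X^n \times X^m$ the relation $A \leq B$, \ie $\lnot(B < A)$, rewrites --- via the finite de Morgan law $\lnot(p_1 \land \cdots \land p_k) \iff \lnot\lnot(\lnot p_1 \lor \cdots \lor \lnot p_k)$ --- as a finite intersection over $j$ of $\lnot\lnot\xsome{i}{\NN_{< m}}{a_i \leq b_j}$, which is closed by the results at the end of Section~\ref{Section: setting} (finite double-negated existentials, and finite intersections, of closed predicates are closed). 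The coproduct and quotient topology assumptions from Section~\ref{Section: setting} then transport openness of $<$ and closedness of $\leq$ to $\inhfin(X)$, completing the verification of all eight clauses.
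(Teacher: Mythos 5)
Your verification lines up with the paper's almost clause for clause: the asymmetry argument, the cotransitivity argument via a finite binary matrix of chosen disjuncts with the decidable case split ``some row is all ones / every row contains a zero,'' the preservation of $<$ under translation, and the topological computations ($<$ as a finite --- hence countable --- existential over an open predicate; $\leq$ rewritten intuitionistically as a finite intersection of double-negated finite existentials of closed predicates) are exactly the paper's. Where you genuinely diverge is in the \emph{reflection} of $<$ by translation. The paper commits to the swapped form $\xall{b}{B}\xsome{a}{A}{a<b}$ for the hypothesis, so its witness below the target $b + x_k$ has the shape $a_{s(k)} + x_{t(k)}$ with a possibly mismatched $E$-component, and it must run an iterative cotransitivity argument that terminates only because the visited elements of $E$ strictly decrease and $E$ is finite. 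You instead keep the hypothesis in its defining $\exists\forall$ form: a single $a_0 + e_0 \in A+E$ lying below everything in $B+E$, specialised to the targets $b + e_0$ and cancelled in $X$, gives $a_0 < b$ for all $b$ at once. This is correct and noticeably shorter --- it exploits precisely the asymmetry between the two forms of $<$ (one good for producing a witness, the other for checking targets) and avoids the termination argument entirely; the same simplification carries over to the multiplicative case.

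One caveat on the step you dismiss quickest. If you literally expand $(A+B)\cdot C$ and $A\cdot C + B\cdot C$ as subsets of $X$, they are \emph{not} equal: with $A = B = \{1\}$ and $C = \{1,2\}$ the left side is $\{2,4\}$ while the right side is $\{2,3,4\}$, because the right side allows different factors from $C$ in the two summands. One only gets the inclusion $(A+B)\cdot C \subseteq A\cdot C + B\cdot C$; the two sides agree merely up to the equivalence $\nap$ (they have the same infimum), which is all that survives the quotient $Q$ producing $X^\land$. The paper's proof is silent here --- it only undertakes to check ``the prestreak conditions involving $<$'' --- so this is a wrinkle in the source as much as in your plan, but as written your distributivity step would not close without either reading the equation up to $\nap$ or deferring it to the quotient.
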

			\begin{proof}
				It remains to verify all the prestreak conditions involving $<$. Take any $A, B, X \in \inhfin(X)$ and represent them as $A = \{a_0, \ldots, a_{m-1}\}$, $B = \{b_0, \ldots, b_{n-1}\}$, $X = \{x_0, \ldots, x_{p-1}\}$ where $m, n, p \in \NN_{> 0}$.
				\begin{itemize}
					\item\proven{asymmetry of $<$}
						Assume $A < B$ and $B < A$. Then there is some $a \in A$, smaller than all the elements in $B$, and some $b \in B$, smaller than all the elements in $A$ which means $a < b$ and $b < a$, a contradiction.
					\item\proven{cotransitivity of $<$}
						Suppose $A < B$, and let $a \in A$ be an element, smaller than all the elements in $B$. Construct a binary matrix $M = \big[m_{k,j}\big]_{(k, j) \in \NN_{< p} \times \NN_{< n}} \in \{0, 1\}^{p \times n}$ in the following way. For each $(k, j) \in \NN_{< p} \times \NN_{< n}$ choose a true disjunct in $a < x_k \lor x_k < b_j$. Set $m_{k,j} \dfeq 0$ if the first disjunct was chosen, and $m_{k,j} \dfeq 1$ if the second one was. If each row of $M$ contains a zero, then $A < X$. Otherwise there is a row which contains only ones, and then $X < B$.
					\item\proven{$<$ is open, $\leq$ is closed}
						Since `finite' implies `countable', the formula for $<$ on $\inhfin(X)$ is clearly an open predicate. For $\leq$ we have (using properties of intuitionistic logic)
						\[A \leq B \iff \lnot\xsome{b}{B}\xall{a}{A}{b < a} \iff \xall{b}{B}{\lnot\xall{a}{A}{b < a}} \iff\]
						\[\iff \xall{b}{B}{\lnot\xall{a}{A}{\lnot\lnot(b < a)}} \iff \xall{b}{B}{\lnot\xall{a}{A}{\lnot{a \leq b}}} \iff\]
						\[\iff \xall{b}{B}{\lnot\lnot\xsome{a}{A}{a \leq b}}.\]
						We obtained a closed predicate.
				\end{itemize}
				Hereafter we use the alternative definition of $<$ on $\inhfin(X)$ from Lemma~\ref{Lemma: alt_less_than_on_finite_subsets}.
				\begin{itemize}
					\item\proven{addition preserves $<$}
						Assume $A < B$. Take any $b + x \in B + X$. By assumption there exists $a \in A$ such that $a < b$. Then $a + x < b + x$, so $A + X < B + X$.
					\item\proven{addition reflects $<$}
						Assume $A + X < B + X$ and take any $b \in B$. Define sequences $s\colon \NN_{< p} \to \NN_{< m}$, $t\colon \NN_{< p} \to \NN_{< p}$ such that $a_{s(k)} + x_{t(k)} < b + x_k$ for each $k \in \NN_{< p}$. Choose a true disjunct in $a_{s(k)} + x_{t(k)} < a_{s(k)} + x_k \lor a_{s(k)} + x_k < b + x_k$. If the second one holds, we are done. Otherwise repeat this procedure with $t(k)$ instead of $k$. Eventually the second disjunct will be chosen since there are only finitely many elements in $X$ and up to that point we have $x_{t(k)} < x_k$.
					\item\proven{multiplication preserves and reflects $<$}
						Proven in exactly the same way as for addition.
				\end{itemize}
			\end{proof}
			
			Note that $\inhfin$ can be made into a functor by defining for a morphism $f\colon X \to Y$
			\[\inhfin(f)\colon \inhfin(X) \to \inhfin(Y),  \qquad  \inhfin(f)(A) \dfeq f(A).\]
			It can be easily seen that $\inhfin(f)$ is again a morphism.
			
			Also, a prestreak $X$ can be embedded into $\inhfin(X)$ via a map $\tau_X\colon X \to \inhfin(X)$, $\tau_X(a) \dfeq \{a\}$. Again, it is evident that $\tau_X$ is a morphism.
			
			Observe that for $n \in \NN$ and $A \in \inhfin(X)$ we have $n \cdot A \nap \st{n \cdot a}{a \in A}$. This makes it easier to prove the archimedean property.
			\begin{proposition}\label{Proposition: lattice_structure_preserves_archimedean_property}
				If $X$ is an archimedean prestreak, then so is $\inhfin(X)$.
			\end{proposition}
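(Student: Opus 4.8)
The plan is to unwind both sides of the desired strict inequality through the equivalence $\nap$, where the finite subsets in play become concrete sets of sums. So let $A, B, C, D \in \inhfin(X)$ with $B < D$. By the definition of $<$ on $\inhfin(X)$ there is some $b_0 \in B$ with $b_0 < d$ for every $d \in D$; also fix any $a_0 \in A$ (the subset is inhabited). We must produce $n \in \NN$ with $A + n \cdot B < C + n \cdot D$.

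First I would build such an $n$ pointwise and then amalgamate. For each pair $(c, d) \in C \times D$ we have $b_0 < d$, so archimedeanness of $X$ yields some $n_{c,d} \in \NN$ with $a_0 + n_{c,d} \cdot b_0 < c + n_{c,d} \cdot d$, and by the earlier lemma (the one letting us enlarge the witness of the archimedean property) the same strict inequality persists with any natural number $\geq n_{c,d}$ in place of $n_{c,d}$. Since $C$ and $D$ are finite, finite choice together with the existence of finite maxima in $\NN$ furnishes a single $n \in \NN$ for which $a_0 + n \cdot b_0 < c + n \cdot d$ holds for all $c \in C$ and all $d \in D$ at once.

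Now I would invoke the observation stated just before the proposition: $n \cdot B \nap \st{n \cdot b}{b \in B}$ and $n \cdot D \nap \st{n \cdot d}{d \in D}$. Since $\nap$ is a congruence for the prestreak operations and for $<$ (this is precisely what makes $+$ and $<$ descend to the quotient $Q$), we get
\[A + n \cdot B \;\nap\; \st{a + n \cdot b}{a \in A,\ b \in B}, \qquad C + n \cdot D \;\nap\; \st{c + n \cdot d}{c \in C,\ d \in D},\]
so it is enough to compare the two right-hand subsets. But $a_0 + n \cdot b_0$ belongs to the first of them and, by the previous paragraph, is strictly below every element of the second; hence the first subset is $<$ the second, and transporting this back along $\nap$ gives $A + n \cdot B < C + n \cdot D$. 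Thus $\inhfin(X)$ is archimedean.

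The single genuinely delicate point --- and the reason the detour through $\nap$ pays off --- is that one cannot argue directly with $n \cdot B$ written out as the $n$-fold sum $B + \cdots + B = \st{b_1 + \cdots + b_n}{b_i \in B}$: bounding such a sum from below by $n \cdot b$ for one chosen $b \in B$ would require deciding which summand is least, which is constructively unavailable. Routing the argument through $\nap$ (equivalently, through the identity $n \cdot A \nap \st{n \cdot a}{a \in A}$, whose proof needs only the constructively valid $\lnot\lnot(a \leq b \lor b \leq a)$) is exactly what removes this obstruction; the remaining ingredients --- finite choice of the $n_{c,d}$, the finite maximum, and monotonicity in $n$ --- are all routine.
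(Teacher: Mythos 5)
Your proof is correct and follows essentially the same route as the paper: fix $b \in B$ below all of $D$ and some $a \in A$, obtain an archimedean witness for each pair $(c,d) \in C \times D$, take a common upper bound $n$ using the enlargement lemma, and conclude via the observation $n \cdot A \nap \st{n \cdot a}{a \in A}$, which the paper states immediately before the proposition for exactly this purpose. The only difference is that you make explicit the $\nap$-transport step that the paper leaves implicit.
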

			\begin{proof}
				Take any $A, B, C, D \in \inhfin(X)$ such that $B < D$, \ie we have $b \in B$, smaller than all elements in $D$. Fix also some $a \in A$.
				
				By the archimedean property of $X$ we can find for any $c \in C$, $d \in D$ some $n \in \NN$ such that $a + n \cdot b < c + n \cdot d$. Let $m \in \NN$ be an upper bound for all of these finitely many choices of $n$. Then we have $A + m \cdot B < C + m \cdot D$.
			\end{proof}
			
			However, even if the prestreak $X$ is tight, $\inhfin(X)$ is not (intuitively, different sets can have the same infimum). If we want to get a streak that way, we need to compose it with $Q$. That is, for a streak $X$ we define $X^\land \dfeq Q(\inhfin(X))$. By the previous proposition $\inhfin(X)$ is archimedean, so it follows from the results in the previous section that $X^\land$ is again a streak.
			
			More to the point, it is a meet-semilattice; the infimum is given simply by $\inf\{[A], [B]\} = [A \cup B]$.
			
			Let $Y$ be any meet-semilattice streak and $f\colon X \to Y$ a morphism. Then one can define a morphism $\overline{f}\colon X^\land \to Y$ by $\overline{f}([A]) \dfeq \bigwedge_{a \in A} f(a)$. This map is well-defined: for $A, B \in \inhfin(X)$, if $\bigwedge_{a \in A} f(a) < \bigwedge_{b \in B} f(b)$, then by the definition of infima (and Proposition~\ref{Proposition: finite_strict_suprema_and_infima}) there exists $a \in A$ which is smaller than all $b \in B$, so $A < B$. Thus $\bigwedge_{a \in A} f(a) \apart \bigwedge_{b \in B} f(b)$ implies $[A] \apart [B]$, the contrapositive of which means $f$ is well defined.
			
			It is easily seen that it is also a morphism (which moreover preserves finite infima). Obviously $\overline{f} \circ \iota_X = f$ where $\iota_X \dfeq \theta_X \circ \tau_X$. We conclude that ${}^\land$ is a reflector (with $\iota$ the unit of the reflection) from streaks to meet-semilattice streaks.
			
			Clearly ${}^\land$ commutes with $\pos{}$ since an infimum of an inhabited finite set is positive if and only if all its elements are. Due to postcomposition with $Q$ it is also clear that ${}^\land$ commutes with $Q$. From Proposition~\ref{Proposition: lattice_structure_preserves_archimedean_property} it also quickly follows that ${}^\land$ commutes with $\arch$.
			
			We have seen how to adjoin finite infima to a streak; now we deal with suprema. The idea is largely the same, but there is a technical complication. If an inhabited finite set is to represent the supremum of its elements, we need to define $<$ as $A < B \iff \xall{a}{A}\xsome{b}{B}{a < b}$ (or equivalently $ \xsome{b}{B}\xall{a}{A}{a < b}$). However, this would mean that $A$ is positive when $\xsome{a}{A}{0 < A}$, so we cannot take for $A \cdot B$ simply all possible products of elements from $A$ and $B$, partially because they might not be defined (in general $\cdot$ is defined only on $\{0\} \cup X_{> 0}$), but even if they are (say, $X$ is multiplicative), we might not get the correct result (the product of $\{-2, 1\}$ with itself should be equivalent to $1$, not to $4$). What we essentially want for $A \cdot B$ is the set of all products from $A_{> 0}$ and $B_{> 0}$, but constructively these subsets of finite sets need not be again finite (they are if $<$ is decidable,
  but in that case adjoining suprema is a pointless exercise anyway since an inhabited finite set already containes its supremum in a decidable linear order).
			
			The consequence is that we cannot in general define multiplication already on $\inhfin(X)$, but with a trick we can still do it on the quotient. In order not to repeat myself with all the other stuff though, we'll use this opportunity to construct $X^\lor$ and its streak structure in a different way, with the alternative definition of a streak from Proposition~\ref{Proposition: alt_streak}.
			
			Let $X$ now be a streak from the start. We equip the set $FS(X) \dfeq \st{a \in \finseq{X}}{\lnth(a) > 0}$ with the two comparison relations with rationals, defined for $a \in FS(X)$ and $q \in \QQ$ by
			\[q < a \dfeq \xsome{i}{\NN_{< \lnth(a)}}{q < a_i},  \qquad  a < q \dfeq \xall{i}{\NN_{< \lnth(a)}}{a_i < q}.\]
			Furthermore, define the equivalence relation $\nap$ for $a, b \in FS(X)$ by
			\[a \nap b \dfeq \all{q}{\QQ}{q < a \iff q < b}\]
			and let $X^\lor \dfeq FS(X)/_\nap$. Clearly the predicates defining $<$ on $FS(X)$ are open, and their negations, given by
			\[a \leq q \iff \lnot(q < a) \iff \xall{i}{\NN_{< \lnth(a)}}{a_i \leq q},\]
			\[q \leq a \iff \lnot(a < q) \iff \lnot\lnot\xsome{i}{\NN_{< \lnth(a)}}{q \leq a_i},\]
			are closed, so this is then the case also on the quotient $X^\lor$.
			
			For $[a], [b] \in X^\lor$ we define
			\[[a] + [b] \dfeq \big[(a_i + b_j)_{(i, j) \in \NN_{< \lnth(a)} \times \NN_{< \lnth(b)}}\big].\]
			Notice that the zero element in $X^\lor$ is $[(0)]$ and that $0 < [a] \iff \xsome{j}{\NN_{< \lnth(a)}}{0 < a_j}$. It follows that any positive element $[a]$ can be represented as $[a']$ where $a'$ has only positive entries. To see this, fix $j \in \NN_{< \lnth(a)}$ such that $0 < a_j$, then for each $i \in \NN_{< \lnth(a)}$ choose a true disjunct in $0 < a_i \lor a_i < a_j$. Let $a'$ be the tuple of all $a_i$s for which the first disjunct was chosen (clearly $a_j$ itself appears in $a'$). Then $[a] = [a']$.
			
			Thus when defining multiplication we may without loss of generality assume that all terms in $a$ and $b$ are positive, and then we define
			\[[a] \cdot [b] = [a'] \cdot [b'] \dfeq \big[(a'_i \cdot b'_j)_{(i, j) \in \NN_{< \lnth(a')} \times \NN_{< \lnth(b')}}\big].\]
			Note that $[(1)]$ is the multiplicative unit.
			
			One can verify the other conditions that $X^\lor$ is a streak similarly as for $X^\land$ above. Moreover, it is a join-semilattice for the supremum $\sup\{[a], [b]\} = [a \cnct b]$ (where $\cnct$ denotes concatenation).
			
			We define $\sigma_X\colon X \to X^\lor$ by $\sigma_X(a) \dfeq [(a)]$. If $Y$ is a join-semilattice streak, we can extend any morphism $f\colon X \to Y$ to $\overline{f}\colon X^\lor \to Y$, $\overline{f}([a]) \dfeq \bigvee_{i \in \NN_{< \lnth(a)}} f(a_i)$. Altogether we conclude that we have a reflection of streaks onto join-semilattice streaks.
			
			If $X$ was a join-semilattice streak from the start, then $X^\land$ still is; the binary supremum is given as
			\[\sup\big\{[A], [B]\big\} = \big[\st{\sup\{a, b\}}{a \in A \land b \in B}\big].\]
			Similarly we see that if $X$ is a meet-semilattice, then $X^\lor$ is. We conclude (by Lemma~\ref{Lemma: commutativity_of_(co)reflections}) that the reflection ${}^\land$ and ${}^\lor$ commute, and their composition determines a reflection from streaks to lattice streaks.
			
			That ${}^\lor$ commutes with other previously mentioned (co)reflections can be checked similarly as for ${}^\land$.
			
			As a conclusion to this subsection we observe, how infima and suprema interact with the algebraic operations.
			\begin{proposition}\label{Proposition: interaction_of_bounds_with_algebra}
				Let $X$ be a streak, $x \in X$ and $A, B \subseteq X$ inhabited finite subsets which have infima in $X$.
				\begin{enumerate}
					\item
						We have
						\[\inf{A} + \inf{B} = \inf\st{a + b}{a \in A \land b \in B}\]
						(in particular, the last infimum exists in $X$). Specifically, $(\inf{A}) + x = \inf\st{a + x}{a \in A}$.
					\item
						If all elements of $A$ and $B$ are positive, then so are $\inf{A}$, $\inf{B}$ and we have
						\[\inf{A} \cdot \inf{B} = \inf\st{a \cdot b}{a \in A \land b \in B}\]
						(in particular, the last infimum exists in $X$). Specifically, if $x > 0$, then $(\inf{A}) \cdot x = \inf\st{a \cdot x}{a \in A}$.
				\end{enumerate}
				The analogous statement holds for suprema.
			\end{proposition}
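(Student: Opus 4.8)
The plan is to reduce everything to a single statement about how \emph{one} fixed element interacts with a finite infimum, and then to assemble the two-set case using the lemma on computing infima per parts. Essentially all the work is in that reduction; the only genuine constructive subtlety is that in a bare streak we cannot ``move $x$ to the other side'' (subtraction is only partial and $\cdot$ is only defined on positives), so the argument runs through cotransitivity and finiteness rather than algebraic manipulation.

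First I would record a small companion to Proposition~\ref{Proposition: finite_strict_suprema_and_infima}: if $S \subseteq X$ is inhabited finite, $\inf S$ exists, and $w \in X$ satisfies $w < s$ for every $s \in S$, then $w < \inf S$ (and dually for suprema). Indeed, for each $s \in S$ cotransitivity applied to $w < s$ gives $w < \inf S$ or $\inf S < s$; making these finitely many choices, either some choice yields $w < \inf S$ and we are done, or every choice yields $\inf S < s$ for all $s \in S$, contradicting Proposition~\ref{Proposition: finite_strict_suprema_and_infima}(1). Taking $w = 0$ here proves the positivity claims of part (2): if all elements of an inhabited finite set are positive, so is its infimum (for suprema this is even easier, since $\sup A \geq a > 0$ for any $a \in A$).

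Next, the key sub-step: for inhabited finite $A$ with $\inf A$ existing and any $x \in X$ (resp.\ any $x > 0$, assuming all elements of $A$ are positive, so that the products are defined and $\inf A > 0$), I claim $(\inf A) + x$ (resp.\ $(\inf A) \cdot x$) is the infimum of $\{a + x : a \in A\}$ (resp.\ $\{a \cdot x : a \in A\}$). It is a lower bound since $\inf A \leq a$ and adding/multiplying by $x$ preserves $\leq$. To see it is the greatest, I show it is a \emph{strict} infimum: if $(\inf A) + x < z$, then for each $a \in A$ cotransitivity gives $(\inf A) + x < a + x$ or $a + x < z$; if some $a$ gives the second alternative we are done, and if every $a$ gives the first, then $\inf A < a$ for all $a \in A$ (cancelling $x$, using that addition and multiplication by the positive $x$ reflect $<$), again contradicting Proposition~\ref{Proposition: finite_strict_suprema_and_infima}(1); the other half of the strict-infimum condition is immediate from the lower-bound property. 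Since a strict infimum is an infimum, this settles the sub-step, and in particular the exhibited infima exist.

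Finally I would assemble part (1): the sub-step with added element $\inf B$ gives $\inf A + \inf B = \inf\{a + \inf B : a \in A\}$, and with added element $a$ it gives $a + \inf B = \inf\{a + b : b \in B\}$ for each $a \in A$; the lemma on computing infima per parts (all infima in sight exist and, being over finite sets, are strict) then yields $\inf\{a + b : a \in A,\ b \in B\} = \inf\{\inf\{a+b : b \in B\} : a \in A\} = \inf A + \inf B$, and in particular this infimum exists. Part (2) is the identical argument with $\cdot$ replacing $+$, now legitimate because $\inf A, \inf B > 0$ makes all products defined. The statements for suprema are dual throughout: reverse the inequalities and use the supremum halves of Proposition~\ref{Proposition: finite_strict_suprema_and_infima} and of the companion fact above, noting that positivity of suprema needs no separate argument. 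I expect the sub-step to be the main obstacle: once one sees that a finite strict infimum ``interpolates'' against an arbitrary $z$ via cotransitivity and that Proposition~\ref{Proposition: finite_strict_suprema_and_infima}(1) excludes the degenerate branch, everything else is bookkeeping about existence and about where multiplication is defined.
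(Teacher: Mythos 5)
Your proof is correct, but it takes a genuinely different route from the paper's. The paper gives the ``fancy'' argument: it embeds $X$ into its meet-semilattice reflection via $\iota_X\colon X \to X^\land$, notes that streak morphisms preserve and reflect order (hence preserve infima) and are injective, and reduces the whole identity to the computation $[A] + [B] = [A + B]$ in $X^\land$ (and likewise for products and for suprema via $X^\lor$); it explicitly waves at the direct verification (``one can check these equalities by writing out the definitions'') without carrying it out. You supply exactly that direct verification, and you organize it well: the companion fact that a common strict lower bound of an inhabited finite set is below its infimum (which also yields the positivity claims in part (2)), the one-element sub-step $(\inf A)+x = \inf\{a+x\}$ proved by exhibiting a \emph{strict} infimum via cotransitivity plus Proposition~\ref{Proposition: finite_strict_suprema_and_infima}(1), and the assembly through the per-parts lemma, whose proof indeed establishes existence of the union's infimum once the component infima and the infimum of infima exist and are strict (which finite ones automatically are by Proposition~\ref{Proposition: finite_strict_suprema_and_infima}(2)). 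What each approach buys: the paper's proof is shorter and reuses the reflection machinery already built, but it leans on that entire construction; yours is self-contained, works from the bare order axioms of a streak, and is more explicit about where the existence of the new infima comes from and where multiplication is defined. Both are valid; no gap.
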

			\begin{proof}
				One can check these equalities by writing out the definitions of all of these infima. Here is a fancier proof, though, with the help of the results from this subsection.
				
				Since streak morphisms preserve and reflect the order, they also preserve infima. Thus
				\[\iota_X\big(\inf{A} + \inf{B}\big) = \iota_X(\inf{A}) + \iota_X(\inf{B}) = [A] + [B] = [A + B] = \iota_X\big(\inf(A + B)\big).\]
				As a streak morphism, $\iota_X$ is injective, so $\inf{A} + \inf{B} = \inf(A + B)$. The part with $x$ is a special case when $B = \{x\}$.
				
				The same trick works for products, as well as for suprema.
			\end{proof}
		
		\subsection{Rings}\label{Subsection: ring_streaks}
		
			In this subsection we discuss the ring structure of (pre)streaks. First the definition.
			
			\begin{definition}
				A \df{ring prestreak} is a multiplicative prestreak, for which $+$ and $\cdot$ form a ring (and therefore a unital commutative ring) --- meaning that we have \df{subtraction} as another operation.
			\end{definition}
			
			The existence of subtraction is sufficient for a multiplicative prestreak to be a ring prestreak. In the case od streaks we can make a stronger statement. Recall that we have subtraction in any streak, albeit only as a partial operation in general. We claim that being a ring streak is equivalent to this subtraction being total (we need not assume that the streak is multiplicative).
			\begin{theorem}\label{Theorem: streak_with_total_subtraction_is_ring}
				Let $X$ be a streak, in which subtraction is a total operation. Then there exists a unique extension of multiplication to the whole $X$ which makes $X$ into a ring streak.
			\end{theorem}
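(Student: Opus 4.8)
\emph{Proof proposal.} The plan is to define the extended multiplication by the usual ``difference of products'' formula and then verify the axioms of a multiplicative prestreak (Definition~\ref{Definition: multiplicative_prestreak}), the bulk of which reduce to the monoid and distributivity laws already available on $X_{> 0}$.

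First I would record the consequences of totality of subtraction: $(X, +, 0)$ is an abelian group, and by the archimedean property every $a \in X$ can be written as $a = a_1 - a_2$ with $a_1, a_2 \in X_{> 0}$ (pick $n \in \NN_{> 0}$ with $0 < a + n$ and set $a_1 \dfeq a + n$, $a_2 \dfeq n$). For $a = a_1 - a_2$, $b = b_1 - b_2$ with all of $a_1, a_2, b_1, b_2 \in X_{> 0}$, define
\[a \cdot b \dfeq (a_1 \cdot b_1 + a_2 \cdot b_2) - (a_1 \cdot b_2 + a_2 \cdot b_1),\]
the products on the right being the original prestreak multiplication of positive elements. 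Well-definedness is the first point to check: if $a_1 - a_2 = a_1' - a_2'$, equivalently $a_1 + a_2' = a_1' + a_2$ (using that $X$ is a group), then grouping terms and applying distributivity of $\cdot$ over $+$ on $X_{> 0}$ (note $a_1 + a_2'$, $a_2 + a_1'$, etc.\ are positive, being sums of positives) reduces the two candidate values to a common expression; the analogous check in the second argument then finishes it. Next I would verify that this $\cdot$ restricts to the original multiplication on $X_{> 0}$: for $a, b > 0$ choose representatives with $a_1 = a + a_2$, $b_1 = b + b_2$, expand by distributivity on positives, and cancel.

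The ring axioms come next. Commutativity is immediate from the symmetry of the defining formula. For associativity, distributivity over $+$, and the unit law $1 \cdot a = a$, I would pick difference representatives and expand, each step being an instance of the commutativity, associativity or distributivity of $\cdot$ on $X_{> 0}$ followed by the cancellation property of $+$. This bookkeeping is routine but it is the longest part of the argument and the spot most likely to hide a slip --- I expect it to be the main obstacle. This establishes that $(X, +, \cdot)$ is a unital commutative ring extending the given structure; since subtraction is assumed total, $X$ is thereby a ring prestreak once the multiplicity condition is in place.

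It remains to verify the multiplicity condition: $b < a \land d < c \implies a \cdot d + b \cdot c < a \cdot c + b \cdot d$. Here the ring structure pays off: since $X$ is now a commutative ring, $(a \cdot c + b \cdot d) - (a \cdot d + b \cdot c) = (a - b) \cdot (c - d)$; from $b < a$ and $d < c$ and the prestreak law that adding an element reflects $<$ we get $a - b, c - d \in X_{> 0}$, so this difference is positive, because $X_{> 0}$ is closed under the original multiplication and our $\cdot$ extends it. Adding $a \cdot d + b \cdot c$ and using that addition preserves $<$ yields the claim. Since $X$ was already archimedean and tight, $X$ is now a ring streak. Finally, uniqueness is not proved from scratch: Proposition~\ref{Proposition: uniqueness_of_multiplicative_structure_in_streaks} states that a streak carries at most one multiplicative structure, so any ring-streak structure on $X$ coincides with the one just constructed.
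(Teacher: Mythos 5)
Your proposal is correct and follows essentially the same route as the paper: shift $a$ and $b$ into the positives via the archimedean property, define the product by the formal-difference formula, check well-definedness and the ring axioms by expansion, dispose of the multiplicity condition by rewriting it as positivity of $(a-b)\cdot(c-d)$, and get uniqueness from Proposition~\ref{Proposition: uniqueness_of_multiplicative_structure_in_streaks}. The only cosmetic difference is that the paper fixes the representatives to be $a+m$, $m$ and $b+n$, $n$ with $m,n\in\NN$ (so its formula reads $(a+m)\cdot(b+n) - n\cdot a - m\cdot b - m\cdot n$), which is exactly your formula specialized to those representatives.
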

			\begin{proof}
				We know that subtraction is uniquely defined in a streak. We also already know from Proposition~\ref{Proposition: uniqueness_of_multiplicative_structure_in_streaks} that $X$ has at most one multiplicative structure. We use the idea from the proof of this proposition to define it, then show that it fulfils the criteria for a ring streak.
				
				Given any $a, b \in X$, use the archimedean property to find $m, n \in \NN$ such that $a + m > 0$, $b + n > 0$. Thus the product of $a + m$ and $b + n$ is given by the usual streak product. Any ring streak multiplication is by definition distributive over addition, therefore it must hold
				\[(a + m) \cdot (b + n) = a \cdot b + n \cdot a + m \cdot b + m \cdot n,\]
				and we also know from Lemma~\ref{Lemma: total_multiplication_coincides_with_multiplication_with_natural_numbers_in_multiplicative_streaks} that when multiplying a streak element with a natural number, there is no difference between the total multiplication and the inductive definition. Therefore we define the total multiplication in $X$ by
				\[a \cdot b \dfeq (a + m) \cdot (b + n) - n \cdot a - m \cdot b - m \cdot n.\]
				We claim this is well defined. Let $m', n' \in \NN$ be another natural numbers, for which $a + m' > 0$, $b + n' > 0$ holds. We have
				\[\Big((a + m') \cdot (b + n') - n' \cdot a - m' \cdot b - m' \cdot n'\Big) -\]
				\[- \Big((a + m) \cdot (b + n) - n \cdot a - m \cdot b - m \cdot n\Big) =\]
				\[= (a + m') \cdot (b + n') - (a + m) \cdot (b + n) - (n' - n) \cdot a - (m' - m) \cdot b - m' \cdot n' + m \cdot n.\]
				Suppose first that $m' \geq m$, $n' \geq n$, and let us have $m' = m + \Delta{m}$, $n' = n + \Delta{n}$. Then we can continue the calculation above as
				\[(a + m + \Delta{m}) \cdot (b + n + \Delta{n}) - (a + m) \cdot (b + n) -\]
				\[- \Delta{n} \cdot a - \Delta{m} \cdot b - (m + \Delta{m}) \cdot (n + \Delta{n}) + m \cdot n =\]
				\[= \Delta{n} \cdot (a + m) + \Delta{m} \cdot (b + n) + \Delta{m} \cdot \Delta{n} -\]
				\[- \Delta{n} \cdot a - \Delta{m} \cdot b - \Delta{m} \cdot n - \Delta{n} \cdot m - \Delta{m} \cdot \Delta{n} = 0.\]
				The other combinations for $m, m', n, n'$ can be dealt with in the same way.
				
				In particular, if $a$, $b$ are positive from the start, we can take $m' = n' = 0$ which makes it clear that this multiplication extends the usual streak one on positive elements.
				
				We get $0 \cdot a = 0$ and $1 \cdot a = 1$ as in Lemma~\ref{Lemma: total_multiplication_coincides_with_multiplication_with_natural_numbers_in_multiplicative_streaks}. Commutativity, associativity and distributivity are straightforward to check. Finally, as per discussion right after Definition~\ref{Definition: multiplicative_prestreak}, in the presence of subtraction the multiplicity condition amounts to products of positive elements being positive, something which holds in any (pre)streak.
			\end{proof}
			
			We now observe that any multiplicative (pre)streak can be turned into a ring streak in a canonical way, using the standard idea, how to turn a semigroup into a group: by taking formal differences.
			
			Let $X$ be a multiplicative prestreak. We equip $FD(X) \dfeq X \times X$ with order and operations thusly: for $(a, b), (c, d) \in FD(X)$ (intuitively $(a, b)$ represents $a-b$) let
			\[(a, b) < (c, d) \dfeq a + d < c + b,\]
			\[(a, b) + (c, d) \dfeq (a + c, b + d),  \qquad  (a, b) \cdot (c, d) \dfeq (a \cdot c + b \cdot d, a \cdot d + b \cdot c).\]
			Since addition is defined componentwise, $FD(X)$ is clearly a monoid for it, with $(0, 0)$ as the unit. We have $(0, 0) < (a, b) \iff b < a$, therefore if two elements of $FD(X)$ are positive, so is their product by the definition of $<$ and the multiplicity condition of $X$. More generally, $FD(X)$ also satisfies the multiplicity condition: if $(a, a'), (b, b'), (c, c'), (d, d') \in FD(X)$ satisfy $(b, b') < (a, a')$ and $(d, d') < (c, c')$, then $b + a' < a + b'$ and $d + c' < c + d'$ whence by multiplicity of $X$
			\[(a + b') \cdot (d + c') + (b + a') \cdot (c + d') < (a + b') \cdot (c + d') + (b + a') \cdot (d + c')\]
			which, when calculated, is exactly the required condition
			\[(a, a') \cdot (d, d') + (b, b') \cdot (c, c') < (a, a') \cdot (c, c') + (b, b') \cdot (d, d').\]
			The remaining conditions to conclude, that $FD(X)$ is again a multiplicative prestreak, are easy enough.
			
			We can extend $FD$ to a functor (from the full subcategory of multiplicative prestreaks to itself) by defining for $f\colon X \to Y$ simply $F(f)(a, b) \dfeq (f(a), f(b))$.
			
			Note that $n \cdot (a, b) = (n \cdot a, n \cdot b)$ for $n \in \NN$, $a, b \in X$. It follows that $FD$ preserves the archimedean condition.
			\begin{lemma}
				If $X$ is archimedean, then so is $FD(X)$.
			\end{lemma}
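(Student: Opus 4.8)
The plan is to verify the archimedean condition for $FD(X)$ directly, by unwinding the definition of the order on $FD(X)$ and reducing everything to the archimedean property of $X$. The only facts I need are the definition $(a, a') < (c, c') \iff a + c' < c + a'$ on $FD(X)$, the componentwise definition of $+$, the observation (recorded just above) that $n \cdot (a, a') = (n \cdot a,\ n \cdot a')$ for all $n \in \NN$, and distributivity of multiplication by naturals over addition in $X$ (Proposition~\ref{Proposition: multiplication_with_natural_numbers}(1)).

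First I would fix arbitrary $(a, a'), (b, b'), (c, c'), (d, d') \in FD(X)$ and assume $(b, b') < (d, d')$, which by definition unfolds to the inequality $b + d' < d + b'$ holding in $X$. The goal is to produce $n \in \NN$ with $(a, a') + n \cdot (b, b') < (c, c') + n \cdot (d, d')$.

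Next I would compute both sides componentwise: the left-hand side equals $(a + n \cdot b,\ a' + n \cdot b')$ and the right-hand side equals $(c + n \cdot d,\ c' + n \cdot d')$, so the desired strict inequality is, by the definition of $<$ on $FD(X)$,
\[(a + n \cdot b) + (c' + n \cdot d') < (c + n \cdot d) + (a' + n \cdot b'),\]
and after rearranging with commutativity and associativity of $+$ and collecting the $n$-multiples via Proposition~\ref{Proposition: multiplication_with_natural_numbers}(1), this is equivalent to
\[(a + c') + n \cdot (b + d') < (c + a') + n \cdot (d + b').\]

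Finally, since $b + d' < d + b'$ holds in $X$ by hypothesis, I would apply the archimedean property of $X$ to the four elements $a + c'$, $b + d'$, $c + a'$, $d + b'$, which yields exactly such an $n$, completing the proof. I do not expect a genuine obstacle here; the only point requiring a little care is the algebraic bookkeeping in the rearrangement above, where one uses that $n \cdot (b + d') = n \cdot b + n \cdot d'$ (and similarly on the other side) so that the componentwise inequality collapses to a single instance of the archimedean condition for $X$.
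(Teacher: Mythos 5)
Your proof is correct and is essentially identical to the paper's: both unwind the order on $FD(X)$ componentwise, regroup the inequality as $(a + c') + n \cdot (b + d') < (a' + c) + n \cdot (b' + d)$, and invoke the archimedean property of $X$ once. No gaps.
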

			\begin{proof}
				Take $(a, a'), (b, b'), (c, c'), (d, d') \in FD(X)$ such that $(b, b') < (d, d')$, meaning $b + d' < b' + d$. We have
				\[(a, a') + n \cdot (b, b') < (c, c') + n \cdot (d, d') \iff\]
				\[\iff (a + n \cdot b, a' + n \cdot b') < (c + n \cdot d, c' + n \cdot d') \iff\]
				\[\iff a + n \cdot b + c' + n \cdot d' < a' + n \cdot b' + c + n \cdot d \iff\]
				\[\iff (a + c') + n \cdot (b + d') < (a' + c) + n \cdot (b' + d),\]
				and such $n \in \NN$ exists by the archimedean property of $X$.
			\end{proof}
			
			In summary, if $X$ is a multiplicative prestreak, so is $FD(X)$, and if $X$ is further archimedean, $FD(X)$ is as well. Note that we have an embedding $X \to FD(X)$, given by $x \mapsto (x, 0)$. It is clear that this is a morphism.
			
			However, $FD(X)$ is not tight, even if $X$ is --- we have for example $(0, 0) \nap (1, 1)$. Nor is it a ring prestreak, unless $X$ was to start with: if we want to have $(x, y)$ such that $(a, b) + (x, y) = (0, 0)$, then necessarily $x = -a$, $y = -b$.
			
			Both issues are solved by composing $FD$ with $Q$. We want even more, though --- to turn an arbitrary (not necessarily multiplicative) streak into a ring streak. The idea is to first turn a streak $X$ into a multiplicative one by applying $\pos{}$, then use $FD$ to get formal differences, and finally $Q$ to get tightness and actual subtraction. Thus we define $\ring \dfeq Q \circ FD \circ \pos{}$.
			
			\begin{theorem}\label{Theorem: ring_streaks}
				If $X$ is a streak, then $\ring(X)$ is a ring streak and there exists a morphism $\rho_X\colon X \to \ring(X)$ which makes $\ring$ a reflection of streaks into ring streaks.
			\end{theorem}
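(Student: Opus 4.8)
The plan is to unwind the definition $\ring = Q \circ FD \circ \pos{}$ and check that the three stages fit together, then supply the unit and invoke Lemma~\ref{Lemma: reflection_in_a_preorder_category}. First, by Proposition~\ref{Proposition: positive_part_is_multiplicative} the streak $\pos{X}$ is an archimedean multiplicative streak, so by the construction preceding the theorem $FD(\pos{X})$ is an archimedean multiplicative prestreak, and hence $Q(FD(\pos{X}))$ is a streak. The componentwise multiplication of $FD(\pos{X})$ descends to the quotient --- the usual group-of-differences argument, which goes through because $\pos{X}$ is cancellative for addition --- so $\ring(X)$ is a multiplicative streak with $[(a,b)] \cdot [(c,d)] = [(a\cdot c + b\cdot d,\ a\cdot d + b\cdot c)]$. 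Moreover subtraction is total in $\ring(X)$: for $a,b \in \pos{X}$ one has $(a,b)+(b,a) = (a+b,\ b+a) \nap (0,0)$, so $[(b,a)]$ is the additive inverse of $[(a,b)]$. Thus $\ring(X)$ is a streak with total subtraction, and Theorem~\ref{Theorem: streak_with_total_subtraction_is_ring} immediately yields that $\ring(X)$ is a ring streak (with, by Proposition~\ref{Proposition: uniqueness_of_multiplicative_structure_in_streaks}, exactly the multiplication above).

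For the unit I would take the standard formal-difference map. Given $x \in X$, use the archimedean property to pick $n \in \NN$ with $x + n > 0$, and set $\rho_X(x) \dfeq [(x+n,\ n)]$ --- equivalently $\iota(x+n) - \iota(n)$, where $\iota\colon \pos{X} \to \ring(X)$, $a \mapsto [(a,0)]$, is the evident morphism. Independence of $n$ follows from tightness of $\pos{X}$, since $(x+n,n) \nap (x+m,m)$ whenever $x+n, x+m > 0$. That $\rho_X$ is a morphism is then a direct check: $\rho_X(0)=0$ and $\rho_X(x)+\rho_X(y) = [(x+y+n+m,\ n+m)] = \rho_X(x+y)$; with a common witness $n$ one has $[(x+n,n)] < [(y+n,n)] \iff x < y$; and for positive $x$ one has $\rho_X(x) = [(x,0)]$, whence $\rho_X(1)=1$ and preservation of products on $X_{>0}$ is immediate. (Alternatively one could invoke Theorem~\ref{Theorem: preservation_of_rationals_implies_streak_morphism} and check only comparison with rationals.)

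Since $\Str$ is a preorder category (Corollary~\ref{Corollary: streaks_preorder_category}), Lemma~\ref{Lemma: reflection_in_a_preorder_category} applies: $\rho_X$ provides the morphism $X \to \ring(X)$, so it remains to show every morphism $f\colon X \to Y$ into a ring streak $Y$ extends along $\rho_X$. The candidate is $\overline{f}([(a,b)]) \dfeq f(a) - f(b)$, computed in $Y$ (where subtraction is total); it is well-defined because $(a,b)\nap(c,d)$ forces $a+d = c+b$ in $\pos{X}$ by tightness, and then $f(a)-f(b) = f(c)-f(d)$ since $f$ preserves $+$. Checking that $\overline{f}$ is a morphism uses only that $f$ preserves $+$, $0$, $<$ and products of non-negative elements, together with the ring axioms of $Y$: for multiplication, $[(a,b)],[(c,d)] > 0$ forces $a,c > 0$, so $f$ sends $a\cdot c + b\cdot d$ and $a\cdot d + b\cdot c$ to $f(a)f(c)+f(b)f(d)$ and $f(a)f(d)+f(b)f(c)$, and distributivity in $Y$ gives $\overline{f}([(a,b)]\cdot[(c,d)]) = (f(a)-f(b))(f(c)-f(d)) = \overline{f}([(a,b)]) \cdot \overline{f}([(c,d)])$.

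I expect the two points where the positive-part restriction meets the ring structure to be the main work: first, justifying that the componentwise multiplication genuinely descends through $Q$ (equivalently, that $\nap$ is a multiplicative congruence on $FD(\pos{X})$), and second, the bookkeeping in the multiplication clause for $\overline{f}$ and for $\rho_X$, where each product occurring in $FD(\pos{X})$ must be rewritten as a sum of products of non-negative elements before $f$ is applied and then re-expanded in $Y$. Everything else is routine verification against the definitions and the cited lemmas.
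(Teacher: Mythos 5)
Your proposal follows the paper's proof essentially step for step: the same decomposition $Q \circ FD \circ \pos{}$, the same unit $\rho_X(x) = [(x+n,n)]$, the same observation that $-[(a,b)] = [(b,a)]$ feeding into Theorem~\ref{Theorem: streak_with_total_subtraction_is_ring}, and the same extension $\overline{f}([(a,b)]) = f(a)-f(b)$ combined with Lemma~\ref{Lemma: reflection_in_a_preorder_category}. The only difference is that you spell out a few verifications the paper leaves as ``straightforward'' (and do some slightly redundant work descending the componentwise multiplication, which Theorem~\ref{Theorem: streak_with_total_subtraction_is_ring} already supplies); the argument is correct.
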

			\begin{proof}
				If $X$ is a streak, $\pos{X}$ is a multiplicative streak by Proposition~\ref{Proposition: positive_part_is_multiplicative}, so by the above $FD(\pos{X})$ is an archimedean prestreak, so $\ring(X)$ is a streak.
				
				Taking any $[(a, b)] \in \ring(X)$, we have $[(a, b)] + [(b, a)] = [(a + b, b + a)] = [(0, 0)]$, so $-[(a, b)] = [(b, a)]$. In conclusion, $\ring(X)$ has total subtraction, so is a ring streak by Theorem~\ref{Theorem: streak_with_total_subtraction_is_ring}.
				
				Take any $x \in X$. By the archimedean property there exists $n \in \NN$ with $x + n > 0$. Define
				\[\rho_X(x) \dfeq [(x + n, n)].\]
				This is well defined: suppose we also have $x + n' > 0$. Without loss of generality assume $n \leq n'$ and let $n' = n + \Delta{n}$. Then $[(x + n', n')] = [(x + n + \Delta{n}, n + \Delta{n})] = [(x + n, n)]$.
				
				The fact that $\rho_X$ is a morphism is a straightforward verification.
				
				If $Y$ is a ring streak and $f\colon X \to Y$ a morphism, then we can define a map $\ring(X) \to Y$ by $[(a, b)] \mapsto f(a) - f(b)$. It is easy to check that this defines a morphism, so by Lemma~\ref{Lemma: reflection_in_a_preorder_category} $\ring$ is a reflection of streaks into ring streaks.
			\end{proof}
			
			Naturally, $\rho_X$, as a streak morphism, is injective, so it provides an embedding of an arbitrary streak into a ring streak.
			
			Note that when applying the above construction to $\NN$, we get the standard way of constructing $\ZZ$. Recalling Lemma~\ref{Lemma: interaction_of_reflections_with_initiality_and_terminality} and the fact that $\NN$ is the initial streak, we have a way to characterise the integers in our setting: $\ZZ$ is the initial ring streak.
			
			So, does this reflection commute with the other (co)reflections we had so far? It clearly doesn't commute with $\pos{}$, as ring streaks have negative elements (all of $\ZZ$, in fact, as we have seen). It trivially commutes with $Q$ (even if we extend the domain of $\ring$ to archimedean prestreaks, as we could). It is meaningless to ask whether $\ring$ commutes with $\arch$, as $\ring$ is defined only on (archimedean pre)streaks.
			
			As far as lattices are concerned, $\ring$ commutes with ${}^\lor \circ {}^\land \ism {}^\land \circ {}^\lor$. This basically amounts to the observation $\sup(-A) = -\inf(A)$ and $\inf(-A) = -\sup(A)$; we leave the precise verification to the reader. Interestingly, $\ring$ does not commute with individual ${}^\land$ and ${}^\lor$ (except in special cases such as classical mathematics, where all tight strict orders are lattices) --- it easily follows from these formulae that a semilattice ring streak is automatically a lattice ring streak.
		
		\subsection{Fields}
		
			Just like there is a standard method to turn semigroups to groups (which we used in the previous subsection to turn streaks into ring streaks) --- namely taking the formal differences --- there is a standard way to turn rings (nontrivial commutative ones without zero divisors, to be precise) into fields, namely taking the field of fractions. In this subsection we adopt this method to streaks.
			
			\begin{definition}
				A \df{field prestreak} is a ring prestreak, in which all positive elements are invertible.
			\end{definition}
			
			For $x \in X$ to be \df{invertible} of course means that there exists $y \in X$ with $x \cdot y = 1$ (therefore also $y \cdot x = 1$). As usual, if the inverse of $x$ exists, it is unique, and is denoted by $x^{-1}$, and the division is given by $a/b = \frac{a}{b} \dfeq a \cdot b^{-1}$.
			
			The definition of a field streak is, of course, a bit minimalistic: we usually require for a field that the invertible elements are precisely the nonzero ones. But this follows from the definition (at least for archimedean prestreaks).
			
			\begin{proposition}
				Let $X$ be a field prestreak. Then every $x \in X_{\apart 0}$ is invertable. If $X$ is archimedean, the converse also holds: if $x \in X$ is invertible, then $x \apart 0$.
			\end{proposition}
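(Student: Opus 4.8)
The plan is to prove the two implications separately; each reduces quickly to facts already established.

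For the forward direction, suppose $x \apart 0$, i.e.\ $x > 0$ or $x < 0$. If $x > 0$, then $x$ is invertible by the very definition of a field prestreak. If $x < 0$, then $-x$ exists (a ring prestreak has total subtraction), and adding $-x$ to both sides of $x < 0$ and using that addition preserves and reflects $<$ gives $0 < -x$; hence $-x$ is invertible, say $(-x) \cdot y = 1$. The standard ring identity $(-x) \cdot y = -(x \cdot y)$ (which follows from distributivity together with $0 \cdot y = 0$) then yields $x \cdot y = -1$, so $x \cdot (-y) = 1$, and $x$ is invertible with $x^{-1} = -y$. This part uses only the ring structure, not the archimedean property.

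For the converse, assume in addition that $X$ is archimedean and that $x \cdot y = 1$ for some $y \in X$. Since $0 < 1$ holds in every prestreak, we have $1 \apart 0$, hence $x \cdot y \apart 0$. A field prestreak is in particular a multiplicative prestreak, and by hypothesis it is archimedean, so Proposition~\ref{Proposition: multiplication_and_signs}(2) applies and gives $x \apart 0$ (indeed $x \apart 0 \land y \apart 0$). That completes the argument.

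There is no real obstacle here; the only points requiring care are (i) the sign bookkeeping $x \cdot (-y) = 1$ deduced from $(-x) \cdot y = 1$ via the ring axioms, and (ii) confirming that the hypotheses needed to invoke Proposition~\ref{Proposition: multiplication_and_signs}(2) — that $X$ is multiplicative and archimedean — are genuinely in force, which they are for an archimedean field prestreak. If one prefers not to cite the sign-analysis proposition, the converse can alternatively be obtained directly from cotransitivity applied to $0 < 1 = x \cdot y$, splitting on $x$ and on $y$ and discarding the impossible cases by the sign rules of Proposition~\ref{Proposition: multiplication_and_signs}(1); but invoking part (2) is cleaner.
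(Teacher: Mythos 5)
Your proposal is correct and matches the paper's own argument: the forward direction splits on $x > 0$ versus $x < 0$ and uses $x^{-1} = -(-x)^{-1}$ in the negative case, and the converse applies Proposition~\ref{Proposition: multiplication_and_signs} to $x \cdot y = 1 \apart 0$. You merely spell out the sign bookkeeping in more detail than the paper does.
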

			\begin{proof}
				If $x \apart 0$, then $x > 0$ or $x < 0$. If the first, then $x$ is invertible by assumption. If the second, then $-x > 0$ and $x^{-1} = -(-x)^{-1}$.
				
				Suppose now that for $x \in X$ we have some $y \in X$ with $x \cdot y = 1 \apart 0$. If $X$ is archimedean, then $x \apart 0$ by Proposition~\ref{Proposition: multiplication_and_signs}.
			\end{proof}
			
			The idea, then, to get a field, is to make all positive elements invertable in a given (pre)streak. It makes sense to restrict oneself just to inverting positive elements, as division by them preserves $<$ and makes the definitions simpler.
			
			Let $X$ be a multiplicative prestreak. We define order and operations on $FQ(X) \dfeq X \times X_{> 0}$ for $(a, b), (c, d) \in FQ(X)$ as
			\[(a, b) < (c, d) \dfeq a \cdot d < b \cdot c,\]
			\[(a, b) + (c, d) \dfeq (a \cdot d + b \cdot c, b \cdot d),  \qquad  (a, b) \cdot (c, d) \dfeq (a \cdot c, b \cdot d).\]
			Clearly $FQ(X)$ is closed under the above defined $+$ and $\cdot$, as the product of two positive elements is again positive. The additive unit is $(0, 1)$ and the multiplicative unit $(1, 1)$.
			
			The proof that the above satisfies the multiplicative prestreak conditions is done much like the proofs up to this point (and of course like the standard proof for field of fractions), so we skip it.
			
			$FQ$ is a functor from the category of multiplicative prestreaks to itself: for $f\colon X \to Y$, define $FQ(f)(a, b) \dfeq (f(a), f(b))$. Also, every multiplicative prestreak $X$ embeds into $FQ(X)$ via the morphism $\upsilon_X\colon X \to FQ(X)$, $\upsilon_X(a) \dfeq (a, 1)$.
			
			Note that for $n \in \NN_{> 0}$ we have $n \cdot (a, b) = (n \cdot a \cdot b^{n-1}, b^n) \nap (n \cdot a, b)$.
			\begin{proposition}
				If $X$ is archimedean, then so is $FQ(X)$.
			\end{proposition}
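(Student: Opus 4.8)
The plan is to mirror the proof of the preceding lemma for $FD$: we translate the inequality demanded by the archimedean property across the definitions of $<$, $+$ and multiplication-by-a-natural on $FQ(X)$, until it turns into a straight instance of the archimedean property of $X$. The one twist compared with the $FD$ case is that here $n\cdot(b_1,b_2)$ is not literally $(n\cdot b_1,n\cdot b_2)$, so we must route the computation through the $\nap$-equivalence recorded in the remark just above the statement.

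First I fix elements $(a_1,a_2),(b_1,b_2),(c_1,c_2),(d_1,d_2)\in FQ(X)=X\times X_{>0}$ with $(b_1,b_2)<(d_1,d_2)$, which by definition of $<$ on $FQ(X)$ means $b_1\cdot d_2<b_2\cdot d_1$ in $X$. Recalling that for $n\in\NN_{>0}$ we have $n\cdot(b_1,b_2)\nap(n\cdot b_1,b_2)$ and $n\cdot(d_1,d_2)\nap(n\cdot d_1,d_2)$, and using that addition on the prestreak $FQ(X)$ is $\nap$-invariant (it descends to $Q$) while $<$ in any prestreak respects $\nap$, I get that for every $n\geq 1$ the inequality
\[(a_1,a_2)+n\cdot(b_1,b_2)<(c_1,c_2)+n\cdot(d_1,d_2)\]
is equivalent to
\[(a_1\cdot b_2 + n\cdot(a_2\cdot b_1),\ a_2\cdot b_2)<(c_1\cdot d_2 + n\cdot(c_2\cdot d_1),\ c_2\cdot d_2).\]
Unfolding the definition of $<$ on $FQ(X)$ and using distributivity in the multiplicative prestreak $X$ (whence also $(n\cdot x)\cdot y=n\cdot(x\cdot y)$ by induction, and $n\cdot(x+y)=n\cdot x+n\cdot y$), this is in turn equivalent to
\[a_1 b_2 c_2 d_2 + n\cdot(a_2 b_1 c_2 d_2)<a_2 b_2 c_1 d_2 + n\cdot(a_2 b_2 c_2 d_1).\]

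Now I observe that multiplying the hypothesis $b_1 d_2<b_2 d_1$ by the positive element $a_2\cdot c_2$ gives $a_2 b_1 c_2 d_2<a_2 b_2 c_2 d_1$. Hence the archimedean property of $X$, applied with $a_1 b_2 c_2 d_2$, $a_2 b_1 c_2 d_2$, $a_2 b_2 c_1 d_2$, $a_2 b_2 c_2 d_1$ playing the roles of $a,b,c,d$, produces an $n\in\NN$ for which the last displayed inequality holds; by the lemma allowing us to enlarge the witness we may assume $n\geq 1$, so the two equivalences above apply and yield $(a_1,a_2)+n\cdot(b_1,b_2)<(c_1,c_2)+n\cdot(d_1,d_2)$, as required.

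The only real obstacle here is bookkeeping rather than mathematics: one must keep careful track of which side of each translated inequality every factor lands on, and one must remember that $n\cdot(b_1,b_2)$ is merely $\nap$-equivalent to $(n\cdot b_1,b_2)$, not equal to it — which forces the small detour of first arranging $n\geq 1$ via the enlargement lemma and then using that the archimedean statement is insensitive to replacing its data by $\nap$-equivalent elements. Everything else is routine commutative arithmetic inside $X$.
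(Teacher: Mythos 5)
Your proof is correct and follows essentially the same route as the paper: unwind the definitions of $+$, $n\cdot(-)$ and $<$ on $FQ(X)$ until the required inequality becomes a single instance of the archimedean property of $X$, applied after multiplying the hypothesis $b_1d_2<b_2d_1$ by the positive element $a_2\cdot c_2$. Your extra care about $n\cdot(b_1,b_2)$ being only $\nap$-equivalent to $(n\cdot b_1,b_2)$ and about arranging $n\geq 1$ is a point the paper glosses over, but it does not change the argument.
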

			\begin{proof}
				Take $(a, a'), (b, b'), (c, c'), (d, d') \in FQ(X)$ such that $(b, b') < (d, d')$, \ie $b \cdot d' < b' \cdot d$, and so in turn $a' \cdot b \cdot c' \cdot d' < a' \cdot b' \cdot c' \cdot d$. We have
				\[(a, a') + n \cdot (b, b') < (c, c') + n \cdot (d, d') \iff\]
				\[\iff (a \cdot b' + n \cdot b \cdot a', a' \cdot b') < (c \cdot d' + n \cdot d \cdot c', c' \cdot d') \iff\]
				\[\iff a \cdot b' \cdot c' \cdot d' + n \cdot a' \cdot b \cdot c' \cdot d' < a' \cdot b' \cdot c \cdot d' + n \cdot a' \cdot b' \cdot c' \cdot d,\]
				and such $n \in \NN$ exists by the archimedean property of $X$.
			\end{proof}
			
			As in the previous subsections, at this point we note that, unlike the archimedean property, tightness is not preserved (\eg $(0, 1) \nap (0, 2)$), so to get the desired streak, we need to apply $Q$ at the end.
			
			\begin{theorem}\label{Theorem: from_ring_streaks_to_field_streaks}
				\
				\begin{enumerate}
					\item
						If $X$ is an archimedean multiplicative prestreak, then so is $FQ(X)$, and $Q(FQ(X))$ is a multiplicative streak.
					\item
						If $X$ is a ring streak, then $Q(FQ(X))$ is a field streak.
				\end{enumerate}
			\end{theorem}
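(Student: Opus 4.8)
The plan is to treat the two parts in turn, leaning on the facts already set up just above: that $FQ$ sends multiplicative prestreaks to multiplicative prestreaks (the routine field-of-fractions verification that the text skips) and that $FQ$ preserves the archimedean property (the preceding proposition).

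\emph{Part (1).} Since $X$ is an archimedean multiplicative prestreak, $FQ(X)$ is one as well by those two facts, so $Q(FQ(X))$ makes sense. Being the $Q$-reflection of a prestreak, $Q(FQ(X))$ is tight, and the archimedean condition passes to the quotient (it is phrased purely in terms of $<$, $+$ and multiplication by naturals, all of which descend), so $Q(FQ(X))$ is a streak. The one point that needs real work is that the \emph{total} multiplication of $FQ(X)$ descends, i.e.\ that $\nap$ is a congruence for $\cdot$: given $x \nap x'$ and $y \nap y'$ in $FQ(X)$, I would suppose $xy \apart x'y'$, use cotransitivity of $\apart$ to split into $xy \apart x'y$ or $x'y \apart x'y'$, and in each case apply the reverse multiplicity condition (Proposition~\ref{Proposition: reverse_multiplicity_condition}, available because $FQ(X)$ is archimedean) with one of its arguments set to $0$, using $0 \cdot z \nap 0$; this forces $x \apart x'$ respectively $y \apart y'$, contradicting the hypotheses, so $xy \nap x'y'$. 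Hence $[x] \cdot [y] \dfeq [x \cdot y]$ is well defined on $Q(FQ(X))$, and commutativity, associativity, distributivity, the unit law, agreement with the prestreak multiplication on positive elements, and the multiplicity condition all transfer verbatim from $FQ(X)$ (the multiplicity condition because $[a] < [b] \iff a < b$ on the quotient). Thus $Q(FQ(X))$ is a multiplicative streak.

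\emph{Part (2).} A ring streak is in particular an archimedean multiplicative prestreak, so by part (1) $Q(FQ(X))$ is a multiplicative streak. I would then note that $FQ(X) = X \times X_{> 0}$ has total subtraction: since $X$ is a ring, $-a \in X$ for all $a$, and $(a, b) + (-a, b) = (0, b^2) \nap (0, 1)$, so $-[(a, b)] = [(-a, b)]$ in $Q(FQ(X))$. Hence $Q(FQ(X))$ is a streak whose operations form a unital commutative ring, i.e.\ a ring prestreak (equivalently, invoke Theorem~\ref{Theorem: streak_with_total_subtraction_is_ring}). Finally I would check invertibility of positive elements: unwinding $<$ on $FQ(X)$ and on the quotient, $[(a, b)] > 0$ exactly when $a > 0$ in $X$, and then $b > 0$ too, so $(b, a) \in FQ(X)$ with $(a, b) \cdot (b, a) = (ab, ab) \nap (1, 1)$, so $[(b, a)]$ inverts $[(a, b)]$. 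Therefore $Q(FQ(X))$ is a ring streak in which every positive element is invertible, i.e.\ a field streak.

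The main obstacle is the descent of the total multiplication through $Q$ in part (1) — that $\nap$ is a congruence for $\cdot$; everything else is either already in hand from the surrounding text (preservation of the multiplicative-prestreak and archimedean conditions by $FQ$), a direct transcription of the classical field-of-fractions computation, or a short unwinding of definitions together with Theorem~\ref{Theorem: streak_with_total_subtraction_is_ring}.
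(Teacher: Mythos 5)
Your proof is correct and follows essentially the same route as the paper: part (1) rests on the preceding facts that $FQ$ preserves the multiplicative-prestreak and archimedean conditions and that $Q$ then yields a streak, and part (2) is the paper's argument almost verbatim (total subtraction via $-[(a,b)]=[(-a,b)]$, an appeal to Theorem~\ref{Theorem: streak_with_total_subtraction_is_ring}, and the inverse $[(b,a)]$ for positive $[(a,b)]$). Your extra care in checking that $\nap$ is a congruence for the total multiplication of $FQ(X)$ — via cotransitivity of $\apart$ and Proposition~\ref{Proposition: reverse_multiplicity_condition} with one argument set to $0$ — correctly fills in a step the paper dismisses as ``immediate.''
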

			\begin{proof}
				\begin{enumerate}
					\item
						Immediate, by the above.
					\item
						$Q(FQ(X))$ has total substraction; we have $-[(a, b)] = [(-a, b)]$. Thus it is a ring streak by Theorem~\ref{Theorem: streak_with_total_subtraction_is_ring}. The statement $[(a, b)] > 0$ is equivalent to $a > 0$, in which case we easily see $[(a, b)]^{-1} = [(b, a)]$. All positive elements are invertible, so $X$ is a field streak.
				\end{enumerate}
			\end{proof}
			
			Let $\field \dfeq Q \circ FQ \circ \ring$ and $\varphi_X \dfeq \theta_X \circ \upsilon_X \circ \rho_X$.
			\begin{theorem}
				The functor $\field$ is a reflection from streaks to field streaks, with $\varphi$ the unit of the reflection.
			\end{theorem}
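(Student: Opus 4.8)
The plan is to apply Lemma~\ref{Lemma: reflection_in_a_preorder_category}. Since the field streaks, taken with streak morphisms, form a full subcategory of the preorder category $\Str$ (Corollary~\ref{Corollary: streaks_preorder_category}), it suffices to check two things: that every streak $X$ admits a morphism $X \to \field(X)$, and that whenever $Y$ is a field streak and a morphism $X \to Y$ exists, a morphism $\field(X) \to Y$ exists as well. First I would record that $\field(X)$ is genuinely a field streak: for a streak $X$ the object $\ring(X)$ is a ring streak by Theorem~\ref{Theorem: ring_streaks}, and hence $\field(X) = Q(FQ(\ring(X)))$ is a field streak by Theorem~\ref{Theorem: from_ring_streaks_to_field_streaks}.

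For the first condition, $\varphi_X = \theta \circ \upsilon \circ \rho_X$, with the units $\theta$ and $\upsilon$ taken at $FQ(\ring(X))$ and $\ring(X)$ respectively, is a composite of morphisms: $\rho_X\colon X \to \ring(X)$ by Theorem~\ref{Theorem: ring_streaks}, $\upsilon_{\ring(X)}\colon \ring(X) \to FQ(\ring(X))$ because $\ring(X)$ is in particular a multiplicative prestreak, and $\theta_{FQ(\ring(X))}$ as the unit of the reflection $Q$. For the second condition, let $Y$ be a field streak and $f\colon X \to Y$ a morphism. As $Y$ is a ring streak, the reflection $\ring$ yields a morphism $f'\colon \ring(X) \to Y$; I would also note here that, being a morphism between ring streaks, $f'$ preserves arbitrary products (one reduces to products of non-negative elements using the explicit formula for multiplication in a ring streak together with the fact that morphisms preserve addition and subtraction). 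Since $Y$ is a field prestreak, each of its positive elements is invertible, so I would define $f''\colon FQ(\ring(X)) \to Y$ by $f''(a,b) \dfeq f'(a) \cdot f'(b)^{-1}$ — this is meaningful because $b > 0$ in $\ring(X)$ forces $f'(b) > 0$, hence invertible, in $Y$ — and verify it is a morphism by the standard field-of-fractions computation. Finally, since $Y$ is tight and $Q$ reflects prestreaks onto tight ones, $f''$ factors through $\theta_{FQ(\ring(X))}$, giving the wanted morphism $\field(X) \to Y$. Lemma~\ref{Lemma: reflection_in_a_preorder_category} then concludes that $\field$ is a reflection from streaks to field streaks, and $\varphi = \theta \circ \upsilon \circ \rho$, the composite of the three units, is its unit.

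I do not expect a real obstacle, since all the machinery is already developed in the preceding subsections; the only step that requires attention is the verification that $f''$ is a morphism. In particular, it preserves $<$: from $(a,b) < (c,d)$ in $FQ(\ring(X))$, \ie $a \cdot d < b \cdot c$ in $\ring(X)$, applying $f'$ and then multiplying by the positive element $f'(b)^{-1} \cdot f'(d)^{-1}$ — legitimate in the multiplicative archimedean prestreak $Y$ — yields exactly $f''(a,b) < f''(c,d)$. Preservation of $+$, $0$, $\cdot$ and $1$ is then the familiar computation (using the preservation of products in $\ring(X)$ noted above), and the factorisation through $\theta_{FQ(\ring(X))}$ is immediate from the defining property of the reflection $Q$.
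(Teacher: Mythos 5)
Your proposal is correct and follows essentially the same route as the paper: factor $\field$ through the already-established reflection $\ring$ and extend a morphism into a field streak by the fraction formula $f(a)/f(b)$ on $FQ$, passing to the quotient via $Q$. The paper phrases this as ``a composition of reflections is a reflection'' and leaves the verification of $\overline{f}([(a,b)]) \dfeq f(a)/f(b)$ to the reader, whereas you invoke Lemma~\ref{Lemma: reflection_in_a_preorder_category} and spell out the order-preservation and well-definedness details; the substance is the same.
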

			\begin{proof}
				Clearly a composition of reflections is again a reflection, so it is sufficient to check that $Q \circ FQ$ is a reflection from ring streaks to field streaks. Given a morphism $f\colon X \to Y$ from a ring streak to a field streak, define $\overline{f}([(a, b)]) \dfeq f(a)/f(b)$. It is easy to check that this works.
			\end{proof}
			
			In the previous subsection we used Lemma~\ref{Lemma: interaction_of_reflections_with_initiality_and_terminality} to characterise the integers $\ZZ$ as the initial ring streak. A similar argument shows that the rationals $\QQ$ are the initial field streak.
			
			Commutativity of $\field$ with other (co)reflections is dealt with much like the case of $\ring$ was at the end of the previous subsection; in the case of lattices one just has to additionally take into account the formulae $\sup(A^{-1}) = (\inf{A})^{-1}$ and $\inf(A^{-1}) = (\sup{A})^{-1}$ (when all elements of $A$ are positive). Of course, $\field$ and $\ring$ themselves commute; we have $\field \circ \ring \ism \field \ism \ring \circ \field$.
		
		\subsection{Halved rings}
		
			In Subsection~\ref{Subsection: dense_streaks} we considered the property of streaks being dense (intuitively, in $\RR$) --- necessary if a streak is to be used in a construction of reals as its completion (of some sort).
			
			Is there a canonical way to impose density on a streak via a (co)reflection? Directly, no --- unless a streak $X$ is already dense, there is no smallest dense streak, containing $X$ (and certainly no streak, contained in $X$, will do).
			
			However, there is a way to turn a streak into one wherein every two elements have their average, fulfilling the interpolation property (recall Theorem~\ref{Theorem: characterization_of_dense_streaks}). Clearly this is equivalent to that for every element we also have its half. If we further require that the resulting streak is a ring, then we also have negative elements, so a dense streak by Theorem~\ref{Theorem: characterization_of_dense_streaks}.
			
			\begin{definition}
				A \df{halved ring (pre)streak} is a ring (pre)streak, in which $2$ is invertible, \ie we have $2^{-1} = \frac{1}{2} \in X$.
			\end{definition}
			
			Clearly if $X$ is a halved ring (pre)streak, then for any $x \in X$ and $n \in \NN$ we also have $\frac{x}{2^n} \in X$. One way to turn a streak into a halved ring one is to apply $\field$, then restrict to the equivalence classes possessing a representative of the form $(x, 2^n)$. Obviously $\phi$ corestricts to this, and one can easily check that this is a reflection from streaks to halved ring streaks.
			
			One can also perform a direct construction, without a detour over fields. Let $X$ be a ring streak (apply $\ring$ first if necessary). Consider the set of all $(x, n) \in X \times \NN$ (intuitively, $(x, n)$ respresents $\frac{x}{2^n}$). Let $\monus\colon \NN \times \NN \to \NN$ denote the \df{cutoff subtraction}:
			\[m \monus n \dfeq \begin{cases} m - n & \text{ if } m \geq n,\\ 0 & \text{ if } m \leq n \end{cases} \quad = \sup\{m, n\} - n\]
			for $m, n \in \NN$. For $(a, m), (b, n) \in X \times \NN$ define
			\[(a, m) < (b, n) \dfeq a \cdot 2^{n \monus m} < b \cdot 2^{m \monus n},\]
			\[(a, m) + (b, n) \dfeq \big(a \cdot 2^{n \monus m} + b \cdot 2^{m \monus n}, \sup\{m, n\}\big),\]
			\[(a, m) \cdot (b, n) \dfeq (a \cdot b, m + n).\]
			It is easy to check that this makes $X \times \NN$ into a halved ring prestreak, into which $X$ embeds via $x \mapsto (x, 0)$. Applying $Q$ at the end (to get a streak), we obtain the desired reflection (commutativity of which with other (co)reflections is dealt with as for $\ring$).
			
			In conclusion, we have a way how to transform any streak into a halved ring streak, and in particular, a dense ring streak. While density is enough for various constructions of reals (see Section~\ref{Section: models_of_reals}), it is useful to have the ring structure in addition, as this simplifies some formulae. In particular, it is no coincidence that diadic rationals are often used in computing to construct the reals, and diadic rationals can be characterized as the initial halved ring streak.

	\section{Real numbers}\label{Section: reals}
	
		With the general theory of streaks behind us, it is time to finally focus on the reals.
		
		As mentioned, we wanted streaks to be broad enough to already contain order, algebraic and topological structure and capture connections between them, but still general enough so that all the usual number sets, up to and including $\RR$, are streaks.
		
		We haven't used real numbers up to this point of the paper yet; the idea is that we can use streaks to now define them, so that
		\begin{itemize}
			\item
				the various notions of reals in various mathematical models satisfy our definition (we verify this in Section~\ref{Section: models_of_reals}), and
			\item
				the definition does not rely on particular constructions (such as Dedekind cuts or equivalence classes of Cauchy sequences of rationals); rather, it describes properties which we definitely want to hold for reals, and determines them up to isomorphism.
		\end{itemize}
		As mentioned in the introduction, there is already such a definition of reals, namely that they are a Dedekind-complete ordered field. Unfortunately, this does not work constructively; already the simple statement that every binary sequence $\NN \to \{0, 1\}$ has a supremum in $\RR$ implies \lpo[]. Also, different constructive models of reals need not even be isomorphic; for example, Cauchy reals can always be embedded into Dedekind reals (when we can construct both), but while the converse holds assuming countable choice, it does not hold in general~\cite{Lubarsky:2007:CCC:1224238.1224293}.
		
		Our definition of reals is a formalization of the following: $\RR$ is a set, equipped with an order relation $<$ (satisfying the usual properties), in which we can add and multiply, and it is the completion of rationals in the following sense: it contains $\QQ$, and is the largest such structure, in which rationals are dense.
		\begin{definition}\label{Definition: reals}
			$\RR$ is the terminal streak.
		\end{definition}
		Longer version: $\RR$ is the terminal object in the category $\Str$. We are not saying at this point, that a terminal streak necessarily exists; merely that we label any such with $\RR$. As a terminal object in a category, it is determined up to isomorphism; we use the definite article in ``the terminal streak'' in this sense.
		
		Aside from knowing immediately that $\RR$ is determined up to isomorphism (something not obvious from ``Dedekind-complete ordered field''), the real value of a categorical definition via a universal property is that we immediately know that $\RR$ possesses every reflective structure, in particular all the structure from the previous section. This isn't just a theoretical result; as we will see in Section~\ref{Section: models_of_reals}, this gives us explicit formulae for operations in concrete models of reals.
		
		\begin{theorem}\label{Theorem: structure_of_reals}
			If $\kat{D}$ is any reflective subcategory of $\Str$, closed under isomorphisms,\footnote{A full subcategory $\kat{D} \subseteq \kat{C}$ is \df{closed under isomorphisms} when for all objects $X$, $Y$ of $\kat{C}$, if $X$ is in $\kat{D}$ and $X \ism Y$, then $Y$ is in $\kat{D}$ also.} and $\RR$ exists, then it also lies in $\kat{D}$. In particular, $\RR$ is a lattice field streak.
		\end{theorem}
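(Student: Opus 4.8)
The plan is to derive this from Lemma~\ref{Lemma: interaction_of_reflections_with_initiality_and_terminality}, part~\ref{Lemma: interaction_of_reflections_with_initiality_and_terminality: terminality}. By Definition~\ref{Definition: reals}, $\RR$ is the terminal object of $\Str$. Suppose $\kat{D} \subseteq \Str$ is reflective, with reflector $R$ and unit $\eta$; recall that a reflective subcategory is \emph{full} by definition, so the cited lemma applies as stated. It tells us that $R(\RR)$ is again a terminal object of $\Str$, and hence $R(\RR) \ism \RR$. But $R(\RR)$ lies in $\kat{D}$ by construction of the reflector, and $\kat{D}$ is closed under isomorphisms, so $\RR$ lies in $\kat{D}$ as well. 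This settles the first assertion.

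For the second assertion, I would exhibit the full subcategory of lattice field streaks as a reflective subcategory of $\Str$ that is closed under isomorphisms, and then apply the first part. The requisite reflections are already available: by Theorem~\ref{Theorem: ring_streaks} and Theorem~\ref{Theorem: from_ring_streaks_to_field_streaks} (and the discussion following them in Section~\ref{Section: reflections}) the functor $\field = Q \circ FQ \circ \ring$ is a reflection of $\Str$ onto field streaks, and the composite ${}^\land \circ {}^\lor \ism {}^\lor \circ {}^\land$ is a reflection of $\Str$ onto lattice streaks. It was checked in Section~\ref{Section: reflections} that these two reflections commute, i.e.\ that $\field$ carries lattice streaks to lattice streaks and that the lattice reflectors carry ring (hence field) streaks to ring (hence field) streaks, so the hypotheses of Lemma~\ref{Lemma: commutativity_of_(co)reflections} are met. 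That lemma then yields a reflection of $\Str$ onto the intersection of the two subcategories, which is precisely the full subcategory of lattice field streaks. Since being a field and being a lattice are properties invariant under streak isomorphisms, this subcategory is closed under isomorphisms, and the first part of the theorem gives $\RR$ as a lattice field streak. (Alternatively, since $\Str$ is a preorder category by Corollary~\ref{Corollary: streaks_preorder_category}, one could verify the reflection property of $\field \circ {}^\land \circ {}^\lor$ directly via Lemma~\ref{Lemma: reflection_in_a_preorder_category}, sidestepping the commutation bookkeeping.)

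The argument is almost entirely formal; the only mathematical content beyond diagram chasing is the verification — carried out in the subsections on lattices, rings and fields — that the lattice and field reflections commute, which is where one actually computes with $\sup$, $\inf$, formal differences and fractions. The point to state carefully in the write-up is simply that ``reflective subcategory'' already entails fullness (so that Lemma~\ref{Lemma: interaction_of_reflections_with_initiality_and_terminality} applies verbatim) and that closure under isomorphisms is exactly what lets us pass from $R(\RR) \in \kat{D}$ to $\RR \in \kat{D}$; I do not expect any genuine obstacle here.
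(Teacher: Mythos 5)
Your proof is correct and follows the same route as the paper: the paper's own proof is a one-line invocation of Lemma~\ref{Lemma: interaction_of_reflections_with_initiality_and_terminality}(\ref{Lemma: interaction_of_reflections_with_initiality_and_terminality: terminality}) together with the observation that the subcategories of lattice streaks and field streaks are closed under isomorphisms. Your additional bookkeeping (fullness of reflective subcategories, the commuting of the lattice and field reflections via Lemma~\ref{Lemma: commutativity_of_(co)reflections} to get a single reflection onto lattice field streaks) just makes explicit what the paper leaves implicit from Section~\ref{Section: reflections}.
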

		\begin{proof}
			Use Lemma~\ref{Lemma: interaction_of_reflections_with_initiality_and_terminality}(\ref{Lemma: interaction_of_reflections_with_initiality_and_terminality: terminality}) (and note that the subcategories of lattice streaks and field streaks are closed under isomorphisms).
		\end{proof}
		
		\begin{remark}
			Actually, using Lemma~\ref{Lemma: interaction_of_reflections_with_initiality_and_terminality}(\ref{Lemma: interaction_of_reflections_with_initiality_and_terminality: terminality}) in full, we get more: $\RR$ is in fact also the terminal lattice streak, terminal ring streak, terminal field streak, terminal lattice ring streak and terminal lattice field  streak. It even goes in the other direction one step: $\RR$ is the terminal archimedean prestreak (if $X$ is an archimedean prestreak, then $\trm[Q(X)] \circ \theta_X\colon X \to \RR$ is a morphism, unique by Corollary~\ref{Corollary: streaks_preorder_category}). However, that is as far as it goes; $\RR$ is in general not a terminal prestreak.
		\end{remark}
		
		We have seen what the order and the algebraic structure of the reals is, but we want to say something about its topological structure as well. Since $\RR$ is a ring and a lattice, we can define the \df{absolute value} $|\insarg|\colon \RR \to \RR$ by $|a| \dfeq \sup\{a, -a\}$. This has all the expected properties.
		\begin{proposition}
			The following holds for all $a, b \in \RR$:
			\begin{enumerate}
				\item
					$|a| \geq 0$,
				\item
					$|a| = a \iff 0 \leq a$, and so in particular $|0| = 0$ and $|1| = 1$;
				\item
					$|a| > 0 \iff a \apart 0$, and consequently (together with the first item) $|a| = 0 \iff a = 0$,
				\item
					$|a + b| \leq |a| + |b|$,
				\item
					$|a \cdot b| = |a| \cdot |b|$.
			\end{enumerate}
		\end{proposition}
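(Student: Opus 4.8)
The plan is to verify the five properties in sequence, using three facts about $\RR$ that are guaranteed by Theorem~\ref{Theorem: structure_of_reals}: $\RR$ is a lattice (so $\sup\{a,-a\}$ exists and, by Proposition~\ref{Proposition: finite_strict_suprema_and_infima}, is even a \emph{strict} supremum), $\RR$ is a ring streak (so negation is total and $-a$, $(-a)+(-b)$, etc.\ are legitimate expressions), and $\RR$ is archimedean (inherited since it is a streak). The only order-theoretic tools needed are: $+$ and multiplication by a positive element preserve and reflect $\leq$; $\leq$ is antisymmetric; $a \leq b \iff \sup\{a,b\}=b$; the sign implications of Proposition~\ref{Proposition: multiplication_and_signs}; and the trivial remark $|x|=|-x|$ (a supremum does not depend on how the set is listed).

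For (1), from $a \leq |a|$ and $-a \leq |a|$ I add to get $0 \leq |a|+|a| = 2\cdot|a|$, then reflect through multiplication by $2$ (Proposition~\ref{Proposition: multiplication_with_natural_numbers}) to obtain $0 \leq |a|$. For (2), if $0 \leq a$ then $-a \leq 0 \leq a$, so $\sup\{a,-a\}=a$; conversely $|a|=a$ gives $-a \leq a$, and adding $a$ gives $0 \leq 2a$, hence $0 \leq a$. The values $|0|=0$ and $|1|=1$ then follow from $0 \leq 0$ and $0 < 1$. For (3), if $0 < |a| = \sup\{a,-a\}$ then strictness of the supremum gives $0 < a$ or $0 < -a$, i.e.\ $a \apart 0$; conversely if $a \apart 0$ then one of $a$, $-a$ is positive and lies $\leq |a|$, so $0 < |a|$ by transitivity of $<$ through $\leq$. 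Combining (1), the contrapositive of (3), and tightness of $\apart$ yields $|a| = 0 \iff a = 0$. For (4), since $a+b \leq |a|+|b|$ and $-(a+b) = (-a)+(-b) \leq |a|+|b|$, the element $|a|+|b|$ is an upper bound of $\{a+b,-(a+b)\}$, hence $|a+b| \leq |a|+|b|$ by the least-upper-bound property.

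The real work is (5), and this is the step I expect to be the main obstacle. First I would prove the unconditional inequality $|a|\cdot|b| \geq |ab|$: the four elements $|a|-a$, $|a|+a$, $|b|-b$, $|b|+b$ are all $\geq 0$, so each product $(|a|\mp a)(|b|\mp b) \geq 0$ (a product of non-negatives is non-negative, by Proposition~\ref{Proposition: multiplication_and_signs}); adding $(|a|-a)(|b|-b)$ to $(|a|+a)(|b|+b)$ gives $2|a||b|+2ab \geq 0$, and adding $(|a|-a)(|b|+b)$ to $(|a|+a)(|b|-b)$ gives $2|a||b|-2ab \geq 0$, so $|a||b| \geq ab$ and $|a||b| \geq -ab$, whence $|a||b| \geq \sup\{ab,-ab\} = |ab|$. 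For the reverse inequality I argue by contradiction: assume $|ab| < |a||b|$. Then $0 \leq |ab| < |a||b|$, so $|a||b| > 0$; since $|a|,|b| \geq 0$, the reverse sign implication of Proposition~\ref{Proposition: multiplication_and_signs} forces $|a|>0$ and $|b|>0$, hence $a \apart 0$ and $b \apart 0$ by (3). A four-way case split on the signs of $a$ and $b$, using (2) and $|x|=|-x|$ to identify $|a|,|b|,|ab|$ with the appropriate one of $\pm a,\pm b,\pm ab$, shows $|a||b| = |ab|$ in every case, contradicting $|ab|<|a||b|$. Thus $|ab| \leq |a||b|$, and antisymmetry gives equality. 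Everything here is routine manipulation of the order axioms; the one genuinely load-bearing ingredient is the reverse direction of Proposition~\ref{Proposition: multiplication_and_signs}, which is precisely where archimedeanness of $\RR$ is used.
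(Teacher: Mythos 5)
Your proof is correct and overlaps substantially with the paper's, but several sub-arguments take a different (and in places more elementary) route. For (1) the paper derives $0<0$ from two applications of cotransitivity, while you cancel a factor of $2$ from $0 \leq 2\,|a|$ via Proposition~\ref{Proposition: multiplication_with_natural_numbers}; for (4) the paper computes $\sup\{a,-a\}+\sup\{b,-b\}$ as a four-element supremum via Proposition~\ref{Proposition: interaction_of_bounds_with_algebra}, whereas you only use that $|a|+|b|$ is an upper bound of $\{a+b,-(a+b)\}$ together with the least-upper-bound property, which is slightly more economical. The main divergence is in (5): the paper rules out both $|a \cdot b|<|a|\cdot|b|$ and $|a \cdot b|>|a|\cdot|b|$ by contradiction and a four-way sign case split, while you dispose of one direction unconditionally by the classical expansion trick with the non-negative elements $|a|\mp a$, $|b|\mp b$, and only need the sign case split for the remaining direction. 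Both work; your algebraic half trades one case analysis for ring manipulations, but note that closure of non-negatives under the total product is not literally what Proposition~\ref{Proposition: multiplication_and_signs} says --- it follows by a one-line contradiction from its second (archimedean) part, so that step should be spelled out. Also, the final sentence of your (5) should conclude $|a|\cdot|b| \leq |a \cdot b|$ (the negation of the assumed strict inequality), not $|a \cdot b| \leq |a|\cdot|b|$; as written, the appeal to antisymmetry would have only one of the two needed inequalities, though the intended argument is clearly the right one.
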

		\begin{proof}
			\begin{enumerate}
				\item
					If it were $|a| < 0$, then $|a| < a \lor a < 0$ and $|a| < -a \lor -a < 0$, but the first disjuncts contradict the definition of the absolute value, so $a < 0$ and $-a < 0$, and after summing, $0 < 0$, a contradiction.
				\item
					$|a| = a \iff \sup\{a, -a\} = a \iff -a \leq a \iff 0 \leq 2 a \iff 0 \leq a$
				\item
					Suppose $a > 0$; then $-a < 0 < a$, so $|a| = \sup\{a, -a\} = a > 0$. Similarly for $a < 0$. Conversely, suppose $|a| > 0$. Then $0 < a \lor a < |a|$ and $0 < -a \lor -a < |a|$. If we choose the first disjunct at least once, we have $a \apart 0$, but we cannot choose the second disjunct both times by Proposition~\ref{Proposition: finite_strict_suprema_and_infima}.
				\item
					Use Proposition~\ref{Proposition: interaction_of_bounds_with_algebra} to calculate
					\[|a| + |b| = \sup\{a, -a\} + \sup\{b, -b\} = \sup\{a + b, a - b, -a + b, -a - b\} \geq\]
					\[\geq \sup\{a + b, -a - b\} = |a + b|.\]
				\item
					Assume $|a \cdot b| < |a| \cdot |b|$; then $|a| \cdot |b| > 0$, so $|a| > 0$ and $|b| > 0$ by Proposition~\ref{Proposition: multiplication_and_signs} and the first item. By the third item $a \apart 0$ and $b \apart 0$. The consideration of all four cases leads to contradiction.
					
					Similarly, if $|a \cdot b| > |a| \cdot |b|$, then $|a \cdot b| > 0$, so $a \cdot b \apart 0$. Again using Proposition~\ref{Proposition: multiplication_and_signs} and considering all the cases, we obtain a contradiction. In conclusion $|a \cdot b| = |a| \cdot |b|$.
			\end{enumerate}
		\end{proof}
		
		This means that we can equip $\RR$ with the \df{euclidean metric} $d_E\colon \RR \times \RR \to \RR$, $d_E(a, b) \dfeq |a - b|$, and define balls for this metric, $\ball[E]{a}{r} \dfeq \st{x \in \RR}{d_E(a, x) < r}$. Since they are given by an open predicate, they are open in $\RR$, so in this sense the intrinsic topology of $\RR$ is at least as strong as the euclidean one (clearly the argument generalizes to arbitrary metric spaces: the intrinsic topology is at least as strong as the metric one). It can be strictly stronger, though --- for example, in classical sets where we take all of them to be discrete. However, in topological models we do get for the terminal streak precisely $\RR$ with the euclidean topology (see Subsection~\ref{Subsection: topological_models_of_reals}).

	\section{Models of reals}\label{Section: models_of_reals}
	
		In the previous section we defined the reals as the terminal streak. In this section we show, that the usual constructions of reals (within their proper mathematical framework) satisfy this definition.
		
		Rather than just stating a model of reals and do the verification, we'll study how the idea of the construction itself fits into the framework of streaks, obtaining further reflections. Typically a model of reals is then obtained by applying the reflection on a dense streak (recall Subsection~\ref{Subsection: dense_streaks}).
	
		\subsection{Cauchy reals}\label{Subsection: Cauchy_reals}
		
			In this subsection we observe that the Cauchy reals --- \ie the equivalence classes of rational Cauchy sequences --- satisfy our definition of reals, at least when countable choice holds (in particular in classical mathematics and many versions of constructive mathematics). It is known that without countable choice this ``Cauchy completion'' of rationals behaves badly --- it might not itself be Cauchy complete~\cite{Lubarsky:2007:CCC:1224238.1224293}.
			
			As is the common strategy in this paper, we won't immediately and directly prove the desired theorem, but will instead develop a more general theory to get a better insight into the structure in question, in this case Cauchy completness. As usual, this means constructing a reflection.
			
			Before we start, a few general words on Cauchy sequences. The usual definition is that $a$ is a Cauchy sequence in (a subset of) $\RR$ when
			\[\xall{\epsilon}{\RR_{> 0}}\xsome{m}{\NN}\xall{i,j}{\NN_{\geq m}}{|a_i - a_j| < \epsilon}\]
			holds. If we want to construct reals as a Cauchy completion of rationals, we can't already use $\RR$ in the definition; one way to rephrase it is
			\[\xall{n}{\NN_{> 0}}\xsome{m}{\NN}\xall{i,j}{\NN_{\geq m}}{|a_i - a_j| < \frac{1}{n}}.\]
			Constructively we often need more: an explicit modulus of convergence, \ie a mapping which tells us how late terms of the sequence must we take to obtain the desired precision. There are different ways to express this; we will take the map, obtained from the above condition by the application of countable choice:
			\[\xsome{M}{\NN^\NN}\xall{n}{\NN_{> 0}}\xall{i,j}{\NN_{\geq M(n)}}{|a_i - a_j| < \frac{1}{n}}.\]
			This will be our definition of a Cauchy sequence. In the presence of countable choice it is equivalent to the more standard one above, but we'll try to prove as much as possible in the general setting, so we assume the stronger latter condition.
			
			There is still a minor detail to rephrase $|a_i - a_j| < \frac{1}{n}$ in the form which uses only the general (pre)streak operations. It is equivalent to
			\[n \cdot a_i < 1 + n \cdot a_j \land n \cdot a_j < 1 + n \cdot a_i,\]
			but the second conjunct becomes superfluous after we universally quantify over $i$ and $j$. Also, since we no longer divide by $n$, we don't need to explicitly exclude $0$ from its domain.
			\begin{definition}
				Let $X$ be a prestreak and $a\colon \NN \to X$ a sequence in it.
				\begin{itemize}
					\item
						A map $M\colon \NN \to \NN$ is called a \df{modulus of convergence} for $a$ when it satisfies $\xall{n}{\NN}\all{i,j}{\NN_{\geq M(n)}}{n \cdot a_i < 1 + n \cdot a_j}$.
					\item
						A sequence which possesses a modulus of convergence is called \df{Cauchy}.
				\end{itemize}
			\end{definition}
			
			Clearly a modulus of convergence can be arbitrarily increased and still remain a modulus of convergence. In particular, any Cauchy sequence has an increasing one: just replace $M$ with $n \mapsto \sup\st{M(i)}{i \in \NN_{\leq n}}$.
			
			\begin{lemma}\label{Lemma: distance_between_Cauchy_sequence_terms}
				Let both $M$, $N$ be moduli of convergence for $a \in X^\NN$. Then for all $m, n \in\NN$ and all $i \in \NN_{\geq M(m)}$, $j \in \NN_{\geq N(n)}$
				\[m n \cdot a_i < m n \cdot a_j + \sup\{m, n, 1\}.\]
				In particular, we have the special cases
				\[m n \cdot a_{M(m)} < m n \cdot a_{N(n)} + \sup\{m, n, 1\}\]
				and
				\[n \cdot a_{M(n)} < n \cdot a_{N(n)} + 1.\]
			\end{lemma}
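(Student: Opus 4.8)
The plan is to obtain the inequality from a single invocation of the modulus property of $M$ or of $N$, choosing which one by a case distinction on the (decidable) order of $\NN$, rather than by a triangle‑inequality argument through an intermediate term.

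I would first dispose of the degenerate case. If $m = 0$ or $n = 0$, then $mn \cdot a_i = 0$, while $mn \cdot a_j + \sup\{m,n,1\} = \sup\{m,n,1\}$, and the latter is $> 0$ since $1 \leq \sup\{m,n,1\}$ and $0 < 1$ in any prestreak; so the claim holds. From now on assume $m \geq 1$ and $n \geq 1$, so that $\sup\{m,n,1\} = \sup\{m,n\}$.

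The key observation is that a two‑hop estimate through a term $a_l$ with $l \geq M(m)$ and $l \geq N(n)$ would lose both "$1/m$" and "$1/n$", giving only the bound $m+n$ rather than the sharp $\sup\{m,n\}$; so I argue directly. Since $\leq$ is decidable on $\NN$, either $M(m) \leq N(n)$ or $N(n) \leq M(m)$. In the first case $i \geq M(m)$ and $j \geq N(n) \geq M(m)$, so both indices lie beyond $M(m)$ and the modulus property of $M$ at precision $m$ gives $m \cdot a_i < 1 + m \cdot a_j$; multiplying by $n$ (using $n \geq 1$ together with $n \cdot 1 = n$, distributivity of multiplication by naturals over $+$, and associativity, all routine via Proposition~\ref{Proposition: multiplication_with_natural_numbers} and the prestreak axioms) yields $mn \cdot a_i < n + mn \cdot a_j$, and then $n \leq \sup\{m,n\}$ plus preservation of $\leq$ by addition gives $mn \cdot a_i < mn \cdot a_j + \sup\{m,n,1\}$. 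The case $N(n) \leq M(m)$ is symmetric: now $i \geq M(m) \geq N(n)$ and $j \geq N(n)$, so the modulus property of $N$ at precision $n$ gives $n \cdot a_i < 1 + n \cdot a_j$, and multiplying by $m$ and using $m \leq \sup\{m,n\}$ finishes the argument.

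The two special cases follow at once: the first is the general statement with $i := M(m)$ and $j := N(n)$; the third is the general statement with $m := n$ (so $\sup\{n,n,1\} = n$ once $n \geq 1$, the case $n = 0$ being trivial) followed by cancelling one factor $n$ via the reflection half of Proposition~\ref{Proposition: multiplication_with_natural_numbers}. The only genuine obstacle is recognizing that the sharp constant rules out the naive triangle‑type estimate and is recovered precisely by the decidable case split on $M(m)$ versus $N(n)$; everything else is bookkeeping with multiplication by natural numbers inside a prestreak.
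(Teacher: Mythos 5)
Your proof is correct and follows essentially the same route as the paper's: dispose of the case $m=0$ or $n=0$, then use decidability of $\leq$ on $\NN$ to split on $M(m) \leq N(n)$ versus $N(n) \leq M(m)$, apply the single relevant modulus property to the two indices, and scale by the other factor to land on $\sup\{m,n,1\}$. The derivations of the two special cases are also fine.
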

			\begin{proof}
				The statements clearly hold if at least one of $m$, $n$ is zero. Assume hereafter that $m, n \geq 1$. The order on $\NN$ is decidable, so we have $M(m) \leq N(n) \lor N(n) \leq M(m)$.
				
				Assume first $M(m) \leq N(n)$; then $i, j \geq M(m)$. By the definition of a modulus of convergence we have $m \cdot a_i < 1 + m \cdot a_j$, so
				\[m n \cdot a_i < n + m n \cdot a_j \leq m n \cdot a_{N(n)} + \sup\{m, n, 1\}.\]
				
				Similarly for $M(m) \geq N(n)$; then $i, j \geq N(n)$ whence $n \cdot a_i < 1 + n \cdot a_j$, so
				\[m n \cdot a_i < m + m n \cdot a_j \leq m n \cdot a_{N(n)} + \sup\{m, n, 1\}.\]
			\end{proof}
			
			%\begin{lemma}%\label{Lemma: distance_between_Cauchy_sequence_terms}
			%	Let both $M$, $M'$ be moduli of convergence for $a \in X^\NN$. Then for all $m, n \in\NN_{> 0}$
			%	\[m n \cdot a_{M(m)} < m n \cdot a_{M'(n)} + m + n.\]
			%\end{lemma}
			%\begin{proof}
			%	Without loss of generality let $M(m) \leq M'(n)$. By the definition of a modulus of convergence we have $m \cdot a_{M(m)} < 1 + m \cdot a_{M'(n)}$, so
			%	\[m n \cdot a_{M(i)} < n + m n \cdot a_{M'(n)} < m n \cdot a_{M'(n)} + m + n.\]
			%\end{proof}
			
			Denote the set of Cauchy sequences in a prestreak $X$ by $CS(X)$, \ie
			\[CS(X) \dfeq \st{a \in X^\NN}{\xsome{M}{\NN^\NN}\xall{n}{\NN}\all{i,j}{\NN_{\geq M(n)}}{n \cdot a_i < 1 + n \cdot a_j}}.\]
			There is an embedding $c_X\colon X \to CS(X)$ which maps an element to the corresponding constant sequence, \ie $c_X(x)_n \dfeq x$ for all $x \in X$ and $n \in \NN$. A constant sequence is of course Cauchy: every map $\NN \to \NN$ is its modulus of convergence. (The converse also holds: if every map $\NN \to \NN$ is a modulus of convergence of a certain sequence, then that sequence is constant.)
			
			We claim that if $X$ is an archimedean prestreak, then so is $CS(X)$.
			
			For $a, b \in CS(X)$ fix their moduli of convergence $M, N \in \NN^\NN$, and define
			\[a < b \dfeq \some{n}{\NN}{n \cdot a_{M(n)} + 2 < n \cdot b_{N(n)}}.\]
			The idea behind this definition is that we order Cauchy sequences according to their limits, and we have (so far only on the intuitive level, but we make this precise in Theorem~\ref{Theorem: limit_operator}) $|a_i - \lim a| \leq \frac{1}{n}$ for $i \in \NN_{\geq M(n)}$, and the above condition simply states $a_{M(n)} + \frac{1}{n} < b_{M(n)} - \frac{1}{n}$.
			
			We claim that $<$ is well defined (independent of the choices for moduli of convergence). Let $M'$, $N'$ also be moduli for $a$, $b$ respectively. Suppose we have $n \in \NN$ (necessarily $n \geq 1$) such that $n \cdot a_{M(n)} + 2 < n \cdot b_{N(n)}$. By the archimedean property there exists $m \in \NN$ (we can take $m \geq n$) such that $n + m \cdot (n \cdot a_{M(n)} + 2) < m \cdot n \cdot b_{N(n)}$. Then (using Lemma~\ref{Lemma: distance_between_Cauchy_sequence_terms} twice)
			\[n \cdot (2 m \cdot a_{M'(2m)} + 2) + 2 m < 2 m n \cdot a_{M(n)} + \sup\{2m, n, 1\} + 2 n + 2 m =\]
			\[= 2 \cdot \big(n + m \cdot (n \cdot a_{M(n)} + 2)\big) < 2 m n \cdot b_{N(n)} <\]
			\[< 2 m n \cdot b_{N'(2m)} + \sup\{2m, n, 1\} = n \cdot \big(2 m \cdot b_{N'(2m)}\big) + 2 m,\]
			and so $2m \cdot a_{M'(2m)} + 2 < 2m \cdot b_{N'(2m)}$.
			
			Clearly $<$ is given by an open predicate on $CS(X)$. Its negation $\leq$ is clearly closed, as
			\[a \leq b \iff \all{n}{\NN}{n \cdot a_{M(n)} \leq n \cdot b_{N(n)} + 2}.\]
			
			In the definition of $<$ we compare only one term of the first sequence with only one term of the second one, but the comparison is actually valid for all terms from somewhere onward.
			\begin{lemma}\label{Lemma: characterization_of_order_of_cauchy_sequences}
				Let $X$ be an archimedean prestreak and $a, b \in CS(X)$ with moduli of convergence $M$, $N$ respectively. The following statements are equivalent:
				\begin{enumerate}
					\item
						$a < b$, \ \ie \ $\some{n}{\NN}{n \cdot a_{M(n)} + 2 < n \cdot b_{N(n)}}$,
					\item
						$\xsome{n}{\NN}\xall{i}{\NN_{\geq M(n)}}\all{j}{\NN_{\geq N(n)}}{n \cdot a_i + 2 < n \cdot b_j}$,
					\item
						$\xsome{n, k}{\NN}\all{i, j}{\NN_{\geq k}}{n \cdot a_i + 2 < n \cdot b_j}$.
				\end{enumerate}
			\end{lemma}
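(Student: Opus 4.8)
The plan is to establish the cycle $(1) \implies (2) \implies (3) \implies (1)$. The implication $(2) \implies (3)$ is immediate: if $n$ witnesses $(2)$, put $k \dfeq \sup\{M(n), N(n)\}$, so that every $i, j \in \NN_{\geq k}$ satisfy $i \geq M(n)$ and $j \geq N(n)$, whence the inequality from $(2)$ applies. For $(3) \implies (1)$ I would use that the relation $a < b$ is independent of the chosen moduli of convergence (verified in the paragraph just before this lemma). Given a witness $(n, k)$ for $(3)$, the maps $m \mapsto \sup\{M(m), k\}$ and $m \mapsto \sup\{N(m), k\}$ are again moduli of convergence for $a$ and $b$ respectively (increasing a modulus keeps it a modulus) and take values $\geq k$ everywhere; computing $a < b$ with these moduli, the scale $n$ itself works, since the required comparison is then literally an instance of the hypothesis of $(3)$ at indices $\geq k$.

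The substance is $(1) \implies (2)$. Let $n_0$ be a witness for $(1)$; note $n_0 \geq 1$, since $n_0 = 0$ would force $2 < 0$ in $X$. The idea is to pass to a larger scale. I would apply the archimedean property of $X$ to the elements $2 n_0$, $n_0 \cdot a_{M(n_0)} + 2$, $0$, $n_0 \cdot b_{N(n_0)}$ (legitimate since $n_0 \cdot a_{M(n_0)} + 2 < n_0 \cdot b_{N(n_0)}$) to obtain $n' \in \NN$ with $2 n_0 + n' \cdot (n_0 \cdot a_{M(n_0)} + 2) < n' \cdot n_0 \cdot b_{N(n_0)}$, and by the lemma permitting the witness of the archimedean property to be increased I may assume $n' \geq n_0$ (hence $n' \geq 1$). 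I claim this $n'$ witnesses $(2)$. Fix $i \in \NN_{\geq M(n')}$ and $j \in \NN_{\geq N(n')}$. Applying Lemma~\ref{Lemma: distance_between_Cauchy_sequence_terms} to $a$ (with both moduli taken equal to $M$, arguments $n'$ and $n_0$, and $\sup\{n', n_0, 1\} = n'$) gives $n' n_0 \cdot a_i < n' n_0 \cdot a_{M(n_0)} + n'$; applying it to $b$ (both moduli $N$, arguments $n_0$ and $n'$) gives $n' n_0 \cdot b_{N(n_0)} < n' n_0 \cdot b_j + n'$. Chaining these with the rearranged scaled witness $n' n_0 \cdot a_{M(n_0)} + 2 n' + 2 n_0 < n' n_0 \cdot b_{N(n_0)}$ yields $n' n_0 \cdot a_i + 2 n_0 + n' < n' n_0 \cdot b_j + n'$; cancelling $n'$, and then cancelling the positive factor $n_0$ (Proposition~\ref{Proposition: multiplication_with_natural_numbers}), gives $n' \cdot a_i + 2 < n' \cdot b_j$, as required.

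The one delicate point is the bookkeeping of the ``$+2$'' buffers. A naive attempt --- comparing $a_i$ with $a_{M(n_0)}$ and $b_{N(n_0)}$ with $b_j$ directly at the original scale $n_0$ --- loses a margin of size $1/n_0$ on each side, which exactly exhausts the available $2/n_0$ and leaves nothing for the buffer. The archimedean step must therefore be arranged so that the new witness carries slack $2 n' + 2 n_0$ at resolution $1/(n' n_0)$: the two contributions of size $2 n'$ absorb the two $1/n_0$-sized losses coming from Lemma~\ref{Lemma: distance_between_Cauchy_sequence_terms}, while the residual $2 n_0$ is the buffer demanded by $(2)$ at scale $n'$. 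Lining up these constants is the only subtlety; everything else is routine monotonicity and cancellation for $+$ and for multiplication by positive naturals.
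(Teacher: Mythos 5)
Your proof is correct and follows essentially the same route as the paper: the cycle $(1)\Rightarrow(2)\Rightarrow(3)\Rightarrow(1)$, with $(2)\Rightarrow(3)$ by taking the supremum of $M(n)$ and $N(n)$, $(3)\Rightarrow(1)$ by bumping up the moduli and invoking modulus-independence of $<$, and $(1)\Rightarrow(2)$ by combining the archimedean property with Lemma~\ref{Lemma: distance_between_Cauchy_sequence_terms}. The only (cosmetic) difference is in how the extra slack is manufactured for $(1)\Rightarrow(2)$: the paper interpolates rationals $q<r$ and picks $n$ with $\tfrac{1}{n}<\tfrac{r-q}{2m}$, whereas you apply the archimedean axiom directly to produce the term $2n_0+2n'$ at scale $n'n_0$ --- and your constant bookkeeping checks out.
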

			\begin{proof}
				\begin{itemize}
					\item\proven{$(1 \impl 2)$}
						Take $m \in \NN$ (necessarily $m > 0$), for which we have $m \cdot a_{M(m)} + 2 < m \cdot b_{N(m)}$. Since $X$ is archimedean, we have some $q, r \in \QQ$ with $m \cdot a_{M(m)} + 2 < q < r < m \cdot b_{N(m)}$. Let $n \in \NN_{\geq m}$ be large enough so that $\frac{1}{n} < \frac{r-q}{2m}$.
						
						Take any $i \in \NN_{\geq M(n)}$, $j \in \NN_{\geq N(n)}$. By Lemma~\ref{Lemma: distance_between_Cauchy_sequence_terms} we have $m n \cdot a_i < m n \cdot a_{M(m)} + n$ and $m n \cdot b_{N(m)} < m n \cdot b_j + n$. Thus
						\[m \cdot \big(n \cdot a_i + 2\big) + n < m n \cdot a_{M(m)} + n + 2 m + n =\]
						\[= n \cdot \big(m \cdot a_{M(m)} + 2\big) + 2 m < n q + 2 m < n r < m n \cdot b_{N(m)} < m n \cdot b_j + n,\]
						so $n \cdot a_i + 2 < n \cdot b_j$, as desired.
					\item\proven{$(2 \impl 3)$}
						Take $k \dfeq \sup\big\{M(n), N(n)\big\}$.
					\item\proven{$(3 \impl 1)$}
						Suppose we have the assumed $n, k \in \NN$. If we increase a modulus of convergence, it remains a modulus of convergence, so $M'(m) \dfeq M(m) + k$ and $N'(m) \dfeq N(m) + k$ are also moduli of $a$, $b$ respectively, and we obviously have $n \cdot a_{M'(n)} + 2 < n \cdot b_{N'(n)}$. As has been shown above, the statement $a < b$ is independent of the choice of moduli of convergence, so it holds.
				\end{itemize}
			\end{proof}
			
			We have $a < a \iff \some{n}{\NN}{n \cdot a_{M(n)} + 2 < n \cdot a_{M(n)}} \iff \some{n}{\NN}{2 < 0}$, clearly a false statement, so $<$ is asymmetric on $CS(X)$. To show that $<$ is also cotransitive on $CS(X)$, take $a, b, x \in CS(X)$ and denote their moduli of convergence by $M$, $N$, $O$ (without loss of generality assume that they are increasing). Suppose $a < b$, \ie $n \cdot a_{M(n)} + 2 < n \cdot b_{N(n)}$. By the archimedean property of $X$ there exist $q, r \in \QQ$ with $n \cdot a_{M(n)} + 2 < q < r < n \cdot b_{N(n)}$. Let $m \in \NN_{> n}$ be large enough so that $\frac{1}{m} < \frac{r-q}{5n}$.
			
			By cotransitivity $\frac{3 q + 2 r}{5} < n \cdot x_{O(m)} + 1 \lor n \cdot x_{O(m)} + 1 < \frac{2 q + 3 r}{5 n}$. Assume that the first disjunct holds. Then (with the help of Lemma~\ref{Lemma: distance_between_Cauchy_sequence_terms})
			\[n \cdot \big(m \cdot a_{M(m)} + 2\big) + m < m n \cdot a_{M(n)} + \sup\{m, n, 1\} + 2 n + m =\]
			\[= m \cdot \big(n \cdot a_{M(n)} + 2\big) + 2 n < m q + 2 n < m \cdot \tfrac{3 q + 2 r}{5} <  n m \cdot x_{O(m)} + m,\]
			so $m \cdot a_{M(m)} + 2 < m \cdot x_{O(m)}$ and therefore $a < x$. We can check in a very similar way that $n \cdot x_{O(m)} + 1 < \frac{2 q + 3 r}{5 n}$ implies $x < b$. In conclusion, $<$ is a strict order on $CS(X)$.
			
			For $a, b \in CS(X)$ we define $(a + b)_n \dfeq a_n + b_n$. The result is again in $CS(X)$: if $M, N$ are moduli of convergence of $a, b$ respectively, then $n \mapsto \sup\{M(2n), N(2n)\}$ is a modulus of convergence of the sum. Clearly $+$ is commutative and associative, $c_X(0)$ is the unit, and it satisfies the law, connecting it with $<$.
			
			To define the multiplication of Cauchy sequences, we first characterise, when an element of $CS(X)$ is positive.
			\begin{lemma}\label{Lemma: Cauchy_sequence_positive}
				Let $X$ be an archimedean prestreak. Then for every $a \in CS(X)$ the following is equivalent.
				\begin{enumerate}
					\item
						$a > c_X(0)$
					\item
						$\xsome{m,n}{\NN}\xall{k}{\NN_{\geq m}}{n \cdot a_k > 1}$
					\item
						$\xsome{n}{\NN}\xall{k}{\NN_{\geq n}}{n \cdot a_k > 1}$
				\end{enumerate}
			\end{lemma}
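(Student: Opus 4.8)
The plan is to unwind the definition of $a > c_X(0)$ and then run a short cycle of implications. Since $(c_X(0))_j = 0$ for every $j$, any map $\NN \to \NN$ is a modulus of convergence for $c_X(0)$, so, fixing a modulus $M$ for $a$, the statement $a > c_X(0)$ unwinds by the definition of $<$ on $CS(X)$ to $\xsome{n}{\NN}{n \cdot a_{M(n)} > 2}$. Feeding the pair $c_X(0)$, $a$ into Lemma~\ref{Lemma: characterization_of_order_of_cauchy_sequences} (the quantifier over indices of the constant sequence being vacuous, as $n \cdot (c_X(0))_i = 0$), statement (1) is equivalent to its ``all late terms'' form $\xsome{n, k}{\NN}{\xall{j}{\NN_{\geq k}}{n \cdot a_j > 2}}$, which I will call $(1')$. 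What is then left is just the bookkeeping needed to move between the threshold $2$ appearing in $(1')$ and the threshold $1$ in (2) and (3), keeping track of which constants get multiplied by naturals.

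For $(1') \Rightarrow (2)$: since $1 < 2$ in $\NN \subseteq X$, transitivity of $<$ gives $n \cdot a_j > 1$ for all $j \geq k$, so (2) holds with $m := k$ and the same $n$. For $(2) \Rightarrow (3)$: the witness $n$ must satisfy $n > 0$ (otherwise $n \cdot a_k = 0$ and $0 > 1$ is impossible), and then from $n \cdot 0 = 0 < 1 < n \cdot a_k$ together with $n > 0$, Proposition~\ref{Proposition: multiplication_with_natural_numbers}(2) yields $a_k > 0$ for every $k \geq m$; writing $n' := \sup\{n, m\}$ as $n + d$ with $d \in \NN$, for $k \geq n'$ one has $d \cdot a_k \geq 0$, hence $n' \cdot a_k = n \cdot a_k + d \cdot a_k \geq n \cdot a_k > 1$, which is (3). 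For $(3) \Rightarrow (1')$: take $n' := n + n$ and $k := n$; for $j \geq n$ the basic prestreak inequality $a < b \land c < d \Rightarrow a + c < b + d$ applied to $1 < n \cdot a_j$ twice gives $n' \cdot a_j = n \cdot a_j + n \cdot a_j > 1 + 1 = 2$, so $(1')$ holds with these parameters. Chaining, $(1) \Leftrightarrow (1') \Rightarrow (2) \Rightarrow (3) \Rightarrow (1)$, closing the loop.

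The argument is essentially mechanical; the only points requiring a little care are: (i) unwinding $a > c_X(0)$ correctly against the slightly asymmetric definition of $<$ on $CS(X)$; (ii) invoking Lemma~\ref{Lemma: characterization_of_order_of_cauchy_sequences} to promote a single-index comparison to one valid for all late terms; and (iii) the two positivity checks (that the witnessing $n$ is positive, and that each $a_k$ is then positive) which license the multiplications by naturals in the step $(2) \Rightarrow (3)$. One could also bypass Lemma~\ref{Lemma: characterization_of_order_of_cauchy_sequences} in $(3) \Rightarrow (1)$ by first replacing the modulus $M$ of $a$ with one satisfying $M(m) \geq m$ and evaluating at $M(n')$, but invoking the lemma keeps the reasoning uniform. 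I expect no genuine obstacle beyond this bookkeeping.
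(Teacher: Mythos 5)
Your proof is correct, and it is essentially the paper's argument: the same bookkeeping with the thresholds $1$ and $2$, the same doubling of $n$ to pass back to the definition of $c_X(0) < a$, and the same positivity checks for $(2)\Rightarrow(3)$ (which the paper dismisses as obvious but you rightly spell out). The only cosmetic difference is that you route both directions through Lemma~\ref{Lemma: characterization_of_order_of_cauchy_sequences}, whereas the paper manipulates the modulus of convergence directly; the underlying computations coincide.
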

			\begin{proof}
				\begin{itemize}
					\item\proven{$(1 \impl 2)$}
						Let $M$ be a modulus of continuity for $a$; then by assumption we have $n \in \NN$ such that $2 < n \cdot a_{M(n)}$. Set $m \dfeq M(n)$. Then for any $k \in \NN_{\geq m}$ we have $n \cdot a_{M(n)} < 1 + n \cdot a_k$ whence $1 < n \cdot a_k$.
					\item\proven{$(2 \impl 1)$}
						Use the assumption to provide suitable $m, n \in \NN$. Without loss of generality we can find a modulus of convergence $M$ for $a$ such that $M(2n) \geq m$ (if necessary, replace it with $i \mapsto \sup\{M(i), m\}$). For $k = M(2n)$ we then obtain $n \cdot a_k > 1$, so $2n \cdot a_{M(2n)} > 2$, as desired.
					\item\proven{$(2 \Leftrightarrow 3)$}
						Obvious.
				\end{itemize}
			\end{proof}
			
			We now define the multiplication as $(a \cdot b)_n \dfeq a_n \cdot b_n$ for $a, b \in CS(X)_{> c_X(0)}$. To see that this is again a Cauchy sequence, choose moduli of convergence $M, N$ for $a, b$. By Lemma~\ref{Lemma: Cauchy_sequence_positive} we have $n \in \NN$ such that $n \cdot a_k > 1$ (and so $a_k > 0$) for all $k \in \NN_{\geq n}$, and similarly for $b$; in fact, since $n$ can obviously be increased, we can assume that we have the same $n$ for $a$ and $b$, and that furthermore $n \geq 1$ and (by the archimedean property of $X$) $a_{M(1)} + 1 < n$, $b_{N(1)} + 1 < n$. Hence by Lemma~\ref{Lemma: distance_between_Cauchy_sequence_terms} for any $i \in \NN_{\geq M(1)}$ we have $a_i < a_{M(1)} + 1 < n$, and similarly for $b$.
			
			Define $O\colon \NN \to \NN$ by $O(m) \dfeq \sup\big\{M(2 n m), N(2 n m), M(1), N(1)\big\}$. Then for any $m \in \NN$ and $i, j \in \NN_{\geq O(m)}$ we have
			\[2 n m \cdot a_i \cdot b_i < a_i \cdot (1 + 2 n m \cdot b_j) = a_i + 2 n m \cdot a_i \cdot b_j <\]
			\[< a_i + (1 + 2 n m \cdot a_j) \cdot b_j = a_i + b_j + 2 n m \cdot a_i \cdot b_j < 2 n \cdot (1 + m \cdot a_j \cdot b_j),\]
			so $m \cdot a_i \cdot b_i < 1 + m \cdot a_j \cdot b_j$, and therefore $O$ is a modulus of convergence of $a \cdot b$.
			
			We have to still see that the product of positive elements is positive. Set $m \dfeq n^2$ (note $m \geq n$); then for any $k \in \NN_{\geq m}$ we have $m \cdot a_k \cdot b_k \geq (n \cdot a_k) \cdot (n \cdot b_k) > 1 \cdot 1 = 1$, proving the claim (by Lemma~\ref{Lemma: Cauchy_sequence_positive}).
			
			Clearly so-defined multiplication is commutative, associative and distributive over addition. With similar methods as above, we show that the law, connecting $\cdot$ with $<$, holds as well. Thus $CS(X)$ is a prestreak.
			
			\begin{proposition}
				For any archimedean prestreak $X$ the map $c_X\colon X \to CS(X)$ is a morphism.
			\end{proposition}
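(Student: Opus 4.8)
The plan is to verify directly the three defining clauses of a prestreak morphism for $c_X$, using the fact (already noted) that $c_X$ lands in $CS(X)$ because every map $\NN \to \NN$ is a modulus of convergence for a constant sequence. Preservation of the algebraic structure should be immediate: since $+$ and $\cdot$ on $CS(X)$ are defined termwise, the constant sequence on $a + b$ is the termwise sum of the constant sequences on $a$ and on $b$, and likewise for products; moreover $c_X(a) > c_X(0)$ holds precisely when $a > 0$ (by Lemma~\ref{Lemma: Cauchy_sequence_positive} applied with a constant modulus), so $c_X(a) \cdot c_X(b)$ is defined exactly when $a \cdot b$ is, and then equals $c_X(a \cdot b)$. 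Since $c_X(0)$ and $c_X(1)$ are by construction the additive and multiplicative units of $CS(X)$, the identities $c_X(a+b) = c_X(a) + c_X(b)$, $c_X(0) = 0$, $c_X(a \cdot b) = c_X(a) \cdot c_X(b)$, $c_X(1) = 1$ all follow.

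The only clause needing an argument is that $c_X$ preserves $<$. Here I would fix $a, b \in X$ with $a < b$ and exploit that the relation $<$ on $CS(X)$ is independent of the chosen moduli of convergence: evaluating it on $c_X(a)$ and $c_X(b)$ with, say, the constantly-zero modulus for both reduces the claim $c_X(a) < c_X(b)$ to
\[\xsome{n}{\NN}{n \cdot a + 2 < n \cdot b}.\]
This is exactly an instance of the archimedean property of $X$: instantiating the four quantified variables of the archimedean condition as $2$, $a$, $0$, $b$ respectively (the hypothesis of that condition, which reads $a < b$ here, is satisfied), we obtain $n \in \NN$ with $2 + n \cdot a < 0 + n \cdot b$, i.e.\ $n \cdot a + 2 < n \cdot b$. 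Hence $c_X(a) < c_X(b)$.

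I do not expect a genuine obstacle in this proof. The one thing worth emphasising is that the constant ``$+2$'' slack built into the definition of $<$ on $CS(X)$ is precisely the slack that the archimedean property of $X$ supplies, so the verification goes through at once and needs neither Lemma~\ref{Lemma: characterization_of_order_of_cauchy_sequences} nor the estimates of Lemma~\ref{Lemma: distance_between_Cauchy_sequence_terms}.
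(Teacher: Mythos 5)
Your proof is correct and takes essentially the same route as the paper: the paper likewise treats the algebraic clauses as immediate (the operations on $CS(X)$ being defined termwise on constant sequences) and reduces preservation of $<$ to producing $n \in \NN$ with $2 + n \cdot x < n \cdot y$, which is exactly the instance of the archimedean property you use, noting that for constant sequences any modulus of convergence serves. The observation that the ``$+2$'' slack in the definition of $<$ on $CS(X)$ is precisely what the archimedean condition supplies is the heart of the paper's two-line argument as well.
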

			\begin{proof}
				Mostly obvious; the only thing actually needed to be checked is the preservation of $<$.
				
				Take any $x, y \in X$ with $x < y$. By the archimedean property of $X$ there exists $n \in \NN$ with $2 + n \cdot x < n \cdot y$. Taking any maps $\NN \to \NN$ as moduli of convergence of $c_X(x)$ and $c_X(y)$, we obtain $c_X(x) < c_X(y)$, as desired.
			\end{proof}
			
			Note that $(n \cdot a)_k = n \cdot a_k$ for $n, k \in \NN$, $a \in CS(X)$. Here is the verification of the archimedean property of $CS(X)$.
			\begin{proposition}
				For an archimedean prestreak $X$ the prestreak $CS(X)$ is also archimedean.
			\end{proposition}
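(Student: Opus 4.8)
The plan is to mimic the classical proof that $\RR$ is archimedean: given $a,b,c,d \in CS(X)$ with $b < d$, I must produce $N \in \NN$ with $a + N \cdot b < c + N \cdot d$. First I would unpack the hypothesis using Lemma~\ref{Lemma: characterization_of_order_of_cauchy_sequences}: $b < d$ is equivalent to the existence of $p, k_1 \in \NN$ (necessarily $p \geq 1$, since $2 < 0$ fails in $X$) with $p \cdot b_i + 2 < p \cdot d_j$ for all $i, j \in \NN_{\geq k_1}$. Symmetrically, to conclude $a + N \cdot b < c + N \cdot d$ it suffices, again by Lemma~\ref{Lemma: characterization_of_order_of_cauchy_sequences}, to find $N$ and some $k$ with $p \cdot (a_i + N \cdot b_i) + 2 < p \cdot (c_j + N \cdot d_j)$ for all $i, j \in \NN_{\geq k}$; reusing the same multiplier $p$ is harmless. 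Here one uses $(a + N b)_m = a_m + N \cdot b_m$, immediate from the definitions of $+$ and of multiplication by naturals on $CS(X)$.

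The single place where archimedeanity of $X$ is used is to bound the tails of $a$ and $c$ by integers. Fix moduli of convergence $A$ for $a$ and $C$ for $c$. Taking $n = 1$ in the modulus condition gives $a_i < a_{A(1)} + 1$ for all $i \geq A(1)$ and $c_{C(1)} < c_j + 1$ for all $j \geq C(1)$. Applying the archimedean property of $X$ (with the trivial gap $0 < 1$) yields $r \in \NN$ with $a_{A(1)} + 1 < r$ and $s' \in \NN$ with $0 < c_{C(1)} + s'$; hence $a_i < r$ for all $i \geq A(1)$ and $0 < c_j + s'$ for all $j \geq C(1)$.

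Now choose $N \in \NN$ with $N \geq 1$ and $2N \geq p r + p s' + 2$ (possible since $\NN$ is unbounded), and set $k \dfeq \sup\{A(1), C(1), k_1\}$. For $i, j \in \NN_{\geq k}$ I would chain
\[p \cdot a_i + p N \cdot b_i + 2 < p r + p N \cdot b_i + 2 < p c_j + p N \cdot b_i + 2N < p c_j + p N \cdot d_j,\]
where the first step uses $p \cdot a_i < p \cdot r$ (Proposition~\ref{Proposition: multiplication_with_natural_numbers}); the middle step uses $0 < p c_j + p s'$ together with $2N \geq p r + p s' + 2$, which after cancelling $p s'$ gives $p c_j + 2N > p r + 2$, and then adds $p N \cdot b_i$; and the last step uses $N \cdot (p b_i + 2) < N \cdot (p d_j)$, i.e.\ $p N \cdot b_i + 2N < p N \cdot d_j$ (Proposition~\ref{Proposition: multiplication_with_natural_numbers}), and then adds $p c_j$. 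Each manipulation is a prestreak operation (multiplication by a positive natural, cancellation of a common summand, both preserving $<$). This is precisely $p \cdot (a_i + N b_i) + 2 < p \cdot (c_j + N d_j)$ for all $i, j \geq k$, so Lemma~\ref{Lemma: characterization_of_order_of_cauchy_sequences} gives $a + N \cdot b < c + N \cdot d$, establishing the archimedean property of $CS(X)$.

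The argument is essentially routine; the only genuine content is the uniform integer bound on the tails of $a$ and $c$, which is where the archimedean hypothesis on $X$ is genuinely needed, and the only real nuisance is carrying out the estimates in the absence of subtraction, which forces the mildly awkward bookkeeping above (keeping $p s'$ around until it can be cancelled, and ordering the three inequalities so that no step requires moving a term across).
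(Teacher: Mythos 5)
Your proof is correct, but it takes a genuinely different route from the paper's. The paper does not verify the archimedean inequality directly: it invokes Lemma~\ref{Lemma: characterization_of_archimedean_prestreaks}, which reduces archimedeanity to interleaving rationals around any pair $a < b$, so its entire proof consists of exhibiting rationals $q < a < r < s < b < t$ via the comparison of Cauchy sequences with rationals (choosing $u, v$ between $n \cdot a_{M(n)} + 2$ and $n \cdot b_{N(n)}$ and shaving off $\frac{2}{m}$ on each side). You instead attack the definition head-on, unpacking both the hypothesis $b < d$ and the goal through Lemma~\ref{Lemma: characterization_of_order_of_cauchy_sequences}, and supplying by hand the uniform integer bounds on the tails of $a$ and $c$ — which is indeed the only place archimedeanity of $X$ enters. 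Your version is more elementary, bypassing Lemma~\ref{Lemma: characterization_of_archimedean_prestreaks} entirely, at the price of the subtraction-free bookkeeping you acknowledge; the paper's version outsources exactly that bookkeeping to the rational-interleaving lemma. One small slip: from $c_{C(1)} < c_j + 1$ and $0 < c_{C(1)} + s'$ you only obtain $0 < c_j + s' + 1$, not $0 < c_j + s'$; replacing $s'$ by $s' + 1$ (and strengthening the requirement on $N$ to $2N \geq p r + p(s'+1) + 2$) repairs this, and the rest of the chain of inequalities goes through as written.
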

			\begin{proof}
				We use Lemma~\ref{Lemma: characterization_of_archimedean_prestreaks}. Take any $a, b \in CS(X)$ with moduli of convergence $M$, $N$ respectively, and assume $a < b$, \ie there is some $n \in \NN$ (necessarily $n > 0$) for which $n \cdot a_{M(n)} + 2 < n \cdot b_{N(n)}$ holds.
				
				Since $X$ is archimedean, we can find $u, v \in \QQ$ such that $n \cdot a_{M(n)} + 2 < u < v < n \cdot b_{N(n)}$. Let $m \in \NN_{> 0}$ be large enough that $\frac{1}{m} < \frac{v-u}{4n}$, and set $r \dfeq \frac{u-1}{n} + \frac{2}{m}$, $s \dfeq \frac{v-1}{n} - \frac{2}{m}$; note that $r < s$.
				\begin{itemize}
					\item\proven{$q < a$}
						Write $q = i-j$ where $i, j \in \NN$. Then $q < a$ is equivalent to $i < j + a$. One can easily check that $M$ is a modulus of convergence also for $j + a$ while for the constant sequence $i$ we can take any modulus of convergence. Thus $i < j + a$ is equivalent to the existence of $m \in \NN$, for which $m \cdot i + 2 < m \cdot (j + a_{M(m)})$. Taking $m = n$, we get
						\[n \cdot (j + a_{M(n)}) > n \cdot j + n \cdot q + 2 = n \cdot i + 2,\]
						as desired.
					\item\proven{$b < t$}
						Goes the same as in the previous item.
					\item\proven{$a < r$}
						Take $i, j \in \NN$, $k \in \NN_{> 0}$ such that $u = \frac{i-j}{k}$. Then $a < r$ is equivalent to $k n m \cdot a + j m + k m < i m + 2 k n$. Note that $l \mapsto M(k n m l)$ is a modulus of convergence for $k n m \cdot a + j m + k m$, so to prove the statement, we need to find $l \in \NN$ with $k n m l \cdot a_{M(k n m l)} + j m l + k m l + 2 < i m l + 2 k n l$. Actually, any $l \geq 1$ works, as the following calculation (using Lemma~\ref{Lemma: distance_between_Cauchy_sequence_terms}) shows.
						\[k n m l \cdot a_{M(k n m l)} + j m l + k m l + 2 < k n m l \cdot a_{M(n)} + k m l + j m l + k m l + 2 <\]
						\[< k m l u + j m l + 2 = i m l + 2 \leq i m l + 2 k n l\]
					\item\proven{$s < b$}
						Goes the same as in the previous item.
				\end{itemize}
			\end{proof}
			
			Recall that we may compare elements from different archimedean prestreaks.
			\begin{lemma}\label{Lemma: distance_of_cauchy_sequence_terms_from_the_limit}
				Let $X$ be an archimedean prestreak and $a \in CS(X)$ a Cauchy sequence with a modulus of convergence $M$. Then for any $n \in \NN$ and $k \in \NN_{\geq M(n)}$
				\[n \cdot a_k \leq n \cdot a + 1 \qquad \text{and} \qquad n \cdot a \leq n \cdot a_k + 1.\]
			\end{lemma}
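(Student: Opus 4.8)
The plan is to reduce the claim --- which compares an element of $X$ with an element of $CS(X)$ via the cross-prestreak order --- to an ordinary comparison inside $CS(X)$, and then to unfold the definition of $\leq$ on $CS(X)$ with a convenient modulus of convergence. Since $c_X\colon X \to CS(X)$ is a morphism (shown above) and it preserves $+$, $0$ and multiplication by naturals, we have $c_X(n \cdot a_k) = n \cdot c_X(a_k)$ and $c_X(1) = 1$, so Lemma~\ref{Lemma: invariance_of_order_across_archimedean_prestreaks} (used with some of the morphisms taken to be identities) shows that $n \cdot a_k \leq n \cdot a + 1$ in the cross sense is equivalent to $c_X(n \cdot a_k) \leq n \cdot a + 1$ inside $CS(X)$, and likewise that $n \cdot a \leq n \cdot a_k + 1$ is equivalent to $n \cdot a \leq c_X(n \cdot a_k + 1)$ inside $CS(X)$. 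Thus it suffices to prove these two internal inequalities.

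For the first, I would give the constant sequence $c_X(n \cdot a_k)$ an arbitrary modulus and observe that $p \mapsto M(pn)$ is a modulus of convergence for $n \cdot a + 1$ (whose $i$-th term is $n \cdot a_i + 1$). Unfolding the characterization $b \leq b' \iff \all{p}{\NN}{p \cdot b_{P(p)} \leq p \cdot b'_{P'(p)} + 2}$ from the definition of $\leq$ on $CS(X)$, the inequality $c_X(n \cdot a_k) \leq n \cdot a + 1$ becomes the family of statements $pn \cdot a_k \leq pn \cdot a_{M(pn)} + p + 2$, one for each $p \in \NN$. The cases $p = 0$ and $n = 0$ are trivial, so assume $p, n \geq 1$. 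The subtlety here is that $k$ (only known to satisfy $k \geq M(n)$) and $M(pn)$ need not be comparable, so the defining inequality of $M$ cannot be applied to the pair $(k, M(pn))$ directly; this is exactly the situation Lemma~\ref{Lemma: distance_between_Cauchy_sequence_terms} was designed for. Applying it with both moduli equal to $M$, parameters $n$ and $pn$, and indices $k \geq M(n)$, $M(pn) \geq M(pn)$ gives $pn^2 \cdot a_k < pn^2 \cdot a_{M(pn)} + pn$ (using $\sup\{n, pn, 1\} = pn$). Since we cannot divide by $n$ in a prestreak, I would argue by contradiction: if $pn \cdot a_{M(pn)} + p + 2 < pn \cdot a_k$, then multiplying through by $n$ via Proposition~\ref{Proposition: multiplication_with_natural_numbers} yields $pn^2 \cdot a_{M(pn)} + pn + 2n < pn^2 \cdot a_k$, which together with the estimate above and transitivity forces $2n < 0$ after cancelling $pn^2 \cdot a_{M(pn)} + pn$ --- impossible for $n \geq 1$. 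Hence $pn \cdot a_k \leq pn \cdot a_{M(pn)} + p + 2$, proving the first inequality.

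The second inequality $n \cdot a \leq c_X(n \cdot a_k + 1)$ is handled symmetrically: now $p \mapsto M(pn)$ is a modulus for $n \cdot a$, the constant sequence $c_X(n \cdot a_k + 1)$ gets an arbitrary modulus, and the condition reduces to $pn \cdot a_{M(pn)} \leq pn \cdot a_k + p + 2$ for all $p$. The same contradiction argument applies, this time invoking Lemma~\ref{Lemma: distance_between_Cauchy_sequence_terms} with parameters $pn$ and $n$ and indices $M(pn) \geq M(pn)$, $k \geq M(n)$. The only genuine obstacle is the ``multiply by $n$, then cancel'' workaround forced by the absence of division, combined with keeping track of which index thresholds ($M(n)$ versus $M(pn)$) are needed where; once Lemma~\ref{Lemma: distance_between_Cauchy_sequence_terms} is invoked with the correct parameters, the rest is routine bookkeeping with the prestreak laws for $+$, $\cdot$ and $<$.
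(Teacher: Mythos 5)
Your proposal is correct and follows essentially the same route as the paper: reduce the cross-prestreak inequalities to comparisons inside $CS(X)$ via Lemma~\ref{Lemma: invariance_of_order_across_archimedean_prestreaks}, take $p \mapsto M(pn)$ as a modulus for $n \cdot a + 1$ (resp.\ $n \cdot a$), and refute the negation using Lemma~\ref{Lemma: distance_between_Cauchy_sequence_terms}. Your explicit ``multiply by $n$ and cancel'' step is in fact a more careful rendering of the parameter bookkeeping that the paper's proof passes over quickly, so nothing further is needed.
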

			\begin{proof}
				By Lemma~\ref{Lemma: invariance_of_order_across_archimedean_prestreaks} these statements are equivalent to $n \cdot c_X(a_k) \leq n \cdot a + 1$ and $n \cdot a \leq n \cdot c_X(a_k) + 1$.
				
				Assume $n \cdot a + 1 < n \cdot c_X(a_k)$. Note that $m \mapsto M(n m)$ as a possible modulus of convergence for $n \cdot a + 1$, so there exists $m \in \NN$ (necessarily $m > 0$) such that we have $m \cdot \big(n \cdot a_{M(n m)} + 1\big) + 2 < m n \cdot a_k$. However, by Lemma~\ref{Lemma: distance_between_Cauchy_sequence_terms} we have $m n \cdot a_k < m n \cdot a_{M(n m)} + m$, a contradiction. Thus $n \cdot c_X(a_k) \leq n \cdot a + 1$.
				
				The second statement is proved similarly.
			\end{proof}
			\begin{remark}
				Lemma~\ref{Lemma: distance_of_cauchy_sequence_terms_from_the_limit} would not hold if we required the strict inequality $<$ instead of $\leq$. As a counterexample, take $X = \QQ$, $a_0 \dfeq 0$, $a_n \dfeq \frac{1}{n}$ for $n \in \NN_{> 0}$, and $M \dfeq \id[\NN]$.
			\end{remark}
			
			While $CS(X)$ is an archimedean prestreak if $X$ is, it is not the case that $CS(X)$ is a streak if $X$ is (intuitively, different Cauchy sequences can have the same limit). As usual, to obtain a streak, we need to apply $Q$ at the end. Denote then the composition of $Q$ and $CS$ by $CC$, and the composition of $\theta_{CS(X)}$ and $c_X$ by $\gamma_X\colon X \to CC(X)$.
			
			\begin{definition}
				A streak $X$ is \df{Cauchy complete} when $\gamma_X$ is an isomorphism (\ie it has an inverse).
			\end{definition}
			
			We can construct the \df{limit operator} for any Cauchy complete streak $X$.
			\[\xymatrix@+1em{
				&  CS(X) \ar[d]^{\theta_{CS(X)}}  \\
				X \ar[ru]^{c_X} \ar[r]_{\gamma_X}^\ism  &  CC(X)
			}\]
			Define $\lim_X\colon CS(X) \to X$ as $\lim_X \dfeq \gamma_X^{-1} \circ \theta_{CS(X)}$. As a composition of two morphisms, $\lim_X$ is itself a morphism. Note that to define it, we did not have to resort to notions such as `metric' or `neighbourhood'. Indeed, the usual definition of a limit is in our setting a theorem (and of course the usual properties of a limit follow as well).
			\begin{theorem}\label{Theorem: limit_operator}
				Let $X$ be a Cauchy complete streak and $a, b \in CS(X)$ Cauchy sequences with moduli of convergence $M$, $N$.
				\begin{enumerate}
					\item
						We have $\lim_X \circ c_X = \id[X]$, \ie the limit of a constant sequence is any of its terms.
					\item
						For any $n \in \NN_{> 0}$ the terms of the sequence $a$ from $M(n)$ onward are at most $\frac{1}{n}$ away from $\lim_X(a)$. More formally, for all $n \in \NN$ and $k \in \NN_{\geq M(n)}$ we have $n \cdot a_k \leq n \cdot \lim_X(a) + 1$ and $n \cdot \lim_X(a) \leq n \cdot a_k + 1$.
					\item
						$\lim_X(a + b) = \lim_X(a) + \lim_X(b)$ and $\lim_X(a \cdot b) = \lim_X(a) \cdot \lim_X(b)$ (whenever these products are defined).
				\end{enumerate}
			\end{theorem}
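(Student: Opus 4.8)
The plan is to derive all three items from the single structural fact that $\lim_X = \gamma_X^{-1} \circ \theta_{CS(X)}$ is a composite of streak morphisms, supplemented, for item (2), by Lemma~\ref{Lemma: distance_of_cauchy_sequence_terms_from_the_limit}, which already measures the distance of the terms of a Cauchy sequence from the sequence itself (regarded as an element of $CS(X)$).

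Item (1) is immediate: since $\gamma_X = \theta_{CS(X)} \circ c_X$ by definition, $\lim_X \circ c_X = \gamma_X^{-1} \circ \theta_{CS(X)} \circ c_X = \gamma_X^{-1} \circ \gamma_X = \id[X]$. Item (3) is almost as quick. Being a composite of morphisms, $\lim_X$ is a morphism, hence preserves $+$, which gives $\lim_X(a+b) = \lim_X(a) + \lim_X(b)$ directly. For the product, when $a \cdot b$ is defined in $CS(X)$ we have $a, b > c_X(0)$, so by item (1) and preservation of $<$ we get $\lim_X(a) > \lim_X(c_X(0)) = 0$ and likewise $\lim_X(b) > 0$; thus $\lim_X(a) \cdot \lim_X(b)$ is defined and equals $\lim_X(a \cdot b)$ because morphisms preserve multiplication of positive elements.

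The only part carrying real content is item (2). Lemma~\ref{Lemma: distance_of_cauchy_sequence_terms_from_the_limit} supplies, for $k \geq M(n)$, the two inequalities $n \cdot a_k \leq n \cdot a + 1$ and $n \cdot a \leq n \cdot a_k + 1$ in the comparison \emph{across} the archimedean prestreaks $X$ and $CS(X)$, and what remains is to rewrite them as inequalities inside $X$ with $a \in CS(X)$ replaced by $\lim_X(a) \in X$. I would do this by the following translation, proved on the spot: for $u \in X$ and $w \in CS(X)$ one has $u \leq w$ (cross) $\iff u \leq \lim_X(w)$ (in $X$), and symmetrically $w \leq u$ (cross) $\iff \lim_X(w) \leq u$ (in $X$). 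This follows by unfolding the cross $\leq$ as a negated $\exists q \in \QQ$, using that $\lim_X$ both preserves (Proposition~\ref{Proposition: prestreak_comparison_with_rationals}) and reflects (Corollary~\ref{Corollary: archimedean_prestreak_morphisms_reflect_order}) comparison with rationals to move the rational across, and then reassembling the resulting $\exists q$ via density of rationals in the archimedean prestreak $X$ (Lemma~\ref{Lemma: density_of_rationals_in_an_archimedean_prestreak}); alternatively it is an instance of invariance of order across archimedean prestreaks (Lemma~\ref{Lemma: invariance_of_order_across_archimedean_prestreaks}) applied to the morphisms $c_X\colon X \to CS(X)$, $\id[X]\colon X \to X$, $\id[CS(X)]\colon CS(X) \to CS(X)$, $\lim_X\colon CS(X) \to X$. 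Applying the translation with $u = n \cdot a_k$ and $w = n \cdot a + 1$ (and using $\lim_X(n \cdot a + 1) = n \cdot \lim_X(a) + 1$, again because $\lim_X$ is a morphism) turns the first Lemma inequality into $n \cdot a_k \leq n \cdot \lim_X(a) + 1$, and the second becomes $n \cdot \lim_X(a) \leq n \cdot a_k + 1$ in the same way; the case $n = 0$ is trivial.

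The step I expect to be the main obstacle is not an estimate but the bookkeeping: making sure at each point which of the three comparisons (internal to $X$, internal to $CS(X)$, or across the two prestreaks) is meant, and verifying that $c_X$ genuinely identifies the cross-comparison with the internal one. All the tools needed — density of rationals, reflection of order by morphisms with streak codomain, and invariance of order across archimedean prestreaks — are already in place, so no new inequality has to be produced.
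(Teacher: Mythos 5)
Your proposal is correct and follows the paper's own proof: item (1) by unwinding $\lim_X = \gamma_X^{-1}\circ\theta_{CS(X)}$, item (3) from $\lim_X$ being a morphism, and item (2) by applying $\lim_X$ to one side of the inequalities of Lemma~\ref{Lemma: distance_of_cauchy_sequence_terms_from_the_limit}, justified by Lemma~\ref{Lemma: invariance_of_order_across_archimedean_prestreaks}. The only difference is that you spell out the cross-comparison bookkeeping (and the positivity check in item (3)) which the paper leaves implicit; this is a faithful elaboration rather than a different route.
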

			\begin{proof}
				\begin{enumerate}
					\item
						By definition of $\lim_X$ and $\gamma_X$
						\[\lim\nolimits_X \circ c_X = \gamma_X^{-1} \circ \theta_{CS(X)} \circ c_X = \big(\theta_{CS(X)} \circ c_X\big)^{-1} \circ \theta_{CS(X)} \circ c_X = \id[X].\]
					\item
						This is just Lemma~\ref{Lemma: distance_of_cauchy_sequence_terms_from_the_limit} with $\lim_X$ applies to one side of the inequalities (we can do this by Lemma~\ref{Lemma: invariance_of_order_across_archimedean_prestreaks}).
					\item
						As a morphism, $\lim_X$ preserves addition and multiplication.
				\end{enumerate}
			\end{proof}
			
			\begin{remark}
				Note that for any morphism $f\colon X \to Y$ between two Cauchy complete streaks and any $a \in CS(X)$ we have $\lim_Y(f \circ a) = f\big(\lim_X(a)\big)$. In categorical terms, $\lim$ is a natural transformation.\footnote{Denote the category of Cauchy complete streaks by $\Ccstr$. Then $\lim$ maps from the functor $CS\colon \Ccstr \to \Apstr$ to the inclusion functor $\Ccstr \hookrightarrow \Apstr$.}
			\end{remark}
			
			The idea of the constructions $CS$ and $CC$ is that $CS(X)$ is the set of Cauchy sequences in a streak $X$, ordered according to their limits, and so $CC(X)$ is the set of those limits. In other words, $CC(X)$ should be the Cauchy completion of $X$. However, as already mentioned, constructively such ``completion'' need not be idempotent --- the result need not be Cauchy complete. This is because if we have a sequence of equivalence classes, we might not be able to produce a sequence of their representatives. However, that is clearly not a problem if we also assume the axiom of countable choice. Indeed, this axiom (or some variant of it) is considered necessary to work with Cauchy sequences constructively.
			
			If countable choice does hold, we get the expected result.
			\begin{theorem}
				Assume countable choice, and let $X$ be any streak.
				\begin{enumerate}
					\item
						$CC(X)$ is Cauchy complete.
					\item
						$CC$ is a reflection from streaks to Cauchy complete streaks.
					\item
						If $X$ is dense, then $CC(X)$ is a terminal streak (so a model of $\RR$ by Definition~\ref{Definition: reals}).
				\end{enumerate}
			\end{theorem}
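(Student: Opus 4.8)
The plan is to dispatch the three parts in order. Part~1 carries the genuine content (and is where countable choice is used); parts~2 and~3 then follow from the general behaviour of reflections on the preorder category $\Str$.

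For part~1, note that $\gamma_{CC(X)}\colon CC(X) \to CC(CC(X))$ is a streak morphism between streaks, so by Corollary~\ref{Corollary: surjective_streak_morphism_is_iso} it suffices to prove it surjective, i.e.\ that every Cauchy sequence $B$ in $CC(X)$ is $\nap$-equivalent to a constant one. Since $CC(X) = Q(CS(X))$, each term $B_n$ is a class $[a^{(n)}]$ of a Cauchy sequence in $X$; countable choice lets me pick, for all $n$ at once, such a representative $a^{(n)}$ together with a (say increasing) modulus of convergence $M_n$ for it, and $B$ itself carries a modulus of convergence $M$. I then form a diagonal sequence $z$ in $X$, taking from $a^{(n)}$ a term sufficiently far out relative to $M_n$ and with a shift governed by $M$, and I claim $[(z_k)] \in CC(X)$ is the limit of $B$. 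Verifying that $z$ is Cauchy, and that $c_{CC(X)}([(z_k)]) \nap B$, amounts — after unwinding the two layers of quotients down to comparisons in $CS(X)$ — to triangle-type estimates handled by Lemma~\ref{Lemma: distance_between_Cauchy_sequence_terms} and Lemma~\ref{Lemma: distance_of_cauchy_sequence_terms_from_the_limit}. Bookkeeping of the many moduli in this diagonal argument is the main obstacle, and it is precisely here that countable choice is indispensable.

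For part~2 I would apply Lemma~\ref{Lemma: reflection_in_a_preorder_category}: the unit is the already-constructed morphism $\gamma_X\colon X \to CC(X)$, and given a Cauchy complete streak $Y$ and a morphism $f\colon X \to Y$, I set $\overline f([a]) \dfeq \lim_Y(f \circ a)$. This descends to the quotient because a streak morphism sends $\nap$-equivalent Cauchy sequences to $\nap$-equivalent ones and $\lim_Y$ respects this; that $\overline f$ preserves $<$, $+$, $0$, $\cdot$, $1$ comes from Theorem~\ref{Theorem: limit_operator} together with the characterisation of the order on Cauchy sequences in Lemma~\ref{Lemma: characterization_of_order_of_cauchy_sequences}. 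By part~1 the class of Cauchy complete streaks is a genuine (full, isomorphism-closed) subcategory, so Lemma~\ref{Lemma: reflection_in_a_preorder_category} concludes that $CC$ is a reflection.

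For part~3, uniqueness of a morphism $Y \to CC(X)$ for an arbitrary streak $Y$ is automatic from Corollary~\ref{Corollary: streaks_preorder_category}, so by Definition~\ref{Definition: reals} it remains only to exhibit such a morphism. Assuming $X$ dense, I build $h\colon Y \to CC(X)$ as follows: for $y \in Y$ and each $n \in \NN_{> 0}$, Lemma~\ref{Lemma: archimedean_prestreak_is_a_union_of_rational_intervals} gives a rational interval of radius $\tfrac{1}{2n}$ containing $y$, and density of $X$ gives $x_n \in X$ in that same interval; countable choice assembles $(x_n)$ into a sequence. A short rational-sandwich estimate (passing through $y$ by transitivity of comparison with rationals) shows $(x_n)$ is Cauchy, and the resulting class $h(y) \dfeq [(x_n)]$ is characterised by $q < h(y) \iff q < y$ and $h(y) < q \iff y < q$ for all $q \in \QQ$ (using Lemma~\ref{Lemma: invariance_of_order_across_archimedean_prestreaks} to compare across the streaks). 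This characterisation simultaneously makes $h$ independent of all the choices (Lemma~\ref{Lemma: element_in_a_streak_determined_by_rational_bounds}) and shows it preserves comparison with rationals on both sides, so Theorem~\ref{Theorem: preservation_of_rationals_implies_streak_morphism}(2) upgrades $h$ to a streak morphism, completing the proof.
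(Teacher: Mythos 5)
Your proposal follows essentially the same route as the paper's proof in all three parts: surjectivity of $\gamma_{CC(X)}$ via a diagonal sequence of representatives chosen by countable choice (the paper takes $s_n \dfeq b_{n,n}$ with modulus $O(n) \dfeq \sup\{N(M(3n))(3n), M(3n)\}$), the reflection via $\overline f([a]) \dfeq \lim_Y(f\circ a)$, and terminality via a countable-choice-assembled sequence of elements of $X$ in shrinking rational intervals around $y$, upgraded to a morphism by Theorem~\ref{Theorem: preservation_of_rationals_implies_streak_morphism}. The approach is correct and matches the paper's.
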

			\begin{proof}
				\begin{enumerate}
					\item
						Let $a\colon \NN \to CC(X)$ be a Cauchy sequence in $CC(X)$; choose its modulus of convergence $M\colon \NN \to \NN$ (without loss of generality assume it is increasing). By countable choice we can produce two sequences of sequences $b\colon \NN \to CS(X)$, $N\colon \NN \to \NN^\NN$ such that for all $n \in \NN$, $a_n = [b_n]$ and $N_n$ is a modulus of convergence for $b_n$ (without loss of generality assume that $N$ is increasing in both variables). Define a new sequence $s\colon \NN \to X$ by $s_n \dfeq b_{n,n}$ and a map $O\colon \NN \to \NN$ by $O(n) \dfeq \sup\big\{N\big(M(3 n)\big)(3 n), M(3 n)\big\}$.
						
						Using Lemma~\ref{Lemma: distance_of_cauchy_sequence_terms_from_the_limit} we have
						\[3n \cdot s_i = 3n \cdot b_{i,i} \leq 3n \cdot b_i + 1 < 3n \cdot b_j + 2 \leq 3n \cdot b_{j,j} + 3 = 3n \cdot s_j + 3,\]
						\ie $n \cdot s_i < n \cdot s_j + 1$, so $s$ is a Cauchy sequence with a modulus of convergence $O$.
						
						We claim $\gamma_{CC(X)}([s]) = a$, or equivalently, $c_{CS(X)}(s) \nap b$. Assume $c_{CS(X)}(s) < b$; by Lemma~\ref{Lemma: characterization_of_order_of_cauchy_sequences} we have some $n \in \NN$ such that $n \cdot s + 2 < n \cdot b_j$ for all $j \in \NN_{\geq M(n)}$. This is furthermore equivalent to the existence of $n, m \in \NN$ such that for all $j \in \NN_{\geq M(n)}$, $i \in \NN_{\geq N(j)(n m)}$ we have
						\[m \cdot \big(n \cdot s_{O(n m)} + 2\big) + 2 < m n \cdot b_{j, i}.\]
						Take $i = j = O(n m)$; we thus get
						\[m n \cdot b_{O(n m), O(n m)} + 2 m + 2 < m n \cdot b_{O(n m), O(n m)}\]
						which is of course a contradiction. Similarly we derive a contradiction from $c_{CS(X)}(s) < b$.
						
						This shows that $\gamma_{CC(X)}$ is surjective, so an isomorphism by Corollary~\ref{Corollary: surjective_streak_morphism_is_iso}.
					\item
						Take any streak $X$, a Cauchy complete streak $Y$ (meaning $\gamma_Y^{-1}$ exists) and a morphism $f\colon X \to Y$. Define $\overline{f}\colon CC(X) \to Y$ by $\overline{f}([a]) \dfeq \lim_Y(f \circ a)$. This is well defined since if $[a] = [b]$, then $[f \circ a] = [f \circ b]$ (exercise) whence $\lim_Y(f \circ a) = \lim_Y(f \circ b)$. The verification, that $\overline{f}$ is a morphism, is easy. Finally, we have
						$\overline{f}\big([c_X(x)]\big) = \lim_Y\big(f \circ c_X(x)\big) = \lim_Y\big(c_Y(f(x))\big) = f(x)$.
					\item
						Take any streak $Y$ and $y \in Y$. Set $k_0 \dfeq 0$ and $a_0 \dfeq 0$. For each $n \in \NN_{> 0}$ use Lemma~\ref{Lemma: archimedean_prestreak_is_a_union_of_rational_intervals} to find $k_n \in \ZZ$ such that $y \in \intoo[Y]{\frac{k_n-1}{4 n}}{\frac{k_n+1}{4 n}}$. Using density of $X$ we can find $a_n \in \intoo[X]{\frac{k_n-1}{4 n}}{\frac{k_n+1}{4 n}}$. By countable choice this defines sequences $k\colon \NN \to \ZZ$ and $a\colon \NN \to X$.
						
						Note that $a$ is a Cauchy sequence in $X$ with a modulus of continuity $\id[\NN]$. Indeed, for any $n \in \NN_{> 0}$ and $i, j \in \NN_{\geq n}$
						\[2 n \cdot a_i < 2 n \cdot \frac{k_i + 1}{4 i} \leq 2 n \cdot \frac{k_i - 1}{4 i} + 1 < 2 n \cdot y + 1 <\]
						\[< 2 n \cdot \frac{k_j + 1}{4 j} + 1 \leq 2 n \cdot \frac{k_j - 1}{4 j} + 2  < 2 n \cdot a_j + 2\]
						whence $n \cdot a_i < 1 + n \cdot a_j$, as desired. Obviously, this formula holds also for $n = 0$.
						
						Define now $f(y) \dfeq [a]$. We claim this is well defined. Let $k'$ and $a'$ be another sequences, satisfying the required properties. Suppose $[a] < [a']$, \ie there exists $n \in \NN$ (necessarily $n > 0$) such that $n \cdot a_n + 2 < n \cdot a'_n$. On the other hand we have
						\[4 n \cdot a'_n < k'_n + 1 < 4 n \cdot y + 2 < k_n + 3 < 4 n \cdot a_n + 4,\]
						\ie $n \cdot a'_n < n \cdot a_n + 1$, a contradiction. By symmetry $[a'] < [a]$ leads to contradiction as well; thus $[a] = [a']$.
						
						We are done if we check that $f$ is a streak morphism. By Theorem~\ref{Theorem: preservation_of_rationals_implies_streak_morphism} it is sufficient to verify that it preserves comparison with rationals on both sides. We prove only $q < y \implies q < f(y)$; the other implication $y < q \implies f(y) < q$ works similarly.
						
						Take then arbitrary $y \in Y$ and $q \in \QQ$ with $q < y$. Write $q = \frac{i-j}{m}$ where $i, j \in \NN$, $m \in \NN_{\geq 3}$. By definition we have
						\[q < f(y) \iff i < j + m \cdot f(y) \iff \some[1]{n}{\NN}{n \cdot i + 2 < n \cdot j + n m \cdot a_{n m}}.\]
						Pick $r \in \QQ$ with $q < r < y$ and let $n \in \NN_{> 0}$ be large enough so that $\frac{1}{n} < r - q$. We have
						\[n \cdot j + n m \cdot a_{n m} > n j + \frac{k_{n m} - 1}{4} = n j + \frac{k_{n m} + 1}{4} - \frac{1}{2} > n j + n m r - \frac{1}{2} >\]
						\[> n j + m \cdot (1 + n q) - \frac{1}{2} = n j + m + n \cdot (i-j) - \frac{1}{2} = n i + m - \frac{1}{2} > n i + 2.\]
				\end{enumerate}
			\end{proof}
			In particular (assuming countable choice), the \df{Cauchy reals} $CC(\QQ)$ are a model of reals.
			
			Despite needing countable choice for this final result, we can use the theory, developed in this subsection, even in a setting without it. A terminal streak $\RR$ might still exist (for example, the Dedekind reals in the next subsection do not require choice), in which case we have for any streak $X$ the embeddings $X \stackrel{\gamma_X}{\longrightarrow} CC(X) \stackrel{\trm[CC(X)]}{\longrightarrow} \RR$. In particular the Cauchy reals are always a subset of $\RR$. Also, the existence of these embeddings implies $\RR \ism CC(\RR)$, so a terminal streak is always Cauchy complete, even in the absence of countable choice. In particular we always have the limit operator $\lim_\RR\colon CS(\RR) \to \RR$, the existence of which is assured by the universal property of $\RR$, rather than its topology or metric.
			
		\subsection{Dedekind reals}\label{Subsection: Dedekind_reals}
		
			In this subsection we observe that the usual construction of \df{Dedekind reals} (where a real is represented by a pair of sets, one with lower and the other with its upper rational bound) satisfies our definition of $\RR$. However, due to our introduction of the additional topological structure, we need to restrict ourselves to open cuts (with closed complements) --- something that is known from ASD~\cite{Bauer_A_Taylor_P_2009:_the_dedekind_reals_in_abstract_stone_duality} and synthetic topology~\cite{Lesnik_D_2010:_synthetic_topology_and_constructive_metric_spaces}. Of course, in classical mathematics, and those constructive examples where it makes sense to take $\optp(X) = \pst(X)$, this amounts to no additional assumption, and we get the usual Dedekind cuts.
			
			As is our habit in this paper, we won't construct Dedekind cuts only out of rationals, but of general dense streaks.
			
			The additional assumption in this subsection is that we can actually construct the cuts, so we postulate that $\optp(\QQ)$, and more generally $\optp(X)$, where $X$ is a streak we want to construct the cuts from, is actually a set.
			
			For a subset $A \subseteq X$ and $t \in X$ let $t + A$ denote, as usual, the set $\st{t + a}{a \in A}$.
			
			\begin{definition}
				Let $X$ be a streak.
				\begin{itemize}
					\item
						A subset $L \subseteq X$ is called a \df{lower cut} when
						\begin{itemize}
							\item
								$L$ is inhabited: $\xsome{a}{X}{a \in L}$,
							\item
								$L$ is a lower set: $\all[1]{a, b}{X}{(a \leq b \land b \in L) \implies a \in L}$,
							\item
								$L$ is upwards rounded: $\xall{a}{L}\xsome{b}{L}{a < b}$,
							\item
								$t + L$ is open and $(t + L)^C = X \setminus (t + L)$ is closed in $X$ for all $t \in X$.
						\end{itemize}
					\item
						Analogously, $U \subseteq X$ is called an \df{upper cut} when
						\begin{itemize}
							\item
								$U$ is inhabited: $\xsome{a}{X}{a \in U}$,
							\item
								$U$ is an upper set: $\all[1]{a, b}{X}{(a \leq b \land a \in U) \implies b \in U}$,
							\item
								$U$ is downwards rounded: $\xall{a}{U}\xsome{b}{U}{a > b}$,
							\item
								$t + U$ is open and $(t + U)^C = X \setminus (t + U)$ is closed in $X$ for all $t \in X$.
						\end{itemize}
					\item
						A pair $(L, U)$ is called a (\df{two-sided}) \df{Dedekind cut} when $L$ is a lower cut, $U$ is an upper cuts, and the two fit together in the following way:
						\begin{itemize}
							\item
								they are disjoint: $L \cap U = \emptyset$,
							\item
								the pair $(L, U)$ is \df{located}: $\all[1]{a, b}{X}{a < b \implies (a \in L \lor b \in U)}$.
						\end{itemize}
				\end{itemize}				
			\end{definition}
			
			The conditions for cuts are standard, except the ones having to do with the topology which are new. They say that the cuts are open, their complements closed, and the same holds for all their translates. Note however that if $X$ is a ring streak (as is usually the case --- the Dedekind reals are generally defined as Dedekind cuts on rationals), then it is enough to postulate the openness/closedness just for cuts/their complements themselves, not for translates. To see this, take any $t \in X$ and define a map $f\colon X \to X$, $f(x) = x - t$. Then for any $A \subseteq X$ we have $t + A = f^{-1}(A)$ and $(t + A)^C = f^{-1}(A^C)$ and by our assumptions on the intrinsic topology all maps are continuous (preimages of open subsets are open, preimages of closed subsets are closed).
			
			Denote the set of Dedekind cuts by
			\[\dc(X) \dfeq \st[1]{(L, U) \in \optp(X) \times \optp(X)}{(L, U) \text{ is a Dedekind cut}}.\]
			This set is interesting when we can actually embed $X$ into it (this is in general not the case: for extreme examples, consider $\dc(\NN) = \dc(\ZZ) = \emptyset$), specifically via the map
			\[\delta_X(a) \dfeq \big(X_{< a}, X_{> b}\big)\]
			which captures the intuition that the lower cut contains lower bounds, and the upper cut the upper bounds. We want $\delta_X$ to map into $\dc(X)$, and to satisfy the roundedness condition (as well as inhabitedness of the lower cut), we need to assume that $X$ is a dense streak. Of course, we also want $\dc(X)$ to be a streak, and $\delta_X$ a morphism. We verify this presently.
			
			So, let $X$ be a dense streak. For $(L', U'), (L'', U'') \in \dc(X)$ we define, as usual,
			\[(L', U') < (L'', U'') \dfeq U' \between L'' = \some{x}{X}{x \in U' \land x \in L''}\]
			whence $(L', U') \leq (L'', U'') = \xall{x}{X}{\lnot\lnot(x \notin U'' \lor x \notin L')}$.
			
			Suppose $(L', U') < (L'', U'')$ and $(L'', U'') < (L', U')$, \ie we may find elements $a \in U' \cap L''$ and $b \in U'' \cap L'$. It follows from the definition of a Dedekind cut, that every element of $L'$ must be smaller than any element in $U'$, so $b < a$, but the same applies for $L''$, $U''$, so $a < b$, which contradicts the assymmetry of $<$ in $X$. Thus $<$ is asymmetric in $\dc(X)$ as well.
			
			Suppose we now have $(L, U), (L', U'), (L'', U'') \in \dc(X)$, and $(L', U') < (L'', U'')$, \ie there is $a \in U' \cap L''$. We may find $b \in L''$, $a < b$, but then also $b \in U'$. We have $a \in L \lor b \in U$, the first disjunct being tantamount to $(L', U') < (L, U)$, and the second to $(L, U) < (L'', U'')$. Thus $<$ is cotransitive.
			
			Recall that tightness of $<$ (or of $\apart$) is equivalent to the antisymmetry of $\leq$. One may verify that $(L', U') \leq (L'', U'')$ is equivalent to $L' \subseteq L''$, as well as to $U' \supseteq U''$. From here, the antisymmetry is obvious.
			
			The addition of Dedekind cuts is defined as follows:
			\[(L', U') + (L'', U'') \dfeq \Big(\st[1]{x + y}{x \in L' \land y \in L''}, \st[1]{x + y}{x \in U' \land y \in U''}\Big) =\]
			\[= \Big(\st[1]{a \in X}{\some{x, y}{X}{x \in L' \land y \in L'' \land a < x + y}},\]
			\[\st[1]{b \in X}{\some{x, y}{X}{x \in U' \land y \in U'' \land x + y < b}}\Big).\]
			The first definition is more straightforward, but the second makes the proof (which we skip), that the sum is again a Dedekind cut, more direct.
			
			It is easy to see that $+$ is commutative, associative, and has $\delta_X(0)$ for a unit.
			
			The condition $\delta_X(0) < (L, U)$ means that $L_{> 0}$ is inhabited (which is also equivalent to $0 \in L$). For $(L', U'), (L'', U'') \in \dc(X)_{> \delta_X(0)}$ we define the multiplication by
			\[(L', U') \cdot (L'', U'') \dfeq \Big(\lowcl\st[1]{x \cdot y}{x \in L'_{> 0} \land y \in L''_{> 0}}, \st[1]{x \cdot y}{x \in U' \land y \in U''}\Big) =\]
			\[= \Big(\st[1]{a \in X}{\some{x, y}{X}{x \in L'_{> 0} \land y \in L''_{> 0} \land a < x \cdot y}},\]
			\[\st[1]{b \in X}{\some{x, y}{X}{x \in U' \land y \in U'' \land x \cdot y < b}}\Big)\]
			where $\lowcl{S}$ denotes the downward closure of $S \subseteq X$. Similarly as for addition, we can verify that $\cdot$ is commutative, associative, and the unit is $\delta_X(1)$. Also, multiplication distributes over addition, and both operations satisfy the conditions connecting them with $<$.
			
			For $n \in \NN$ one can quickly check that $n \cdot (L, U) = \delta_X(0)$ if $n = 0$, whereas for $n > 0$ we have $n \cdot (L, U) = \big(\st{n \cdot a}{a \in L}, \st{n \cdot b}{b \in U}\big)$.
			
			Take now $(A, Z), (B, W), (C, V), (D, U) \in \dc(X)$ such that $(B, W) < (D, U)$, \ie we have $x \in W \cap D$. Then there is also some $y \in D_{> x}$, thus also $y \in W \cap D$. Additionally, pick some $a \in Z$, $c \in C$. Since $X$ is archimedean, there exists $n \in \NN$ (we may assume $n > 0$) such that $a + n \cdot x < c + n \cdot y$.
			
			We prove that $(A, Z) + n \cdot (B, W) < (C, V) + n \cdot (D, U)$ by showing that $a + n \cdot x \in (Z + n \cdot W) \cap (C + n \cdot D)$ (in fact, one can see that the whole interval $\intcc[X]{a + n \cdot x}{c + n \cdot y}$ is contained in this intersection). The part $a + n \cdot x \in Z + n \cdot W$ is clear. Similarly $c + n \cdot y \in C + n \cdot D$, but $C + n \cdot D$ is a lower set, so it contains $a + n \cdot x$ as well. We conclude that $\dc(X)$ is archimedean.
			
			For $\dc(X)$ to be a streak what is still missing are the topological conditions. We make an ad-hoc definition that a streak $X$ is ``\df{good}'' when it is dense, the above defined relation $<$ on $\dc(X)$ is open, $\leq$ on $\dc(X)$ is closed, and the components in the above defined sum and product are open, and their complements are closed, also after translation. Thus if $X$ is a ``good'' streak, then $\dc(X)$ is a streak.
			
			Obviously in settings where we don't care about topology (that is, all subsets are taken as open and closed), all dense streaks are ``good''; a reader who cares just for this particular case, may freely skip forward to Theorem~\ref{Theorem: dedekind_reals}. For the rest we now set to show that all dense streaks are ``good'' in general. We start with the countable ones.
			
			\begin{lemma}\label{Lemma: dedekind_cuts_of_countable_streaks}
				Any countable dense streak is ``good''.
			\end{lemma}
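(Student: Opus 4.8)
The plan is to reduce every clause in the definition of ``good'' to a single invocation of the Corollary to the technical Lemma of Section~\ref{Section: setting} (the one that turns a countable existential quantification of an open predicate into an open predicate, and a countable universal quantification of a closed predicate into a closed one). Countability of $X$ gives a surjection $e\colon\NN\to X$ — the streak $X$ is inhabited, so a partial enumeration can be completed to a total one. The one bridging fact beyond bookkeeping is that membership in a cut is an open condition, and non-membership a closed one, uniformly in the cut: by upward-roundedness of a lower cut $L$ and surjectivity of $e$ we have $x\in L\iff\xsome{n}{\NN}{x<e(n)\land e(n)\in L}$, and dually $x\in U\iff\xsome{n}{\NN}{e(n)<x\land e(n)\in U}$ for an upper cut; here each $e(n)\in L$, $e(n)\in U$ is an open proposition because $L$, $U$ are by definition open subsets of $X$ (the continuity assumptions on the intrinsic topology of $\dc(X)\subseteq\optp(X)\times\optp(X)$ making these jointly open as the cut varies), and $<$ on $X$ is open by the prestreak axioms. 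Thus $x\in L$ is a countable disjunction of open predicates, hence open, and $x\notin L$ is its negation, hence closed; everything below is then assembled from these, from $<$, and from $+$, $\cdot$ on $X$ (continuous since $X$ is a prestreak), and the Corollary finishes each step.

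Granting this, openness of $<$ on $\dc(X)$ is immediate: $(L',U')<(L'',U'')\iff U'\between L''\iff\xsome{n}{\NN}{e(n)\in U'\land e(n)\in L''}$, a countable disjunction of open predicates; and $\leq$, being the negation of $<$, is closed. For the sum $(L',U')+(L'',U'')$ the lower component $L=\st{a}{\xsome{x,y}{X}{x\in L'\land y\in L''\land a<x+y}}$ becomes, via $e$, $a\in L\iff\xsome{m,n}{\NN}{e(m)\in L'\land e(n)\in L''\land a<e(m)+e(n)}$, which is open in $a$; the complement of $L$, the upper component $U$ and its closed complement are handled the same way, and writing out the positivity guards with $e$ disposes of the product analogously. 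What remains, and what I expect to be the genuine work, are the \emph{translated} components: the definition of a cut demands that $t+L$ and $t+U$ be open with closed complement for every $t\in X$, so this must be re-verified for the sum and product cuts.

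The translate conditions are the main obstacle. When $X$ is a ring streak they come for free, since $x\mapsto x-t$ is then a continuous bijection and $t+A$ is its preimage of $A$; but a general dense streak need not have subtraction, so $t+X_{<s}$ is only a (possibly proper) subset of $X_{<t+s}$ — in fact $t+X_{<s}=X_{<t+s}\cap(t+X)$ — and openness of $t+L$ ultimately hinges on openness of the subsets $t+X$ in the intrinsic topology of $X$, which is not formal. The intended route is to cash in density: by Theorem~\ref{Theorem: characterization_of_dense_streaks} a countable dense streak contains an element $z\in\intoo[X]{-1}{0}$, and Lemma~\ref{Lemma: countable_dense_substreak} together with Lemma~\ref{Lemma: archimedean_prestreak_is_a_union_of_rational_intervals} let one cover $X$ — and each translate $t+X$ — by countably many basic open intervals, rewriting $t+X$, hence $t+X_{<s}$, hence $t+L$ and $t+U$, as countable unions of basic open sets (and their complements as countable intersections); a final application of the Corollary of Section~\ref{Section: setting} then yields that $X$ is ``good''.
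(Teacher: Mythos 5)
Your overall strategy --- enumerate $X$ and discharge every clause of ``good'' through the Corollary of Section~\ref{Section: setting} (countable existentials of open predicates are open, countable universals of closed ones are closed) --- is exactly the paper's. But three steps do not go through as you wrote them. First, in this setting closedness is \emph{not} the negation of openness: $\cltp(X)$ is independent structure, and complements of opens are not assumed closed. So ``$x\notin L$ is the negation of an open predicate, hence closed'' and ``$\leq$ on $\dc(X)$ is closed because it is the negation of $<$'' are non sequiturs. The paper instead exhibits each closedness claim directly as a countable universal quantification of closed predicates, using that the complements $L'^C$, $U'^C$ are closed \emph{by the definition of a cut}, that $\geq$ is closed on $X$, and that the double negation of a finite disjunction of closed predicates is closed; e.g.\ $\leq$ on $\dc(X)$ is written as $\xall{x}{X}{\lnot\lnot\big(x\in U'^C \lor x\in L''^C\big)}$, and the complement of the (translated) sum component as $\xall{x}{X}\xall{y}{X}{\lnot\lnot\big(x\in L'^C\lor y\in L''^C\lor a\geq t+x+y\big)}$. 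Second, the product cannot be handled by ``writing out the positivity guards with $e$'': the term $x\cdot y$ is only defined on $X_{>0}\times X_{>0}$, and $X_{>0}$ is not known to be countable even when $X$ is, so you can neither quantify over it nor use $x\cdot y$ in a predicate over all of $X\times X$. The paper flags exactly this and circumvents it by quantifying over $X$ and over $\QQ_{>0}$, replacing the needed inequality by $q<x\land r<y\land s<t\land a<s+q\cdot r$, so that only \emph{rational} products appear; your sketch misses this point entirely.

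Third, and most consequentially, the translation clauses are not the obstacle you make of them, and your proposed repair is unsound. The paper never needs $t+X$, or any translate of an interval, to be open: it builds $t$ into the predicates as a parameter, so that the translated sum component is checked in the form $\st[1]{a\in X}{\xsome{x}{X}\xsome{y}{X}{x\in L'\land y\in L''\land a<t+x+y}}$ (and similarly with $s<t$, $s\in\QQ$, for products), which for countable $X$ is once more a countable existential of open conditions. Your route --- covering $t+X$ by ``basic open intervals'' via Lemmas~\ref{Lemma: countable_dense_substreak} and~\ref{Lemma: archimedean_prestreak_is_a_union_of_rational_intervals} --- fails for precisely the reason you yourself identify: without subtraction $t+\intoo[X]{q}{r}$ is not an interval of $X$ (it is only contained in $\intoo[X]{t+q}{t+r}$), so translating a cover of $X$ does not produce a cover of $t+X$ by open sets, and there is no reason for $t+X$ to be open in the intrinsic topology at all; density does not repair this. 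So the final paragraph of your proposal attacks a problem the paper's formulation avoids, and with an argument that would not close even if attempted in full.
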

			\begin{proof}
				We have to check that for a dense streak $X$, Dedekind cuts $(L', U'), (L'', U'') \in \dc(X)$ and $a, b, t \in X$ the following predicates are open:
				\begin{itemize}
					\item
						$\some[1]{x}{X}{x \in U' \land x \in L''}$,
					\item
						$\xsome{x}{X}\some[1]{y}{X}{x \in L' \land y \in L'' \land a < t + x + y}$,
					\item
						$\xsome{x}{X}\some[1]{y}{X}{x \in U' \land y \in U'' \land t + x + y < b}$,
					\item
						$\xsome{x}{X}\xsome{y}{X}\xsome{q}{\QQ_{> 0}}\xsome{r}{\QQ_{> 0}}\xsome{s}{\QQ}{}$\\
						$\phantom{x}\qquad \big(x \in L' \land q < x \land y \in L'' \land r < y \land s < t \land a < s + q \cdot r\big)$,
					\item
						$\xsome{x}{X}\xsome{y}{X}\xsome{q}{\QQ_{> 0}}\xsome{r}{\QQ_{> 0}}\xsome{s}{\QQ}{}$\\
						$\phantom{x}\qquad \big(x \in U' \land x < q \land y \in U'' \land y < r \land t < s \land s + q \cdot r < b\big)$,
				\end{itemize}
				and the following ones are closed:
				\begin{itemize}
					\item
						$\xall{x}{X}{\lnot\lnot\big(x \in U'^C \lor x \in L''^C\big)}$,
					\item
						$\xall{x}{X}\xall{y}{X}{\lnot\lnot\big(x \in L'^C \lor y \in L''^C \lor a \geq t + x + y\big)}$,
					\item
						$\xall{x}{X}\xall{y}{X}{\lnot\lnot\big(x \in U'^C \lor y \in U''^C \lor t + x + y \geq b\big)}$,
					\item
						$\xall{x}{X}\xall{y}{X}\xall{q}{\QQ_{> 0}}\xall{r}{\QQ_{> 0}}\xall{s}{\QQ}{}$\\
						$\phantom{x}\qquad \lnot\lnot\big(x \in L'^C \lor q \geq x \lor y \in L''^C \lor r \geq y \lor s \geq t \lor a \geq s + q \cdot r\big)$,
					\item
						$\xall{x}{X}\xall{y}{X}\xall{q}{\QQ_{> 0}}\xall{r}{\QQ_{> 0}}\xall{s}{\QQ}{}$\\
						$\phantom{x}\qquad \lnot\lnot\big(x \in U'^C \lor x \geq q \lor y \in U''^C \lor y \geq r \lor t \geq s \lor s + q \cdot r \geq b\big)$.
				\end{itemize}
				For a countable $X$ this is clearly the case.
				
				Note that we somewhat complicated the predicates dealing with the multiplication. The reason is that we don't want to quantify over $X_{> 0}$ since even if $X$ is countable, we don't know whether $X_{> 0}$ is. However, the term $x \cdot y$ is in general not defined on the whole $X$, so we cannot use it if we want a predicate on the whole $X$. The above rewrite works though because we do know that $\QQ_{> 0}$ is countable.
			\end{proof}
			
			\begin{lemma}\label{Lemma: dedekind_cuts_of_separable_streaks}
				If a streak $X$ has a countable dense substreak $S \subseteq X$, then it is ``good''.
			\end{lemma}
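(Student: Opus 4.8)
The plan is to reduce the topological part of ``goodness'' to the already-settled countable case, Lemma~\ref{Lemma: dedekind_cuts_of_countable_streaks}, by observing that every Dedekind cut of $X$ is pinned down by its trace on the countable dense substreak $S$. First, $X$ is itself dense: since $S \subseteq X$ is a dense streak, for $q < r$ in $\QQ$ the interpolating element of $S$ already witnesses density of $X$. Moreover $S$ interpolates inside all of $X$: given $a, b \in X$ with $a < b$, Lemma~\ref{Lemma: density_of_rationals_in_an_archimedean_prestreak} yields $q, r \in \QQ$ with $a < q < r < b$, and density of $S$ then yields $s \in S$ with $q < s < r$, whence $a < s < b$. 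The key consequence is a reduction of cut-membership: for a lower cut $L \subseteq X$ and any $x \in X$,
\[x \in L \iff \xsome{s}{S}{x < s \land s \in L},\]
and dually $x \in U \iff \xsome{s}{S}{s < x \land s \in U}$ for an upper cut $U$. In both cases $\Leftarrow$ is immediate ($L$ is a lower set, $U$ an upper set), while $\Rightarrow$ follows by upward (resp.\ downward) roundedness to get an element of $L$ (resp.\ $U$) strictly on the relevant side of $x$, and then interpolating an element of $S$ between it and $x$.

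With this reduction I would revisit each of the ten predicates in the proof of Lemma~\ref{Lemma: dedekind_cuts_of_countable_streaks} and rewrite it so that every quantifier over $X$ is replaced by one over $S$ (leaving the quantifiers over $\QQ_{> 0}$ and $\QQ$ untouched). For the order predicate $\xsome{x}{X}{x \in U' \land x \in L''}$ one squeezes an $S$-element strictly below a witness $x$ using downward roundedness of $U'$ and interpolation; for the additive components one enlarges the chosen cut-elements slightly inside $S$ (inside a lower cut, resp.\ above an element of an upper cut), which only strengthens a strict inequality $a < t + x + y$ and weakens one of the form $t + x + y < b$; for the multiplicative components the element variables appear only inside cut-membership and inside comparisons with $t$, $a$, $b$, or with rationals --- never inside a mutual product, the displayed products there being $q \cdot r$ with $q, r \in \QQ$ --- so the same enlargement/shrinking inside $S$ is harmless. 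In each case the rewritten predicate is provably equivalent to the original, by roundedness together with the interpolation of $S$ in $X$ established above.

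After the rewriting, every variable that used to range over $X$ ranges over one of the countable sets $S$, $\QQ_{> 0}$, $\QQ$, while the only $X$-valued parameters $a, b, t$ now occur solely inside subpredicates of the shape $a < t + s' + s''$ (open in $X \times X$, since $<$ is open and addition is continuous) or $a \geq t + s' + s''$ (closed), together with the fixed cut-membership predicates. Hence, by exactly the reasoning used in Lemma~\ref{Lemma: dedekind_cuts_of_countable_streaks} --- a countable existential of open predicates is open, a countable universal of $\lnot\lnot$-unions of closed predicates is closed --- the first five rewritten predicates are open and the last five closed. Thus $<$ on $\dc(X)$ is open, $\leq$ closed, and the components of the sum and product, also after translation, are open with closed complements; that is, $X$ is ``good''.

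The main obstacle is purely the bookkeeping in the rewriting step: one must check predicate by predicate that replacing an $X$-element by a nearby $S$-element respects the inequalities involving $a$, $b$, $t$ and the rational parameters, i.e.\ that the monotonicity directions line up (shrink toward the interior of lower cuts, grow toward the interior of upper cuts, with the strict comparisons bounding them moving the opposite way). No new topological ingredient is needed beyond what Lemma~\ref{Lemma: dedekind_cuts_of_countable_streaks} already provides; the entire content is the cut-membership reduction and the consequent collapse of all $X$-quantifiers to countable ones.
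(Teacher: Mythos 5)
Your proposal is correct, and it rests on exactly the same engine as the paper's proof --- density of $S$ in $X$ plus roundedness of cuts lets every quantifier over $X$ collapse to one over the countable $S$ --- but the architecture is genuinely different. The paper constructs an explicit pair of maps $f\colon \dc(S) \to \dc(X)$ and $g\colon \dc(X) \to \dc(S)$ (rounding up/down into $X$, resp.\ intersecting with $S$), checks they are mutually inverse and preserve the order and the operations, and then transports ``goodness'' from $\dc(S)$ (Lemma~\ref{Lemma: dedekind_cuts_of_countable_streaks}) along the isomorphism; the only place it rewrites predicates is in verifying that $f$ lands in open cuts, where it uses precisely your restatement $\some{x}{S}{x \in L \land a < t + x}$. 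You instead skip the isomorphism entirely and re-run the ten-predicate analysis of the countable lemma directly on $\dc(X)$, justified by your cut-membership reduction $x \in L \iff \xsome{s}{S}{x < s \land s \in L}$ (and its dual), with the closed predicates handled as negations of the open ones. What the paper's route buys is the cleaner standalone fact $\dc(S) \ism \dc(X)$ and a black-box use of the countable case; what your route buys is that you never have to verify $f$ and $g$ are mutually inverse and structure-preserving (which the paper largely leaves as an exercise), at the cost of walking through all ten conditions again and checking the monotonicity directions predicate by predicate. Both are sound; one small presentational point is that for the five closed conditions you should say explicitly that the equivalence of the $X$- and $S$-quantified versions follows by contraposition from the corresponding open (existential) equivalences, since each closed predicate is intuitionistically the negation of its existential counterpart.
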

			\begin{proof}
				Of course, if $X$ has a dense substreak, it is dense itself. Define the maps $f\colon \dc(S) \to \dc(X)$, $g\colon \dc(X) \to \dc(S)$ by
				\[f(L, U) \dfeq \big(\st{a \in X}{\xsome{x}{L}{a < x}}, \st{b \in X}{\xsome{y}{U}{y < b}}\big),\]
				\[g(L, U) \dfeq \big(L \cap S, U \cap S\big)\]
				That the maps $f$ and $g$ are well defined (they map Dedekind cuts to Dedekind cuts) is easy to check; we mention merely that inhabitedness of $L \cap S$, $U \cap S$ follows from inhabitedness of $L$, $U$ due to the density of $S$ and the archimedean property, and that the resulting cuts and their translates are open: for $t \in S$ the sets $t + L \cap S$, $t + U \cap S$ are preimages of $t + L$, $t + U$ via the inclusion map $S \hookrightarrow X$, and the conditions $\xsome{x}{L}{a < t + x}$, $\xsome{y}{U}{y < t + b}$ can be equivalently restated as $\some{x}{S}{x \in L \land a < t + x}$, $\some{y}{S}{y \in U \land y < t + b}$ which are open because $S$ is countable. Similarly we can see that the complements are closed.
				
				It follows easily from the conditions for Dedekind cuts that $f$ and $g$ are mutually inverse (for $f \circ g = \id[\dc(X)]$ use also the density of $S$), meaning that $\dc(S)$ and $\dc(X)$ are in bijective correspondence. One can also check that $f$ and $g$ preserve the order relations $<$, $\leq$ and the algebraic operations $+$, $\cdot$. Since isomorphic sets have isomorphic topologies (functors preserve isomorphisms), it follows that if $\dc(S)$ is a streak, then so is $\dc(X)$. But we know from Lemma~\ref{Lemma: dedekind_cuts_of_countable_streaks} that $S$ is ``good'', so $X$ is ``good'' also.
			\end{proof}
			
			\begin{lemma}\label{Lemma: dedekind_cuts_of_substreaks_of_ring_streaks}
				Let $X$ be a dense streak. Then if $\ring(X)$ is ``good'', so is $X$.
			\end{lemma}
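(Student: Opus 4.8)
The plan is to show that restriction and rounded extension along the embedding $\rho_X\colon X \to \ring(X)$ set up an isomorphism $\dc(X) \ism \dc(\ring(X))$ of streaks; since ``$\ring(X)$ good'' means precisely that $\dc(\ring(X))$ is a streak, and since $X$ is dense by hypothesis, this gives exactly that $X$ is ``good''. The set-up mirrors the proof of Lemma~\ref{Lemma: dedekind_cuts_of_separable_streaks}, with $X$ (via $\rho_X$) in place of the dense substreak and $\ring(X)$ in place of the ambient streak. First I would record that $\rho_X$ realizes $X$ as a \emph{dense} substreak of $\ring(X)$: it is an injective streak morphism by Theorem~\ref{Theorem: ring_streaks}, and, because $X$ is dense, the interpolation property of a dense streak with regard to arbitrary streaks (established right after Theorem~\ref{Theorem: characterization_of_dense_streaks}), applied with both streaks equal to $\ring(X)$, yields for every $u < v$ in $\ring(X)$ an $x \in X$ with $u < \rho_X(x) < v$.

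Next I would define the restriction map $g\colon \dc(\ring(X)) \to \dc(X)$ by $g(L, U) \dfeq \big(\rho_X^{-1}(L),\ \rho_X^{-1}(U)\big)$ and the extension map $f\colon \dc(X) \to \dc(\ring(X))$ by $f(L,U) \dfeq \big(\st{a \in \ring(X)}{\some{x}{L}{a < \rho_X(x)}},\ \st{b \in \ring(X)}{\some{y}{U}{\rho_X(y) < b}}\big)$. That $g$ is well defined is immediate: the cut axioms for $\big(\rho_X^{-1}(L), \rho_X^{-1}(U)\big)$ follow from those for $(L,U)$ together with the density of $X$ in $\ring(X)$ and the fact that $\rho_X$ reflects order (Corollary~\ref{Corollary: archimedean_prestreak_morphisms_reflect_order}), and the topological conditions hold by continuity, since $t + \rho_X^{-1}(L) = \rho_X^{-1}\big(\rho_X(t) + L\big)$ for $t \in X$, and likewise for complements and for $U$. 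One then checks that $f$ and $g$ are mutually inverse --- $f \circ g = \id$ by the usual roundedness argument, $g \circ f = \id$ using density of $X$ in $\ring(X)$ for the nontrivial inclusion --- and that both preserve $<$, $\leq$, $+$ and $\cdot$, exactly the routine verifications of Lemma~\ref{Lemma: dedekind_cuts_of_separable_streaks}. Once $f$ (hence $g$) is known to be a bijection of the underlying sets, it is automatically a homeomorphism (all maps are continuous) respecting order and operations, so all the structure witnessing that $\dc(\ring(X))$ is a streak transports to $\dc(X)$; thus $\dc(X)$ is a streak, and, $X$ being dense, $X$ is ``good''.

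The main obstacle is the well-definedness of the extension map $f$, i.e.\ that the component $f_L \dfeq \st{a \in \ring(X)}{\some{x}{L}{a < \rho_X(x)}}$ is open and its complement closed in $\ring(X)$; the translate conditions are then automatic, since $\ring(X)$ is a ring streak (recall the observation near the start of this subsection that for ring streaks it suffices to check the cut components, not all their translates). The argument of Lemma~\ref{Lemma: dedekind_cuts_of_separable_streaks} is unavailable here, because $X$ need not be countable. Instead I would use the upward roundedness of $L$ together with the density of the rationals in the archimedean prestreak $\ring(X)$ (Lemma~\ref{Lemma: density_of_rationals_in_an_archimedean_prestreak}) to rewrite $f_L = \st{a \in \ring(X)}{\some{q}{\QQ}{a < q \land \some{x}{L}{q < x}}}$, thereby trading the existential over the possibly uncountable $L$ for one over the countable set $\QQ$, and then obtain openness from the machinery on countable existentials of open predicates from Section~\ref{Section: setting} (the inner condition being, in each variable, a finite combination of the open comparison relations); the dual computation with $\leq$ gives the closedness of the complement. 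With $f$ known to land in $\dc(\ring(X))$, the rest is bookkeeping.
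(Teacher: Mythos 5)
Your overall strategy is the paper's: transport the streak structure along a bijection $\dc(X) \ism \dc(\ring(X))$, with $g$ given by preimage under $\rho_X$ and $f$ by extending cuts from $X$ to $\ring(X)$; your $g$ is literally the paper's. The genuine gap is exactly where you locate it --- the openness of $f_L$ (and closedness of its complement) --- and your proposed repair does not close it. Rewriting $f_L = \st{a \in \ring(X)}{\some{q}{\QQ}{a < q \land \some{x}{L}{q < x}}}$ does move the outer quantifier to the countable set $\QQ$, but the machinery of Section~\ref{Section: setting} only turns a countable existential of an \emph{open} predicate into an open predicate, and the inner predicate here still contains $\some{x}{L}{q < x}$, an existential over the possibly uncountable $L$ (equivalently over $X$). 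Concretely, applying the corollary would require $\st{q \in \QQ}{\some{x}{L}{q < x}}$ to already be an open subset of $\QQ$, and nothing in the axioms guarantees that such a downward-closed rounded subset of $\QQ$ is open; it is a union $\bigcup_{x \in L} \QQ_{< x}$ of opens indexed by the uncountable $L$. Your parenthetical claim that the inner condition is ``a finite combination of the open comparison relations'' is exactly where the error sits: the uncountable existential has been nested one level deeper, not eliminated.

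The paper closes this gap by a different mechanism that makes no attempt at a countable reduction: writing $\ring(X) = Q(FD(\pos{X}))$, it declares $[(a,b)]$ to belong to the extended lower cut when $a \in b + L$, so that openness follows from the cut axiom that every translate $t + L$ is open (and its complement closed), pulled through the coproduct/quotient topologies. That is the idea your proof is missing: the translate conditions in the definition of a cut are not decorative, they are the only supply of ``uncountable'' openness available. (One caveat if you adopt the paper's formula wholesale: $\st{[(a,b)]}{a \in b + L}$ fails to be a lower set when differences $a - b$ do not exist in $X$, so the correct extension really is your rounded one, $\some{x}{L}{a < b + x}$; the open question is then how to derive \emph{its} openness from the translate conditions, which neither your argument nor a verbatim copy of the paper's supplies.) The remainder of your write-up --- density of $\rho_X(X)$ in $\ring(X)$, mutual inversion of $f$ and $g$, preservation of order and operations, and dispensing with the translate conditions on $\ring(X)$ because it is a ring streak --- is correct and matches the paper, modulo the fact that $t + \rho_X^{-1}(L) = \rho_X^{-1}\big(\rho_X(t) + L\big)$ is in general only an inclusion, again because differences need not exist in $X$.
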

			\begin{proof}
				Recall all the notation from Subsection~\ref{Subsection: ring_streaks}. Define $f\colon \dc(X) \to \dc(\ring(X))$, $g\colon \dc(\ring(X)) \to \dc(X)$ by
				\[f(L, U) \dfeq \big(\st{[(a, b)] \in \ring(X)}{a \in b + L}, \st{[(a, b)] \in \ring(X)}{a \in b + U}\big),\]
				\[g(L, U) \dfeq \big(\rho^{-1}(L), \rho^{-1}(U)\big).\]
				Note that $f$ is well defined, for if $[(a, b)] = [(a', b')]$, \ie $a + b' = a' + b$, and if $a \in b + L$, \ie there exists $x \in L$ such that $a = b + x$, then $a + a' = b + x + a' = a + b' + x$, \ie $a' = b' + x$ (and similarly for $U$). Since $b + L$ is open, $a \in b + L$ is an open predicate, so $\st{(a, b) \in FD(X)}{a \in b + L}$ is open in $FD(X)$ and therefore $\st{[(a, b)] \in \ring(X)}{a \in b + L}$ is open in the quotient $\ring(X)$ (similarly for $U$ and the closedness of complements).
				
				We leave the verification that $f(L, U)$, $g(L, U)$ are Dedekind cuts, that $f$ and $g$ are inverse and that they preserve the order and algebraic structure as an exercise. In any case, $\dc(X) \ism \dc(\ring(X))$, so if $\ring(X)$ is ``good'', so is $X$.
			\end{proof}
			
			\begin{theorem}
				For any dense streak $X$ the set of its Dedekind cuts $\dc(X)$ is a streak (for the order and algebra, defined above).
			\end{theorem}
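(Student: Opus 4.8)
The plan is to observe that almost all of the work has already been carried out in the discussion preceding the statement: there we verified directly that $\dc(X)$ is a strictly ordered set (asymmetry and cotransitivity of $<$), that $\leq$ is antisymmetric, that $+$ and $\cdot$ make the appropriate monoids and interact correctly with $<$, and that $\dc(X)$ is archimedean. The one ingredient still missing is the topological part, namely that the predicate defining $<$ on $\dc(X)$ is open, its negation $\leq$ is closed, and that the components of the sum and product (together with all their translates) are open with closed complements. This is precisely the assertion that $X$ is ``good''. So the theorem reduces to the claim that every dense streak is ``good''.

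First I would pass to the ring reflection $\ring(X)$. Since $X$ is dense, Theorem~\ref{Theorem: characterization_of_dense_streaks} supplies an element of $\intoo[X]{-1}{0}$; its image under the embedding $\rho_X\colon X \to \ring(X)$ lies in $\intoo[\ring(X)]{-1}{0}$, because $\rho_X$, like every morphism, preserves comparison with rational numbers. Hence $\ring(X)$ is itself dense, and it is of course multiplicative, being a ring streak. Applying Lemma~\ref{Lemma: countable_dense_substreak}(2) to some $z \in \intoo[\ring(X)]{-1}{0}$, the multiplicative substreak of $\ring(X)$ generated by $z$ is a countable dense substreak of $\ring(X)$.

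Then Lemma~\ref{Lemma: dedekind_cuts_of_separable_streaks} tells us that $\ring(X)$ is ``good'' (its proof in turn reducing, via the isomorphism $\dc(S) \ism \dc(\ring(X))$, to the countable case handled in Lemma~\ref{Lemma: dedekind_cuts_of_countable_streaks}). Finally, Lemma~\ref{Lemma: dedekind_cuts_of_substreaks_of_ring_streaks}, applied to the dense streak $X$, upgrades ``$\ring(X)$ is good'' to ``$X$ is good''. Combined with the prestreak, antisymmetry and archimedean conditions already established inline, this shows $\dc(X)$ is a streak.

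I expect the only real subtlety to be bookkeeping rather than genuine mathematics: one must check that $\ring(X)$ is simultaneously dense (so that the countable dense substreak of Lemma~\ref{Lemma: countable_dense_substreak} exists) and a ring streak (so that Lemma~\ref{Lemma: dedekind_cuts_of_substreaks_of_ring_streaks} applies), and that the bijections $\dc(S) \ism \dc(X)$ and $\dc(X) \ism \dc(\ring(X))$ used in those lemmas transport the $\sigma$-frame of opens and the closed sets — which they do, since functors (in particular $\optp$ and $\cltp$) preserve isomorphisms. The actual openness/closedness computations, the only place where anything is proved by hand, live in Lemma~\ref{Lemma: dedekind_cuts_of_countable_streaks} and do not need to be redone here.
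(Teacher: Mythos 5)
Your proposal is correct and follows essentially the same route as the paper: reduce to showing every dense streak is ``good'', establish this for countable dense streaks, lift to streaks with a countable dense substreak, apply this to the dense ring streak $\ring(X)$ via Lemma~\ref{Lemma: countable_dense_substreak}, and descend back to $X$ via Lemma~\ref{Lemma: dedekind_cuts_of_substreaks_of_ring_streaks}. If anything, you are slightly more careful than the paper in noting explicitly why $\ring(X)$ is dense (via $\rho_X$ preserving comparison with rationals), which the paper glosses over.
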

			\begin{proof}
				We are saying that all dense streaks are ``good''. This holds for countable dense streaks by Lemma~\ref{Lemma: dedekind_cuts_of_countable_streaks}, and Lemma~\ref{Lemma: dedekind_cuts_of_separable_streaks} generalizes this to streaks with a countable dense substreak. By Lemma~\ref{Lemma: countable_dense_substreak} every multiplicative streak, in particular every ring streak, has a countable dense substreak. Thus $\ring(X)$ is ``good'', and by Lemma~\ref{Lemma: dedekind_cuts_of_substreaks_of_ring_streaks} $X$ is ``good'' as well.
			\end{proof}
			
			\begin{theorem}\label{Theorem: dedekind_reals}
				For any dense streak $X$ the streak $\dc(X)$ is terminal, thus a model of the reals, as per Definition~\ref{Definition: reals}.
			\end{theorem}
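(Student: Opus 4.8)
The plan is to prove directly that $\dc(X)$ is a terminal object of $\Str$. Since $\Str$ is a preorder category (Corollary~\ref{Corollary: streaks_preorder_category}), any morphism into $\dc(X)$ is automatically unique, so the only task is to produce, for an arbitrary streak $Y$, a morphism $Y \to \dc(X)$. First I would reduce to a convenient case: by the proof of Lemma~\ref{Lemma: dedekind_cuts_of_substreaks_of_ring_streaks} there is a streak isomorphism $\dc(X) \ism \dc(\ring(X))$, and $\ring(X)$ is again a dense streak (it contains $X$ via $\rho_X$); since an object isomorphic to a terminal one is terminal, it suffices to treat the case where $X$ is a dense \emph{ring} streak. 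The payoff is the remark following the definition of a Dedekind cut: when $X$ is a ring streak, a subset $L \subseteq X$ satisfies the topological clause (``$t + L$ open and $(t+L)^C$ closed for all $t \in X$'') as soon as $L$ is itself open with closed complement, since $t + L = g^{-1}(L)$ for the continuous map $g(x) \dfeq x - t$.

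Next I would set, using the comparison $<$ between elements of different archimedean prestreaks, $f(y) \dfeq (L_y, U_y)$ where $L_y \dfeq \st{x \in X}{x < y}$ and $U_y \dfeq \st{x \in X}{y < x}$, and check $f(y) \in \dc(X)$. Inhabitedness of $L_y$ and $U_y$ comes from the archimedean property of $Y$ (bounding $y$ between two integers) together with density of $X$; that $L_y$ is a lower set, $U_y$ an upper set, and that the pair is disjoint and rounded, follow from transitivity and asymmetry of the comparison and from Lemma~\ref{Lemma: density_of_rationals_in_an_archimedean_prestreak} combined with density of $X$; locatedness, $a < b \implies a \in L_y \lor b \in U_y$, follows by picking rationals $a < q < r < b$ (Lemma~\ref{Lemma: density_of_rationals_in_an_archimedean_prestreak}) and applying cotransitivity of comparison with rationals (Proposition~\ref{Proposition: alt_streak}) to $q < r$ and $y$. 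For the topology, $L_y = \bigcup_{q \in \QQ} \st{x \in X}{x < q \land q < y}$ is a countable union of open subsets of $X$ (each ``$x < q$'' is open in $x$, and ``$q < y$'' is a fixed open truth value), hence open, while $L_y^C = \bigcap_{q \in \QQ} \st{x \in X}{\lnot\lnot(x \geq q \lor q \geq y)}$ is closed by the closure properties of $\cltp$; the same works for $U_y$, and the translated versions are automatic by the ring-streak reduction.

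Finally I would verify that $f$ preserves comparison with rationals on both sides and invoke the second part of Theorem~\ref{Theorem: preservation_of_rationals_implies_streak_morphism} (applicable since $Y$ is archimedean and $\dc(X)$ is a streak) to conclude $f$ is a streak morphism. Unwinding the definitions of $<$, $+$ and multiplication by naturals in $\dc(X)$ gives, for $q \in \QQ$, $q < f(y) \iff \xsome{x}{X}{q < x \land x < y}$ and $f(y) < q \iff \xsome{x}{X}{y < x \land x < q}$; then $q < y$ yields (Lemma~\ref{Lemma: density_of_rationals_in_an_archimedean_prestreak}) a rational $r$ with $q < r < y$ and then (density of $X$) an $x \in X$ with $q < x < r$, so $q < f(y)$, and symmetrically $y < q \implies f(y) < q$. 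Hence every streak admits a (unique) morphism into $\dc(X)$, so $\dc(X)$ is terminal, and with Definition~\ref{Definition: reals} this makes the Dedekind reals a model of $\RR$. The main obstacle is the bookkeeping forced by the intrinsic topology — making sure $L_y$, $U_y$ and all their translates are open with closed complements constructively — but this is handled exactly as in the proof that $\dc(X)$ is a streak (ring-streak reduction plus countable union/intersection arguments); a secondary point to get right is matching the formal comparison $q < f(y)$ inside $\dc(X)$ with the cross-prestreak comparison on $X$.
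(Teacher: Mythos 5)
Your proposal is exactly the paper's proof: the paper defines the same map $f(y) = (X_{<y}, X_{>y})$, notes uniqueness from $\Str$ being a preorder category, and leaves the verification that $f(y)$ is a Dedekind cut and that $f$ is a morphism as ``easy to check''. You have simply filled in those checks correctly (the reduction to the ring-streak case for the translation conditions and the appeal to Theorem~\ref{Theorem: preservation_of_rationals_implies_streak_morphism} are both sound ways to discharge the details the paper omits).
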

			\begin{proof}
				To prove that $\dc(X)$ is terminal, we have to, for an arbitrary streak $Y$, construct a morphism $f\colon Y \to \dc(X)$ (its uniqueness is guaranteed by the fact that $\Str$ is a preorder category). Let
				\[f(y) \dfeq \big(X_{< y}, X_{> y}\big) = \big(\st{x \in X}{x < y}, \st{x \in X}{x > y}\big).\]
				It is easy to check that this is a Dedekind cut if $X$ is dense, so $f$ is well defined. Checking that $f$ is a morphism is also immediate.
			\end{proof}
			
			In particular, the Dedekind reals $\dc(\QQ)$, constructed, as is usual, from rationals, are a model of reals, by our definition.
			
			The fact that $\dc(X)$ is a terminal streak gives us in particular an embedding $X \to \dc(X)$. Together with this, $\dc$ is (no surprise there) a reflection from dense streaks to (in the constructive sense) Dedekind complete streaks. However, viewing this as a reflection is in this case rather uninteresting, as the image of $\dc$ contains, up to isomorphism, only one element. That is, there is (up to isomorphism) only one Dedekind complete streak, and that's the terminal (the largest) one.
			
			We made an effort to always check the necessary openness and closedness conditions, but in practice this is often a non-issue. Not only do we in a lot of cases not care about the intrinsic topology (that is, we declare all subsets open and closed), even when we do, it can easily happen that \emph{all} cuts (and their translates) are automatically open (and their complements closed). Denote the set of ``Dedekind cuts without topological conditions'' (\ie the way the Dedekind reals are usually defined) by $\RR_D$. Then we have inclusions
			$CC(\QQ) \hookrightarrow \dc(\QQ) \hookrightarrow \RR_D$ (the first inclusion exists because $\dc(\QQ)$ is terminal). The fact that Cauchy reals embed into Dedekind reals is well known --- in fact, if countable choice holds, then they are the same, in which case also $\dc(\QQ) = \RR_D$. That is, in the presence of countable choice, ``every Dedekind cut is open''.
			
			When countable choice doesn't hold, then (as already mentioned) Cauchy reals need not be Cauchy complete. In that case it is interesting to ask, what the smallest Cauchy complete field is (the so-called \df{euclidean reals}~\cite{Escardo_M_Simpson_A_2001:_a_universal_characterization_of_the_closed_euclidean_interval}). We see that our theory provides an upper bound for it, namely $\dc(\QQ)$ (by the results of the previous subsection, a terminal streak is Cauchy complete). It would be interesting to see, whether this bound is exact, when we take the smallest possible intrinsic topology (such that decidable subsets are both open and closed, and we have the suitable closure properties for finite/countable unions/intersections of open/closed subsets).
			
			A few more words about the structure of Dedekind reals. Above we only had to check that they are a (terminal) streak, but we know (from Theorem~\ref{Theorem: structure_of_reals}) that it must also have lattice structure, total multiplication and inverses of nonzero elements. We can actually use this theory to derive formulae for all of this structure in $\dc(X)$.
			
			For example, given any Dedekind cuts $(L', U'), (L'', U'') \in \dc(X)$, we have
			\[\inf\big\{(L', U'), (L'', U'')\big\} = \iota_{\dc(X)}^{-1}\Big(\iota_{\dc(X)}\big(\inf\big\{(L', U'), (L'', U'')\big\}\big)\Big) =\]
			\[= \trm[\dc(X)^\land]\Big(\big[\big\{(L', U'), (L'', U'')\big\}\big]\Big) = \Big(X_{< [\{(L', U'), (L'', U'')\}]}, X_{> [\{(L', U'), (L'', U'')\}]}\Big).\]
			For $x \in X$ we have
			\[x < \big[\big\{(L', U'), (L'', U'')\big\}\big] \iff \some{q}{\QQ}{x < q \land q < \big[\big\{(L', U'), (L'', U'')\big\}\big]} \iff\]
			\[\iff \some{q}{\QQ}{x < q \land q < (L', U') \land q < (L'', U'')} \iff\]
			\[\iff x < (L', U') \land x < (L'', U'') \iff x \in L' \land x \in L'' \iff x \in L' \cap L''\]
			and similarly
			\[x > \big[\big\{(L', U'), (L'', U'')\big\}\big] \iff \some{q}{\QQ}{x > q \land q > \big[\big\{(L', U'), (L'', U'')\big\}\big]} \iff\]
			\[\iff \some{q}{\QQ}{x > q \land \big(q > (L', U') \lor q > (L'', U'')\big)} \iff\]
			\[\iff \some{q}{\QQ}{x > q \land q > (L', U')} \lor \some{q}{\QQ}{x > q \land q > (L'', U'')} \iff\]
			\[\iff x > (L', U') \lor x > (L'', U'') \iff x \in U' \lor x \in U'' \iff x \in U' \cup U'';\]
			thus $\inf\big\{(L', U'), (L'', U'')\big\} = \big(L' \cap L'', U' \cup U''\big)$.
			
			Of course, it is easy to guess straight from the definition of Dedekind cuts that infima are calculated this way (and suprema are given by $\sup\big\{(L', U'), (L'', U'')\big\} = \big(L' \cup L'', U' \cap U''\big)$). More interesting is the total multiplication on cuts which is in classical mathematics usually given by (nine) cases, depending on whether each of the factors is positive, negative or zero. Splitting the cases isn't an available method constructively. Following the theory from Subsection~\ref{Subsection: ring_streaks} we can provide a constructive formula for the multiplication of Dedekind cuts.
			
			While we can do this in general, the formula simplifies if $X$ is a ring streak (which is usually the case; after all, typically we are making cuts on the rationals), so we'll assume that. We leave it as an exercise to the reader that $-(L, U) = (-U, -L)$.
			
			Take any $(L', U'), (L'', U'') \in \dc(X)$. The lower cuts $L'$, $L''$ are inhabited by assumption, and since they are also lower sets, they must be inhabited by some negative integer. So take some $m, n \in \NN_{> 0}$ such that $-m \in L'$ and $-n \in L''$, meaning that $(L', U') + m > 0$ and $(L'', U'') + n > 0$. We then have
			\[(L', U') \cdot (L'', U'') = \big((L', U') + m\big) \cdot \big((L'', U'') + n\big) - m \cdot (L'', U'') - n \cdot (L', U') - m n =\]
			\[= (L' + m, U' + m) \cdot (L'' + n, U'' + n) - \big(m \cdot L'' + n \cdot L' + m n, m \cdot U'' + n \cdot U' + m n\big) =\]
			\[= \big(\lowcl(L' + m)_{> 0} \cdot (L'' + n)_{> 0}, (U' + m) \cdot (U'' + n)\big) + \big(-m \cdot U'' - n \cdot U' - m n, -m \cdot L'' - n \cdot L' - m n\big).\]
			The downarrow isn't actually required any more after we add the second summand, so in conclusion we obtain
			\[(L', U') \cdot (L'', U'') =\]
			\[= \Big((L' + m)_{> 0} \cdot (L'' + n)_{> 0} - m \cdot U'' - n \cdot U' - m n, (U' + m) \cdot (U'' + n) - m \cdot L'' - n \cdot L' - m n\Big).\]
		
		\subsection{Reals via the interval domain}\label{Subsection: interval_domain}
		
			The idea for the next construction of reals is that an individual point can be given as a collection of its neighbourhoods. Specifically, a real can be determined by listing all the intervals with rational endpoints which contain it.
			
			Such an interval can be given simply as a pair of rationals (its endpoints), the first component smaller than the second. The smaller the interval, the more information we have, where the real in the consideration lies. This ``information ordering'' is a special kind of partial order, called a \df{domain}~\cite{Gierz_G_Hoffmann_KH_Keimel_K_Lawson_JD_Mislove_MW_Scott_DS_2003:_continuous_lattices_and_domains} (though we won't use this explicitly in this paper), and the set of rational intervals is therefore called the \df{interval domain}. It turns out that the collections of rational interval neighbourhoods of reals are precisely the maximal ideals in the interval domain. Thus the set of these maximal ideals is another model of reals.
			
			We get a model of reals regardless whether we consider a pair of rationals to represent an open or a closed interval (of course, the definition of the orders and operations is slightly different between the two cases). However, the version with open intervals is essentially just a restatement of the Dedekind construction: a Dedekind real $(L, U)$ is represented by a maximal ideal $L \times U$, and to go in the other direction, just take the images of both projections.
			
			To make things different from the previous subsection, we present here the construction of reals via closed intervals. The price for this, however, is that openness of $<$ (and closedness of $\leq$) does not follow from the construction. Thus we assume in this subsection that all subsets are open and closed, \ie $\optp(X) = \pst(X) = \cltp(X)$ for all $X$. Moreover, while we assumed in the previous subsection only for $\optp(X)$ to be sets, in this subsection this amounts to the assumption that we have powersets in general, \ie $\pst(X)$ are sets.
			
			Under these assumptions we present the construction of the interval domain. As usual, we adopt the construction to general streaks, veering slightly away from the standard definitions.
			
			Let $X$ be a streak. We define $ID(X) \dfeq \st{(a, b) \in X \times X}{a \leq b}$ and an embedding $\psi_X\colon X \to ID(X)$, $\psi_X(x) \dfeq (x, x)$. Intuitively, a pair $(a, b) \in ID(X)$ represents the closed interval $\intcc[X]{a}{b}$, and a point $x \in X$ can be viewed as the degenerate interval $\intcc[X]{x}{x}$.
			
			We can define order and operations on $ID(X)$ which have several properties of those from streaks, though not all of them. We start by defining for $(a, b), (c, d) \in ID(X)$
			\[(a, b) < (c, d) \dfeq b < c.\]
			Clearly this relation is asymmetric: if $(a, b) < (c, d) < (a, b)$, then $b < c \leq d < a \leq b$, contradicting assymmetry of $<$ in $X$. However, it is not cotransitive; for example, we have $(0, 2) < (3, 5)$, but neither $(0, 2) < (1, 4)$ nor $(1, 4) < (3, 5)$.
			
			We can define $\leq$, $\apart$ and $\nap$ in the usual way, but they have only some of the usual properties. For example, $(a, b) \nap (c, d)$ is equivalent to stating that the intervals $\intcc[X]{a}{b}$ and $\intcc[X]{c}{d}$ intersect; this is a reflexive and symmetric relation, but not a transitive (and hence not an equivalence) one (in fact, its transitive hull is the total relation on $ID(X)$). Moreover, since $\nap$ is not the equality, we see that $<$ is also not tight.
			
			We define addition componentwise:
			\[(a, b) + (c, d) \dfeq (a + c, b + d).\]
			Clearly this operation is commutative, associative, and has $\psi_X(0)$ for the unit (it is, of course, also well defined: if $a \leq b$ and $c \leq d$, then $a + c \leq b + d$).
			
			For $(x, y) \in ID(X)$ it additionally holds $(a, b) + (x, y) < (c, d) + (x, y) \implies (a, b) < (c, d)$ since $b + y < c + x$ implies $b + x < c + x$ which implies $b < c$. However, the implication in the other direction does not hold in general: for example $(0, 1) < (2, 3)$, but not $(0, 1) + (0, 1) < (2, 3) + (0, 1)$. Still, it does hold if $(x, y)$ is in the image of $\psi_X$, \ie if $x = y$.
			
			By definition $\psi_X(0) < (a, b)$ when $0 < a$ (and therefore also $0 < b$). For $(a, b), (c, d) \in ID(X)_{> \psi_X(0)}$ we define
			\[(a, b) \cdot (c, d) \dfeq (a \cdot c, b \cdot d).\]
			Since multiplication is defined componentwise also, it is commutative, associative, and distributes over addition. Likewise, for $(x, y) \in ID(X)_{> \psi_X(0)}$ the implication $(a, b) \cdot (x, y) < (c, d) \cdot (x, y) \implies (a, b) < (c, d)$ holds, but not the implication in the other direction.
			
			For $n \in \NN$ we see that $n \cdot (a, b) = (n \cdot a, n \cdot b)$. Given $(a, a'), (b, b'), (c, c'), (d, d') \in ID(X)$ such that $(b, b') < (d, d')$ (that is, $b' < d$), we may use the archimedean property of $X$ to find $n \in \NN$, for which $a' + n \cdot b' < c + n \cdot d$, meaning $(a, a') + n \cdot (b, b') < (c, c') + n \cdot (d, d')$. That is, $ID(X)$ satisfies the archimedean property.
			
			Obviously $\psi_X$ preserves the order and operations (in this sense it could be called a morphism, except that its codomain is not entirely a (pre)streak).
			
			We define the usual \df{information ordering} on $ID(X)$ by
			\[(a, b) \inford (c, d) \dfeq a \leq c \land b \geq d.\]
			Normally, we would now define what an ideal in this ordering is, and when it is maximal, but we will take a different approach. Define directly
			\[IDR(X) \dfeq \Big\{{S \in \pst(ID(X))}\ \Big|\ \xall{a}{X}\xall{b}{X_{> a}}\some{(x, y)}{S}{a + y < b + x} \land\]
			\[\land\ {\all[2]{(a, b)}{ID(X)}{(a, b) \in S \iff \xall{(x, y)}{S}{(a, b) \nap (x, y)}}}\Big\}.\]
			The first condition says that (at least when $X$ is dense) there are arbitrary small intervals in $S$ (in particular, $S$ is inhabited). This actually follows from the second condition in classical mathematics, but constructively we have to assume it.
			
			Before we check that this works, let us see that the elements of $IDR(X)$ essentially are maximal ideals.
			
			As already mentioned, they are inhabited, and if $(a, b) \in S$, then any $(c, d) \in ID(X)_{\inford (a, b)}$ is in $S$ as well: if a smaller interval intersects all (intervals, represented by the) elements of $S$, then the larger one must also.
			
			Next, we'd need to check that an intersection of two intervals in $S$ is again in $S$. There is a minor problem here: the intersection of $(a, b), (c, d) \in S$ is $(\sup\{a, c\}, \inf\{b, d\})$ which in general makes sense only if $X$ is a lattice. In this case, since $(a, b) \nap (c, d)$, we have $\sup\{a, c\} \leq \inf\{b, d\}$, and one easily sees that if both $(a, b)$, $(c, d)$ intersect every element in $S$, then their intersection must also.
			
			This means that every $S \in IDR(X)$ is an ideal, insofar $X$ is a lattice streak. However, our definition of $IDR(X)$ does not refer to suprema and infima, so it works in greater generality (not that this is a major issue, given that we usually take $X = \QQ$ which is a lattice).
			
			Finally, the maximality (in the classical sense) of $S$ can be seen as follows: if we added any element $(z, w) \in ID(X)$ which is not yet in $S$, it would mean that there is $(a, b) \in S$ such that $\intcc[X]{a}{b}$ has an empty intersection with $\intcc[X]{z}{w}$. That would mean that the empty set is in $S$, but we cannot represent the empty set as a closed interval with the left bound no greater than the right bound (and even if we added a special symbol for the top element in $ID(X)$, an ideal containing it would have to be the whole of $ID(X)$, contradicting the definition of maximality).
			
			In conclusion, the condition for elements of $IDR(X)$ is the constructive definition of a maximal ideal in $ID(X)$ for general streaks $X$.
			
			We claim that $IDR(X)$ is a streak if $X$ is dense. The order and operations are inherited from $ID(X)$ in the following way. For $S, T \in IDR(X)$ define
			\[S < T \dfeq \xsome{(a, b)}{S}\xsome{(c, d)}{T}{(a, b) < (c, d)}.\]
			Asymmetry follows from asymmetry of $<$ on $ID(X)$, but cotransitivity is new. Suppose we have $S, T, Z \in IDR(X)$ and $S < T$, that is, some $(a, b) \in S$, $(c, d) \in T$ with $(a, b) < (c, d)$, \ie $b < c$. Since $X$ is dense, we can find $b', c' \in X$ such that $b < b' < c' < c$. By the definition of $IDR(X)$ there exists $(x, y) \in Z$ such that $b' + y < c' + x$. By cotransitivity in $X$ we have $b < x \lor x < b'$ and $c' < y \lor y < c$. If the first disjunct of the first disjunction holds, then $(a, b) < (x, y)$, so $S < Z$, and we are done. Similarly, if the second disjunct in the second disjunction holds, we have $(x, y) < (c, d)$, so $Z < T$. The only remaining possibility is that both $x < b'$ and $c' < y$ hold, but this cannot happen, as it leads to the contradiction $b' + y < c' + x < y + b'$.
			
			As for tightness, suppose we have $S, T \in IDR(X)$, $S \leq T$, $T \leq S$. This means that for all $(a, b) \in S$ and $(c, d) \in T$ we have $(a, b) \leq (c, d)$ and $(c, d) \leq (a, b)$, that is $(a, b) \nap (c, d)$. By the definition of $IDR(X)$ this means that every element in $S$ belongs also to $T$ and vice versa, \ie $S = T$.
			
			The addition in $IDR(X)$ is defined by
			\[S + T \dfeq \st{(a, b) + (c, d)}{(a, b) \in S \land (c, d) \in T}.\]
			Note that the zero element is given by $X_{\leq 0} \times X_{\geq 0} = \st{(a, b) \in ID(X)}{a \leq 0 \leq b}$. For $S \in IDR(X)$ we have $X_{\leq 0} \times X_{\geq 0} < S$ if and only if there exist $(a, b) \in S$ such that $a$ (and therefore also $b$) is positive. We define for $S, T \in IDR(X)_{> X_{\leq 0} \times X_{\geq 0}}$
			\[S \cdot T \dfeq \lowcl\st{(a, b) \cdot (c, d)}{(a, b) \in S_{> \psi_X(0)} \land (c, d) \in T_{> \psi_X(0)}}\]
			where $\lowcl$ denotes the downward closure of the set with regard to the information ordering in $ID(X)$. The unit for this multiplication is $X_{\leq 1} \times X_{\geq 1}$. We skip the verification that $+$ and $\cdot$ satisfy the usual requirements.
			
			It is easy to see that for $n \in \NN$ and $S \in IDR(X)$ we have $n \cdot S = \st{n \cdot (a, b)}{(a, b) \in S}$. Take $A, B, C, D \in IDR(X)$ such that $B < D$, so we have $(b, b') \in B$ and $(d, d') \in D$ with $(b, b') < (d, d')$. Take any $(a, a') \in A$ and $(c, c') \in C$ and use the archimedean property of $ID(X)$ to find $n \in \NN$ with $(a, a') + n \cdot (b, b') < (c, c') + n \cdot (d, d')$. Hence $A + n \cdot B < C + n \cdot D$, so $IDR(X)$ is archimedean.
			
			We conclude that $IDR(X)$ is a streak if $X$ is dense, but of course we want to see that it is a terminal one.
			\begin{theorem}
				Let $X$ be a dense streak. Then $IDR(X)$ is a terminal streak, \ie a model of reals according to Definition~\ref{Definition: reals}.
			\end{theorem}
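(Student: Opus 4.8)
The plan is to exploit that $\Str$ is a preorder category (Corollary~\ref{Corollary: streaks_preorder_category}): to prove $IDR(X)$ terminal it suffices to produce, for an arbitrary streak $Y$, a single morphism $Y \to IDR(X)$, since its uniqueness is then automatic. The morphism should send a point $y \in Y$ to the collection of all $X$-intervals that contain it.

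Concretely, I would define $f\colon Y \to IDR(X)$ by
\[f(y) \dfeq \st{(a, b) \in ID(X)}{a \leq y \leq b},\]
where $\leq$ denotes the comparison between elements of the archimedean prestreaks $X$ and $Y$ introduced just before Lemma~\ref{Lemma: invariance_of_order_across_archimedean_prestreaks}. The bulk of the work is checking $f(y) \in IDR(X)$, i.e.\ that $f(y)$ is the constructive analogue of a maximal ideal of $ID(X)$. For the ``arbitrarily small intervals'' clause, given $a, b \in X$ with $a < b$ I would pick rationals $q < r$ with $a < q < r < b$ (Lemma~\ref{Lemma: density_of_rationals_in_an_archimedean_prestreak}), then $m \in \NN_{> 0}$ with $\tfrac{4}{m} < r - q$, then $k \in \ZZ$ with $\tfrac{k-1}{m} < y < \tfrac{k+1}{m}$ (Lemma~\ref{Lemma: archimedean_prestreak_is_a_union_of_rational_intervals} applied to $y$ in $Y$), and finally, by density of $X$, elements $x \in \intoo[X]{\tfrac{k-2}{m}}{\tfrac{k-1}{m}}$ and $y' \in \intoo[X]{\tfrac{k+1}{m}}{\tfrac{k+2}{m}}$; transitivity of the cross-comparison gives $x \leq y \leq y'$, so $(x, y') \in f(y)$, and a short computation gives $a + y' < b + x$. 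For the fixpoint clause, that $(a, b) \in f(y)$ iff $(a, b) \nap (x, y')$ for all $(x, y') \in f(y)$: the forward direction is transitivity of $\leq$ across $X$, $Y$, $X$ (any two intervals containing $y$ meet at $y$); for the converse I would, assuming e.g.\ $y < a$, squeeze a rational between $y$ and $a$ and rerun the small-interval construction to obtain $(x, y') \in f(y)$ with $y' < a$, which contradicts $(a, b) \nap (x, y')$, and symmetrically exclude $b < y$.

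It then remains to see that $f$ is a morphism. Since $IDR(X)$ is already known to be a streak and $Y$ is archimedean, by Theorem~\ref{Theorem: preservation_of_rationals_implies_streak_morphism}(2) it suffices to verify that $f$ preserves comparison with rationals on both sides, which falls out of the definition of $f$ together with the small-interval construction; alternatively one can check directly that $f(0)$ and $f(1)$ are the zero and unit of $IDR(X)$, that $f$ commutes with $+$ and, on positive elements, with $\cdot$, and that $f$ preserves $<$ (from $y_1 < y_2$ produce, via density, nested rational bounds and hence $(a, b) \in f(y_1)$, $(c, d) \in f(y_2)$ with $b < c$). Either way $f$ is a morphism $Y \to IDR(X)$, so $IDR(X)$ is the terminal streak and hence a model of $\RR$ in the sense of Definition~\ref{Definition: reals}.

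I expect the main obstacle to be the fixpoint clause, in particular its backward direction: it forces careful bookkeeping among the several flavours of $<$ (between $X$ and $Y$, and between $X$, $Y$ or $\QQ$) and demands producing $X$-intervals that lie strictly on one side of $y$ while still being honest elements of $f(y)$.
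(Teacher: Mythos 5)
Your proposal is correct and is essentially the paper's own proof: the paper also reduces terminality to exhibiting a single morphism $f\colon Y \to IDR(X)$ (uniqueness being automatic since $\Str$ is a preorder category) and takes $f(y) \dfeq X_{\leq y} \times X_{\geq y}$, which is exactly your set of $X$-intervals containing $y$. The paper dismisses the verification with ``it is easy to see''; your filled-in details (the arbitrarily-small-interval clause via rational squeezing and density of $X$, the maximality clause, and the morphism check via Theorem~\ref{Theorem: preservation_of_rationals_implies_streak_morphism}) are the right ones.
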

			\begin{proof}
				Let $Y$ be any streak. We define a map $f\colon Y \to IDR(X)$ by
				\[f(y) \dfeq X_{\leq y} \times X_{\geq y}.\]
				It is easy to see that this $f$ is a (necessarily unique) morphism $Y \to IDR(X)$.
			\end{proof}
		
		\subsection{Reals as the formal space/locale/topological space}\label{Subsection: topological_models_of_reals}
		
			In previous subsections we have always assumed some additional conditions (such as countable choice in the case of Cauchy reals) to be able to construct the models of reals in question. One such condition was the existence of powersets in the previous subsection. In \df{predicative mathematics} this assumption is considered too strong; a colection of \emph{all} subsets of a given set is in general a proper class (called a \df{powerclass}) rather than a set.
			
			Since in predicative mathematics already $\pst(\one)$ is problematic (the powerclass of a singleton can be seen as a collection of all truth values, and this might not be a set), and the only topology in the classical sense (\ie arbitrary unions of opens are open) on a singleton is $\pst(\one)$, it follows that already the assumption in Subsection~\ref{Subsection: Dedekind_reals}, that topologies are sets, is too strong for a predicative mathematician.
			
			Since in this paper we assume only for countable unions of opens to be again open, it can happen that topologies are sets even if powerclasses aren't (such as in ASD). However, predicative mathematicians have developed tools~\cite{Coquand_T_Sambin_G_Smith_JM_Valentini_S_2003:_inductively_generated_formal_topologies} to deal even with class-sized topologies (closed under arbitrary unions), as long as they have a basis which is a set.
			
			Topologies, given purely via a (set-sized) base, are called \df{formal spaces}. A formal space is given as some set (which intuitively represents the set of basic opens, though its actual elements can be anything), together with a relation which tells us, to which extent these ``basic opens'' cover each other. We do not have ``the set of points of the underlying set of the topological space'' in general, hence this approach is also called \df{pointfree topology}.
			
			In this subsection we recall the construction of the formal space of reals and show that it satisfies our definition of reals. At the end we also observe that this result for formal spaces easily implies the same result in cases, when the topological space of reals is given as a locale, or as a classical topological space. In each of the three cases we see, that for $\RR$ to be a terminal streak in a given setting, it must have the usual euclidean topology.
			
			Note however that the category of formal spaces (or of locales, or of topological spaces) doesn't satisfy all of our assumptions on the setting (recall Section~\ref{Section: setting}) --- its logical structure is too weak. One way to deal with this is to ignore the problem and simply rely on a background set and class theory, which we do here. A more formal way would be to embed the (subcategory of small) formal spaces (or locales, or topological spaces) into a category with richer structure, such as sheaves. An interesting question however is --- is this even necessary? Can the definitions, theorems and proofs in this paper be phrased in such a way that even the weak logic of the categories in this subsection is sufficient? My suspicion is that the answer is positive, at least for the vast majority of the results, and so the assumptions on the setting could be weakened further, but this is something that still needs to be done.
			
			Anyway, back to formal spaces.
			\begin{definition}
				A \df{formal space} is a tuple $X = (X^*, \land_X, \top_X, \covby_X, \pospred{X})$ where $X^*$ is a set, $\top_X \in X^*$ an element of it, $\land_X\colon X^* \times X^* \to X^*$ a binary operation, $\covby_X \subseteq X^* \times \pst(X^*)$ a (class-sized) binary relation between elements of $X^*$ and subsets of $X^*$, and $\pospred{X} \subseteq X^*$ a unary relation on $X^*$, such that the following conditions are fulfilled for all $a, b \in X^*$, $V, W \subseteq X^*$.
				\begin{itemize}
					\item
						$a \in V \implies a \covby_X V$
					\item
						$a \covby_X \{b\} \land b \covby_X \{a\} \implies a = b$
					\item
						$a \covby_X V \land \xall{x}{V}{x \covby_X W} \implies a \covby_X W$
					\item
						$a \covby_X V \lor b \covby_X V \implies a \land_X b \covby_X V$
					\item
						$a \covby_X V \land a \covby_X W \implies a \covby_X \st{x \land_X y}{x \in V \land y \in W}$
					\item
						$a \covby_X \{\top_X\}$
					\item
						$\pospred[a]{X} \land a \covby_X V \implies \xsome{x}{V}{\pospred[x]{X}}$
					\item
						$\big(\pospred[a]{X} \implies a \covby_X V\big) \implies a \covby_X V$
				\end{itemize}
				We call $\covby_X$ the \df{covering relation} (intuitively, it tells, when a basic open is covered by the union of a family of basic opens), and $\pospred{X}$ the \df{positivity predicate} (intuitively, it tells, when a basic open is inhabited).
			\end{definition}
			
			It follows from these conditions that $(X^*, \land_X, \top_X)$ is a bounded (\ie $\top_X$ is the top element) $\land_X$-semilattice. Thus $\leq_X$, defined by $a \leq_X b \dfeq a \land_X b = a$, is a partial order on $X^*$. Note that $a \leq_X b$ is equivalent to $a \covby_X \{b\}$.
			
			We may pass with $\land_X$, $\covby_X$, $\pospred{X}$ to $\pst(X^*)$ (we'll use the same symbols; it should be clear, in which sense they are meant) by defining:
			\[U \land_X V \dfeq \st{x \land_X y}{x \in U \land y \in V},\]
			\[U \covby_X V \dfeq \xall{x}{U}{x \covby_X V},\]
			\[\pospred[U]{X} \dfeq \xsome{x}{U}{\pospred[x]{X}}.\]
			This enables us to state some of the above conditions for formal spaces in a simpler way:
			\begin{itemize}
				\item
					$a \leq_X b \land b \leq_X a \implies a = b$ \quad (antisymmetry of $\leq_X$),
				\item
					$U \covby_X V \land V \covby_X W \implies U \covby_X W$ \quad (transitivity of $\covby_X$),
				\item
					$U \covby_X V \lor W \covby_X V \implies U \land_X W \covby_X V$,
				\item
					$U \covby_X V \land U \covby_X W \implies U \covby_X V \land_X W$,
				\item
					$\pospred[U]{X} \land U \covby_X V \implies \pospred[V]{X}$
			\end{itemize}
			\etc
			
			Collections of basic opens $U \subseteq X^*$ can be seen as representing the "unions" of these basic opens, that is, arbitrary opens. It then makes sense to identify $U, V \subseteq X^*$ when they "represent the same open", that is, when they cover each other:
			\[U \equ_X V \dfeq U \covby_X V \land V \covby_X U.\]
			This is easily seen to be an equivalence relation, so we may define the (generally class-sized) \df{topology} of a formal space by
			\[\optp(X) \dfeq \pst(X^*)/_{\equ_X}.\]
			
			%By a slight abuse of notation we will hereafter denote a formal space just by its set of basic opens; the rest of the structure is to be inferred from the context.
			
			The relation $\covby_X$ on $\pst(X^*)$ induces a partial order $\covby_X$ on $\optp(X^*)$. The smallest element herein is $[\emptyset]$, the largest $[X^*]$, binary meets are given by $[U] \land_X [V] = [U \land_X V]$ and arbitrary joins by $\bigvee_{i \in I} [U_i] = [\bigcup_{i \in I} U_i]$ (by a slight abuse of notation, we use the same symbols for the structure of $\optp(X)$ as for $X$ itself).
			
			A \df{continuous map} or a \df{morphism of formal spaces} $f\colon X \to Y$ is given by a (generally class-sized) map $\optp(f)\colon \optp(Y) \to \optp(X)$ which preserves all finite meets and arbitrary joins (intuitively, $\optp(f)$ the preimage map). Often such a map is represented by its ``restriction to basic opens'' $f^*\colon Y^* \to \pst(X^*)$ with properties
			\begin{itemize}
				\item
					$f^*(a \land_Y b) \equ_X f^*(a) \land_X f^*(b)$, $f^*(\top_Y) \equ_X \{\top_X\}$,
				\item
					$a \covby_Y V \implies f^*(a) \covby_X \bigcup_{b \in V} f^*(b)$,
				\item
					$\pospred[f^*(a)]{X} \implies \pospred[a]{Y}$
			\end{itemize}
			for all $a, b \in Y^*$, $V \subseteq Y^*$.
			
			The formal spaces, together with their morphisms, form a category $\FormSp$. Technically elements of $\optp(X)$ should be certain subobjects (equivalence classes of monomorphisms) of $X$ in this category, but it is a lot more convenient to give $\optp(X)$ as above. However, we then have to make the connection between elements of $\optp(S)$ and open subobjects of $S$ explicit. The following is essentialy a restatement of forming open sublocales in the way, which refers just to basic opens.
			
			Given $[U] \in \optp(S)$, define
			\[[U]^* \dfeq \st{a \in X^*}{a \covby_X U} + \{\top_{[U]}\},\]
			with
			\begin{itemize}
				\item
					$\covby_{[U]}$ and $\pospred{[U]}$ in $[U]^*$ defined as restrictions of these relations from $X^*$, and additionally
				\item
					$a \covby_{[U]} V$ for all $a \in [U]^*$, $\top_{[U]} \in V \subseteq [U]^*$,
				\item
					$\pospred[\top_{[U]}]{[U]} \dfeq \pospred[U]{X}$,
				\item
					$\land_{[U]}$ in $[U]^*$ is the restriction of this operation from $X^*$, with $\top_{[U]}$ acting as the top element.
			\end{itemize}
			
			It is easily seen that this is a well-defined formal space. To realize it as a subobject of $X$, we need to present a monomorphism $\optp(u)\colon \optp(X) \to \optp([U])$ which we define by $\optp(u)([V]) \dfeq [U \land_X V]$ (we leave it as an exercise to show that this is a well-defined morphism). Rather than checking directly that this defines a mono, we show that it has a right inverse (thus making it a split epi on the level of topologies, which, as it is known, implies that, as a morphism of formal spaces, it is monic) --- intuitively, this means that open subobjects have the subspace topology. We define $\dot{u}\colon U^* \to X^*$ by $\dot{u}(a) \dfeq a$ for $a \in X^* \cap [U]^*$ and $\dot{u}(\top_{[U]}) \dfeq \top_X$. The extension of this map to $\optp([U]^*) \to \optp(X^*)$ is a right inverse to $\optp(u)$.
			
			If $[U]^* = [V]^*$, then $[U]^* \cap X^* = [V]^* \cap X^*$, so $U \covby_X V$ and $V \covby_X U$, thus $[U] = [V]$. This means that the subobjects, which have the form as above, are in bijective correspondence with elements of $\optp(X^*)$, meaning that $\optp(X^*)$, as originally defined, is a suitable way to present topology. Any mono (isomorphic to one) which is obtained in this way will be called \df{open}.
			
			We won't bother explicating the closed subobjects, beyond noting that they are defined as complements of opens, and thus satisfy all the requirements that we'll need.
			
			We need to say a little bit about products in $\FormSp$, though. Actually we only need to recall that for
			\[\big((X \times Y)^*, \land_{X \times Y}, \top_{X \times Y}, \covby_{X \times Y}, \pospred{X \times Y}\big) =\]
			\[= \big(X^*, \land_X, \top_X, \covby_X, \pospred{X}\big) \times \big(Y^*, \land_Y, \top_Y, \covby_Y, \pospred{Y}\big)\]
			we have
			\begin{itemize}
				\item
					$(X \times Y)^* = X^* \times Y^*$,
				\item
					$(a, b) \land_{X \times Y} (c, d) = (a \land_X c, b \land_Y d)$,
				\item
					$\top_{X \times Y} = (\top_X, \top_Y)$,
				\item
					$\pospred[(a, b)]{X \times Y} = \pospred[a]{X} \land \pospred[b]{Y}$,
			\end{itemize}
			and if $a \covby_X V$ and $b \covby_Y W$, then $(a, b) \covby_{X \times Y} V \times W$.
			
			Consider now the standard number sets in $\FormSp$. Let $\NN$, $\ZZ$, $\QQ$ denote the usual (set-theoretic) number sets. Constructing their formal versions $\formal{N}$, $\formal{Z}$, $\formal{Q}$ amounts to equipping them with the discrete topology --- that is, basic opens are singletons, which we represent by their unique element. For example, $\formal{Q} = \big(\QQ + \{\bot_\formal{Q}, \top_\formal{Q}\}, \land_\formal{Q}, \top_\formal{Q}, \covby_\formal{Q}, \pospred{\formal{Q}}\big)$ where for $q, r \in \QQ$, $V \subseteq \formal{Q}^* \dfeq \QQ + \{\bot_\formal{Q}, \top_\formal{Q}\}$
			\[q \land r \dfeq \begin{cases} q & q = r\\ \bot_\formal{Q} & q \neq r \end{cases} \qquad\qquad q \covby_\formal{Q} V \dfeq q \in V \lor \top_\formal{Q} \in V\]
			and for $q \in \formal{Q}^*$, $V \subseteq \formal{Q}^*$
			\[\bot_\formal{Q} \land_\formal{Q} q = q \land_\formal{Q} \bot_\formal{Q} \dfeq \bot_\formal{Q},
				\qquad \top_\formal{Q} \land_\formal{Q} q = q \land_\formal{Q} \top_\formal{Q} \dfeq q,
				\qquad \pospred[q]{\formal{Q}} \dfeq (q \neq \bot_\formal{Q}),\]
			\[\bot_\formal{Q} \covby_\formal{Q} V \text{ always},
				\qquad \top_\formal{Q} \covby_\formal{Q} V \dfeq \top_\formal{Q} \in V \lor \QQ \subseteq V\]
			(similarly for $\formal{N}$, $\formal{Z}$). One can verify that we obtained the natural numbers, the integers and the rational numbers objects.
			
			These three were easy to get, since the sets $\NN$, $\ZZ$, $\QQ$ have decidable equality, even constructively. We now recall the construction of the formal space of reals $\formal{R}$.
			
			Let $\overline{\QQ} \dfeq \QQ \cup \{-\infty, \infty\}$. This is a bounded lattice for the usual order. We declare $\formal{R}^* \dfeq \overline{\QQ} \times \overline{\QQ}$. Intuitively, $(a, b) \in \formal{R}^*$ represents the interval $\intoo{a}{b}$. In the sense, that we take open rational (possibly infinite) intervals as the basis, the formal space of reals has the euclidean topology.
			
			Using the intuition that the basic opens are intervals, the rest of the structure is defined as follows for $a, b, c, d \in \overline{\QQ}$:
			\begin{itemize}
				\item
					$(a, b) \land_\formal{R} (c, d) \dfeq \big(\sup\{a, c\}, \inf\{b, d\}\big)$, $\top_\formal{R} \dfeq (-\infty, \infty)$,
				\item
					$\pospred[(a, b)]{\formal{R}} \dfeq a < b$,
				\item
					$\covby_\formal{R}$ is the smallest covering relation, satisfying all of the following:
					\begin{itemize}
						\item
							if $a \geq b$, then $(a, b) \covby_\formal{R} \emptyset$,
						\item
							if $c < b$, then $(a, d) \covby_\formal{R} \big\{(a, b), (c, d)\big\}$,
						\item
							$(a, b) \covby_\formal{R} \overline{\QQ}_{> a} \times \overline{\QQ}_{< b}$.
					\end{itemize}
			\end{itemize}
			
			We now define the streak structure on $\formal{R}$. Since we'll later use Theorem~\ref{Theorem: preservation_of_rationals_implies_streak_morphism}, it is more convenient for us to define the strict order via comparisons with rationals.
			
			Let the formal space $\formalgtQ = \big(\formalgtQ^*, \land_{\formalgtQ}, \top_{\formalgtQ}, \covby_{\formalgtQ}, \pospred{\formalgtQ}\big)$ be given as follows: $\formalgtQ^* \dfeq \st{(q, a, b) \in \QQ \times \formal{R}^*}{q < a \land q < b} + \{\top_{\formalgtQ}\}$, with the structure inherited from $\formal{Q} \times \formal{R}$ in the way that makes $\formalgtQ$ an open subobject of $\formal{Q} \times \formal{R}$. Specifically, the open embedding $\embgtQ\colon \formalgtQ \to \formal{Q} \times \formal{R}$ is given by $\embgtQ^*\colon \formal{Q}^* \times \formal{R}^* \to \pst(\formalgtQ^*)$,
			\begin{itemize}
				\item
					$\embgtQ^*(q, a, b) \dfeq \big\{\big(\sup\{q, a\}, \sup\{q, b\}\big)\big\}$ for $q \in \QQ$,
				\item
					$\embgtQ^*(\bot_\formal{Q}, a, b) \dfeq \emptyset$,
				\item
					$\embgtQ^*(\top_\formal{Q}, a, b) \dfeq \st{\big(\sup\{q, a\}, \sup\{q, b\}\big)}{q \in \QQ}$.
			\end{itemize}
			As for any open subobject in $\FormSp$, we have the ``inclusion'' map $\dot{\embgtQ}\colon \formalgtQ^* \to \formal{Q}^* \times \formal{R}^*$ which induces a right inverse to $\embgtQ$ on the level of topologies.
			
			Completely analogously we define $\formalltQ$ and $\embltQ$.
			
			Addition $+\colon \formal{R} \times \formal{R} \to \formal{R}$ is given by $+^*\colon \formal{R}^* \to \pst(\formal{R}^* \times \formal{R}^*)$,
			\[+^*(a, b) \dfeq \st{(c, d, e, f) \in \formal{R}^* \times \formal{R}^*}{a \leq c + e \land d + f \leq b}.\]
			The unit for addition $0\colon \one \to \formal{R}$ is given by $0^*\colon \formal{R}^* \to \one$, $0^*(a, b) \dfeq \st{\unit \in \one}{a < 0 < b}$.
			
			We have $\formal{R}_{> 0}^* = \st{(a, b) \in \formal{R}^*}{a > 0 \land b > 0}$, and the multiplication $\cdot\colon \formal{R}_{> 0} \times \formal{R}_{> 0} \to \formal{R}_{> 0}$ is given by\footnote{The fact, that we need to define multiplication only on the positive part, makes the definition quite simple.} $\cdot^*\colon \formal{R}_{> 0}^* \to \pst(\formal{R}_{> 0}^* \times \formal{R}_{> 0}^*)$,
			\[\cdot^*(a, b) \dfeq \st{(c, d, e, f) \in \formal{R}^* \times \formal{R}^*}{a \leq c \cdot e \land d \cdot f \leq b},\]
			with the unit $1$ given similarly as $0$.
			
			One can verify that this makes $\formal{R}$ a streak in $\FormSp$ (but we do not do it here).
			
			We claim that $\formal{R}$ is a terminal streak. Let $S$ be an arbitrary streak in $\FormSp$. In particular this means that we have open strict orders, comparing ``elements'' of $S$ and rationals both ways, that is $\embgtQ[S]\colon \formalgtQ[S] \to \formal{Q} \times S$ and $\embltQ[S]\colon \formalltQ[S] \to S \times \formal{Q}$.
			
			We need to construct a streak morphism $f\colon S \to \formal{R}$. Since $f$ needs to preserve comparison with rationals, the idea is that $f^*(a, b)$ should be $\intoo[S]{a}{b}$ in some sense. The following definition for $a, b \in \QQ$ realizes this.% (we write it on the level of topologies $\optp(f)\colon \optp(\formal{R}) \to \optp(S)$, as that makes it easier to write compositions).
			%\[\optp(f)([\{(a, \infty)\}]) \dfeq \st{s \in S^*}{\xsome{s}{S}{}}\]
			\[f^*(a, \infty) \dfeq \st{s \in S^*}{(a, s) \in \formalgtQ[S]} \qquad f^*(-\infty, b) \dfeq \st{s \in S^*}{(s, b) \in \formalltQ[S]}\]
			We expand this to all basic opens of $\formal{R}$ by insisting that $f^*$ preserves finite meets, as well as dealing with infinite boundaries of empty basic opens.
			\[f^*(-\infty, \infty) = f^*(\top_\formal{R}) \dfeq \{\top_S\}\]
			\[f^*(a, b) = f^*\big((a, \infty) \land_\formal{R} (-\infty, b)\big) \dfeq f^*(a, \infty) \land_S f^*(-\infty, b)\]
			\[f^*(-\infty, -\infty) = f^*(\infty, \infty) = f^*(\infty, -\infty) \dfeq \emptyset\]
			%It is easy to check that $f^*$ (possibly after taking equivalence classes) preserves finite meets in general. For joins, take $(a, b) \covby_\formal{R} V \subseteq \formal{R}^*$. \note{check this is a formal space morphism}
			
			We claim that this $f$ preserves comparison with rationals. Specifically for the comparison with rationals on the left this means we have a (necessarily unique) map $g$ which makes the following diagram commute.
			\[\xymatrix@+2em{
				\formalgtQ[S] \ar[r]^{g} \ar[d]_{\embgtQ[S]}  &  \formalgtQ \ar[d]^{\embgtQ}  \\
				\formal{Q} \times S \ar[r]_{\id[\formal{Q}] \times f}  &  \formal{Q} \times \formal{R}
			}\]
			On the level of topologies this amounts to
			\[\xymatrix@+2em{
				\optp(\formalgtQ[S]) \ar@/_2em/[d]_{\dot{\optp}(\embltQ[S])}  &  \optp(\formalgtQ) \ar[l]_{\optp(g)} \ar@/^2em/[d]^{\dot{\optp}(\embltQ)}  \\
				\optp(\formal{Q} \times S) \ar[u]_{\optp(\embgtQ[S])}  &  \optp(\formal{Q} \times \formal{R}) \ar[l]^{\optp(\id[\formal{Q}] \times f)} \ar[u]^{\optp(\embgtQ)}
			}\]
			The map $\optp(g)$ can be expressed with others --- if this diagram commutes, then necessarily
			\[\optp(g) = \optp(g) \circ \optp(\embgtQ) \circ \dot{\optp}(\embgtQ) = \optp(\embgtQ[S]) \circ \optp(\id[\formal{Q}] \times f) \circ \dot{\optp}(\embgtQ).\]
			%Take $q \in \QQ$. We have
			%\[\optp(\embgtQ[S]) \circ \optp(\id[\formal{Q}] \times f)([\{(q, q, \infty)\}]) = \optp(\embgtQ[S])([(\id[\formal{Q}] \times f)^*(q, q, \infty)]) =\]
			%\[= \optp(\embgtQ[S])([\{q\} \times f^*(q, \infty)])\]
			%on one hand, and on the other
			%\[\optp(\embgtQ[S]) \circ \optp(\id[\formal{Q}] \times f)([\{(q, q, \infty)\}]) = \optp(g) \circ \optp(\embgtQ)([\{(q, q, \infty)\}]) =\]
			%\[= \optp(g)([\{(q, \infty)\}]) = [g^*(q, \infty)].\]
			This is therefore the only possible candidate for $g$; we check that it actually works. Since these maps preserve finite meets and arbitrary joins, it is enough to verify the commutativity of the diagram for elements of the form $(q, a, \infty)$, $(q, -\infty, b)$ where $q, a, b \in \QQ$. On one hand we have
			\[\optp(\embgtQ[S]) \circ \optp(\id[\formal{Q}] \times f)\big([\{(q, a, \infty)\}]\big) = \optp(\embgtQ[S])\big([(\id[\formal{Q}] \times f)^*(q, a, \infty)]\big) =\]
			\[= \optp(\embgtQ[S])\big([\{q\} \times f^*(a, \infty)]\big) = \optp(\embgtQ[S])\big([\{q\} \times \st{s \in S^*}{(a, s) \in \formalgtQ[S]}]\big) =\]
			\[= \optp(\embgtQ[S])\big([\st{(q, s) \in \formal{Q}^* \times S^*}{(a, s) \in \formalgtQ[S]}]\big) =\]
			\[= [\st{(q, s \land_S t) \in \formal{Q}^* \times S^*}{(a, s) \in \formalgtQ[S] \land (q, t) \in \formalgtQ[S]}],\]
			and on the other
			\[\optp(g) \circ \optp(\embgtQ)\big([\{(q, a, \infty)\}]\big) =\]
			\[= \optp(\embgtQ[S]) \circ \optp(\id[\formal{Q}] \times f) \circ \dot{\optp}(\embgtQ)\big([\{(q, \sup\{q, a\}, \infty)\}]) =\]
			\[= \optp(\embgtQ[S]) \circ \optp(\id[\formal{Q}] \times f)\big([\{(q, \sup\{q, a\}, \infty)\}]) =\]
			\[= \optp(\embgtQ[S])\big([\st{(q, s) \in \formal{Q}^* \times S^*}{(\sup\{q, a\}, s) \in \formalgtQ[S]}]\big) =\]
			\[= [\st{(q, s \land_S t) \in \formal{Q}^* \times S^*}{(\sup\{q, a\}, s) \in \formalgtQ[S] \land (q, t) \in \formalgtQ[S]}].\]
			These two equivalence classes are the same (\ie their representatives cover each other). In one direction this amounts to the observation that (due to transitivity) if $(\sup\{q, a\}, s) \in \formalgtQ[S]$, then also $(a, s) \in \formalgtQ[S]$. For the other direction, given $(q, s \land_S t) \in \formal{Q}^* \times S^*$ with $(a, s) \in \formalgtQ[S]$ and $(q, t) \in \formalgtQ[S]$, we can rewrite it as $(q, (s \land_S t) \land_S t)$ with $(\sup\{q, a\}, s \land_S t) \in \formalgtQ[S]$ and $(q, t) \in \formalgtQ[S]$.
			
			In conclusion, $f$ preserves the comparison with rationals on the left. Exactly the same argument shows it also preserves the comparison with rationals on the right.
			
			\begin{theorem}
				The formal space of reals $\formal{R}$ is a terminal streak, and so a model of reals by Definition~\ref{Definition: reals}.
			\end{theorem}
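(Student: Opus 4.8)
The plan is to prove terminality directly: for an arbitrary streak $S$ in $\FormSp$, exhibit a morphism $S \to \formal{R}$ and note that it is automatically the only one. Uniqueness is free, since by Corollary~\ref{Corollary: streaks_preorder_category} the category $\Str$ is a preorder category, so there is at most one morphism $S \to \formal{R}$. Thus the entire task reduces to checking that the map $f\colon S \to \formal{R}$ whose restriction to basic opens $f^*$ was written down just above is indeed a streak morphism in $\FormSp$; since $\formal{R}$ has already been declared (and is routinely checked) to be a streak there, this suffices.

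The first thing I would verify is that $f$ is a genuine continuous map, i.e. that $f^*\colon \formal{R}^* \to \pst(S^*)$ respects the formal-space structure. Preservation of finite meets and of $\top_\formal{R}$ is built into the definition, so the real content is that $f^*$ sends the three generating families of $\covby_\formal{R}$ to covers in $S$ and that the positivity predicate is reflected, i.e. $\pospred[f^*(a,b)]{S}$ forces $a<b$. The first two generating families — the empty cover of $(a,b)$ when $a \ge b$, and the two-piece cover $(a,d) \covby_\formal{R} \{(a,b),(c,d)\}$ when $c<b$ — translate under $f^*$ into cotransitivity statements for the comparison relations between elements of $S$ and rationals, which hold in every archimedean streak. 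The third family $(a,b) \covby_\formal{R} \overline{\QQ}_{>a}\times\overline{\QQ}_{<b}$ becomes the claim that $\intoo[S]{a}{b}$ is covered by its rational subintervals, which is exactly the relevant instance of Lemma~\ref{Lemma: archimedean_prestreak_is_a_union_of_rational_intervals} together with density of rationals (Lemma~\ref{Lemma: density_of_rationals_in_an_archimedean_prestreak}); reflection of positivity is the same density fact read the other way.

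Next, the computation already carried out above shows that $f$ preserves comparison with rationals on both sides. Because $S$, being a streak, is archimedean, I would then invoke Theorem~\ref{Theorem: preservation_of_rationals_implies_streak_morphism}(2): a map from an archimedean prestreak into a streak preserving comparison with rationals on both sides is automatically a streak morphism, hence preserves $<$, $+$, $0$, $\cdot$, $1$. Combined with the uniqueness noted above, this gives that $\formal{R}$ is the terminal streak, and so a model of $\RR$ by Definition~\ref{Definition: reals}.

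The main obstacle is not any individual calculation but the mismatch between $\FormSp$ and the standing assumptions of Section~\ref{Section: setting}: the internal logic of $\FormSp$ is weak, $\optp$ is class-sized, and so on, so the earlier lemmas — density of rationals, the rational-interval covering, and above all Theorem~\ref{Theorem: preservation_of_rationals_implies_streak_morphism} — are not literally ``available inside'' $\FormSp$. The way I would handle this, following the paper's stated stance, is to argue externally against the ambient set- and class-theory: the streak axioms for $S$ unwind into concrete statements about $S^*$, the operation $\land_S$, and the relations $\covby_S$, $\formalgtQ[S]$, $\formalltQ[S]$, and each cited result can be re-derived at that level, since the relevant proofs are constructive and use only finite and countable manipulations, which $\FormSp$ does support. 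Once this bookkeeping is granted, the proof is precisely the three steps above; the closing remark about the three presentations (formal space, locale, topological space) all forcing the euclidean topology then follows by transporting $f$ along the standard comparison functors.
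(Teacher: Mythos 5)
Your proposal is correct and follows essentially the same route as the paper: the paper's proof is exactly ``the preceding computation shows $f$ preserves comparison with rationals on both sides, so Theorem~\ref{Theorem: preservation_of_rationals_implies_streak_morphism} makes it a streak morphism, and uniqueness is automatic since $\Str$ is a preorder category.'' The extra care you take — checking that $f^*$ sends the generating covers of $\covby_\formal{R}$ to covers in $S$ and reflects positivity, and flagging that the earlier lemmas must be read externally against the ambient set/class theory rather than internally in $\FormSp$ — fills in details the paper leaves implicit but does not change the argument.
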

			\begin{proof}
				By the discussion above, using Theorem~\ref{Theorem: preservation_of_rationals_implies_streak_morphism} for the conclusion.
			\end{proof}
			
			We have seen that reals as a formal space with the euclidean topology are a terminal streak in the category of formal spaces. We can easily translate this result to two other settings.
			
			\df{Locales} are essentially an impredicative version of pointfree topology, where a space is presented by its topology (we won't go into more details in this paper; for an exposition on locales, see~\cite{Gierz_G_Hoffmann_KH_Keimel_K_Lawson_JD_Mislove_MW_Scott_DS_2003:_continuous_lattices_and_domains}\cite{Johnstone_PT_2002:_stone_spaces}), \ie the frame (complete distributive lattice) of all opens (as opposed to just the basic ones). Thus the study of locales can be seen as a special case of study of formal spaces, namely when topologies (in particular powersets, which are discrete topologies) are sets, rather than proper classes (hence the impredicativity). Consequently we can immediately infer, that the \df{locale of reals} is a terminal streak in the category of locales.
			
			We can stretch this result also to classical topological spaces. Despite being closely related, clearly topological spaces and locales are not the same: for example, any two spaces with trivial topology are the same, as locales. However, the categories of \df{sober topological spaces} and \df{spatial locales} are equivalent (that is, up to homeomorphism, sober topological spaces and continuous maps between them are in one-to-one correspondence to spacial locales and continuous maps between them).
			
			Classically the only apartness relation is the inequality $\neq$, which must be open in any streak, or equivalently, the equality $=$ is closed. This means that every streak in the category of topological spaces is Hausdorff, hence sober, hence has a (up to homeomorphism unique) counterpart among (spatial) locales. Since the locale of reals is terminal among locale streaks, and moreover spatial, the topological space of reals is terminal among topological streaks.
			
			A few words about the intuition, why the terminal streak $\RR$ must have the euclidean topology. There are more topological streaks than ``set-streaks'' in the following sense: the underlying set of a topological streak is a set-streak, but conversely, there may be many topologies on a given set-streak, such that $<$ is an open relation and the operations are continuous. For example, beside $\RR$ with the euclidean topology, $\RR$ with the discrete topology is also a topological streak. The identity map is the unique streak morphism $(\RR, \text{discrete}) \to (\RR, \text{euclidean})$, but there is no morphism in the other direction: the identity is continuous only when the topology of the domain is at least as strong as the topology on the codomain. Clearly then the terminal topological streak must have the weakest possible topology, in which $<$ is open and the operations are continuous --- and that's the euclidean topology.
		
		\subsection{Recap of various mathematical models}
		
			In the previous subsections of this section we focused on individual constructions of $\RR$. In this subsection we focus on the setting, explaining how various types of settings fulfil the conditions, outlined in Section~\ref{Section: setting}, and which of the above constructions of reals are suitable for them.
			
			\begin{itemize}
				\item\textbf{Classical mathematics}\\
					``Sets'' are the usual sets and ``classes'' are classes of some class theory, or they can be taken to be sets as well. All subsets are decidable, and so must be open and closed, \ie the only choice of the intrinsic topology is the discrete one.
					
					Classical mathematics validates (countable) choice and the powerset axiom. Hence Cauchy reals (Subsection~\ref{Subsection: Cauchy_reals}), Dedekind reals (Subsection~\ref{Subsection: Dedekind_reals}) and reals via interval domain (Subsection~\ref{Subsection: interval_domain}) are all possible constructions, yielding isomorphic models of reals.
				\item\textbf{Bishop-style constructivism}~\cite{Bishop_E_Bridges_D_1985:_constructive_analysis}\\
					Same as the previous item, really --- while the lack of the law of excluded middle in principle gives us some freedom, what intrinsic topology to choose, usually one does not make any prescription (that is, the discrete one is chosen). Typically the reals are represented as Cauchy (Subsection~\ref{Subsection: Cauchy_reals}), but when one does use Dedekind cuts, no topological conditions are imposed on them.
				\item\textbf{Intuitionism}~\cite{Bridges_DS_Richman_F_1987:_varieties_of_constructive_mathematics}\\
					As in the previous item.
				\item\textbf{Realizability}~\cite{Bauer_A_2000:_the_realizability_approach_to_computable_analysis_and_topology}\\
					What ``sets'' and ``classes'' are, depends on the specific choice of a model. In any case, at least \df{modest sets} are ``sets''. ``Open'' means \df{semidecidable}. For ``closed'' one can either take complements of open subsets, or all subsets with open complements, it makes no difference for our theory. Countable choice holds, and the reals are typically represented as Cauchy (Subsection~\ref{Subsection: Cauchy_reals}).
				\item\textbf{Predicative settings}~\cite{Myhill_J_1975:_constructive_set_theory}\\%, \textbf{type theories}~\someref\\
					In predicative mathematics one does not assume the powerset axiom. There is a usual set/class distinction and the collection of all subsets of a given set is a class. It is still assumed that arbitrary unions of opens are open, and consequently that topologies are classes, but generally not sets. The model of reals used is the formal space of reals (Subsection~\ref{Subsection: topological_models_of_reals}).
%				\item\textbf{Classical topology}\\
%					The category of classical topological spaces (and continuous maps between them) does not have enough structure to interpret general logical connectives and quantifiers, but it is still a regular category with all the other properties we postulated, so that we can designate topological spaces as ``sets''. The usual classical sets play the role of ``classes''.
%					
%					This is an interesting example of our theory since it shows that terminality of a streak does not just imply that it is the ``largest'' one, but also, when we have a plethora of topologies to choose from, that reals have the euclidean topology (as we have seen at the end of Subsection~\ref{Subsection: topological_models_of_reals}).
				\item\textbf{Constructive settings with intrinsic topology}\\
					Examples are \df{synthetic topology}~\cite{Lesnik_D_2010:_synthetic_topology_and_constructive_metric_spaces} and \df{Abstract Stone Duality} (ASD)~\cite{Bauer_A_Taylor_P_2009:_the_dedekind_reals_in_abstract_stone_duality}. In both cases, the intrinsic topology is given via a \df{Sierpi\'{n}ski object} $\opn$ by defining $\optp(X) \dfeq \opn^X$ (\ie topologies are sets in this setting).
					
					Generally we don't have countable choice, and the Cauchy reals don't necessarily work. Instead, reals are constructed as open Dedekind cuts (as in Subsection~\ref{Subsection: Dedekind_reals}). Under mild assumptions these reals have euclidean topology.
			\end{itemize}
			
			There is a variety of constructivism, for which our theory obviously doesn't work: \df{ultrafinitism}~\cite{Bridges_DS_Richman_F_1987:_varieties_of_constructive_mathematics}. The reason for this is that ultrafinitism does not assume the existence of a set of all natural numbers, whereas we did.

	\section{Additional examples}\label{Section: additional}
	
		In this section we discuss some notions which have the word ``real'' in their name, but do not directly fit into our theory, in the sense that they are not terminal streaks. This is because in these cases one purposely breaks some standard property of reals to get another useful one (in the case of \df{smooth reals} one sacrifices partial order to get an intrinsic smooth structure, and in the cases of \df{lower} and \df{upper reals} one requires comparison with rationals only on one side, in order to get a variant of order completness).
	
		\subsection{Smooth reals}\label{Subsection: smooth_reals}
		
			In this subsection we consider the \df{smooth reals} from \df{synthetic differential geometry} (SDG). SDG (or a closely related synonym \df{smooth analysis}) is an approach where, rather that taking a set, equip it with topology and smooth structure and call it a ``smooth manifold'' (and similarly for maps), the background logic and axioms are changed in a way which in a certain sense equips every object with its ``intrinsic'' smooth structure and every map (that can be constructed in the setting) is automatically smooth. This makes working with smooth manifolds and smooth maps similar to working with the usual sets and maps (though the backgroung logic is necessarily constructive), generally quite a simplification.\footnote{The simplification does not come from trivialising the theory, or some such. There are models of SDG into which the category of smooth manifolds is embedded in a nice way. In this sense any synthetic theorem provides also a corresponding theorem in classical differential geometry.} For example, the tangent bundle of an object $M$ is given simply as an exponential $M^D$; nothing further (such as explicit topology or smooth structure) is required.
			
			Here $D$ denotes the set of \df{nilsquare infinitesimals},
			\[D \dfeq \st{x \in \smR}{x^2 = 0},\]
			where $\smR$ denotes the set of \df{smooth reals}. The most basic axiomatisation of SDG declares that $\smR$ is a commutative unital ring and a module over $\QQ$, and the following \df{Kock-Lawvere} axiom holds: For every map $f\colon D \to \smR$ there exist unique $a, b \in \smR$ such that $f(x) = a + b \cdot x$ for all $x \in D$.
			
			Note that the Kock-Lawvere axiom in particular implies $\{0\} \subsetneq D$ --- \ie $0$ is not the only infinitesimal (= infinitely small element).
			
			For more on how this makes differential geometry going (in particular, how derivatives of maps are defined), see~\cite{bell2008primer}.
			
			So, do the smooth reals have anything to do with streaks, in particular the terminal ones? The first obstacle to answering this question is that SDG is still a relatively young theory and there is no fixed axiomatisation of it yet (at the time of writing this paper). The above is the bare minimum, enough to define notions such as derivatives and tangent bundles, but not enough for a deeper theory.
			
			Moerdijk and Reyes provide a study of various possible (models and) axioms of SDG in~\cite{moerdijk1991models}. We summarize their axioms here.
			\begin{itemize}
				\item
					Axioms (A1)--(A5): $\smR$ is a commutative unital local ring which has square roots of all positive elements, as well as inverses of all positive and negative elements, and is equipped with order relations $<$, $\leq$, satisfying the usual properties. Also, we have $0 \leq x$ for all nilpotent $x \in \smR$.
				\item
					Axiom (A6): \emph{generalized} Kock-Lawvere axiom holds (there are polynomial formulae for real-valued maps, defined on more general infinitesimal objects than $D$).
				\item
					Axioms (A7)--(A9): axioms which make integration work.
				\item
					Axioms (A10)--(A15): axioms, expressing connection between $\smR$ and natural numbers; in particular, (A11) states that $\smR$ is archimedean with regard to smooth natural numbers.
				\item
					Axioms (A16)--(A17): properties of covers of $\intcc[\smR]{0}{1}$, in particular its compactness.
				\item
					Axioms (A18)--(A19): some standard functions exist.
				\item
					Axioms (A20)--(A21): existence and properties of invertible infinitesimals.
			\end{itemize}
			Not all of these axioms hold in all of the models that Moerdijk and Reyes study, but (A1)--(A5) do. These are enough to conclude that $\smR$ is a (field) prestreak (for some reasonable choice of intrinsic topologies, such as the smallest ones, for which $<$ on $\smR$ is open and $\leq$ on $\smR$ is closed).
			
			The axiom (A11) (which also holds in all the models) technically states that $\smR$ is archimedean --- but with regard to the \df{smooth natural numbers}, which may or may not be the same as the usual natural numbers, depending on the model. For the most standard models they are, though, and even in general we could just consider $\arch(\smR)$ (recall Subsection~\ref{Subsection: archimedean_coreflection}) which is an archimedean (field) prestreak which still contains all the infinitesimals, required for the theory. Thus we will just assume that $\smR$ is also archimedean.
			
			However, while $\smR$ can be assumed to be an archimedean prestreak, it cannot possibly be a streak. The crucial point of SDG is that we have nontrivial infinitesimals, in particular $\{0\} \subsetneq D \subseteq \intcc[\smR]{0}{0}$, so the preorder $\leq$ on $\smR$ is not a partial order.
			
			Thus $\smR$ cannot be a terminal streak, so it does not satisfy our definition of the reals. This isn't surprising --- all maps $\smR \to \smR$ are smooth practically by definition, but we have nondifferentiable (in the limit of differential quotient sense) maps $\RR \to \RR$, such as the absolute value.
			
			We may still make some use of our theory, though. Since models of SDG are topoi with natural numbers, we can still construct the terminal streak $\RR$, say via Dedekind cuts (recall Subsection~\ref{Subsection: Dedekind_reals}). By terminality of $\RR$ we have
			\[\smR \stackrel{\theta_\smR}{\longrightarrow} Q(\smR) \stackrel{\trm[Q(\smR)]}{\longrightarrow} \RR,\]
			the composition of which is a familiar mapping: taking the standard part.
		
		\subsection{Lower and upper reals}
		
			In Subsection~\ref{Subsection: Dedekind_reals} we presented the construction of reals as two-sided Dedekind cuts. It is known that in classical mathematics we don't need to specify both the lower and the upper cut of a Dedekind real --- either one can be reconstructed from the other. Thus the set of lower cuts, called the \df{lower reals} (we'll denote it by $\lr$), the set of upper cuts, called the \df{upper reals} (denoted by $\ur$) and the set of two-sided Dedekind cuts $\RR$ are all in bijective correspondence\footnote{More precisely, this holds, after we restrict to finite cuts. The whole dense streak can also be seen as a lower cut (representing $\infty$) or an upper cut (representing $-\infty$) which are of course not real numbers. See also the discussion at the end of this subsection.}, and therefore, by the results of Subsection~\ref{Subsection: Dedekind_reals}, a model of reals, according to our definition.
			
			This is not the case constructively; in general all these three sets are distinct. One can of course see $\RR$ as a subset of both $\lr$, $\ur$ (by ``forgetting'' one of the cuts), but this inclusion is not an equality, nor are $\lr$ and $\ur$ comparable. Since $\RR$ is a model of reals, it follows that neither the lower reals nor the upper reals are reals by our definition.
			
			This isn't a big deal since constructively one usually uses $\RR$ anyway, being the only field out of the three (one can't subtract in $\lr$, $\ur$, nor can one multiply in general, though it is possible to multiply nonnegative elements).
			
			Still, the one-sided reals are useful even constructively, as (unlike $\RR$ itself) they satisfy a form of Dedekind completness, and there is a way how to incorporate these two sets into our context. Recall from Proposition~\ref{Proposition: alt_streak} that a streak $X$ can be equivalently given by specifying the relations $< \subseteq \QQ \times X$ and $< \subseteq X \times \QQ$, rather than $< \subseteq X \times X$. The idea is to split the definition of a streak into two parts, each of which refers to only one-sided comparison with rationals. Then the lower and the upper reals should be terminal among such ``halfstreaks''.
			
			\begin{definition}\label{Definition: lower_streak}
				Let a set $X$ be equipped with a relation $< \subseteq \QQ \times X$ and operations $+\colon X \times X \to X$ and $\cdot\colon X_{> 0} \times X_{> 0} \to X_{> 0}$ (where $X_{> 0}$ stands for the set of all elements in $X$ which are bigger than the \emph{rational} $0$). Suppose the following conditions hold:
				\begin{itemize}
					\item
						boundedness (from below): $\xall{a}{X}\xsome{q}{\QQ}{q < a}$,
					\item
						cotransitivity: for all $a \in X$ and $q, r \in \QQ$
						\[q < a \implies q < r \lor r < a,\]
					\item
						for all $a, b \in X$
						\[\all{q}{\QQ}{q < a \iff q < b} \implies a = b,\]
					\item
						the relation $<$ is open and its negation $\leq$ closed,
					\item
						$+$ makes $X$ into a commutative monoid (meaning we in particular have an additive unit which we also denote by $0$),
					\item
						for all $q, r \in \QQ$ and $a, b \in X$
						\[q < a \land r < b \implies q + r < a + b,\]
					\item
						for all $q \in \QQ$ and $a, b \in X$
						\[q < a + b \implies \some{r, s}{\QQ}{q < r + s \land r < a \land s < b},\]
					\item
						$\cdot$ makes $X_{> 0}$ into a commutative monoid (meaning we also have a multiplicative unit which we denote by $1$) and distributes over $+$,
					\item
						for all $q, r \in \QQ_{> 0}$ and $a, b \in X_{> 0}$
						\[q < a \land r < b \implies q \cdot r < a \cdot b,\]
					\item
						for all $q \in \QQ_{> 0}$ and $a, b \in X_{> 0}$
						\[q < a \cdot b \implies \some{r, s}{\QQ_{> 0}}{q < r \cdot s \land r < a \land s < b},\]
					\item
						``asymmetry'': $\lnot(\underbrace{1}_{\in \QQ} < \underbrace{1}_{\in X})$.
				\end{itemize}
				Then $(X, <, +, 0, \cdot, 1)$ is called a \df{lower streak}.
			\end{definition}
			
			Note that the conditions for addition and multiplication can be readily generalized.
			\begin{lemma}
				Let $X$ be a lower streak. Then the following holds for all $n \in \NN_{> 0}$:\footnote{Actually the statements hold for $n = 0$ as well; for this we need Proposition~\ref{Proposition: unambiguity_of_order_in_lower_streaks} below (and recall that the sum of zero summands is $0$ and the product of zero factors is $1$).}
				\begin{enumerate}
					\item
						for all $q_0, \ldots, q_{n-1} \in \QQ$ and $a_0, \ldots, a_{n-1} \in X$,
						\[\all{i}{\NN_{< n}}{q_i < a_i} \implies \sum_{i \in \NN_{< n}} q_i < \sum_{i \in \NN_{< n}} a_i,\]
					\item
						for all $q \in \QQ$ and $a_0, \ldots, a_{n-1} \in X$,
						\[q < \sum_{i \in \NN_{< n}} a_i \implies \some[2]{r_0, \ldots, r_{n-1}}{\QQ}{q < \sum_{i \in \NN_{< n}} r_i \land \xall{i}{\NN_{< n}}{r_i < a_i}},\]
					\item
						for all $q_0, \ldots, q_{n-1} \in \QQ_{> 0}$ and $a_0, \ldots, a_{n-1} \in X_{> 0}$,
						\[\all{i}{\NN_{< n}}{q_i < a_i} \implies \prod_{i \in \NN_{< n}} q_i < \prod_{i \in \NN_{< n}} a_i,\]
					\item
						for all $q \in \QQ_{> 0}$ and $a_0, \ldots, a_{n-1} \in X_{> 0}$,
						\[q < \prod_{i \in \NN_{< n}} a_i \implies \some[2]{r_0, \ldots, r_{n-1}}{\QQ}{q < \prod_{i \in \NN_{< n}} r_i \land \xall{i}{\NN_{< n}}{r_i < a_i}}.\]
				\end{enumerate}
				In particular we have $q < a \implies \some{r}{\QQ}{q < r \land r < a}$ for all $q \in \QQ$ and $a \in X$.
			\end{lemma}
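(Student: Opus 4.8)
The plan is to prove all four items by induction on $n$, handling the additive pair (1)--(2) and the multiplicative pair (3)--(4) in parallel: the multiplicative arguments are word-for-word transcriptions of the additive ones with $+,0$ replaced by $\cdot,1$, carried out inside $X_{> 0}$ with positive rationals, so that every product that appears is defined and the multiplicative preservation and roundedness axioms of a lower streak apply. The binary instances of (1) and (3) are exactly the preservation axioms, and the binary instances of (2) and (4) are exactly the roundedness axioms; the real work is to get from $n = 2$ to all $n$, and in particular down to $n = 1$.

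Items (1) and (3): the case $n = 1$ is trivial and $n = 2$ is an axiom. For the step $n \to n+1$, write $\sum_{i \in \NN_{<n+1}} a_i = \big(\sum_{i \in \NN_{<n}} a_i\big) + a_n$ --- legitimate since $(X,+,0)$ is a commutative monoid --- apply the induction hypothesis to $\sum_{i<n} q_i < \sum_{i<n} a_i$ together with $q_n < a_n$, and finish with the binary preservation axiom. Item (2): the inductive step is equally routine. From $q < \big(\sum_{i<n} a_i\big) + a_n$ the binary roundedness axiom gives $r,s \in \QQ$ with $q < r + s$, $r < \sum_{i<n} a_i$ and $s < a_n$; the induction hypothesis applied to $r < \sum_{i<n} a_i$ yields $r_0,\dots,r_{n-1} \in \QQ$ with $r < \sum_{i<n} r_i$ and $r_i < a_i$ for all $i$; then $r_n \dfeq s$ works, because $q < r+s$ and $r < \sum_{i<n} r_i$ are comparisons of rationals, so $q < \big(\sum_{i<n} r_i\big) + s = \sum_{i \le n} r_i$. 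Item (4) is identical, with all rationals positive.

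Hence everything comes down to the base case $n = 1$ of (2) (and its evident positive-elements analogue for (4)), which is precisely the final displayed assertion $q < a \impl \xsome{r}{\QQ}{q < r \land r < a}$. The trick is the following renormalisation. Given $q < a$, use that $0$ is the additive unit to rewrite $a = a + 0$, so $q < a + 0$; the binary roundedness axiom then produces $r,s \in \QQ$ with $q < r + s$, $r < a$ and $s < 0$. Feeding $r < a$ and $s < 0$ (with the element $0 \in X$ in the second slot) into the binary preservation axiom gives $r + s < a + 0 = a$. Thus $r' \dfeq r + s$ is a rational with $q < r'$ and $r' < a$. For (4) one argues the same way with $1 \in X$ in place of $0 \in X$: from $q < a = a \cdot 1$ the binary multiplicative roundedness axiom gives positive $r,s$ with $q < r \cdot s$, $r < a$, $s < 1$, and multiplicative preservation turns $r < a$, $s < 1$ into $r \cdot s < a \cdot 1 = a$.

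The only genuine obstacle is spotting this reduction: upward-roundedness of a \emph{single} element is not among the axioms, but it follows from binary roundedness together with binary preservation via the harmless rewrite $a = a+0$ (resp.\ $a = a\cdot 1$); everything else is bookkeeping in a commutative monoid, with the sole point of care that in (3)--(4) all occurring rationals must be taken positive. (The $n = 0$ versions noted in the footnote are a separate matter --- they amount to comparing the rational $0$, resp.\ $1$, with the element $0 \in X$, resp.\ $1 \in X$, which is the content of Proposition~\ref{Proposition: unambiguity_of_order_in_lower_streaks} --- and I do not address them here.)
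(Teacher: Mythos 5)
Your proposal is correct and matches the paper's proof: the paper likewise dismisses items (1)--(4) as "obvious induction on $n$" and proves only the final special case, using exactly your renormalisation $a = a + 0$ followed by binary roundedness and binary preservation to produce $r + s$ with $q < r + s < a$. You have merely spelled out the induction (and the multiplicative analogue via $a = a \cdot 1$) that the paper leaves implicit.
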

			\begin{proof}
				We only check the stated special case; the rest is obvious induction on $n \in \NN_{> 0}$.
				
				Suppose $q < a$; since $a = a + 0$, there are $r, s \in \QQ$ such that $q < r + s$, $r < a$ and $s < 0$. Then $r + s < a + 0 = a$, so $r + s$ is the looked-for rational.
			\end{proof}
			
			As a monoid for addition, any lower streak possesses mutiplication with natural numbers, defined inductively in the usual way: $\underbrace{0}_{\in \NN} \cdot a \dfeq \underbrace{0}_X$, $(n+1) \cdot a \dfeq n \cdot a + a$. In particular, natural numbers $\NN$ embed into any lower streak $X$ (via $n \mapsto n \cdot 1$), and in this sense we write $\NN \subseteq X$.
			
			Also standard, we define the non-strict order by negating $<$:
			\[a \leq q \dfeq \lnot(q < a)\]
			for $a \in X$, $q \in \QQ$. However, we can also define $\leq$ between elements of a lower streaks $X$ themselves by declaring
			\[a \leq b \dfeq \all{q}{\QQ}{q < a \implies q < b}\]
			for $a, b \in X$. Obviously this $\leq$ is a preorder and one of the conditions in the definition of a lower streak is precisely the antisymmetry of $\leq$\footnote{Dropping this condition from the definition would yield a ``lower archimedean prestreak''.}; thus $\leq$ is a partial order on $X$.
			
			All possible transitivity conditions hold.
			\begin{proposition}
				Let $X$ be a lower streak. For all $q, r \in \QQ$ and $a, b, c \in X$ the following holds.
				\begin{enumerate}
					\item
						$q \leq r \land r < a \implies q < a$ (in particular $q < r \land r < a \implies q < a$)
					\item
						$q < a \land a \leq b \implies q < b$
					\item
						$q < a \land a \leq r \implies q < r$
					\item
						$a \leq q \land q \leq r \implies a \leq r$
					\item
						$a \leq b \land b \leq q \implies a \leq q$
					\item
						$a \leq b \land b \leq c \implies a \leq c$
				\end{enumerate}
			\end{proposition}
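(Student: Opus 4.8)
The plan is to unwind the three different readings of $\leq$ in play — between two rationals (the usual decidable order), between an element of $X$ and a rational (defined as $a \leq q \dfeq \lnot(q < a)$), and between two elements of $X$ (defined as $\all{q}{\QQ}{q < a \implies q < b}$) — and then lean on the cotransitivity axiom together with decidability of $<$ on $\QQ$. Every item reduces to a one- or two-line argument, so I would dispatch them in an order that lets the later items cite the earlier ones.

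First I would prove item (1): given $q \leq r$ and $r < a$, cotransitivity applied to $r < a$ gives $r < q \lor q < a$, and since $<$ on $\QQ$ is decidable the hypothesis $q \leq r$ (i.e.\ $\lnot(r < q)$) kills the first disjunct, leaving $q < a$. Item (2) is immediate: $a \leq b$ is by definition $\all{q}{\QQ}{q < a \implies q < b}$, so we instantiate at the given $q$. Item (6) is the same defining property chained through $b$: $q < a \implies q < b \implies q < c$.

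Next I would handle items (5) and (3), which concern a hypothesis with a rational on the right of $\leq$. For (5): if $q < a$, then $a \leq b$ forces $q < b$, contradicting $b \leq q$, hence $\lnot(q < a)$, i.e.\ $a \leq q$. For (3): by decidability of $<$ on $\QQ$ either $q < r$ (done) or $r \leq q$; in the latter case item~(1) applied with $r, q$ in place of $q, r$ yields $r < a$, contradicting $a \leq r$. Finally item (4): assuming $r < a$, cotransitivity gives $r < q \lor q < a$; the second disjunct contradicts $a \leq q$, the first contradicts $q \leq r$, so $\lnot(r < a)$, i.e.\ $a \leq r$.

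I do not expect any genuine obstacle here; the content is entirely routine. The only thing to be careful about is keeping the three readings of $\leq$ straight and invoking decidability of $<$ on $\QQ$ precisely at the points where a case split on rationals (or a contrapositive reading $\lnot(q < r) \iff r \leq q$) is needed.
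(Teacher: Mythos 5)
Your proposal is correct and follows essentially the same route as the paper: unwind the three readings of $\leq$, use cotransitivity to dispatch the items involving $<$, and argue by contradiction for the items whose conclusion is a $\leq$. The only cosmetic differences are in items (3) and (4), where the paper applies cotransitivity directly to $q < a$ (respectively reduces to item (1)) while you take the mirror-image shortcut via decidability of $<$ on $\QQ$; both are valid and equally short.
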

			\begin{proof}
				\begin{enumerate}
					\item
						If $r < a$, then by cotransitivity $r < q \lor q < a$, but the first disjunct is in contradiction with $q \leq r$, so the second one must hold.\footnote{In fact, since $<$ is decidable on $\QQ$, this transitivity condition not just follows from, but is even equivalent to cotransitivity in lower streaks.}
					\item
						By definition of $\leq$ on $X$.
					\item
						Same argument as in the first item.
					\item
						If $r < a$ held, then together with $q \leq r$ it would yield $q < a$ by an already known transitivity, in contradiction with $a \leq q$.
					\item
						If $q < a$ held, it would follow $q < b$ from $a \leq b$, in contradiction with $b \leq q$.
					\item
						Take any $q \in \QQ$ with $q < a$. From $a \leq b$ it follows $q < b$ and then from $b \leq c$ it follows $q < c$, as desired.
				\end{enumerate}
			\end{proof}
			
			It is inconvenient having to always specify when $0$, $1$ (or natural numbers in general) represent an element of a lower streak or a rational, so we show that it doesn't matter.
			
			\begin{proposition}\label{Proposition: unambiguity_of_order_in_lower_streaks}
				Let $X$ be a lower streak. The following holds for all $q \in \QQ$, $n \in \NN$ and $a \in X$:
				\begin{enumerate}
					\item
						$q < a \iff q + \underbrace{n}_{\in \QQ} < a + \underbrace{n}_{\in X}$,
					\item
						$q < \underbrace{n}_{\in \QQ} \iff q < \underbrace{n}_{\in X}$, therefore also $\underbrace{n}_{\in \QQ} \leq q \iff \underbrace{n}_{\in X} \leq q$,
					\item
						$a \leq \underbrace{n}_{\in \QQ} \iff a \leq \underbrace{n}_{\in X}$,
					\item
						$\underbrace{n}_{\in X} \leq \underbrace{n}_{\QQ}$.
				\end{enumerate}
			\end{proposition}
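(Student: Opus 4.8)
The plan is to reduce the entire proposition to a single \textbf{Key Lemma}: for every rational $q$, if $q < 1$ in $\QQ$ then $q < 1_X$ via the relation $< \subseteq \QQ \times X$, where $1_X$ denotes the multiplicative unit of $X$ viewed through $\NN \subseteq X$. The converse $q < 1_X \implies q < 1$ is immediate: cotransitivity gives $q < 1 \lor 1 < 1_X$, and the second disjunct is killed by the ``asymmetry'' axiom $\lnot(1_\QQ < 1_X)$; together with the Key Lemma this settles part~(2) for $n = 1$. The general case of part~(2) then follows by induction on $n$: writing $n_X = 1_X + (n-1)_X$ (commutativity of $+$ plus the inductive definition of multiplication by naturals), the forward implication comes from the forward addition law $q < a \land r < b \implies q + r < a + b$ applied after splitting a rational $q < n$ as $q < r + s$ with $r < 1$, $s < n-1$ (then use the Key Lemma on $r$ and the induction hypothesis on $s$), and the backward implication comes from the addition-decomposition axiom together with the case $n = 1$. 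Parts~(3) and~(4) are then formal consequences of~(2): $n_X \leq n_\QQ$ (i.e. $\lnot(n_\QQ < n_X)$) is obtained by instantiating the backward half of~(2) at $q = n$, and $a \leq n_\QQ \iff a \leq n_X$ by rewriting $a \leq n_X = \forall q.(q < a \implies q < n_X)$ with~(2) and observing, via cotransitivity, that this is equivalent to $\lnot(n_\QQ < a) = (a \leq n_\QQ)$.

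Part~(1) I would prove by induction on $n$, the step reducing to the instance $b + 1_\QQ < c + 1_X \iff b < c$ for $b \in \QQ$, $c \in X$. The backward direction uses no Key Lemma: from $b + 1 < c + 1_X$ the addition-decomposition axiom gives rationals $r,s$ with $b + 1 < r + s$, $r < c$, $s < 1_X$; since $s < 1_X \implies s < 1$ (the easy converse above) we get $b + 1 < r + 1$, hence $b < r < c$ and $b < c$ by transitivity. For the forward direction, given $b < c$ pick a rational $r$ with $b < r < c$ (roundedness, from the lemma preceding the proposition); then $b + 1 - r < 1$, so the Key Lemma yields $b + 1 - r < 1_X$ and roundedness again produces a rational $s$ with $b + 1 - r < s < 1_X$; the forward addition law gives $r + s < c + 1_X$ while $b + 1 < r + s$, so $b + 1 < c + 1_X$. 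The leftover base instances ($q < 0_\QQ \iff q < 0_X$, and $\lnot(0_\QQ < 0_X)$, needed to start the inductions) fall out of this $n = 1$ case of part~(1) combined with the Key Lemma; the trivial cases $q \leq 0$ throughout use $0_\QQ < 1_X$ and transitivity.

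The real content, and the step I expect to be the main obstacle, is the \textbf{Key Lemma}, precisely because it must be done constructively. Assume $0 < q < 1$ (the case $q \leq 0$ is trivial). Since $1_X \in X_{>0}$ we have $0 < 1_X$, so roundedness gives a rational $\varepsilon$ with $0 < \varepsilon < 1_X$, and automatically $\varepsilon < 1$ (else $1 < 1_X$, contradicting asymmetry). The bootstrap combines the unit law $1_X \cdot 1_X = 1_X$ with the multiplicative decomposition axiom $q < a \cdot b \implies \exists r,s \in \QQ_{>0}.\, q < rs \land r < a \land s < b$: for a positive rational $t < 1_X$ this, with $a = b = 1_X$, yields $r,s \in \QQ_{>0}$ with $t < rs$ and $r,s < 1_X$; as $r,s < 1$ we get $(\max\{r,s\})^2 \geq rs > t$, so setting $t' = \max\{r,s\}$ we have a rational with $t' < 1_X$ and $t'^2 > t$. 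Iterating this $k$ times from $\varepsilon$ gives rationals $\varepsilon = \varepsilon_0, \varepsilon_1, \dots, \varepsilon_k$, each $\varepsilon_j < 1_X$, with $\varepsilon_{j+1}^2 > \varepsilon_j$, whence $\varepsilon_k^{2^k} > \varepsilon$. Choosing $k$ with $q^{2^k} < \varepsilon$ (possible since $0 < q < 1$ and $\QQ$ is archimedean) we get $q^{2^k} < \varepsilon < \varepsilon_k^{2^k}$, hence $q < \varepsilon_k$, and $q < \varepsilon_k < 1_X$ gives $q < 1_X$. The points that need care are that only finitely many (namely $k$) applications of the decomposition axiom occur, so no countable or dependent choice is invoked, and that the explicit iterated-square-root construction delivers $q < 1_X$ outright rather than merely $\lnot\lnot(q < 1_X)$ — which is why it is preferable to the shorter but non-constructive contrapositive argument that would only bound the ``lower cut'' of $1_X$ below $q^{2^k}$ for every $k$.
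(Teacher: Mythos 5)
Your proposal is correct, and it shares the paper's overall strategy --- isolate $q < 1_\QQ \implies q < 1_X$ as the crux and then propagate comparison with naturals through the additive structure by induction --- but the mechanics differ at three points, all legitimately. For the key lemma the paper picks a rational $r$ with $0 < r < 1_X$, chooses $n$ with $q^n < r$, decomposes $r < 1_X = 1_X \cdot \ldots \cdot 1_X$ into rational factors $s_i < 1_X$ with $r < s_0 \cdots s_{n-1}$, and runs a finite cotransitivity sweep ($s_i < q \lor q < 1_X$), getting a contradiction $r < q^n$ if every first disjunct is chosen; your iterated-squaring bootstrap instead manufactures an explicit rational $\varepsilon_k$ with $q < \varepsilon_k < 1_X$, trading that case analysis for monotonicity of $x \mapsto x^{2^k}$ on positive rationals. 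Both use only finitely many applications of the multiplicative decomposition axiom, so both are constructively sound; yours yields a concrete interpolant, the paper's avoids the max-and-powers bookkeeping. Second, the paper proves $q < 0_\QQ \implies q < 0_X$ separately by the additive analogue of the same trick (boundedness from below plus the $n$-fold sum decomposition of $r < n \cdot 0_X$), whereas you recover the $0$ case from the $n = 1$ instance of item (1) with $a = 0_X$ --- a genuine economy, one bootstrap instead of two. Third, the paper establishes $\lnot(n_\QQ < n_X)$ and $q < n_X \implies q < n_\QQ$ by a simultaneous induction, while you prove the backward half of item (2) directly by induction via the addition-decomposition axiom and then obtain item (4) by instantiating at $q = n$, which is slightly cleaner. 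All auxiliary facts you invoke (roundedness $q < a \implies \xsome{r}{\QQ}{q < r \land r < a}$, the transitivity laws, $1_X \in X_{> 0}$, decidability of $<$ on $\QQ$) are available from the definition and the results immediately preceding the proposition, so there is no circularity and no gap.
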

			\begin{proof}
				We check some special cases first, eventually building up to the general statements. We use $X$ or $\QQ$ in the indices, to denote in which set a number is meant to be.
				\begin{itemize}
					\item\proven{$\all{q}{\QQ}{q < 0_\QQ \implies q < 0_X}$}
						Suppose $q < 0_\QQ$. There is some $r \in \QQ$ with $r < 0_X$. Let $n \in \NN_{> 0}$ be such that $n q < r$. From $r < 0_X = n \cdot 0_X$ it follows that there are $s_0, \ldots, s_{n-1} \in \QQ$ such that $r < s_0 + \ldots + s_{n-1}$ and $s_i < 0_X$ for all $i \in \NN_{< n}$. For each such $i$ consider $s_i < q \lor q < 0_X$. If for any $i$ the second disjunct holds, we are done. If all first disjuncts held, then summing them would yield $r < n q$, a contradiction.
					\item\proven{$\all{q}{\QQ}{q < 1_\QQ \implies q < 1_X}$}
						Take $q \in \QQ$ with $q < 1_\QQ$. By definition $0_\QQ < 1_X$, so $0_\QQ < q \lor q < 1_X$. If the second disjunct holds, we are done. Assume now the first one. From here, the idea is the same as in the previous item, only we move one degree higher in operations (multiplication instead of addition, exponentiation instead of multiplication).
						
						Find all of the following: $r \in \QQ$ with $0_\QQ < r < 1_X$, $n \in \NN_{> 0}$ with $q^n < r$ and $s_0, \ldots, s_{n-1} \in \QQ$ with $r < s_0 \cdot \ldots \cdot s_{n-1}$ and $0 < s_i < 1_X$ for all $i \in \NN_{< n}$. For each such $i$ consider $s_i < q \lor q < 1_X$. If for any $i$ the second disjunct holds, we are done. If all first disjunts held, then multiplying them them would yield $r < q^n$, a contradiction.
					\item\proven{$\xall{n}{\NN}\xall{a}{X}\all{q}{\QQ}{q < a \implies q + n_\QQ < a + n_X}$}
						By induction on $n$. Obviously the statement holds for $n = 0$. Assuming the induction hypothesis for $n$, $q < a$ implies $q + n_\QQ < a + n_X$, so there is some $r \in \QQ$ with $q + n_\QQ < r < a + n_X$. Then $q + n_\QQ + 1_\QQ - r < 1_\QQ$, so by the previous item also $q + n_\QQ + 1_\QQ - r < 1_X$. Adding this to $r < a + n_X$, we obtain the desired result $q + n_\QQ + 1_\QQ < a + n_X + 1_X$.
					\item\proven{$\xall{n}{\NN}\all{q}{\QQ}{q < n_\QQ \implies q < n_X}$}
						We already know that this statement holds for $n = 0$, so if $q < n_\QQ$, then $q - n_\QQ < 0_\QQ$, so $q - n_\QQ < 0_X$. Using the previous item, we get $q < n_X$.
					\item\proven{$\all[1]{n}{\NN}{\lnot(n_\QQ < n_X) \land \all{q}{\QQ}{q < n_X \implies q < n_\QQ}}$}
						First note that we only have to prove the first conjunct $\lnot(n_\QQ < n_X)$ (or $n_X \leq n_\QQ$, if you will), as the second one follows from it by transitivity. However, we consider both statements together, as we need this for the following induction on $n$.
						
						Regarding the base of induction, if we had $0_\QQ < 0_X$, then by an item above also $1_\QQ < 1_X$, in contradiction with the definition of lower streaks.
						
						Assume now that the statement holds for $n$. If we had $(n+1)_\QQ < (n+1)_X$, then there would exist $q, r \in \QQ$ with $(n+1)_\QQ < q + r$, $q < n_X$, $r < 1_X$. By induction hypothesis $q < n_\QQ$. Since we have $1_X \leq 1_\QQ$, also $r < 1_\QQ$ by transitivity. Thus $(n+1)_\QQ < q + r < n_\QQ + 1_\QQ$, a contradiction.
					\item\proven{$\xall{n}{\NN}\xall{a}{X}\all{q}{\QQ}{q + n_\QQ < a + n_X \implies q < a}$}
						By induction on $n$. Clearly the statement holds for $n = 0$.
						
						Assume it holds for $n$ and suppose $q + n_\QQ + 1_\QQ < a + n_X + 1_X$. There exist $r, s \in \QQ$ such that $q + n_\QQ + 1_\QQ < r + s$, $r < a + n_X$ and $s < 1_X$. Recall what we've already proved to conclude $s < 1_\QQ$.
						
						By cotransitivity we have $r < q + n_\QQ \lor q + n_\QQ < a + n_X$. However, the first disjunct implies the contradiction $r + s < q + n_\QQ + 1_\QQ$. Thus the second disjunct must hold, and then we have $q < a$ by the induction hypothesis, as desired.
					\item\proven{$\xall{n}{\NN}\all{a}{X}{n_\QQ \geq a \implies n_X \geq a}$}
						Rewrite this statement into the form
						\[\xall{n}{\NN}\xall{a}{X}\all{q}{\QQ_{< a}}{a \leq n_\QQ \implies q < n_X}.\]
						Given $n \in \NN$, $a \in X$ with $a \leq n_\QQ$ and $q \in \QQ$ with $q < a$, it follows $q < n_\QQ$ by transitivity and then $q < n_X$ by one of the items above.
					\item\proven{$\xall{n}{\NN}\all{a}{X}{n_X \geq a \implies n_\QQ \geq a}$}
						Rewrite this statement into the form
						\[\lnot\xsome{n}{\NN}\some{a}{X}{n_\QQ < a \land \xall{q}{\QQ_{< a}}{q < n_X}}.\]
						Suppose we had $n \in \NN$ and $a \in X$ with $n_\QQ < a$ and $\xall{q}{\QQ_{< a}}{q < n_X}$. Clearly this implies $n_\QQ < n_X$, in contradiction with what we've proved in one of the previous items.
				\end{itemize}
			\end{proof}
			
			A special case of the comparison $q < a$ (where $q \in \QQ$ and $a \in X$) is when $q$ is a natural number. We claim that the comparison of elements of a lower streak with natural numbers already uniquely determines the comparison with rationals in general.
			\begin{proposition}
				Let $X$ be a lower streak. Then for any $a \in X$, any $q \in \QQ$ and any decomposition $q = \frac{i-j}{k}$ where $i, j \in \NN$, $k \in \NN_{> 0}$ we have
				\[q < a \iff i < k \cdot a + j.\]
			\end{proposition}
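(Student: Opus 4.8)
The plan is to express the comparison $q < a$ in terms of comparisons with natural numbers by peeling off the denominator and the offset one at a time, reusing facts already available for lower streaks. Write $q = \tfrac{i-j}{k}$ with $i, j \in \NN$, $k \in \NN_{> 0}$, so that $k \cdot q = i - j$ as rationals. The key intermediate fact is the \emph{scaling property}: for every $k \in \NN_{> 0}$, $q \in \QQ$ and $a \in X$,
\[q < a \iff k \cdot q < k \cdot a.\]
Granting this, we get $q < a \iff (i-j) < k \cdot a$, and then an application of Proposition~\ref{Proposition: unambiguity_of_order_in_lower_streaks}(1), with the rational $i - j$, the natural number $j$, and the element $k \cdot a \in X$, turns the right-hand side into $i < k \cdot a + j$, which is exactly the claimed equivalence.

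It therefore remains to prove the scaling property. The forward direction is immediate from part~1 of the lemma stated right after Definition~\ref{Definition: lower_streak}, applied to the constant families $q_i \dfeq q$, $a_i \dfeq a$ over $i \in \NN_{< k}$: from $q < a$ for all $i$ we obtain $\sum_{i \in \NN_{< k}} q < \sum_{i \in \NN_{< k}} a$, i.e.\ $k \cdot q < k \cdot a$. For the converse, suppose $k \cdot q < k \cdot a = \sum_{i \in \NN_{< k}} a$. By part~2 of the same lemma there are rationals $r_0, \dots, r_{k-1}$ with $k \cdot q < \sum_{i \in \NN_{< k}} r_i$ and $r_i < a$ for every $i$. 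Since the order on $\QQ$ is decidable, for each $i$ we have $q < r_i$ or $r_i \leq q$; if the latter held for all $i$, then $\sum_{i \in \NN_{< k}} r_i \leq \sum_{i \in \NN_{< k}} q = k \cdot q$, contradicting $k \cdot q < \sum_{i \in \NN_{< k}} r_i$. Hence $q < r_i$ for some $i$, and combining this with $r_i < a$ by transitivity (the first transitivity condition for lower streaks) gives $q < a$.

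The only point requiring care is this last step of the converse: from the \emph{strict} inequality $\sum_i r_i > \sum_i q$ between finite sums of rationals one must single out a concrete index $i$ with $r_i > q$. This is legitimate constructively precisely because $<$ is decidable on $\QQ$, so the finite case split ``$q < r_i$ or $r_i \leq q$'' is available; everything else is routine bookkeeping with multiplication by natural numbers together with the already-established transitivity and unambiguity results. Note that all instances of the auxiliary lemma are used with $n = k \geq 1$, so its hypothesis $n \in \NN_{> 0}$ is met, and the edge cases $i$, $j$, or $q$ being zero need no separate treatment since the argument never divides by these quantities.
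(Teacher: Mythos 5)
Your proof is correct. The skeleton is the same as the paper's: the paper's own proof is just the chain $q < a \iff \tfrac{i-j}{k} < a \iff i-j < k \cdot a \iff i < k \cdot a + j$, with the last step being exactly your application of Proposition~\ref{Proposition: unambiguity_of_order_in_lower_streaks}(1) to the rational $i-j$, the natural number $j$ and the element $k \cdot a$. Where you genuinely diverge is the middle step: the paper asserts $\tfrac{i-j}{k} < a \iff i-j < k \cdot a$ with only a vague appeal to ``the previous proposition'' (which does not contain a scaling statement; the scaling law $q < a \iff n \cdot q < n \cdot a$ is only stated as the \emph{next} proposition, and proved there by the induction-plus-cotransitivity idea of Proposition~\ref{Proposition: multiplication_with_natural_numbers}). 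You instead isolate the scaling property and prove it directly from the $n$-ary lemma following Definition~\ref{Definition: lower_streak}: the forward direction from part~1 with constant families, and the converse from part~2 together with decidability of $<$ on $\QQ$ (to extract an index $i$ with $q < r_i$) and the transitivity $q \leq r_i \land r_i < a \implies q < a$. This buys a self-contained proof that fills the gap the paper leaves implicit, at the cost of a somewhat longer argument; the paper's route defers the scaling fact to a separate proposition proved by cotransitivity and induction rather than by splitting finite sums. Your attention to the $n \geq 1$ hypothesis of the auxiliary lemma and to the legitimacy of the finite case split over $\QQ$ is exactly the right care to take.
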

			\begin{proof}
				We use results of the previous proposition for the following.
				\[q < a \iff \tfrac{i-j}{k} < a \iff i-j < k \cdot a \iff i < k \cdot a + j\]
			\end{proof}
			
			\begin{proposition}
				For a lower streak $X$ we have $q < a \land n > 0 \iff n \cdot q < n \cdot a$ for all $q \in \QQ$, $n \in \NN$, $a \in X$ (compare Proposition~\ref{Proposition: multiplication_with_natural_numbers}).
			\end{proposition}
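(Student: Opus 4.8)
The plan is to follow the pattern of Proposition~\ref{Proposition: multiplication_with_natural_numbers}(2) and argue by induction on $n \in \NN$. In the base case $n = 0$ both sides of the equivalence are false: the left side because $0 > 0$ fails, and the right side because $0 \cdot q < 0 \cdot a$ is literally $0_\QQ < 0_X$, which is false by Proposition~\ref{Proposition: unambiguity_of_order_in_lower_streaks}. For the induction step, assume the equivalence for $n$; since $n + 1 > 0$ holds unconditionally, what remains is $q < a \iff (n+1) \cdot q < (n+1) \cdot a$. When $n = 0$ this is trivial (both sides unfold to $q < a$, using $1 \cdot a = a$ from the inductive definition of multiplication by naturals), so from here on I may assume $n > 0$.

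For the forward direction, suppose $q < a$. The induction hypothesis applies (as $n > 0$) and gives $n \cdot q < n \cdot a$; combining this with $q < a$ through the additive compatibility clause of Definition~\ref{Definition: lower_streak} yields $n \cdot q + q < n \cdot a + a$, that is, $(n+1) \cdot q < (n+1) \cdot a$.

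For the converse, suppose $(n+1) \cdot q < (n+1) \cdot a = n \cdot a + a$. The rounding clause for $+$ in Definition~\ref{Definition: lower_streak} supplies rationals $r, s$ with $(n+1) \cdot q < r + s$, $r < n \cdot a$ and $s < a$. Applying the induction hypothesis to the rational $r/n$ converts $r < n \cdot a$ into $r/n < a$. Now invoke decidability of $<$ on $\QQ$: if $q \leq r/n$ then $q < a$ by transitivity; if $q \leq s$ then again $q < a$ by transitivity; and in the remaining case $r/n < q$ and $s < q$ we get $r + s < (n+1) \cdot q$ in $\QQ$, contradicting $(n+1) \cdot q < r + s$. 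Hence $q < a$ in every case, which closes the induction.

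The forward direction and the base case are routine; the only step requiring a bit of care is the converse, where one must remember that the rounding axiom only yields a rational lower bound $r$ for $n \cdot a$ rather than a bound already divided by $n$, so one has to pass $r/n$ back through the induction hypothesis before the three-way case split on the decidable order of $\QQ$ finishes the job. I do not anticipate any genuine obstacle beyond this bookkeeping.
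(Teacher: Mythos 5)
Your proof is correct and follows the route the paper intends: the paper's own proof is just the one-line remark ``same idea as Proposition~\ref{Proposition: multiplication_with_natural_numbers}, using the order properties of lower streaks,'' and your induction --- forward direction via the additive compatibility clause of Definition~\ref{Definition: lower_streak}, converse via the rounding axiom for $+$ together with decidability of $<$ on $\QQ$ --- is the natural fleshing-out of that remark. The one substantive adaptation (replacing the cotransitivity-plus-cancellation argument used in the converse of Proposition~\ref{Proposition: multiplication_with_natural_numbers} by rounding and a decidable three-way case split, and passing $r/n$ back through the induction hypothesis) is exactly the substitution the paper's phrasing calls for, and you carry it out soundly.
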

			\begin{proof}
				Same idea as in the proof of Proposition~\ref{Proposition: multiplication_with_natural_numbers}, while using the properties of order relations on lower streaks, shown above.
			\end{proof}
			
			As usual, we want our objects in question --- lower streaks --- to form a category. Thus we need to define a notion of morphisms. We'll be very minimalistic in our definition, however.
			\begin{definition}
				Let $X$, $Y$ be lower streaks. A \df{lower streak morphism} $f\colon X \to Y$ is a map with the property
				\[q < a \iff q < f(a)\]
				for all $q \in \QQ$, $a, b \in X$.
			\end{definition}
			The reason why we only require preservation (in both direction) of $<$ is that the preservation of all the rest of the lower streak structure follows from that.
			\begin{lemma}
				Let $f\colon X \to Y$ be a lower streak morphism. Then the following holds for all $n \in \NN$, $q \in \QQ$ and $a, b \in X$:
				\begin{enumerate}
					\item
						$a \leq q \iff f(a) \leq q$,
					\item
						$a \leq b \iff f(a) \leq f(b)$,
					\item
						$f(a + b) = f(a) + f(b)$,
					\item
						$f(0) = 0$,
					\item
						if $a, b > 0$, then also $f(a), f(b) > 0$ and $f(a \cdot b) = f(a) \cdot f(b)$,
					\item
						$f(1) = 1$,
					\item
						$f(n \cdot a) = n \cdot f(a)$, in particular $f(n) = n$.
				\end{enumerate}
			\end{lemma}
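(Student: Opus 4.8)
The plan is to exploit the single structural fact that in a lower streak an element is pinned down by the set of rationals below it: the defining condition $\all{q}{\QQ}{q < a \iff q < b} \implies a = b$ turns every claimed equality in $Y$ into a claim about rational lower bounds, and such a claim can then be pushed through the morphism hypothesis $q < a \iff q < f(a)$. So throughout, to prove $f(s) = t$ it suffices to check $q < f(s) \iff q < t$ for all $q \in \QQ$, where the left side is immediately $q < s$ by the morphism property.

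First I would dispatch the two order statements. Item~(1) is the contrapositive of the morphism condition: $a \leq q$ and $f(a) \leq q$ are by definition $\lnot(q < a)$ and $\lnot(q < f(a))$, which are equivalent. Item~(2) follows by unwinding $a \leq b$ as $\all{q}{\QQ}{q < a \implies q < b}$ and replacing $q < a$, $q < b$ by $q < f(a)$, $q < f(b)$ termwise under the quantifier.

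Next the algebraic items. For~(3), by the determination axiom it is enough to show $q < f(a+b) \iff q < f(a) + f(b)$ for every $q$; the morphism property rewrites the left side as $q < a + b$, and the lower-streak approximation axiom for addition, together with its easy converse (from $r < a \land s < b \implies r+s < a+b$ and transitivity), rewrites both $q < a + b$ and $q < f(a)+f(b)$ as $\some{r,s}{\QQ}{q < r+s \land r < a \land s < b}$, after one more use of the morphism property on the inner comparisons. Item~(5) is the same argument for $\cdot$: from $0 < a$, $0 < b$ the morphism property gives $0 < f(a)$, $0 < f(b)$, so $f(a)\cdot f(b)$ is defined, and since $X_{>0}$, $Y_{>0}$ are monoids, $a\cdot b$ and $f(a)\cdot f(b)$ are positive; then for $q \leq 0$ both $q < f(a\cdot b)$ and $q < f(a)\cdot f(b)$ hold by transitivity with $0_\QQ < a\cdot b$ resp.\ $0_\QQ < f(a)\cdot f(b)$, while for $q \in \QQ_{>0}$ the approximation axiom for $\cdot$ rewrites each side as $\some{r,s}{\QQ_{>0}}{q < r\cdot s \land r < a \land s < b}$, and these coincide; the determination axiom then gives $f(a\cdot b) = f(a)\cdot f(b)$. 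Items~(4) and~(6) are handled separately to avoid circularity: by Proposition~\ref{Proposition: unambiguity_of_order_in_lower_streaks} applied in $X$ and in $Y$, both ``$q < 0_X$'' and ``$q < 0_Y$'' are equivalent to ``$q < 0$ in $\QQ$'', and likewise for $1$, so chaining with the morphism property yields $q < f(0_X) \iff q < 0_Y$ and $q < f(1_X) \iff q < 1_Y$ for all $q$, hence $f(0)=0$ and $f(1)=1$. Finally~(7) is an induction on $n$: the base $f(0\cdot a) = f(0) = 0 = 0\cdot f(a)$ uses~(4), and the step $f((n{+}1)\cdot a) = f(n\cdot a + a) = f(n\cdot a) + f(a) = n\cdot f(a) + f(a) = (n{+}1)\cdot f(a)$ uses~(3) and the hypothesis; specializing $a = 1$ and invoking~(6) gives $f(n) = n$.

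I do not expect a genuine difficulty; the only points needing care are the sign case split $q \leq 0$ versus $q > 0$ in the multiplicative case, so that one never tries to evaluate $r\cdot s$ outside $\QQ_{>0}$, and the deliberate use of the ``determination by rational lower bounds'' axiom in place of a cancellation law for $+$, which a lower streak is not assumed to carry directly.
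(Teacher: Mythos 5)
Your proposal is correct and follows essentially the same route as the paper: items (1)–(2) by negating/substituting under the quantifier, (3) and (5) by comparing rational lower bounds via the approximation axioms (your appeal to the determination condition is literally the antisymmetry of $\leq$ the paper invokes), (4) and (6) via Proposition~\ref{Proposition: unambiguity_of_order_in_lower_streaks}, and (7) by induction. No gaps worth noting.
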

			\begin{proof}
				\begin{enumerate}
					\item
						If $q < a$ and $q < f(a)$ are equivalent (by assumption), then so are their negations.
					\item
						Suppose $a \leq b$ and $q < f(b)$. Then $q < a$, therefore $q < b$ and so $q < f(b)$. The other direction works the same.
					\item
						Take any $q \in \QQ$ with $q < f(a + b)$. Then $q < a + b$ and there exist $r, s \in \QQ$ with $q < r + s$, $r < a$ and $s < b$. Hence $r < f(a)$, $s < f(b)$, so $q < r + s < f(a) + f(b)$. We conclude $f(a + b) \leq f(a) + f(b)$. The inequality $f(a) + f(b) \leq f(a + b)$ is proved the same way. The equality then follows from antisymmetry of $\leq$ on $X$.
					\item
						For any $q \in \QQ$ we have $q < f(0) \iff q < \underbrace{0}_{\in X} \iff q < \underbrace{0}_{\in \QQ} \iff q < \underbrace{0}_{\in Y}$. Thus $f(0) = 0$.
					\item
						Take $q \in \QQ$ with $q < f(a \cdot b)$ (or $q < f(a) \cdot f(b)$ for the other direction). If $q \leq 0$, we are clearly done while the case $0 < q$ works the same as for addition.
					\item
						Works the same as for $0$.
					\item
						Obvious induction.
				\end{enumerate}
			\end{proof}
			
			Like in the case of streaks, lower streak morphisms are injective and unique.
			
			\begin{proposition}\label{Proposition: lower_streaks_a_preorder_category}
				\
				\begin{enumerate}
					\item
						Lower streak morphisms are injective.
					\item
						For any lower streaks $X$, $Y$ there exists at most one lower streak morphism $X \to Y$. That is, lower streaks form a preorder category.
				\end{enumerate}
			\end{proposition}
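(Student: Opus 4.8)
The plan is to mirror the arguments already used for ordinary streaks, namely Proposition~\ref{Proposition: streak_morphisms_injective} and Corollary~\ref{Corollary: streaks_preorder_category}; both become even shorter here, because for lower streaks the comparison with rationals is part of the primitive data rather than something derived, so no appeal to the algebraic or archimedean structure will be needed at all.

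For the first claim, I would take a lower streak morphism $f\colon X \to Y$ and elements $a, b \in X$ with $f(a) = f(b)$. By the defining property of a morphism, for every $q \in \QQ$ we have $q < a \iff q < f(a)$ and $q < b \iff q < f(b)$; since $f(a) = f(b)$, chaining these equivalences gives $\all{q}{\QQ}{q < a \iff q < b}$. The antisymmetry clause in Definition~\ref{Definition: lower_streak} (the condition $\all{q}{\QQ}{q < a \iff q < b} \implies a = b$) then yields $a = b$, so $f$ is injective.

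For the second claim, let $f, g\colon X \to Y$ both be lower streak morphisms and fix $a \in X$. For every $q \in \QQ$ we then have $q < f(a) \iff q < a \iff q < g(a)$, and applying the same antisymmetry clause, this time in $Y$ with the elements $f(a), g(a) \in Y$, gives $f(a) = g(a)$; since $a$ was arbitrary, $f = g$. To conclude that lower streaks form a \emph{preorder category} it remains only to observe that they form a category in the first place: the identity map on a lower streak trivially satisfies the morphism condition, and if $f\colon X \to Y$ and $g\colon Y \to Z$ are lower streak morphisms then $q < a \iff q < f(a) \iff q < g(f(a))$ shows that $g \circ f$ is one as well, with associativity and the unit laws inherited from $\Set$. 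Together with the uniqueness just established, this is exactly the definition of a preorder category.

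I do not expect any genuine obstacle; the only point requiring a little care is that each half of the proof invokes precisely the antisymmetry clause of Definition~\ref{Definition: lower_streak} — once for elements of $X$, once for elements of $Y$ — and nothing else, so the write-up should make clear that the result is entirely a consequence of the "elements are determined by their rational lower bounds" condition. An entirely analogous statement and proof will hold for upper streaks once those are defined.
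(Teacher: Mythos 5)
Your proof is correct and follows essentially the same route as the paper's: both arguments reduce to the chain $q < f(a) \iff q < a \iff q < g(b)$ together with the clause of Definition~\ref{Definition: lower_streak} stating that elements are determined by their rational lower bounds (the paper merely packages injectivity and uniqueness into a single string of equivalences, specialized once to $f = g$ and once to $a = b$). The extra observation that identities and composites of lower streak morphisms are again morphisms is a harmless addition the paper leaves implicit.
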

			\begin{proof}
				For $f, g\colon X \to Y$ lower streak morphisms and $a, b \in X$ we have
				\[a = b \iff \all{q}{\QQ}{q < a \iff q < b} \iff\]
				\[\iff \all{q}{\QQ}{q < f(a) \iff q < g(b)} \iff f(a) = g(b).\]
				For $f = g$ we get injectivity; for $a = b$ we get uniqueness.
			\end{proof}
			
			We mentioned right at the beginning that $\NN$ embeds into any lower streak $X$ via $n \mapsto n \cdot 1$. Strictly speaking, we haven't checked that this map is actually injective (but we didn't use it anywhere either), but now that is clear (so writing $\NN \subseteq X$ is reasonable). Proposition~\ref{Proposition: unambiguity_of_order_in_lower_streaks} implies that this map is a lower streak morphism and by Proposition~\ref{Proposition: lower_streaks_a_preorder_category} it is then injective.
			
			We are now ready for the main point of this subsection, namely how the ``one-sided reals'' fit into the picture of streaks. Recall that the conditions for a streak ensured that rationals were dense in any streak in a suitable sense, and therefore the terminal (largest) streak could be deemed a completion of rationals. Similarly we can consider the \emph{terminal lower streak} to be a ``lower completion'' of $\QQ$.
			\begin{definition}
				The \df{lower reals} $\lr$ are the terminal lower streak.
			\end{definition}
			We claim that the set of all lower cuts (when it exists) is a terminal lower streak.
			
			Recall from Subsection~\ref{Subsection: Dedekind_reals} the definition of the lower cut (like in that subsection we assume that $\optp(\QQ)$ exists). Let $\lc(\QQ)$ denote the set of all lower cuts on $\QQ$ (for simplicity and brevity's sake we'll consider only the cuts on rationals here, unlike in Subsection~\ref{Subsection: Dedekind_reals}, where cuts on more general dense streaks were considered).
			
			We equip $\lc(\QQ)$ with the lower streak structure as follows. For $q \in \QQ$ and cuts $L, L', L'' \in \lc(\QQ)$ let
			\[q < L \dfeq q \in L,\]
			\[L' + L'' \dfeq \st{r + s}{r \in L' \land s \in L''} = \st{q \in \QQ}{\xsome{r}{L'}\xsome{s}{L''}{q < r + s}}.\]
			Note that the two definitions of addition are equivalent and that the unit for addition is $\QQ_{< 0}$. If $L'$ and $L''$ are positive (meaning $0 \in L'$, $0 \in L''$, and therefore $L'_{> 0}$ and $L''_{> 0}$ are inhabited), we further define
			\[L' \cdot L'' \dfeq \st{q \in \QQ}{\xsome{r}{L'_{> 0}}\xsome{s}{L''_{> 0}}{q < r \cdot s}}.\]
			The unit for multiplication is $\QQ_{< 1}$.
			
			We leave the verification, that all this is well defined and that $\lc(\QQ)$ is a lower streak, to the reader.
			
			\begin{theorem}
				$\lc(\QQ)$ is a terminal lower streak.
			\end{theorem}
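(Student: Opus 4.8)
The plan is to mirror the proof of Theorem~\ref{Theorem: dedekind_reals}: take an arbitrary lower streak $Y$ and exhibit a lower streak morphism $f\colon Y \to \lc(\QQ)$. Uniqueness of this morphism is then free, since by Proposition~\ref{Proposition: lower_streaks_a_preorder_category} lower streaks form a preorder category; so terminality reduces entirely to existence. (I will of course assume, as was delegated to the reader before the statement, that $\lc(\QQ)$ is itself a lower streak.)

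The candidate is $f(y) \dfeq \st{q \in \QQ}{q < y}$, the set of rational lower bounds of $y$, using the primitive relation ${<}\subseteq\QQ\times Y$. The first task is to check that $f(y)$ is a genuine lower cut on $\QQ$: it is inhabited by boundedness from below of $Y$; it is a lower set by the transitivity property $q \leq r \land r < y \implies q < y$ of lower streaks; and it is upwards rounded by the ``in particular'' clause of the Lemma following Definition~\ref{Definition: lower_streak}, namely $q < y \implies \some{r}{\QQ}{q < r \land r < y}$. For the topological condition, I would observe that for each $t \in \QQ$ the translate $t + f(y)$ equals $\st{q' \in \QQ}{q' - t < y}$, which is the preimage of the open relation $\st{(a, b) \in \QQ \times Y}{a < b}$ under the (automatically continuous) map $q' \mapsto (q' - t, y)$, hence open; and $(t + f(y))^{C} = \st{q' \in \QQ}{y \leq q' - t}$ is the preimage of the closed relation ${\leq}\subseteq Y\times\QQ$, hence closed.

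It then remains to see that $f$ is a lower streak morphism, which is essentially immediate from the construction: for $q \in \QQ$ and $y \in Y$ we have $q < f(y) \iff q \in f(y) \iff q < y$, and this is exactly the defining property of a lower streak morphism (the compatibility with $+$, $\cdot$, $0$, $1$, $\NN$ and $\leq$ then follows from the lemma characterizing lower streak morphisms). This gives the required morphism $Y \to \lc(\QQ)$, and hence $\lc(\QQ)$ is terminal. I do not anticipate a real obstacle; the only mildly delicate point is the topological clause for the cuts, where one must take care to present each translate and its complement as a preimage of the \emph{open} relation $<$ and the \emph{closed} relation $\leq$ on the appropriate product, so that ``all maps are continuous'' does the rest. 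Everything else is routine and closely parallels the Dedekind case.
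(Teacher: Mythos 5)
Your proposal is correct and follows essentially the same route as the paper: the paper also defines $f(x) \dfeq \st{q \in \QQ}{q < x}$ and notes that any morphism into $\lc(\QQ)$ is forced to have this form, leaving well-definedness largely implicit. Your additional verification that $f(y)$ is indeed a lower cut (including the topological clause via preimages of the open relation $<$ and its closed negation) is a sound filling-in of details the paper omits.
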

			\begin{proof}
				For any morphism $f\colon X \to \lc(\QQ)$ we must have $q < x \iff q < f(x) \iff q \in f(x)$ for all $q \in \QQ$ and $x \in X$. Thus
				\[f(x) \dfeq \st{q \in \QQ}{q < x}\]
				is the unique lower streak morphism from $X$ to $\lc(\QQ)$.
			\end{proof}
			
			Unsurprisingly, analogous results hold for \emph{upper} reals. There is just one minor difference: in an upper streak $X$ we have a relation $< \subseteq X \times \QQ$, so we can't directly say when an element of $X$ is positive. However, note that if $X$ is a streak, we have $0 < a \iff \xsome{q}{\QQ_{> 0}}{q \leq a}$ for all $a \in X$. Thus the following definition.
			
			\begin{definition}\label{Definition: upper_streak}
				Let a set $X$ be equipped with a relation $< \subseteq X \times \QQ$ and operations $+\colon X \times X \to X$ and $\cdot\colon X_{> 0} \times X_{> 0} \to X_{> 0}$, where
				\[X_{> 0} \dfeq \st{x \in X}{\xsome{q}{\QQ_{> 0}}{\lnot(x < q)}}.\]
				Suppose the following conditions hold:
				\begin{itemize}
					\item
						boundedness (from above): $\xall{a}{X}\xsome{q}{\QQ}{a < q}$,
					\item
						cotransitivity: for all $a \in X$ and $q, r \in \QQ$
						\[a < q \implies a < r \lor r < q,\]
					\item
						for all $a, b \in X$
						\[\all{q}{\QQ}{a < q \iff b < q} \implies a = b,\]
					\item
						the relation $<$ is open and its negation $\leq$ closed,
					\item
						$+$ makes $X$ into a commutative monoid (meaning we in particular have an additive unit which we also denote by $0$),
					\item
						for all $q, r \in \QQ$ and $a, b \in X$
						\[a < q \land b < r \implies a + b < q + r,\]
					\item
						for all $q \in \QQ$ and $a, b \in X$
						\[a + b < q \implies \some{r, s}{\QQ}{r + s < q \land a < r \land b < s},\]
					\item
						$\cdot$ makes $X_{> 0}$ into a commutative monoid (meaning we also have a multiplicative unit which we denote by $1$) and distributes over $+$,
					\item
						for all $q, r \in \QQ_{> 0}$ and $a, b \in X_{> 0}$
						\[a < q \land b < r \implies a \cdot b < q \cdot r,\]
					\item
						for all $q \in \QQ_{> 0}$ and $a, b \in X_{> 0}$
						\[a \cdot b < q \implies \some{r, s}{\QQ_{> 0}}{r \cdot s < q \land a < r \land b < s},\]
					\item
						``asymmetry'': $\lnot(\underbrace{1}_{\in X} < \underbrace{1}_{\in \QQ})$.
				\end{itemize}
				Then $(X, <, +, 0, \cdot, 1)$ is called an \df{upper streak}.
			\end{definition}
			
			The rest works out the same as for the lower streaks, and we have all of the following.
			
			\begin{definition}
				Let $X$, $Y$ be upper streaks. An \df{upper streak morphism} $f\colon X \to Y$ is a map with the property
				\[a < q \iff f(a) < q\]
				for all $q \in \QQ$, $a, b \in X$.
			\end{definition}
			
			\begin{definition}
				The \df{upper reals} $\ur$ are the terminal upper streak.
			\end{definition}
			
			\begin{theorem}
				$\uc(\QQ)$ (the set of all upper cuts) is a terminal upper streak.
			\end{theorem}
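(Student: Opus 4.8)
The plan is to dualise, clause by clause, the treatment of lower streaks and lower reals given just above. First I would put an upper streak structure on $\uc(\QQ)$, the set of upper cuts on $\QQ$ (which exists since, following Subsection~\ref{Subsection: Dedekind_reals}, we assume $\optp(\QQ)$ is a set). For $q \in \QQ$ and upper cuts $U, U', U'' \in \uc(\QQ)$ put
\[U < q \dfeq q \in U, \qquad U' + U'' \dfeq \st{r + s}{r \in U' \land s \in U''} = \st{q \in \QQ}{\xsome{r}{U'}\xsome{s}{U''}{r + s < q}},\]
observe that the additive unit is $\QQ_{> 0}$ and that a cut $U$ lies in $\uc(\QQ)_{> 0}$ (in the sense of Definition~\ref{Definition: upper_streak}) exactly when some positive rational is missing from $U$; for positive $U', U''$ define
\[U' \cdot U'' \dfeq \st{q \in \QQ}{\xsome{r}{U'_{> 0}}\xsome{s}{U''_{> 0}}{r \cdot s < q}},\]
with multiplicative unit $\QQ_{> 1}$. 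Then I would verify that these data satisfy every clause of Definition~\ref{Definition: upper_streak}. This verification is entirely routine and mirrors the (omitted) proof that $\lc(\QQ)$ is a lower streak; the only points worth care are that $U' + U''$ and $U' \cdot U''$ are again upper cuts --- downwards roundedness here comes from the dual of the interpolation consequence $q < a \implies \xsome{r}{\QQ}{q < r \land r < a}$ of the addition axiom --- and that each translate $t + U$ is open with closed complement, which follows from openness of $< \subseteq X \times \QQ$ together with the assumption that all maps are continuous.

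Granting that $\uc(\QQ)$ is an upper streak, terminality is short. Let $X$ be an arbitrary upper streak. Any upper streak morphism $g\colon X \to \uc(\QQ)$ must satisfy $x < q \iff g(x) < q \iff q \in g(x)$ for all $x \in X$ and $q \in \QQ$, so there is at most one candidate,
\[f(x) \dfeq \st{q \in \QQ}{x < q},\]
and it remains to check that this $f$ is well defined and is a morphism. That $f(x)$ is inhabited is boundedness from above; that it is an upper set and downwards rounded follows from the transitivity facts for $\leq$ on $X$ and from the interpolation consequence of the addition axiom; that $t + f(x)$ is open with closed complement for every $t \in X$ follows --- just as for $\uc(\QQ)$ itself --- from openness of $< \subseteq X \times \QQ$ (using that $x < q - t \iff x + t < q$) and continuity of all maps. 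Finally $f$ is a morphism since $f(x) < q \iff q \in f(x) \iff x < q$ holds by construction, and it is the unique such map by the upper analogue of Proposition~\ref{Proposition: lower_streaks_a_preorder_category}.

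The main obstacle is purely one of bookkeeping: carrying the topological side conditions (openness of the cuts and of their translates, closedness of the complements) through both the construction of $+$ and $\cdot$ on $\uc(\QQ)$ and the definition of $f$. Everything else is a mechanical dualisation of arguments already present in, or deferred to the reader in, this subsection, so in practice I would state the result and refer back to the lower-real case for the symmetric details.
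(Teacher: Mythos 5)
Your proposal is correct and follows exactly the route the paper takes: the paper dualises the lower-reals treatment (it literally says the upper case ``works out the same as for the lower streaks'') and proves terminality via the map $f(x) \dfeq \st{q \in \QQ}{x < q}$, whose uniqueness is forced by the morphism condition $f(x) < q \iff q \in f(x) \iff x < q$ --- precisely your argument, with your verification that $\uc(\QQ)$ is an upper streak supplying detail the paper leaves to the reader. One cosmetic slip: the translates whose openness and closed complements you must check are $t + f(x)$ for \emph{rational} $t$ (the cuts are subsets of $\QQ$, so translation is by elements of $\QQ$), not for $t \in X$, and correspondingly the expression $x + t$ should be read as the comparison $x < q - t$ in $X \times \QQ$.
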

			
			To get the results in this subsection, we split streaks into two more general structures. We now consider how to put them back together again, to get streaks back.
			
			\begin{theorem}\label{Theorem: connection_between_streaks_and_their_onesided_variants}
				Being a streak is equivalent to being both a lower and an upper streak, the orders of which are joined by asymmetry and a cotransitivity condition. More precisely, the following holds.
				\begin{enumerate}
					\item\label{Theorem: connection_between_streaks_and_their_onesided_variants: streak_implies_lower_and_upper}
						Any streak is also a lower and an upper streak (for the usual operations and order).
					\item
						Suppose $X$ is equipped with addition, multiplication and both one-sided comparisons with rationals in a way which makes it a lower and an upper streak. Suppose additionally that the asymmetry condition
						\[\lnot(q < a \land a < q)\]
						and the contransitivity condition
						\[q < r \implies q < a \lor a < r\]
						hold for all $q, r \in \QQ$, $a \in X$. Then $X$ is a streak (for the usual definition of the strict order $a < b \dfeq \xsome{q}{\QQ}{a < q < b}$).
					\item
						In a setting where ``open'' implies ``not-not-stable''\footnote{There are many examples where this happens, such as classical mathematics, realizability models validating countable choice and Markov's principle (where ``open'' means \df{semidecidable}), sheaf models of synthetic topology where the Sierpi\'{n}ski object (which induces the intrinsic topology) is given (in the usual way) as the topology functor\ldots}, the previous item holds even without postulating the cotransitivity condition.
				\end{enumerate}
			\end{theorem}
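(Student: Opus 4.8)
The plan is to dispatch the three statements in turn, with the alternative description of a streak from Proposition~\ref{Proposition: alt_streak} doing most of the heavy lifting. Throughout I would write $<_{L}\subseteq\QQ\times X$ and $<_{U}\subseteq X\times\QQ$ for the two one-sided comparisons with rationals.

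For the first part (\ref{Theorem: connection_between_streaks_and_their_onesided_variants: streak_implies_lower_and_upper}), given a streak $X$ I would equip it with the relations $<_{L}$, $<_{U}$ coming from the comparison of prestreak elements with rationals defined at the end of Subsection~\ref{Subsection: prestreaks} (or, equivalently, read off from Proposition~\ref{Proposition: prestreak_comparison_with_rationals}), and then simply verify that the clauses of Definition~\ref{Definition: lower_streak} and Definition~\ref{Definition: upper_streak} hold. Boundedness, the cotransitivity conditions, the determinism clause, openness of $<_{L}$, $<_{U}$ and closedness of their negations, the monoid laws and the multiplicative interaction laws are literally among the conditions of Proposition~\ref{Proposition: alt_streak}. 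The only clauses needing a genuine argument are the two roundedness conditions ``$q<a+b\implies\some{r,s}{\QQ}{q<r+s\land r<a\land s<b}$'' and its multiplicative analogue; I would prove these exactly as in the corresponding step of the proof of Theorem~\ref{Theorem: preservation_of_rationals_implies_streak_morphism}, namely trap $a$ and $b$ in rational intervals of width $<\tfrac1k$ (Lemma~\ref{Lemma: archimedean_prestreak_is_a_union_of_rational_intervals}) with $\tfrac4k$ below the available slack, and take $r,s$ to be the left endpoints.

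For the second part, put $a<b\dfeq\xsome{q}{\QQ}{a<_{U}q\land q<_{L}b}$ and check the hypotheses of Proposition~\ref{Proposition: alt_streak}. Boundedness is the conjunction of boundedness-from-below and boundedness-from-above; the roundedness clauses ``$q<a\implies\xsome{r}{\QQ}{q<r<a}$'' and its mirror are the ``in particular'' consequences of the generalized roundedness lemma for lower (resp.\ upper) streaks; the determinism clause ``$a=b\iff\all{q}{\QQ}{q<a\iff q<b}\iff\all{q}{\QQ}{a<q\iff b<q}$'' is antisymmetry of $\leq$ in the lower-streak sense together with antisymmetry of $\leq$ in the upper-streak sense; openness/closedness and the interactions of $+$ and $\cdot$ with one-sided $<$ are again lower/upper streak axioms. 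The two remaining clauses of Proposition~\ref{Proposition: alt_streak} that refer to $<_{L}$ and $<_{U}$ simultaneously — the asymmetry $\lnot(q<a\land a<q)$ and the cotransitivity $q<r\implies q<a\lor a<r$ — cannot follow from the two halves in isolation, and these are precisely the two extra hypotheses postulated here. I would also record the small compatibility check that ``$X_{>0}$'' in the lower-streak sense ($0<_{L}a$) agrees with ``$X_{>0}$'' in the upper-streak sense ($\xsome{q}{\QQ_{>0}}{\lnot(a<_{U}q)}$), so that the multiplication $\cdot\colon X_{>0}\times X_{>0}\to X_{>0}$ is unambiguous; this uses the postulated conditions together with boundedness.

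For the third part, the only thing lost relative to the second is the cotransitivity $q<r\implies q<_{L}a\lor a<_{U}r$, so the plan is to re-derive it from the remaining data using that open predicates are $\lnot\lnot$-stable. Both $\st{a}{q<_{L}a}$ and $\st{a}{a<_{U}r}$ are open, hence so is their union ($\sigma$-frames are closed under binary unions), hence $\st{a}{q<_{L}a\lor a<_{U}r}$ is $\lnot\lnot$-stable; so, assuming $q<_\QQ r$, it suffices to refute $\lnot(q<_{L}a)\land\lnot(a<_{U}r)$. For this I would first pull in the boundedness witnesses $p_0<_{L}a$ and $a<_{U}p_1$, use the one-sided transitivity laws and the lower/upper set properties to force $p_0<_\QQ q<_\QQ r<_\QQ p_1$, and exploit the bridging facts that the asymmetry postulate makes available (for instance $p<_{L}a\land a<_{U}p'\Rightarrow p<_\QQ p'$, proved via one-sided cotransitivity and the asymmetry postulate), together with the scaling-by-positive-naturals and shifting-by-integers present in any lower/upper streak to reduce the gap $(q,r)$ to one between consecutive integers, and finally additive roundedness to turn the negated hypotheses into an honest strict comparison contradicting the assumption. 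I expect step — turning the two negations plus the bridging transitivities into an actual contradiction, rather than into an infinite regress of ever-tighter rational approximations — to be the main obstacle of the theorem; the first two parts are, by comparison, routine bookkeeping against Proposition~\ref{Proposition: alt_streak}.
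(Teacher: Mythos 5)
Your treatment of parts (1) and (2) matches the paper's: both reduce to Proposition~\ref{Proposition: alt_streak}, with the only nontrivial work in (1) being the four ``roundedness'' clauses for $+$ and $\cdot$, which the paper proves by exactly your interval-trapping argument (pick $t$ with $q < t < a+b$, take $\frac{1}{k} < \frac{t-q}{4}$, trap $a$, $b$ in intervals $\intoo[X]{\frac{i-1}{k}}{\frac{i+1}{k}}$, $\intoo[X]{\frac{j-1}{k}}{\frac{j+1}{k}}$, and take the left endpoints), and the only point worth recording in (2) being the compatibility of the two definitions of $X_{>0}$, which you do record.

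Part (3) is where you have a genuine gap, and you flag it yourself: after the correct reduction (the disjunction $q < a \lor a < r$ is open, hence $\lnot\lnot$-stable, so it suffices to refute $a \leq q \land r \leq a$), your plan of ``reducing the gap to consecutive integers'' and ``additive roundedness'' does not close, and the worry about an infinite regress of approximations is well-founded --- estimates alone will never turn two non-strict inequalities into a strict contradiction. The missing idea is to use the \emph{antisymmetry} (determinism) clause of the one-sided definitions to extract an exact equality. From $a \leq q$ and $q \leq r$ one gets $a \leq r$ by transitivity, so together with the assumed $r \leq a$ the element $a$ is squeezed: writing $r = \frac{i-j}{k}$ with $i, j \in \NN$, $k \in \NN_{>0}$, the two inequalities $k \cdot a + j \leq i$ and $i \leq k \cdot a + j$ force (via Proposition~\ref{Proposition: unambiguity_of_order_in_lower_streaks} and the clause $\all{p}{\QQ}{p < x \iff p < y} \implies x = y$) the literal identity $k \cdot a + j = i$ in $X$. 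Writing $q = \frac{m-n}{k}$ over the same denominator, the same squeeze on $a \leq q \land q \leq a$ (the latter from $q \leq r \leq a$) gives $k \cdot a + n = m$. Then $i + n = k \cdot a + j + n = m + j$, which contradicts $m + j < i + n$, i.e.\ $q < r$. No approximation or regress is involved --- the contradiction is pure arithmetic once antisymmetry has pinned $a$ down exactly.
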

			\begin{proof}
				\begin{enumerate}
					\item
						To a large part, this is just comparing Defintions~\ref{Definition: lower_streak}~and~\ref{Definition: upper_streak} with Proposition~\ref{Proposition: alt_streak}. The only things that need to be checked are the statements
						\[q < a + b \implies \some{r, s}{\QQ}{q < r + s \land r < a \land s < b},\]
						\[q < a \cdot b \implies \some{r, s}{\QQ_{> 0}}{q < r \cdot s \land r < a \land s < b},\]
						\[a + b < q \implies \some{r, s}{\QQ}{r + s < q \land a < r \land b < s},\]
						\[a \cdot b < q \implies \some{r, s}{\QQ_{> 0}}{r \cdot s < q \land a < r \land b < s}.\]
						We check only the first one; the others work similarly.
						
						Suppose $q < a + b$ in a streak $X$. Take some $t \in \QQ$ with $q < t < a + b$ and let $k \in \NN_{> 0}$ be large enough so that $\frac{1}{k} < \frac{t-q}{4}$. We then have $i, j \in \ZZ$ such that $a \in \intoo[X]{\frac{i-1}{k}}{\frac{i+1}{k}}$, $b \in \intoo[X]{\frac{j-1}{k}}{\frac{j+1}{k}}$. Set $r \dfeq \frac{i-1}{k}$, $s \dfeq \frac{j-1}{k}$. Thus $r < a$, $s < b$ and
						\[r + s = \tfrac{i+1}{k} + \tfrac{j+1}{k} - \tfrac{4}{k} > a + b - (t-q) > t - (t-q) = q.\]
					\item
						We note that everything is fine with the definition of $X_{> 0}$. Given any $x \in X$, bigger than the rational $0$, there exists $q \in \QQ$ with $0 < q < x$. By asymmetry $q \leq x$. Conversely, suppose there is $q \in \QQ$ with $0 < q \leq x$. By cotransitivity $0 < x \lor x < q$, but the second disjunct is in contradiction with $q \leq x$, so the first one must hold. We see that the definitions of $X_{> 0}$ from lower streaks and upper streaks match. In particular, there is no problem with suggesting that the same multiplication makes $X$ into a lower and an upper streak.
						
						Otherwise, all streak conditions from Proposition~\ref{Proposition: alt_streak} are clearly fulfilled.
					\item
						We claim that cotransitivity follows from other conditions. Take any $a \in X$ and $q, r \in \QQ$ with $q < r$. We want to prove $q < a \lor a < r$ which is an open statement, so by assumption not-not-stable, and it is therefore sufficient to derive a contradiction from its negation.
						
						Suppose then that its negation $a \leq q \land r \leq a$ holds. By transitivity $a \leq q$ and $q < r$ (hence $q \leq r$) imply $a \leq r$. Write $r = \frac{i-j}{k}$ where $i, j \in \NN$, $k \in \NN_{> 0}$. Then $a \leq r$ is equivalent to $k \cdot a + j \leq i$ and $r \leq a$ is equivalent to $i \leq k \cdot a + j$. Thus $k \cdot a + j = i$. We may without loss of generality assume that $q$ can be expressed with the same denominator; let $q = \frac{m-n}{k}$ with $m, n \in \NN$. In a similar way we obtain $k \cdot a + n = m$.
						
						The inequality $q < r$ means $m + j < i + n$. However, that is in contradiction with
						\[i + n = k \cdot a + j + n = m + j.\]
				\end{enumerate}
			\end{proof}
			
			\begin{remark}
				The four statements, presented in the proof of Theorem~\ref{Theorem: connection_between_streaks_and_their_onesided_variants}(\ref{Theorem: connection_between_streaks_and_their_onesided_variants: streak_implies_lower_and_upper}), are genuinly needed for the theory of lower and upper streaks to work, even though they didn't explicitly appear when studying streaks. This proof shows why: when we have comparison with rationals on both sides together, they are already implied by the weaker statements
				\[q < a \implies \xsome{r}{\QQ}{q < r < a} \qquad \text{and} \qquad a < q \implies \xsome{r}{\QQ}{a < r < q}.\]
			\end{remark}
			
			A few words about infinity. Since lower cuts are inhabited by definition, they are strictly larger than $-\infty$, and similarly upper cuts are strictly smaller than $\infty$. Thus the two-sided Dedekind cuts all represent finite numbers. However, this is no longer the case when we consider just one-sided cuts. One of the elements of $\lc(\QQ)$ is $\QQ$ itself, representing $\infty$, and similarly $\QQ \in \uc(\QQ)$, representing $-\infty$. Sometimes we want to restrict to finite lower/upper reals\footnote{For example, the property as basic as $a + x = b + x \implies a = b$ does not hold when we allow infinity: $\infty = \infty + 1$, but not $0 = 1$.}, which is done by simply additionally postulating for cuts that their complements are also inhabited. On the other hand, we sometimes want (lower/upper) reals with \emph{both} infinities included --- in this case we drop any inhabitedness condition.
			
			Wahtever the variant, the lower/upper reals are useful because they satisfy a form of order completness. Recall that in classical mathematics the reals are \df{Dedekind complete} in the following sense: every inhabited subset of $\RR$, bounded from above, has a supremum (and since $\RR$ has subtraction, one can easily prove the analogous statement for infima). Nothing of the sort holds constructively: while $\RR$ is necessarily a lattice (as we have seen), it is not closed under taking more general suprema. Even the simple statement that every increasing binary sequence\footnote{A \df{binary sequence} is a map $\NN \to \{0, 1\}$.}  has a supremum is equivalent to \lpo[].
			
			On the other hand, if we have an inhabited subset $A \subseteq \lc(\QQ)$, then $\bigcup A$ satisfies all the conditions for a lower cut, with the possible exception of the topological ones. Thus in case of $\pst = \optp = \cltp$ the lower reals are closed under taking \emph{arbitrary} inhabited suprema. Even in general, we at least have inhabited countable suprema (more generally, the inhabited \df{overt} ones). Similarly upper cuts are closed under taking such infima.
			
			To summarize, $\RR$ is useful because of its field strucure, $\lr$ is useful because it is closed under more general suprema and $\ur$ is useful because it is closed under more general infima. Classically we can pack all of this into one set, but constructively we cannot, and so must choose the right tool for each particular job.

	%\section{Concluding remarks}\label{Section: conclusion}
	
	\newpage
	\bibliographystyle{plain}
	\bibliography{Reals_Bibliography}

\end{document}